\patchcmd{\abstract}{\scshape\abstractname}{\textbf{\abstractname}}{}{}
\renewcommand{\tocsection}[3]{%
  \indentlabel{\@ifnotempty{#2}{\bfseries\ignorespaces#1 #2\quad}}\bfseries#3}
\renewcommand{\tocsubsection}[3]{%
  \indentlabel{\@ifnotempty{#2}{\ignorespaces#1 #2\quad}}#3}
\newcommand\@dotsep{4.5}
\def\@tocline#1#2#3#4#5#6#7{\relax
  \ifnum #1>\c@tocdepth %
  \else
    \par \addpenalty\@secpenalty\addvspace{#2}%
    \begingroup \hyphenpenalty\@M
    \@ifempty{#4}{%
      \@tempdima\csname r@tocindent\number#1\endcsname\relax
    }{%
      \@tempdima#4\relax
    }%
    \parindent\z@ \leftskip#3\relax \advance\leftskip\@tempdima\relax
    \rightskip\@pnumwidth plus1em \parfillskip-\@pnumwidth
    #5\leavevmode\hskip-\@tempdima{#6}\nobreak
    \leaders\hbox{$\m@th\mkern \@dotsep mu\hbox{}\mkern \@dotsep mu$}\hfill
    \nobreak
    \hbox to\@pnumwidth{\@tocpagenum{\ifnum#1=1\bfseries\fi#7}}\par%
    \nobreak
    \endgroup
  \fi}
\renewcommand\csname r@tocindent0\endcsname{0pt}
\def\l@subsection{\@tocline{2}{0pt}{2.5pc}{5pc}{}}
\def\l@subsubsection{\@tocline{3}{0pt}{4.5pc}{2.5pc}{}}
\pgfplotsset{compat=newest} 
\tikzset{
  Gc/.style = {draw=none,circle split, inner sep=0pt,minimum size=8pt,rotate=90,path picture={\draw[pattern=#1] (0,0.07) circle (1.5pt); }},
  Gd/.style = {draw=none,circle split, inner sep=0pt,minimum size=8pt,rotate=270,path picture={\draw[pattern=#1] (0,0.07) circle (1.5pt); }},
  Gf/.style={draw=none,circle split, inner sep=1pt,minimum size=8pt,rotate=90},
  G/.style={circle,draw,minimum size=8pt,inner sep=1pt,font=\tiny},
  xx/.style={circle,fill,draw,inner sep=0pt,minimum size=3pt},
  Gend/.style={inner sep=0pt,minimum size=3pt}
}
\tikzset{circle split part fill/.style  args={#1}{%
 alias=tmp@name, 
  postaction={%
    insert path={
     \pgfextra{%
     \pgfpointdiff{\pgfpointanchor{\pgf@node@name}{center}}%
                  {\pgfpointanchor{\pgf@node@name}{east}}%
     \pgfmathsetmacro\insiderad{\pgf@x}
      \fill[white,fill opacity=0] (\pgf@node@name.base) ([xshift=-\pgflinewidth]\pgf@node@name.east) arc
                          (0:180:\insiderad-\pgflinewidth)--cycle;
      \fill[fill=white,preaction={fill, white},pattern=#1] (\pgf@node@name.base) ([xshift=\pgflinewidth]\pgf@node@name.west)  arc
                           (180:360:\insiderad-\pgflinewidth)--cycle;   
      \draw[line width=0.4pt] (\pgf@node@name.base) ([xshift=\pgflinewidth]\pgf@node@name.west)  arc
                           (180:360:\insiderad-\pgflinewidth)--cycle;                                
         }}}}}  
\tikzset{my loop/.style =  {to path={
  \pgfextra{}
  [looseness=6,min distance=4mm]
  \tikz@to@curve@path},font=\sffamily\small
  }}  
\definecolor{col0}{HTML}{FFFFFF}
\definecolor{col1}{HTML}{A2B969}
\definecolor{col2}{HTML}{EBCB38}
\definecolor{col3}{HTML}{0D95BC}
\definecolor{col4}{HTML}{063951}
\definecolor{col5}{HTML}{F36F13}
\definecolor{col6}{HTML}{C13018}
\definecolor{lightgray}{HTML}{CCCCCC}
\newcommand\csum[1]{%
\sum_{\forcsvlist{\createColorCircle@item}{#1}}
}
\newcommand\createColorCircle@item[1]{
\StrDel{#1}{h}[\colNum]
\newif\ifhalf
\IfSubStr{#1}{h}{\halftrue}{\halffalse}
{\color{col\colNum}\ifhalf\circ\else\bullet\fi}
}
\newcommand\plotLambda[1]{
  \def\mgraphspecs{}
  \foreach \ll [count = \countl] in {#1} {
    \csedef{firstCol}{col0}
    \def\graphspecs{}
    \StrCount{\ll}{,}[\graphlen]
    \ifnum\graphlen>1 
    \foreach \node [count = \g] in \ll {
      \StrDel{\node}{m}[\nodenumber]
      \StrDel{\nodenumber}{.}[\nodenumber]
      \global\csedef{tempCol}{col\nodenumber}
      \global\csdef{tempPat}{\col{1}}
      \ifnum\g=1
        \global\csedef{firstCol}{\tempCol}
        \IfSubStr{\node}{.}{\global\csedef{open}{y}}{\global\csedef{open}{n}}
        \IfStrEqCase{\open}{
          {y}{\xappto\graphspecs{\countl\g[as=,draw=none] }}
          {n}{\xappto\graphspecs{\countl\g[as=,preaction={fill, white},fill=\tempCol,pattern=\csname pat\nodenumber\endcsname] }}
        }
      \else
        \IfBeginWith{\node}{m}{\global\csedef{secondArrow}{{<[sep=-3pt,length=8pt]}}}{\global\csedef{secondArrow}{}}%
        \ifnum\g=2
          \IfStrEqCase{\open}{
            {y}{\xappto\graphspecs{ --[dotted,thick] \countl\g[as=,preaction={fill, white},fill=\tempCol,pattern=\csname pat\nodenumber\endcsname] }}
            {n}{\xappto\graphspecs{ --[\firstArrow-\secondArrow] \countl\g[as=,preaction={fill, white},fill=\tempCol,pattern=\csname pat\nodenumber\endcsname] }}
          }
        \else
          \xappto\graphspecs{ --[\firstArrow-\secondArrow] \countl\g[as=,preaction={fill, white},fill=\tempCol,pattern=\csname pat\nodenumber\endcsname] }%
        \fi
      \fi
      \IfEndWith{\node}{m}{\global\csedef{firstArrow}{{>[sep=-3pt,length=8pt]}}}{\global\csedef{firstArrow}{}}
      \global\csedef{prevTempCol}{\tempCol} 
    }
    \IfStrEqCase{\open}{
      {n}{\IfBeginWith{\ll}{m}{\global\csedef{secondArrow}{{<[sep=-3pt,length=8pt]}}}{\global\csedef{secondArrow}{}}
          \xappto\graphspecs{ --[\firstArrow-\secondArrow,decorate,decoration={snake,amplitude=.3mm,segment length=.6mm}] \countl1; } }
      {y}{\xappto\graphspecs{ --[dotted,thick] \countl0[draw=none,as=] ; } }
    }
    \else
      \ifnum\graphlen=0
        \StrDel{\ll}{m}[\nodenumber]
        \csedef{tempCol}{col\nodenumber}
        \IfSubStr{\ll}{m}{\csedef{firstArrow}{{>[sep=-3pt,length=8pt]}}}{\csedef{firstArrow}{}}
        \def\graphspecs{ \countl1[as=,preaction={fill, white},fill = \tempCol,pattern=\csname pat\nodenumber\endcsname] --[my loop, decorate,decoration={snake,amplitude=.3mm,segment length=.6mm},\firstArrow-] \countl1; }
      \else
        \IfSubStr{\ll}{.}{
          \StrDel{\ll}{.}[\nodenumber]
          \StrDel{\nodenumber}{,}[\nodenumber]
          \csedef{tempCol}{col\nodenumber}
          \def\graphspecs{ \countl0[draw=none,as=,orient = left] --[dotted,thick] \countl1[as=,fill = \tempCol,pattern=\csname pat\nodenumber\endcsname] --[dotted,thick] \countl2[draw=none,as=,nudge down=10pt]; }
        }{
          \StrBefore{\ll}{,}[\nodeOne]
          \StrBehind{\ll}{,}[\nodeTwo]
          \StrDel{\nodeOne}{m}[\nodeOneNumber]
          \StrDel{\nodeTwo}{m}[\nodeTwoNumber]
          \def\tempColOne{col\nodeOneNumber}
          \def\tempColTwo{col\nodeTwoNumber}
          \IfEndWith{\nodeOne}{m}{\global\csedef{firstArrowOne}{{>[sep=-3pt,length=8pt]}}}{\global\csedef{firstArrowOne}{}}
          \IfEndWith{\nodeTwo}{m}{\global\csedef{firstArrowTwo}{{<[sep=-3pt,length=8pt]}}}{\global\csedef{firstArrowTwo}{}}
          \IfBeginWith{\nodeOne}{m}{\global\csedef{secondArrowOne}{{>[sep=-3pt,length=8pt]}}}{\global\csedef{secondArrowOne}{}}
          \IfBeginWith{\nodeTwo}{m}{\global\csedef{secondArrowTwo}{{<[sep=-3pt,length=8pt]}}}{\global\csedef{secondArrowTwo}{}}
          \def\graphspecs{ \countl1[as=,preaction={fill, white},fill = \tempColOne,pattern=\csname pat\nodeOneNumber\endcsname] --[bend right,decorate,decoration={snake,amplitude=.3mm,segment length=.6mm}, \firstArrowOne-\secondArrowTwo ] \countl2[as=,preaction={fill, white},fill = \tempColTwo,pattern=\csname pat\nodeTwoNumber\endcsname]; \countl1 --[bend left,\secondArrowOne-\firstArrowTwo] \countl2;}
        }
      \fi
    \fi
    \xappto\mgraphspecs{ \graphspecs }
  }
  \xdef\mgraphspecs{\noexpand\graph[simple necklace layout,componentwise,component packing=skyline,components go right center aligned,orient=0,nodes=G]{ \mgraphspecs }}
  \begin{tikzpicture}[baseline={([yshift=-2pt]current bounding box.center)},font=\tiny,>=Stealth, node distance = 15pt,node sep=12pt,component sep=5pt]
    \mgraphspecs;
  \end{tikzpicture}%
}
\newcommand\plotLambdas[1]{
  \begin{tikzpicture}[baseline={([yshift=-2pt]current bounding box.center)},font=\tiny,>=Stealth, node distance = 5pt,node sep=12pt,component sep=5pt]
   \graph[simple necklace layout,componentwise,component packing=skyline,components go right center aligned,orient=0,nodes=G] { #1; };
  \end{tikzpicture}
}
\newcommand\pB[1]{
  \StrDel{#1}{h}[\patNum]
  \IfSubStr{#1}{h}{
  \begin{tikzpicture}[baseline={([yshift=-2pt]current bounding box.center)},font=\tiny,>=Stealth, node distance = 0pt,node sep=1pt,component sep=1pt]
   \graph[simple necklace layout,componentwise,component packing=skyline,components go right center aligned,orient=0,nodes=G] { 1[Gf,rotate=270,circle split part fill={\csname pat\patNum\endcsname},as=,minimum size=6pt]; };
  \end{tikzpicture}
  }{
  \begin{tikzpicture}[baseline={([yshift=-2pt]current bounding box.center)},font=\tiny,>=Stealth, node distance = 0pt,node sep=1pt,component sep=1pt]
   \graph[simple necklace layout,componentwise,component packing=skyline,components go right center aligned,orient=0,nodes=G] { 1[pattern=\csname pat\patNum\endcsname,as=,minimum size=6pt]; };
  \end{tikzpicture}
  }
}
\newcommand\plotlLambda[1]{
  \def\mgraphspecs{}
  \foreach \ll [count = \countl] in {#1} {
    \csedef{firstCol}{col0}
    \csedef{openEnd}{no}
    \def\graphspecs{}
    \StrCount{\ll}{,}[\graphlen]
    \foreach \node [count = \g] in \ll {
      \StrDel{\node}{m}[\nodenumber]
      \csedef{type}{n}
      \csedef{marking}{no}
      \IfSubStr{\node}{x}{\csedef{type}{x}}{}
      \IfSubStr{\node}{c}{\csedef{type}{c}}{}
      \IfSubStr{\node}{d}{\csedef{type}{d}}{}
      \IfSubStr{\node}{.}{\csedef{type}{open}}{}
      \IfBeginWith{\node}{m}{\csedef{firstMarking}{yes}\csedef{marking}{yes}}{\csedef{firstMarking}{no}}
      \IfEndWith{\node}{m}{\csedef{secondMarking}{yes}\csedef{marking}{yes}}{\csedef{secondMarking}{no}}
      \StrDel{\nodenumber}{.}[\nodenumber]
      \StrDel{\nodenumber}{c}[\nodenumber]
      \StrDel{\nodenumber}{d}[\nodenumber]
      \StrDel{\nodenumber}{x}[\nodenumber]
      \global\csedef{tempCol}{col\nodenumber}
      \ifnum\g=1
        \global\csedef{firstCol}{\tempCol}
        \IfStrEq{\type}{open}{
          \xappto\graphspecs{ \countl0[xx,as=,grow right,draw=none,fill=white] --[dotted,thick] \countl1[fill=white,preaction={fill, white},pattern=\csname pat\nodenumber\endcsname,as=]}
        }{
        \IfSubStr{\node}{x}{
          \xappto\graphspecs{ \countl0[xx,as=,grow right,fill=lightgray,draw=none] --[color=lightgray]\countl1[Gf,circle split part fill={\csname pat\nodenumber\endcsname},as=]}}
        {
          \IfStrEq{\marking}{yes}
          {
            \xappto\graphspecs{ \countl0[xx,as=,grow right] --[-{<[fill=lightgray,color=black,sep=-3pt,length=8pt]}] \countl1[fill=white,preaction={fill, white},pattern=\csname pat\nodenumber\endcsname,as=]}
          }{
            \xappto\graphspecs{ \countl0[xx,as=,grow right] -- \countl1[fill=white,preaction={fill, white},pattern=\csname pat\nodenumber\endcsname,as=]}
          }
        }}
      \else
        \IfStrEq{\firstMarking}{yes}{\csedef{secondArrow}{{<[sep=-3pt,length=8pt]}}}{\csedef{secondArrow}{}}
        \IfStrEqCase{\type}{
          {n}{\xappto\graphspecs{ --[\firstArrow-\secondArrow] \countl\g[fill=white,preaction={fill, white},pattern=\csname pat\nodenumber\endcsname,as=] }}%
          {c}{\xappto\graphspecs{ --[\firstArrow-\secondArrow] \countl\g[Gc=\csname pat\nodenumber\endcsname,rotate=180,circle split part fill={\csname pat\nodenumber\endcsname},as=] }}%
          {d}{\xappto\graphspecs{ --[\firstArrow-\secondArrow] \countl\g[Gd=\csname pat\nodenumber\endcsname,rotate=180,circle split part fill={\csname pat\nodenumber\endcsname},as=] }}%
          {open}{\global\csedef{openEnd}{yes}}%
        }
      \fi
      \IfStrEq{\secondMarking}{yes}{
        \global\csedef{firstArrow}{{>[sep=-3pt,length=8pt]}}%
      }{
        \global\csedef{firstArrow}{}%
      }
      \global\csedef{prevTempCol}{\tempCol} 
    }
    \IfStrEqCase{\openEnd}{
      {yes}{\xappto\mgraphspecs{ \graphspecs --[dotted,thick] \countl17[Gend,as=,draw=none]; }}%
      {no}{\xappto\mgraphspecs{ \graphspecs -- \countl17[Gend,as=]; }}%
    }%
  }
  \xdef\mgraphspecs{\noexpand\graph[tree layout,componentwise,component packing=skyline,components go down left aligned,nodes=G]{ \mgraphspecs }}
  \begin{tikzpicture}[grow=right,baseline={([yshift=-2pt]current bounding box.center)},font=\tiny,>=Stealth, node distance = 5pt,node sep=12pt,component sep=5pt]
  \mgraphspecs;
  \end{tikzpicture}%
}
\newcommand\plotlLambdas[1]{
  \begin{tikzpicture}[grow=right,baseline={([yshift=-2pt]current bounding box.center)},font=\tiny,>=Stealth, node distance = 5pt,node sep=12pt,component sep=5pt]
  \graph[tree layout,componentwise,component packing=skyline,components go down left aligned,nodes=G, node distance = 15pt,node sep=12pt,component sep=5pt]{ #1 };
  \end{tikzpicture}%
}
\newtheorem{theorem}{Theorem}
\numberwithin{equation}{section}
\numberwithin{theorem}{section}
\newtheorem{assumption}{Assumption}
\def\subsubsection{\@startsection{subsubsection}{3}%
  \z@{.5\linespacing\@plus.7\linespacing}{-.5em}%
  {\normalfont\bfseries}}
\def\section{\@startsection{section}{1}%
\z@{.7\linespacing\@plus\linespacing}{.5\linespacing}%
{\normalfont\scshape\centering}}
\def\subsection{\@startsection{subsection}{2}%
\z@{.7\linespacing\@plus\linespacing}{.5\linespacing}%
{\normalfont\bfseries\centering}}
\newcommand{\ii}{\mathrm{i}}
\newtheorem{lemma}[theorem]{Lemma}
\newtheorem{corollary}[theorem]{Corollary}
\newtheorem{definition}[theorem]{Definition}
\newtheorem{proposition}[theorem]{Proposition}
\newtheorem{remark}[theorem]{Remark}
\newtheorem{example}[theorem]{Example}
\pgfplotsset{plot coordinates/math parser=false} 
\newtheorem*{genericthm*}{Assumption \ref{assumption correlations}'}
\newtheorem*{assCD}{Assumption (CD)}
\newtheoremstyle{named}{}{}{\itshape}{}{\bfseries}{.}{.5em}{\thmnote{#3}}
\theoremstyle{named}
\newlength\figureheight 
\newlength\figurewidth 
\newcommand{\inD}[1]{\quad\text{in}\quad\DD_{#1}^\delta}
\newcommand{\Dout}{\DD_{\text{out}}}
\newcommand{\1}{\mathds{1}}
\newcommand{\R}{\mathbb{R}}  %
\C\renewcommand{\C}{\mathbb{C}}\else\newcommand{\C}{\mathbb{C}}\fi %
\newcommand{\N}{\mathbb{N}}  %
\newcommand{\cM}{\mathcal{M}}  %
\newcommand{\cE}{\mathcal{E}}  %
\newcommand{\cG}{\mathcal{G}}  %
\newcommand{\HC}{\mathbb{H}}  %
\newcommand{\deq}{%
  \mathrel{\vbox{\offinterlineskip\ialign{%
    \hfil##\hfil\cr
    $\scriptscriptstyle d$\cr
    \noalign{\kern.1ex}
    $=$\cr
}}}}
\newcommand{\Z}{\mathbb{Z}}  %
\newcommand{\DD}{\mathbb{D}}  %
\newcommand{\wick}[1]{\mathpunct{:} #1 \mathpunct{:}}
\newcommand{\vx}{\mathbf{x}}
\newcommand{\vy}{\mathbf{y}}
\newcommand{\tuple}[1]{
(\,
\StrSubstitute{#1}{|}{\;\vert\;}[\temp]%
\temp
\,)
}
\newcommand{\vw}{\mathbf{w}}
\newcommand{\NN}{\mathcal{N}}
\newcommand{\vu}{\mathbf{u}}
\newcommand{\cI}{\mathcal{I}}
\newcommand{\vv}{\mathbf{v}}
\renewcommand{\SS}{\mathcal{S}}
\newcommand{\cP}{\mathcal{P}} %
\newcommand{\cR}{\mathcal{R}} %
\newcommand{\cJ}{\mathcal{J}} %
\newcommand{\cC}{\mathcal{C}} %
\newcommand{\cQ}{\mathcal{Q}} %
\newcommand{\landauO}[2][]{\mathcal{O}_{#1}\left(#2\right)}
\newcommand*{\defeq}{\mathrel{\vcenter{\baselineskip0.5ex \lineskiplimit0pt\hbox{\scriptsize.}\hbox{\scriptsize.}}}=}
\newcommand*{\defqe}{=\mathrel{\vcenter{\baselineskip0.5ex \lineskiplimit0pt\hbox{\scriptsize.}\hbox{\scriptsize.}}}}                     
\newcommand{\norms}[2][0]{%
  \ifcase#1\relax
    \lVert #2\rVert\or         %
    \big\lVert #2\big\rVert\or   %
    \Big\lVert #2\Big\rVert\or   %
    \bigg\lVert #2\bigg\rVert\or  %
    \Bigg\lVert #2\Bigg\rVert\or     %
    \left\lVert #2\right\rVert
  \fi
} 
\def\norm{\@ifnextchar[{\@normwith}{\@normwithout}}
\def\@normwith[#1]#2{\norms[#1]{#2}}
\def\@normwithout#1{\norms[5]{#1}}
\newcommand{\tnorms}[2][0]{%
  \ifcase#1\relax
    \lvert\kern-0.25ex\lvert\kern-0.25ex\lvert #2 \rvert\kern-0.25ex\rvert\kern-0.25ex\rvert\or %
    \big\lvert\kern-0.25ex\big\lvert\kern-0.25ex\big\lvert #2 \big\rvert\kern-0.25ex\big\rvert\kern-0.25ex\big\rvert\or   %
    \Big\lvert\kern-0.25ex\Big\lvert\kern-0.25ex\Big\lvert #2 \Big\rvert\kern-0.25ex\Big\rvert\kern-0.25ex\Big\rvert\or   %
    \bigg\lvert\kern-0.25ex\bigg\lvert\kern-0.25ex\bigg\lvert #2 \bigg\rvert\kern-0.25ex\bigg\rvert\kern-0.25ex\bigg\rvert\or   %
    \Bigg\lvert\kern-0.25ex\Bigg\lvert\kern-0.25ex\Bigg\lvert #2 \Bigg\rvert\kern-0.25ex\Bigg\rvert\kern-0.25ex\Bigg\rvert\or   %
    \left\lvert\kern-0.25ex\left\lvert\kern-0.25ex\left\lvert #2 \right\rvert\kern-0.25ex\right\rvert\kern-0.25ex\right\rvert
  \fi
} 
\def\tnorm{\@ifnextchar[{\@tnormwith}{\@tnormwithout}}
\def\@tnormwith[#1]#2{\tnorms[#1]{#2}}
\def\@tnormwithout#1{\tnorms[5]{#1}}
\newcommand{\abss}[2][0]{%
  \ifcase#1\relax
    \lvert #2\rvert\or         %
    \big\lvert #2\big\rvert\or   %
    \Big\lvert #2\Big\rvert\or   %
    \bigg\lvert #2\bigg\rvert\or  %
    \Bigg\lvert #2\Bigg\rvert\or     %
    \left\lvert #2\right\rvert
  \fi
} 
\def\abs{\@ifnextchar[{\@abswith}{\@abswithout}}
\def\@abswith[#1]#2{\abss[#1]{#2}}
\def\@abswithout#1{\abss[5]{#1}}
\providecommand{\braket}[1]{\left\langle#1\right\rangle} %
\providecommand{\Cov}[1]{\mathbf{Cov}\left(#1\right)} %
\DeclareMathOperator{\supp}{supp}
\DeclareMathOperator{\Val}{Val}
\DeclareMathOperator{\Spec}{Spec}
\DeclareMathOperator{\E}{\mathbf{E}}
\renewcommand{\P}{\mathbf{P}}
\DeclareMathOperator{\Tr}{Tr}
\DeclareMathOperator{\graph}{Gr} %
\DeclareMathOperator{\dist}{dist} %
\newcommand\restr[2]{{%
  \left.\kern-\nulldelimiterspace %
  #1 %
  \vphantom{\big|} %
  \right|_{#2} %
  }}
\providecommand*{\diff}%
        {\@ifnextchar^{\DIfF}{\DIfF^{}}}
\def\DIfF^#1{%
        \mathop{\mathrm{\mathstrut d}}%
                \nolimits^{#1}\gobblespace
}
\def\gobblespace{%
        \futurelet\diffarg\opspace}
\def\opspace{\let\DiffSpace\! \ifx\diffarg(\let\DiffSpace\relax\else\ifx\diffarg\let\DiffSpace\relax\else\ifx\diffarg\{\let\DiffSpace\relax\fi\fi\fi\DiffSpace}
\newcommand{\nc}{\normalcolor}
\title{Random Matrices with Slow Correlation Decay} 
\author{L\'aszl\'o Erd\H{o}s$^\dagger$ \and Torben Kr\"uger$^\dagger$ \and Dominik Schr\"oder$^{\dagger\ddagger}$}
\address{IST Austria, Am Campus 1, A-3400 Klosterneuburg, Austria}
\email{dschroed@ist.ac.at} 
\email{lerdos@ist.ac.at}
\email{tkrueger@ist.ac.at}
\thanks{$^\dagger$ Partially supported by ERC Advanced Grant No. 338804}
\thanks{$^\ddagger$ Partially supported by the IST Austria Excellence Scholarship}
\subjclass[2010]{60B20, 15B52} 
\keywords{Local Law, Bulk Universality, Correlated Random Matrix, Multivariate Cumulant Expansion}
\date{\today}
\begin{document}
\thispagestyle{empty}
\maketitle 
\begin{abstract}
We consider large random matrices with a general  slowly decaying  correlation among its entries.
We prove universality of the local eigenvalue statistics and  optimal local laws for the resolvent away from the spectral edges,
generalizing the recent result of \cite{1604.08188} to allow slow correlation decay and arbitrary expectation.
The main novel tool is a systematic diagrammatic control of a multivariate cumulant expansion.
\end{abstract}  
\tableofcontents

\section{Introduction}
In recent years it has been proven for increasingly general random matrix ensembles that
 their spectral measure converges to a deterministic measure up to the scale of individual eigenvalues
 as the size of the matrix tends to infinity, and that the fluctuation of the individual eigenvalues follows a universal distribution, independent of the specifics of the random matrix itself. The former is commonly called a \emph{local law}, whereas the latter is known as the \emph{Wigner-Dyson-Mehta (WDM) universality conjecture}, first 
envisioned by Wigner in the 1950's and formalized later by Dyson and Mehta in the 1960's \cite{MR0220494}. 
In fact, the conjecture extends beyond the customary random matrix ensembles in probability theory and is believed to hold for
any random operator in the delocalization regime of the Anderson metal-insulator phase transition. Given this profound
universality conjecture for general disordered quantum systems,
the ultimate goal of local spectral analysis of large random matrices is to prove the WDM conjecture for the largest
possible class of matrix ensembles. In the current paper we complete this program for  random matrices with a general, slow
correlation decay among its matrix elements.  Previous works covered only correlations with such a  fast decay  that, in a certain sense, they could be treated as a perturbation of the independent model.
 Here we present a new method that goes well beyond the perturbative regime.
 It relies on a novel multi-scale  version of the cumulant expansion and its rigorous
  Feynman diagrammatic representation that can be useful for other problems as well. To put our work in
  context, we  now explain the previous results.

In the last ten years a powerful new approach, the \emph{three-step strategy} has been developed 
to resolve WDM universality problems, see \cite{univBook} for a summary. In particular, the WDM conjecture in its classical form, stated
for Wigner matrices with a general distribution of the entries, has been proven with this strategy in \cite{MR2662426,MR2810797,MR2981427}; an independent
proof for the Hermitian symmetry class was given in \cite{MR2784665}.
Recent advances  have crystallized that 
the  only model dependent step in this strategy is the first one,  the local
law. The other two steps, the fast relaxation to equilibrium of the Dyson Brownian motion and 
the approximation by Gaussian divisible ensembles, have been formulated 
as very general ``black-box'' tools whose only input is the local law \cite{MR3687212,MR3729630,landon2016fixed}. 
Thus the proof of the WDM universality, at least for mean field ensembles, is automatically reduced to obtaining a local law.

Both local law and universality have first been established for \emph{Wigner matrices}, which are real symmetric or complex Hermitian 
$N\times N$ matrices with mean-zero entries which are independent and identically distributed (i.i.d.) up to symmetry \cite{MR2810797, MR2919197}. For Wigner matrices it has long been known that the \emph{limiting}, or \emph{self-consistent} density is the \emph{Wigner semicircle law}. In subsequent work the condition on the i.i.d.~entries has been relaxed in several steps. First, it was proven in \cite{MR2981427}, that
for \emph{generalized Wigner ensembles}, i.e., for matrices with stochastic variance profile and uniform upper and lower bound on the
variance of the matrix entries, the local law and universality also hold, with the self-consistent density still given by the semicircle law. Next, the condition of stochasticity was removed by introducing the \emph{Wigner-type} ensemble \cite{MR3719056}, in which case the self-consistent density is, generally, not semicircular any more. Finally, 
the independence condition was dropped and in \cite{1604.08188}  both a local law on the optimal local scale and  bulk universality were obtained
for matrices with correlated entries  with fast decaying general correlations. 
Special correlation structures were also considered before in \cite{MR3478311,
MR3629874} on a local scale. We also mention that there exists an extensive literature on the global law for random matrices with correlated entries \cite{MR2444540,MR3332852,MR2417889,MR1887675,MR2191967,MR2155229,MR1431189}. These results, however, either concern Gaussian random matrices or more specific correlation structures than considered in the present work. In a parallel development the zero-mean condition on  the matrix elements  has also been relaxed. First this was achieved  for the \emph{deformed Wigner ensembles} 
 that have diagonal deterministic shifts in \cite{MR3502606,MR3208886} and more recently for i.i.d.~Wigner matrices shifted by an arbitrary deterministic matrix in \cite{MR3800833}.

In this paper we prove a local law and bulk universality for random matrices with a slowly decaying correlation structure and arbitrary expectation,
generalizing both \cite{1604.08188, MR3800833}. The main point is to considerably relax the condition on the decay of correlations compared to \cite{1604.08188}: We allow for a polynomial decay of order two in a neighbourhood of size $\ll \sqrt N$ around every entry and
we only have to assume a polynomial decay of a certain finite order outside these neighbourhoods. Another  novelty is that our new concept of neighbourhoods
is completely general, it is not induced 
by the product structure of the index set labelling the matrix elements.  In particular,
the improved correlation condition also includes many other matrix models of interest, for example, general block matrix type models,
that have not been covered by \cite{1604.08188}.

Regarding strategy of proving the local law, the starting point is to find the deterministic approximation of the resolvent $G(z)=(H-z)^{-1}$ of
the random matrix $H$ with a complex spectral parameter $z$ in the upper half plane $\HC=\set{z\in\C|\Im z\ge0}$. This approximation is given as the solution $M=M(z)$ to
 the \emph{Matrix Dyson Equation (MDE)} 
 \[ 1+(z-A+\SS[M])M=0,
 \]
  where the expectation matrix $A\defeq\E H$ and the linear map $\SS[V]\defeq \E (H-A)V(H-A)$ on the space of matrices $R$
encode the first two moments of the random matrix.  The resolvent approximately  satisfies  the MDE
with an additive perturbation term \[D\defeq (H-A)G + \SS[G]G.\] The smallness of $D$ and stability of the MDE
against small perturbations imply that $G$ is indeed close to $M$.
The necessary stability properties of the MDE have already been established in  \cite{1604.08188},
so the main focus in this paper is to bound $D$ in appropriate norms that 
can then be fed into the stability analysis. Most proofs of the previous local laws loosely follow a strategy of first reducing the problem to a smaller number of relevant variables, such as the diagonal entries of $G$. Instead, correlated ensembles
require to carry out the analysis genuinely on the matrix level since $G$ is not even approximately diagonal.
This key feature distinguishes the current paper as well as  \cite{1604.08188}
from all previous works, where the Dyson equation was only a scalar equation for the trace of the resolvent or a vector equation for its diagonal elements.
 Although adding a general expectation matrix $A$ to a Wigner matrix 
 already  induces a non-diagonal resolvent, diagonalization 
of $A$ reduced the analysis to the scalar level in \cite{MR3800833}. A similar algebraic reduction is not possible for
general correlations even if they decay as fast as in \cite{1604.08188}. 
However, in \cite{1604.08188} 
every matrix quantity, such as $G$ or $M$, still had a very fast off-diagonal decay and thus 
it was sufficient to focus only on matrix elements very close to the diagonal; the rest was treated as an irrelevant error. 
For the slow correlation decay considered in this paper such direct perturbative treatment for the off-diagonal elements 
is not possible. In fact, with our new method 
we can even handle the essentially optimal integrable correlation decay  on  a scale $\sqrt{N}$ near the diagonal.

To obtain a probabilistic bound on $D$, 
essentially two approaches are available.
When $G$ is approximately diagonal and when the columns of $H$ are independent, 
one may use resolvent expansion formulas  involving minors that lead to standard linear and quadratic
large deviation bounds -- a natural idea that first arose in the works of Girko and Pastur \cite{MR1023102,MR0406251}, 
as well as in the works of Bai et.~al., e.g.~\cite{MR929083}. For correlated models the natural extension of this method 
requires a somewhat involved successive expansion of minors; this was the main technical tool in \cite{1604.08188}.
This approach is thus  restricted to very fast correlation decay  since it is 
 essentially a perturbation around nearly diagonal matrices.
The alternative method relies on the  cumulant expansion of the form
 $\E h f(h)=\sum_{k} (\kappa_{k+1} /k!) \E f^{(k)}$, where $\kappa_k$ is  the $k$-th order cumulant of the random variable $h$.
 The power of this expansion in studying resolvents of random matrices
  was first recognized in \cite{MR1411619} and it has been revived in several
  recent papers, e.g.~\cite{MR3678478,MR3405746,MR3805203}. It gives more flexibility than the minor expansion 
  on two accounts. First,  it can handle the stochastic effect of  individual matrix elements instead of 
  treating an entire column at the same time. This observation was essential in \cite{MR3800833} to handle
  deformations of Wigner matrices with an arbitrary expectation matrix. Single entry expansions,
  as opposed to expansion by entire columns, also appeared in 
  the proof of a version of the \emph{fluctuation averaging theorem} 
  \cite{MR2847916}, but in this context it did not have any major advantage 
  over the row expansions. Secondly,  a multivariate version of the  cumulant expansion is inherently 
  well suited to correlated models; it  automatically keeps track of the correlation structure
  without artificial cut-offs and strong restrictions on the off-diagonal decay.
   This is the method we use to bound $D$ in the current work to handle the slow correlation decay effectively.

After presenting our main results in Section \ref{sec main results}, in Section \ref{sec cum exp} we first  give
a multivariate cumulant expansion formula with an explicit error term that is especially well suited
for mean field random matrix models. The main ingredient is a novel \emph{pre-cumulant decoupling identity}, Lemma \ref{pre cum lemma}. 
We were not able to find these formulas in the literature; related formulas, however, have probably been known. 
They are reminiscent  to the Wick polynomials, their relationship is explained in Appendix \ref{wick poly appendix}.
 Some  consequences are collected in Section~\ref{sec:toy}
via a toy model. When applying it to our problem,  in order 
to bookkeep the numerous  terms, we develop a graphical language which allows us
 to actually compute $\E \abs{\Lambda(D)}^p$ up to a tiny error
 for arbitrary linear functionals $\Lambda$. The structure of $D$ contains an essential
cancellation: the  term $(H-A)G$ is compensated by $\SS[G]G$ that acts
as a counter term or \emph{self-energy renormalization} in the physics terminology. Our cumulant expansion automatically  
exploits this cancellation to all orders and the diagrammatic representation in Sections \ref{cancellation ids sec}--\ref{iso bound sec} conveniently visualizes this mechanism.
Section \ref{section step} contains the main novel part of this paper, in Section \ref{section stability} we combine the bounds on $D$ with the stability argument
for the MDE to prove the local law. Section \ref{section deloc rig univ} is devoted to the short proofs of bulk universality and other natural corollaries
of the local law. 

\smallskip
\noindent\emph{Acknowledgements.} T.K.~gratefully acknowledges private communications with Antti Knowles
on the preliminary version of \cite{MR3800833}. D.S.~would like to thank Nikolaos Zygouras for raising the question how our novel pre-cumulants are related to Wick polynomials.

\section{Main results}\label{sec main results}
For a Hermitian $N\times N$ random matrix $H=H^{(N)}$ we denote its resolvent by \[G(z)=G^{(N)}(z)=(H-z)^{-1},\] where the spectral parameter $z$ is assumed to be in the upper half plane $\HC$. The first two moments of $H$ determine the limiting behaviour of $G(z)$ in the large $N$ limit. More specifically, let 
\[A\defeq \E H,\qquad H\defqe A + \frac{1}{\sqrt N}W, \qquad \SS[V]\defeq \frac{1}{N}\E W V W,\]
where $\SS$ is a linear map on the space of $N\times N$ matrices and $W$ is a  random matrix with zero expectation.  Then the unique, deterministic solution $M=M(z)$ to the matrix Dyson equation (MDE)
 \begin{align}\label{matrix dyson eq}1+(z-A+\SS[M])M=0\qquad\text{under the constraint}\qquad \Im M\defeq \frac{1}{2\ii}[M-M^\ast]>0,\end{align}
 approximates the random matrix $G(z)$ increasingly well as $N$ tends to $\infty$. Here $\Im M>0$ indicates that the matrix $\Im M$ is positive definite. The properties of \eqref{matrix dyson eq} and its solution have been comprehensively studied in \cite{1604.08188}. In particular, it has been shown that 
 \begin{align}\label{M stieltjes trans} \frac{1}{N}\Tr M(z)=\int_\R \frac{1}{x-z}\diff \mu(x) \end{align}
 is the Stieltjes transform of a measure $\mu$ on $\R$, which we call the \emph{self-consistent density of states}, and whose support $\supp\mu$ we call the \emph{self-consistent spectrum}. Under an additional flatness Assumption (see Assumption \ref{assumption flatness} later) it has also been shown that $\mu$ is absolutely continuous with compactly supported H\"older continuous probability density \begin{align}\label{rho density}\diff \mu(x)=\rho(x)\diff x\qquad\text{and that}\qquad \rho(z)\defeq \frac{1}{\pi N}\Im\Tr M(z)\end{align} is the harmonic extension of $\rho\colon\R\to[0,\infty)$.
Moreover, \eqref{matrix dyson eq} is stable with respect to small additive perturbations and therefore it is sufficient to show that the error matrix $D=D(z)$ defined by 
 \begin{align} D\defeq 1+(z-A+\SS[G])G=(H-A+\SS[G])G = \frac{W}{\sqrt N}G + \SS[G]G \label{eq D def}\end{align}
is small. 

Choosing the correct norm to measure smallness of the error terms is a key technical ingredient. 
Similarly to the resolvent $G$, the error matrix $D$ is very large in the usual induced $\ell^p\to\ell^q$ matrix norms, but its quadratic form
${\braket{\vx ,D\vy}}$ is under control with very high probability for any fixed deterministic vectors $\vx, \vy$. 
Furthermore, to improve precision, we will  distinguish two different concepts of measuring the size of $D$. We will show that $D$ can be bounded in \emph{isotropic sense} as $\abs{\braket{\vx, D \vy}}\lesssim \norm{\vx}\norm{\vy}/\sqrt{N\Im z}$ for fixed deterministic vectors $\vx,\vy$ as well as in an \emph{averaged sense} as $N^{-1}\abs{\Tr BD }\lesssim \norm{B}/N\Im z$ for fixed deterministic matrices $B$. Here $\norm{\vx},\norm{\vy},\norm{B}$ denote the standard (Euclidean) vector norm $\norm{\vx}^2=\sum_a \abs{x_a}^2$ and (matrix) operator norm $\norm{B}\defeq \sup_{\norm{\vx},\norm{\vy}\le 1}\abs{\braket{\vx,B\vy}}$. The second step of the proof will be to show that because $D$ is small, and \eqref{matrix dyson eq} is stable under small additive perturbations, also $G-M$ is small in an appropriate sense. 

\subsection{Notations and conventions} An inequality with a subscript indicates that we allow for a constant in the bound depending only on the quantities in the subscript. For example, $A(N,\epsilon)\le_{\epsilon} B(N,\epsilon)$ means that there exists a constant $C=C(\epsilon)$, independent of $N$, such that $A(N,\epsilon)\le C(\epsilon ) B(N,\epsilon)$ holds for all $N$ and $\epsilon>0$. In many statements we will implicitly assume that $N$ is sufficiently large, depending on any other parameters of the model. Moreover, we will write $f\sim g$ if $f=\landauO{g}$ and $g=\landauO{f}$, if it is clear from the context in which regime we claim this comparability and how the implicit constant may depend on parameters.

An abstract index set $J$ of size $N$ labels the rows and columns of our matrix (generally one can think of $J=[N]\defeq \{1,\dots,N\}$ but there is no need for having a (partial) order or a notion of distance on $J$). The elements of $J$ will be denoted by letters $a,b,\ldots$ and $i,j,\ldots$ from the beginning of the alphabet.
We will use boldfaced letters $\vx, \vy, \vu, \vv, \ldots$ from the end of the alphabet to denote $J$-vectors with entries $\vx=(x_a)_{a\in J}$. 
We will denote the set of ordered pairs of indices by $I\defeq J\times J$ and will often call the elements of $I$ \emph{labels} to avoid confusion with other types of indices, and will denote them by Greek letters $\alpha=(a,b)\in I$. The matrix element $w_{ab}$ will thus often be denoted by $w_\alpha$. Summations of the form $\sum_a$ and $\sum_\alpha$ are always understood to sum over all $a\in J$ and $\alpha\in I$.

For indices $a,b\in J$ and vectors $\vx,\vy\in \C^J$ we shall use the notations 
\[A_{\vx \vy}\defeq \braket{\vx, A\vy}, \qquad A_{\vx a}\defeq \braket{\vx,A e_a}, \qquad A_{a \vx}\defeq \braket{e_a,A \vx},\] 
where $e_a$ is the $a$-th standard basis vector. We will frequently write $\Delta^{ab}=e_a e_b^t$ for the matrix of all zeros except a one in the $(a,b)$ entry.
The normalized trace of an $N\times N$ matrix is denoted by $\braket{A}\defeq N^{-1}\Tr A$. Sometimes we will also use the notation $\braket{z}\defeq 1+\abs{z}$ for the complex number $z$, but this should not create confusions as it will only be used for $z$. We will furthermore use the maximum norm and the normalized  Hilbert-Schmidt  norm \[\norm{A}_{\max}\defeq \max_{a,b}\abs{A_{ab}}, \quad \norm{A}_{\text{hs}}\defeq \Big[\frac{1}{N}\sum_{a,b} \abs{A_{ab}}^2\Big]^{1/2}\] for an $N\times N$ matrix $A$. 
  
\subsection{Assumptions}We now formulate our main assumptions on $W$ and $A$. 
\begin{assumption}[Bounded expectation]\label{assumption A}
There exists some constant $C$ such that $\norm{A}\le C$  for all $N$.
\end{assumption}
\begin{assumption}[Finite moments]\label{assumption high moments}
For all $q\in\N$ there exists a constant $\mu_q$ such that $\E \abs{w_{\alpha}}^q\le \mu_q$ for all $\alpha.$
\end{assumption}

Next, we formulate our conditions on the correlation decay conveniently phrased in terms of the multivariate cumulants $\kappa$ of random variables of $\set{ w_\alpha | \alpha\in I}$. In Appendix~\ref{appendix cumulants}
we recall the definition and some basic properties of multivariate cumulants. First we present a simple condition in terms of a tree type $\rho$-mixing decay of the cumulants  with respect to the standard Euclidean metric on $[N]^2$. Later, in Section \ref{sec kappa def}, we formulate weaker and more general conditions which we actually use for the proof of our results but their formulation
is quite involved, so for 
 the sake of clarity we first rather state  simpler but more restrictive assumptions.
 
  Consider $J=[N]$, $I=[N]^2$ equipped with the standard Euclidean distance modulo the Hermitian symmetry, i.e.,
for $\alpha, \beta\in I$ we set $d(\alpha, \beta) \defeq \min \{ \abs{\alpha-\beta}, \abs{\alpha^t-\beta}\}$ where $\alpha^t \defeq(b,a)$ for $\alpha = (a,b)$. This distance naturally extends to subsets of $I$, i.e., $d(A,B)=\min \set{d(\alpha,\beta)|\alpha\in A,\beta\in B}$ for any $A,B\subset I$.
\stepcounter{assumption}\stepcounter{assumption}
\begin{assCD}[\hypertarget{assumpCD}Polynomially decaying metric correlation structure]
For the $k=2$ point correlation we assume a decay of the type 
\begin{subequations}\label{metric tree decay CD}
\begin{align}\abs{\kappa(f_1(W),f_2(W))} \le \frac{C}{1+d(\supp f_1,\supp f_2)^s}\norm{f_1}_2\norm{f_2}_2, 
\label{cor decay}
\intertext{for some $s>12$ and all square integrable functions $f_1,f_2$ on $N\times N$ matrices. For $k\ge 3$ we assume a decay condition of the form}
\label{tree decay}\abs{\kappa(f_1(W),\dots,f_k(W))} \le_k \prod_{e\in E(T_{\text{min}})} \abs{\kappa(e)}, \end{align}
\end{subequations}
where $T_{\text{min}}$ is the minimal spanning tree in the complete graph on the vertices $1,\dots,k$ with respect to the edge length $d(\{i,j\})=d(\supp f_i,\supp f_j)$, i.e., the tree for which the sum of the lengths $d(e)$ is minimal, 
and $\kappa(\{i,j\})=\kappa(f_i,f_j)$. 
\end{assCD}

A correlation decay of type \eqref{tree decay} is typical for various statistical physics models, see, e.g.~\cite{MR0337229}. Besides the assumptions on the decay of correlations we also impose a \emph{flatness condition} to guarantee the stability of the Dyson equation:
\begin{assumption}[Flatness]\label{assumption flatness} 
There exist constants $0<c<C$ such that 
\[c\braket{T} \le \SS[T] \le C \braket{T} \] 
for any positive semi-definite matrix $T$.    
\end{assumption}
Flatness is a certain \emph{mean field} condition on the random matrix $W$. In particular, choosing $T$ to be
the diagonal matrix with a single nonzero entry in the $(i,i)$ element, flatness implies that the variances of the matrix elements $\E \abs{w_{ij}}^2$ are comparable for all $i,j=1,\dots,N$.

\subsection{Local law}
We now formulate our main theorem on the isotropic and averaged local laws. They 
 compare the resolvent $G$ with the (unique) solution to  
the MDE in \eqref{matrix dyson eq}  away from the spectral edges.  To specify the range of spectral parameters $z$ we define two spectral domains specified via any given parameters $\delta,\gamma>0$.
Outside of the self-consistent spectrum we will work on
\[\DD_{\text{out}}^\delta\defeq\Set{z\in\HC|\abs{z}\le N^{C_0}, \dist(z,\supp\mu)\ge N^{-\delta}}\]
for some arbitrary fixed $C_0\ge100$. Under Assumption \ref{assumption flatness}, which guarantees the existence of a density $\rho$, we consider the spectral domains 
\[ \DD_\gamma^\delta  \defeq \Set{z\in\HC | \abs{z}\le N^{C_0},\,\Im z\ge N^{-1+\gamma},\,\rho(\Re z)+\dist(\Re z,\supp \mu)\ge N^{-\delta}} \]
that will be used  away from the edges of the self-consistent spectrum.
\begin{theorem}[Local law outside of the spectrum and global law]\label{isotropic local law away}
Under Assumptions \ref{assumption A}, \ref{assumption high moments} and \hyperlink{assumpCD}{(CD)}, the following statements hold: For any $\epsilon>0$ there exists $\delta>0$ such that for all $D>0$ we have the isotropic law away from the spectrum,
\begin{subequations}
\begin{align}\label{iso outside}
\P \left(\abs{\braket{\vx,(G-M)\vy}}\le \norm{\vx}\norm{\vy}\frac{N^\epsilon}{\braket{z}^2\sqrt{N}}\quad\text{in}\quad\Dout^\delta \right) \ge 1 - C N^{-D}
\end{align}
for all deterministic vectors $\vx,\vy\in \C^N$ and we have the averaged law away from the spectrum,
\begin{align}\label{av outside}
\P \left(\abs{\braket{B(G-M)}}\le \norm{B} \frac{N^\epsilon}{\braket{z}^2N}\quad\text{in}\quad\Dout^\delta \right) \ge 1 - C N^{-D}
\end{align}
\end{subequations}
for all deterministic matrices $B\in\C^{N\times N}$. In fact, for small $\epsilon$, $\delta$ can be chosen such that $\delta=c\epsilon$ for some absolute constant $c>0$. Here $G=G(z)$, $M=M(z)$ and $C=C(D,\epsilon)$ is some constant, depending only on its arguments and the constants in Assumptions \ref{assumption A}--\hyperlink{assumpCD}{(CD)}. Moreover, instead of Assumption \hyperlink{assumpCD}{(CD)} it is sufficient to assume the more general Assumptions \ref{assumption correlations} (or \ref{assumption correlations}' for complex Hermitian matrices) and \ref{assumption neighbourhood decay}, as stated in Section \ref{sec kappa def}.
\end{theorem}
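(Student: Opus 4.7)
The plan is to follow the strategy laid out in the introduction: transfer the problem from $G-M$ to the error matrix
\[ D = \frac{W}{\sqrt N}G + \SS[G]G \]
via the stability of the MDE, and bound linear functionals of $D$ through a high-moment computation based on a multivariate cumulant expansion. Subtracting the MDE \eqref{matrix dyson eq} for $M$ from the defining identity $1+(z-A+\SS[G])G=D$ yields a linear equation for $G-M$ of the schematic form $\mathcal{L}(G-M)=-MD+M\SS[G-M](G-M)$, with stability operator $\mathcal{L}=1-M\SS[\cdot]M$. On $\Dout^\delta$ the operator $\mathcal{L}$ is uniformly invertible in the relevant induced isotropic and averaged norms by the MDE stability theory of \cite{1604.08188}, so both \eqref{iso outside} and \eqref{av outside} follow from the high-probability bounds
\[ \abs{\braket{\vx,D\vy}}\leq \norm{\vx}\norm{\vy}\frac{N^\epsilon}{\sqrt N},\qquad \abs{\braket{BD}}\leq \norm{B}\frac{N^\epsilon}{N}, \]
combined with a standard continuity argument in $\Im z$ initialised at large $\Im z$, where $\norm{G}\leq 1/\Im z$ is trivial, and continued down into $\Dout^\delta$ absorbing the quadratic error $M\SS[G-M](G-M)$.

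For a linear functional $\Lambda$ of $D$ (either $\Lambda(D)=\braket{\vx,D\vy}$ or $\Lambda(D)=\braket{BD}$) I would estimate $\E\abs{\Lambda(D)}^{2p}$ for a large integer $p$ and apply Markov's inequality; the tail bound $N^{-D}$ is secured by choosing $p$ large enough in terms of $D$ and $\epsilon$. The starting point is
\[ \E\abs{\Lambda(D)}^{2p}=\E\,\Lambda\Bigl(\tfrac{W}{\sqrt N}G+\SS[G]G\Bigr)\,\Lambda(D)^{p-1}\overline{\Lambda(D)}^{\,p}, \]
into which I would expand the $w_\alpha$ factor in the first term via the novel multivariate pre-cumulant decoupling identity Lemma~\ref{pre cum lemma}. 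Using $\partial_{w_\alpha}G=-G\Delta^{\alpha}G/\sqrt N$, the first-order cumulant contribution reproduces exactly $-\SS[G]G$ and cancels the counter-term $\SS[G]G$ already present in $D$: this is the \emph{self-energy renormalization} highlighted in the introduction, and it is what makes the remaining expansion effective. What survives are cumulant terms of order $\geq 2$, derivatives that hit the moment factor $\Lambda(D)^{p-1}\overline{\Lambda(D)}^{\,p}$ (each carrying an additional $1/\sqrt N$ from the chain rule), and a truncation remainder whose smallness is guaranteed by Assumption~\ref{assumption high moments}.

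The main technical obstacle is to organise the resulting combinatorial sum so that the slow polynomial correlation decay (CD) still suffices to perform all index summations. I would use the diagrammatic language of Sections~\ref{cancellation ids sec}--\ref{iso bound sec}: each surviving term is represented by a Feynman graph whose edges carry the cumulant weights $\kappa(w_{\alpha_1},\dots,w_{\alpha_k})$, decoupled via \eqref{tree decay} into a product over the minimal spanning tree of two-point cumulants, each summable over the two-dimensional label set $I$ thanks to the exponent $s>12$ in \eqref{cor decay}; vertices carry resolvent entries $G_{\cdot\cdot}$ and their derivatives, controlled by the a priori bound on $\norm{G}$. Careful power counting then shows that every free vertex summation is compensated by an $N^{-1/2}$ from the normalization of $W$, and every closed cycle in the averaged setting gains a further $N^{-1/2}$ through the trace --- precisely producing the $1/\sqrt N$ gain in the isotropic bound and the $1/N$ gain in the averaged bound. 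Extending the cumulant expansion to an order that grows with $p$ then yields $\E\abs{\Lambda(D)}^{2p}\leq N^{2p\epsilon}\norm{\Lambda}^{2p}N^{-p}$ for the appropriate $\norm{\Lambda}$.

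Finally, Markov's inequality converts these moment bounds into pointwise-in-$z$ high-probability bounds, and a standard grid argument on $\Dout^\delta$ based on the Lipschitz estimate $\norm{G(z)-G(z')}\leq\norm{G(z)}\norm{G(z')}\abs{z-z'}$ and the polynomial cut-off $\abs{z}\leq N^{C_0}$ promotes them to the uniform bounds claimed in \eqref{iso outside}--\eqref{av outside}, the small loss in $\epsilon$ being absorbed by the choice $\delta=c\epsilon$.
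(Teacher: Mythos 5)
Your high-level strategy matches the paper's: reduce to bounding $D$ via stability of the MDE, estimate high moments of $\Lambda(D)$ by a multivariate cumulant expansion whose leading term is cancelled by the $\SS[G]G$ counter-term, organize the surviving terms diagrammatically, and feed the resulting bound into a bootstrap/continuity argument. The self-energy renormalization and the role of Lemma~\ref{pre cum lemma} are correctly identified.

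However, there is a genuine gap in the passage from the restricted domain $\eta \gtrsim N^{-1+\gamma}$ to all of $\Dout^\delta$, including $\eta\to 0$. Your concluding grid argument invokes the Lipschitz estimate $\norm{G(z)-G(z')}\le\norm{G(z)}\norm{G(z')}\abs{z-z'}$, but a priori $\norm{G}\le 1/\Im z$, which blows up as $\Im z\to 0$, and the moment bound of Theorem~\ref{theorem step} itself is stated in terms of $\norm{\Im G}_q/(N\Im z)$ and powers of $\norm{G}_q$ — quantities that are not controlled near the real axis unless one \emph{already knows} $H$ has no spectrum there. The bootstrap of Proposition~\ref{proposition step} only reaches $\eta\ge N^{-1+\gamma}$, and the estimate it delivers there, equation~\eqref{outside first bound}, still carries a $1/(N\eta)$ term that dominates the claimed $N^{-1/2}\braket{z}^{-2}$ rate when $\eta$ is small. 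The paper resolves this circularity in two dedicated steps you do not mention: Step~4 uses the partial result in $\DD_\gamma^\delta\cap\Dout^\delta$ to prove that with overwhelming probability $\Spec H\subset(-N^{-\delta},N^{-\delta})+\supp\mu$ (Corollary~\ref{cor no eigenvalues outside}, obtained by noting an eigenvalue at distance $N^{-\delta}$ would force $\abs{\braket{\Im G}}\ge 1/(N\eta)$, contradicting \eqref{outside first bound}); and Step~5 then uses this to obtain the improved a priori inputs $\norm{G}\prec\braket{z}^{-1}$, $\norm{\Im G}\prec\eta\braket{z}^{-2}$ in all of $\Dout^\delta$, which make Theorem~\ref{theorem step} produce the optimal $N^{-1/2}$ rate and make the Lipschitz constant order one, so the bound extends to $\eta=0$. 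Without this intermediary, the claimed uniform bound on $\Dout^\delta$ does not follow from what you have written.

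A smaller point: the comparison with \eqref{tree decay} via a minimal-spanning-tree decoupling is how Assumption~(CD) is shown to \emph{imply} Assumptions~\ref{assumption correlations}--\ref{assumption neighbourhood decay}, but the actual expansion in Section~\ref{section step} is organized around the neighbourhoods $\NN_k(\alpha)$ and the pigeon-hole splitting inside/outside of \eqref{two groups pre cum}; the raw spanning-tree estimate does not directly survive after the derivatives hit $G$, which depends on all the variables. Your power-counting description (``every free vertex compensated by $N^{-1/2}$'') also omits the Ward-identity mechanism, which gives the $1/(N\eta)$ gain — the essential source of smallness in the bulk and, after Step~4, the source of the $N^{-1/2}$ gain outside the spectrum. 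These are more matters of precision than of substance, but the absence-of-eigenvalues step is a real missing ingredient.
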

If we additionally assume flatness in the form of Assumption \ref{assumption flatness}, then we also obtain an optimal local law away from the spectral edges, especially in the bulk,
\begin{theorem}[Local law in the bulk of the spectrum]\label{isotropic local law}
Under Assumptions \ref{assumption A}, \ref{assumption high moments}, \hyperlink{assumpCD}{(CD)} and \ref{assumption flatness}, the following statements hold: For any $\gamma,\epsilon>0$ there exists $\delta>0$ such that for all $D>0$ we have the isotropic law in the bulk,
\begin{subequations}
\begin{align}\label{iso bulk}
\P \left(\abs{\braket{\vx,(G-M)\vy}}\le \norm{\vx}\norm{\vy}\frac{N^\epsilon}{\sqrt{N\Im z}}\inD{\gamma}\right) \ge 1 - C N^{-D}
\end{align}
for all deterministic vectors $\vx,\vy\in \C^N$ and we have the averaged law in the bulk,
\begin{align}\label{av bulk}
\P \left(\abs{\braket{B(G-M)}}\le \norm{B}\frac{N^\epsilon}{N\Im z}\inD{\gamma}\right) \ge 1 - C N^{-D}
\end{align}
\end{subequations}
for all deterministic matrices $B\in\C^{N\times N}$. In fact, $\delta$ can be chosen such that $\delta=c\min\{\epsilon,\gamma\}$ for some absolute constant $c>0$. Here $C=C(D,\epsilon,\gamma)$ is some constant, depending only on its arguments and the constants in Assumptions \ref{assumption A}--\ref{assumption flatness}. Moreover, as in the previous theorem, instead of Assumption \hyperlink{assumpCD}{(CD)} it is sufficient to assume the more general Assumptions \ref{assumption correlations} (or \ref{assumption correlations}' for complex Hermitian matrices) and \ref{assumption neighbourhood decay}, as stated in Section \ref{sec kappa def}.
\end{theorem}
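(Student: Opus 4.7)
The plan is to establish high-moment bounds on linear functionals $\Lambda(D)$ of the error matrix $D = \frac{1}{\sqrt N}WG + \SS[G]G$ from \eqref{eq D def}, then invoke the stability theory for the MDE from \cite{1604.08188} to transfer these into bounds on $G-M$. Since the a priori control on $G$ needed for the moment estimate only holds with improved precision after the local law itself, I would run a bootstrap in the spectral parameter on a fine $z$-net inside $\DD_\gamma^\delta$: initialize at large $\Im z$ (where a perturbative estimate or Theorem \ref{isotropic local law away} applies), and propagate down to $\Im z\ge N^{-1+\gamma}$ using Lipschitz continuity of $G$ and $M$ in $z$.

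The core estimate is a bound on $\E\abs{\Lambda(D)}^{2p}$ for arbitrarily large $p$, where $\Lambda$ is either $V\mapsto\braket{\vx,V\vy}$ (isotropic) or $V\mapsto\braket{BV}$ (averaged). Expanding $\braket{\vx,D\vy} = \frac{1}{\sqrt N}\sum_\alpha w_\alpha\braket{\vx,\Delta^\alpha G\vy} + \braket{\vx,\SS[G]G\vy}$ and applying the multivariate cumulant expansion of Section \ref{sec cum exp} to each factor of $w_\alpha$ produces three kinds of contributions: (i) the second-order cumulant with its derivative on the $G$ factor, which by the definition of $\SS$ exactly cancels the compensating term $\braket{\vx,\SS[G]G\vy}$; (ii) derivatives hitting other factors of $\Lambda(D)$ or its conjugate in the high-moment product, which yield recursive contributions absorbed into the bootstrap via power counting; (iii) higher-order cumulants, whose size is controlled by the correlation decay Assumption~\hyperlink{assumpCD}{(CD)} combined with the Ward identity $\sum_b\abs{G_{ab}}^2=(\Im G)_{aa}/\Im z$ used to bound resolvent chains.

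To organize the proliferation of terms I would use the diagrammatic language developed in Sections \ref{cancellation ids sec}--\ref{iso bound sec}: each contribution corresponds to a graph whose vertices carry cumulants and whose edges carry resolvent matrix elements or deterministic vectors, with value bounded by the product of edge factors (controlled via Ward) and cumulant factors (controlled via (CD)). The flatness Assumption \ref{assumption flatness} enters when applying the MDE stability theorem in the bulk, since it guarantees a lower bound on $\rho$ on the interior of the self-consistent spectrum and hence invertibility of the stability operator with a quantitative estimate depending on $\rho(\Re z)+\dist(\Re z,\supp\mu)$, which is exactly the quantity used to define $\DD_\gamma^\delta$. The target bounds $\abs{\braket{\vx,D\vy}}\prec \norm{\vx}\norm{\vy}/\sqrt{N\Im z}$ and $\abs{\braket{BD}}\prec\norm{B}/(N\Im z)$ then yield \eqref{iso bulk} and \eqref{av bulk} via MDE stability.

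The main obstacle is the slow correlation decay: unlike the fast-decay setting of \cite{1604.08188}, one cannot truncate the expansion to near-diagonal configurations and treat everything else as negligible. The cumulants $\kappa(w_\alpha,w_{\alpha_1},\ldots,w_{\alpha_k})$ remain significant for index tuples spread over macroscopic scales, so the number of relevant graph topologies is large and pairwise-distance bounds alone do not close the estimates. Controlling them requires (a) the neighbourhood formalism from Section \ref{sec kappa def}, which measures correlation strength globally in terms of sets rather than pairs of indices, and (b) extending the self-energy cancellation beyond leading order, so that every resolvent chain produced by the expansion stays within its Ward budget. Once the bookkeeping is correctly set up, the remaining estimates reduce to power counting, and the resulting bounds on $D$ plug directly into the MDE stability machinery to yield Theorem \ref{isotropic local law}.
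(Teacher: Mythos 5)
Your proposal follows essentially the same approach as the paper: high-moment bounds on $\Lambda(D)$ via the multivariate cumulant expansion with diagrammatic bookkeeping and Ward-identity gains (Theorem \ref{theorem step}), combined with MDE stability and a bootstrap in $\Im z$ down to the local scale (Proposition \ref{proposition step} and the five-step proof in Section \ref{sec local law proof}). The one technical ingredient you leave implicit is that the stability has to be formulated in the custom isotropic norm $\norm{\cdot}_\ast^{K,\vx,\vy}$ of Theorem \ref{thr:Stability} so it is compatible with the entrywise high-moment control on $D$, but this is a refinement of, not a departure from, the strategy you describe.
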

Note that both theorems cover the regime where $z$ is far away from the spectrum; in this case the estimates in 
Theorem~\ref{isotropic local law away} are stronger and require less conditions. Theorem~\ref{isotropic local law} is really relevant
when $\Re z$ is inside the bulk of the spectrum and $\Im z$ is very small; this is why we called it local law in the bulk.
In the literature this regime is typically characterized by $\rho(\Re z)\ge \delta$ for some $\delta>0$, but in Theorem \ref{isotropic local law} it is extended to $\rho(\Re z)\ge N^{-\delta}$
for some sufficiently small $\delta>0$.

\subsection{Delocalization, rigidity and universality}
The local law is the main input for eigenvector delocalization, eigenvalue rigidity and universality, as stated below. We formulate them as corollaries
since they follow from a general theory that has been developed recently.
We explain how to adapt the general arguments to prove these corollaries in Sections \ref{sec local law proof} and \ref{section deloc rig univ}.
\begin{corollary}[No eigenvalues outside the support of the self-consistent density]\label{cor no eigenvalues outside}
Under the assumptions of Theorem \ref{isotropic local law away} there exists a $\delta>0$ such that for any $D>0$,
\[\P\left(\Spec H \not\subset (-N^{-\delta},N^{-\delta})+\supp \mu \right)\le_{D} N^{-D},\]
where $\supp \mu\subset \R$ is the support of the self-consistent density of states $\mu$.
\end{corollary}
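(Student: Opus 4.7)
The plan is to apply the averaged local law \eqref{av outside} at a spectral parameter $z=\lambda+\ii\eta$ sitting just above a hypothetical eigenvalue $\lambda$ of $H$ with $\dist(\lambda,\supp\mu)\ge N^{-\delta}$, and derive a contradiction between the near-pole of $\braket{G(z)}$ at $\lambda$ and the smallness of $\Im\braket{M(z)}$ away from $\supp\mu$. First, a high-moment Markov bound using Assumption \ref{assumption high moments} yields $\P(\norm{H}\ge N^{C_0}/2)\le_D N^{-D}$ for every $D>0$, confining $\Spec H$ to $[-N^{C_0}/2,N^{C_0}/2]$ on an event $\Omega'_D$. Fix $\epsilon>0$ small; Theorem \ref{isotropic local law away} provides a $\delta=c\epsilon>0$ and an event $\Omega_D$ with $\P(\Omega_D)\ge 1-N^{-D}$ on which the bound
\[
\abs{\braket{G(z)-M(z)}}\le \frac{N^\epsilon}{\braket{z}^2 N}\le N^{\epsilon-1}
\]
holds simultaneously for every $z\in\Dout^\delta$. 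We work on $\Omega_D\cap\Omega'_D$.

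Now assume for contradiction that $H$ has an eigenvalue $\lambda$ with $\dist(\lambda,\supp\mu)\ge N^{-\delta}$. Set $\eta\defeq N^{-D-10}$ and $z\defeq\lambda+\ii\eta$. Then $|z|\le N^{C_0}$ and $\dist(z,\supp\mu)\ge\dist(\lambda,\supp\mu)\ge N^{-\delta}$, so $z\in\Dout^\delta$ and the local law applies at this (random) point. The spectral decomposition of $G(z)$ in an orthonormal eigenbasis of $H$ gives
\[
\Im\braket{G(z)}=\frac{1}{N}\sum_{k=1}^{N}\frac{\eta}{(\lambda_k-\lambda)^2+\eta^2}\ge \frac{1}{N\eta}=N^{D+9},
\]
where we kept only the term corresponding to $\lambda_k=\lambda$.

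On the other hand, $\braket{M(z)}$ is the Stieltjes transform of the probability measure $\mu$ (the property $\mu(\R)=1$ follows from $\lim_{y\to\infty}(-\ii y)\braket{M(\ii y)}=1$, a standard consequence of \eqref{matrix dyson eq}). Since $(x-\lambda)^2+\eta^2\ge \dist(z,\supp\mu)^2\ge N^{-2\delta}$ for all $x\in\supp\mu$, we obtain
\[
\Im\braket{M(z)}=\int_\R\frac{\eta\,d\mu(x)}{(x-\lambda)^2+\eta^2}\le \eta\,N^{2\delta}\le N^{-D-8}.
\]
Combining these two estimates with the local law at $z$ yields
\[
N^{D+9}\le \Im\braket{G(z)}\le \Im\braket{M(z)}+\abs{\braket{G(z)-M(z)}}\le N^{-D-8}+N^{\epsilon-1},
\]
a contradiction for all sufficiently large $N$, once $\epsilon<1$. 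Hence on $\Omega_D\cap\Omega'_D$ no such $\lambda$ exists, which gives the claim with the stated probability.

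The argument is essentially routine once the uniform averaged local law \eqref{av outside} is in hand; no genuine obstacle arises. The only point worth flagging is that the spectral parameter $z=\lambda+\ii\eta$ is random (it depends on the eigenvalue), but this causes no difficulty because the local law is valid for every $z\in\Dout^\delta$ simultaneously on $\Omega_D$, so no net/grid argument over spectral parameters is needed. The choice $\eta=N^{-D-10}$ is tailored so that the Poisson-kernel mass of $\mu$ at $\lambda$ is negligible ($\Im\braket{M(z)}\ll 1$) while $1/(N\eta)$ dwarfs the local-law error $N^{\epsilon-1}$.
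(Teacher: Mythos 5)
Your contradiction mechanism — spectral decomposition at a hypothetical outlier eigenvalue gives $\Im\braket{G(\lambda+\ii\eta)}\ge 1/(N\eta)$, while the Poisson-kernel bound forces $\Im\braket{M}\le\eta\,\dist(z,\supp\mu)^{-2}$, so any averaged local law estimate on $\braket{G-M}$ produces a contradiction — is exactly the right idea, and your handling of the random spectral point $z=\lambda+\ii\eta$ via the uniform-in-$z$ probability event is correct. The gap is circularity: you invoke \eqref{av outside} at $\eta=N^{-D-10}$, far below the scale $\eta\ge N^{-1+\gamma}$. In the paper the theorem and the corollary are established together in Section \ref{sec local law proof}: Steps 1--3 produce only the restricted bound \eqref{outside first bound}, valid in $\Dout^\delta\cap\DD_\gamma^\delta$; Step 4 derives Corollary \ref{cor no eigenvalues outside} from \emph{that}; and Step 5 then \emph{uses} the corollary to show $\norm{\Im G}\prec\eta\braket{z}^{-2}$, $\norm{G}\prec\braket{z}^{-1}$ and the order-one Lipschitz continuity of $G$ near the real axis, which is what lets one extend \eqref{iso outside}--\eqref{av outside} from $\eta\ge N^{-1+\gamma}$ down to $\eta\to 0^+$. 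For $\eta\ll 1/N$ the validity of \eqref{av outside} is in fact \emph{equivalent} to the corollary — your own estimate shows a stray eigenvalue would make $\Im\braket{G}\ge 1/(N\eta)$ blow up and destroy \eqref{av outside} — so you are implicitly assuming what you want to prove.

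The fix is to use the intermediate averaged bound \eqref{outside first bound} (with $\norm{\Im M}\le\eta\,\dist(z,\supp\mu)^{-2}$ already substituted), which only holds for $\eta\ge N^{-1+\gamma}$ but is available before the corollary. Pick $\eta$ in that window small enough that $1/(N\eta)$ dominates both $\eta N^{2\delta}$ and the high-moment error, and close the argument by a Markov/Chebyshev inequality on $\E\abs{\braket{G-M}}^p$ (so the dependence of the threshold $p$ on the target exponent $D$ is visible), as the paper does in Step 4. All the other ingredients in your write-up carry over unchanged.
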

\begin{corollary}[Bulk delocalization]\label{cor delocalization}
Under the assumptions of Theorem \ref{isotropic local law} it holds for an $\ell^2$-normalized eigenvector $\bm u$ corresponding to a bulk eigenvalue $\lambda$ of $H$ that
\[\P\left( \max_{a\in J} \abs{u_a}\ge \frac{N^\epsilon}{\sqrt N},\, H\vu=\lambda\vu,\, \rho(\lambda)\ge \delta\right)\le_{\epsilon,\delta,D} N^{-D}\]
for any $\epsilon,\delta,D>0$.
\end{corollary}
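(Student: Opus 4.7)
The approach is the standard reduction of eigenvector delocalization to the isotropic local law via the spectral representation of the resolvent. For a normalized eigenvector $\vu$ with $H\vu=\lambda\vu$, the spectral decomposition gives, for any $a\in J$ and any $\eta>0$, the pointwise inequality
\[\abs{u_a}^2 \le \eta\,\Im G_{aa}(\lambda+\ii\eta) \;=\; \eta \sum_j \frac{\eta\abs{u_j(a)}^2}{(\lambda_j-\lambda)^2+\eta^2},\]
which converts the delocalization claim into an upper bound on the diagonal resolvent entries at the smallest imaginary part for which the local law is available.

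Fix a small $\gamma>0$ to be optimized, set $\eta\defeq N^{-1+\gamma}$, and pick $z\defeq \lambda+\ii\eta$. On the event that $\rho(\lambda)\ge \delta$, the point $z$ lies in a spectral domain of the form $\DD_\gamma^{\delta'}$ on which \eqref{iso bulk} applies with $\vx=\vy=e_a$, yielding
\[\abs{G_{aa}(z)-M_{aa}(z)} \le \frac{N^{\epsilon/3}}{\sqrt{N\eta}} \;=\; N^{\epsilon/3-\gamma/2}\]
with overwhelming probability. Combining this with the a priori estimate $\abs{M_{aa}(z)}\le \norm{M(z)}\lesssim 1$ available in the bulk as a direct consequence of the flatness Assumption \ref{assumption flatness} and the MDE stability theory of \cite{1604.08188}, I would obtain
\[\abs{u_a}^2 \le \eta\bigl(\norm{M(z)} + N^{\epsilon/3-\gamma/2}\bigr) \;\lesssim\; N^{-1+\gamma} + N^{-1+\gamma/2+\epsilon/3} \;\le\; N^{-1+\epsilon}\]
for $\gamma$ chosen sufficiently small relative to $\epsilon$ (e.g.\ $\gamma=\epsilon/2$).

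Since $\lambda$ is random and the bound is required uniformly in $a$, I would apply the local law on a fine deterministic grid: choose an $N^{-C}$-net $\mathcal{G}$, for $C$ large, of the deterministic bulk set $\set{x\in\R|\rho(x)\ge \delta/2}$, then invoke \eqref{iso bulk} simultaneously at $(E+\ii\eta,\,e_a)$ for every $E\in\mathcal{G}$ and every $a\in J$, using a union bound to absorb the $\landauO{N^{C+1}}$ failure probabilities. For any realization of a bulk eigenvalue with $\rho(\lambda)\ge \delta$ one locates $E\in\mathcal{G}$ with $\abs{\lambda-E}\le N^{-C}$, and because $\norm{G(z)}^2\le \eta^{-2}=N^{2-2\gamma}$ controls the Lipschitz constant of $G$, the pointwise bound at $E+\ii\eta$ transfers to $\lambda+\ii\eta$ with negligible additive error. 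Maximizing over $a$ then gives the claimed delocalization.

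The main obstacle I foresee is purely bookkeeping: ensuring that the random bulk condition $\rho(\lambda)\ge \delta$ on the eigenvalue can be safely replaced by a deterministic one at the grid level. This is handled by enlarging the bulk region (passing from $\delta$ to $\delta/2$) and combining with Corollary \ref{cor no eigenvalues outside} to discard the exceptional event that an eigenvalue strays far from $\supp\mu$. The remaining ingredients, namely the operator-norm bound on $M(z)$ in the bulk and the Lipschitz estimate on the resolvent, are standard consequences of the MDE analysis and the definition of $G$, so no new technical input beyond Theorems \ref{isotropic local law away} and \ref{isotropic local law} is required.
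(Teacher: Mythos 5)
Your proposal is correct and follows essentially the same route as the paper: bound $\abs{u_a}^2\le\eta\,\Im G_{aa}(\lambda+\ii\eta)$ via the spectral decomposition, then bound $\Im G_{aa}\lesssim 1$ at $\eta=N^{-1+\gamma}$ from the isotropic local law and the boundedness of $M$ in the bulk. The paper states this argument more tersely and leaves the grid/union-bound treatment of the random $\lambda$ implicit, which you have usefully spelled out.
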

\begin{corollary}[Bulk rigidity]\label{cor rigidity}
Under the assumptions of Theorem \ref{isotropic local law} the following holds. Let $\lambda_1\le \dots\le\lambda_N$ be the ordered eigenvalues of $H$ and denote the classical position of the eigenvalue close to energy $E\in \R$ by \[k(E)\defeq \Big\lceil N\int_{-\infty}^E \rho(x)\diff x\Big\rceil,\]
where $\lceil\cdot \rceil$ denotes the ceiling function. It then holds that
\[\P\left( \sup \Set{\abs{\lambda_{k(E)} - E } | E\in\R,\, \rho(E)\ge \delta}\ge \frac{N^\epsilon}{N} \right)\le_{\epsilon,\delta,D} N^{-D}\]
for any $\epsilon,\delta,D>0$.
\end{corollary}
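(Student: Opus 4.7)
The plan is to deduce Corollary \ref{cor rigidity} from the averaged local law \eqref{av bulk} by the standard Helffer--Sjöstrand argument, with Corollary \ref{cor no eigenvalues outside} handling eigenvalues lying outside the self-consistent spectrum. The key observation is that $\pi^{-1}\Im\braket{G(x+\ii\eta)}$ is precisely the Poisson-$\eta$-smoothing of the empirical spectral measure $N^{-1}\sum_i\delta_{\lambda_i}$, so \eqref{av bulk} with $B=1$ controls the difference between the smoothed empirical density and the smoothed self-consistent density $\rho$ with error $N^{\epsilon'}/(N\eta)$ on any scale $\eta\ge N^{-1+\gamma}$.

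Concretely, I would introduce the empirical and self-consistent counting functions
\[
\mathfrak{n}_N(E)\defeq\frac{1}{N}\#\{i:\lambda_i\le E\},\qquad \mathfrak{n}(E)\defeq \int_{-\infty}^E\rho(x)\diff x,
\]
and fix a bulk energy $E$ with $\rho(E)\ge\delta$. Applying the Helffer--Sjöstrand formula to a smooth regularization of $\mathbf{1}_{(-\infty,E]}$ at scale $\eta=N^{-1+\epsilon/2}$ and integrating against $\braket{G-M}$ converts \eqref{av bulk} into the counting estimate $\abs{\mathfrak{n}_N(E)-\mathfrak{n}(E)}\le N^{\epsilon''}/N$ with overwhelming probability. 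The contribution from spectral parameters not covered by $\DD_\gamma^\delta$ is absorbed by Corollary \ref{cor no eigenvalues outside}, which confines all eigenvalues to an $N^{-\delta}$-neighbourhood of $\supp\mu$, while the Hölder regularity of $\rho$ established in \cite{1604.08188} controls the self-consistent side.

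Since $\mathfrak{n}'=\rho\ge\delta$ on a bulk interval around $E$, this counting bound inverts directly to $\abs{\lambda_{k(E)}-E}\le N^\epsilon/N$, with the $\delta$-dependence absorbed into the implicit constant of $\le_{\epsilon,\delta,D}$. To upgrade the pointwise statement to the supremum over all bulk $E$, I take a grid of spacing $N^{-10}$ in the bulk, apply a union bound for the pointwise estimate on the grid, and extend to all $E$ using monotonicity of $\mathfrak{n}_N$ together with the continuity of $\mathfrak{n}$. The main delicate point is the bookkeeping of the small exponents: one must choose the Helffer--Sjöstrand scale $\eta$, the local-law exponent appearing in \eqref{av bulk}, and the threshold $\gamma$ consistently so that all error contributions combine into the single prescribed $N^\epsilon/N$, but beyond this the argument is essentially identical to its counterpart in \cite{1604.08188}.
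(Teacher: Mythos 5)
Your proposal is essentially the argument that the paper delegates to its citations: the paper's proof of Corollary~\ref{cor rigidity} is a one-liner pointing to \cite[Corollary~2.9]{1604.08188} and the general black-box rigidity lemma \cite[Lemma~5.1]{MR3719056}, and the Helffer--Sj\"ostrand counting argument you sketch is precisely what sits inside those references. In that sense your route is the same as the paper's, just unwound.

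There is, however, one point where your sketch is too optimistic. You propose to close the argument using only the averaged law in the bulk \eqref{av bulk} together with Corollary~\ref{cor no eigenvalues outside}, saying that the latter ``absorbs'' the spectral parameters not covered by $\DD_\gamma^\delta$. But the region excluded from $\DD_\gamma^\delta$ also contains $z$ with $\Re z$ \emph{inside} $\supp\mu$ but near the edge, where $\rho(\Re z)<N^{-\delta}$. Corollary~\ref{cor no eigenvalues outside} says nothing about eigenvalues in this thin edge layer, and \eqref{av bulk} does not hold there; yet the Helffer--Sj\"ostrand integral for $\mathbf 1_{(-\infty,E]}$, anchored far to the left of $\supp\mu$, necessarily passes through this layer. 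This is exactly why the paper's proof explicitly lists ``the improved local law away from the spectrum'' (i.e.\ \eqref{av outside}, which gives the $N^\epsilon/(\braket{z}^2 N)$ bound on $\Dout^\delta$) as the key analytic input: it is used to anchor the counting function and to dominate the contribution of the edge/outside region in the Helffer--Sj\"ostrand integral, in combination with the square-root vanishing of $\rho$ at the edges to render the leftover edge region negligible. Your argument is salvageable by replacing the invocation of Corollary~\ref{cor no eigenvalues outside} with an explicit appeal to \eqref{av outside} (from which Corollary~\ref{cor no eigenvalues outside} is itself derived), which is what \cite[Lemma~5.1]{MR3719056} effectively does.
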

For proving the bulk universality we replace the lower bound from Assumption \ref{assumption flatness} by the following, stronger, assumption:
\begin{assumption}[Fullness]\label{assumption fullness}
There exists a constant $\lambda>0$ such that 
\[\E \abs{\Tr B W}^2 \ge \lambda \Tr B^2 \]
for any deterministic matrix $B$ of the same symmetry class as $H$.
\end{assumption}
Fullness is a technical condition which ensures that the covariance matrix of W is bounded from below by that of a full GUE or GOE matrix with variance $\lambda$. Note this
is the only condition that induces the difference between the complex Hermitian and real symmetric
symmetry classes in the following universality statement.

\begin{corollary}[Bulk universality]\label{cor universality}
Under the assumptions of Theorem \ref{isotropic local law} and additionally Assumption \ref{assumption fullness} the following holds: Let $k\in \N$, $\delta>0,E\in \R$ with $\rho(E)\ge \delta$ and let $\Phi\colon \R^k\to\R$ be a compactly supported smooth test function. Denote the $k$-point correlation function of the eigenvalues of $H$ by $\rho_k$ and denote the corresponding $k$-point correlation function of the GOE/GUE-point process by $\Upsilon_k$. Then there exists a positive constant $c=c(\delta,k)>0$ such that 
\begin{align*}
&\abs{\int_{\R^k} \Phi(\bm t) \left[ \frac{1}{\rho(E)}\rho_k\Big(E\mathbf 1+\frac{\bm t}{N\rho(E)}\Big)-\Upsilon_k(\bm t) \right]\diff\bm t}\le_{\Phi,\delta,k} N^{-c}, \\
&\abs{\E \Phi\Big( \big(N \rho(\lambda_{k(E)})[\lambda_{k(E)+j}-\lambda_{k(E)}]\big)_{j=1}^k \Big)-\E_{\text{GOE/GUE}}\Phi\Big( \big(N \rho_{sc}(0)[\lambda_{\lceil N/2\rceil+j}-\lambda_{\lceil N/2\rceil}]\big)_{j=1}^k \Big) }\le_{\Phi,\delta,k} N^{-c},
\end{align*}
where $\textbf 1$ is the vector of $k$ ones, $\textbf 1=(1,\dots,1)$, the expectation $\E_{\text{GOE/GUE}}$ is taken with respect to the Gaussian matrix ensemble in the same symmetry class as $H$, and $\rho_{sc}$ denotes the semicircular density.
\end{corollary}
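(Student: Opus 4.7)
The plan is to apply the standard three-step strategy for bulk universality, using the local law established in Theorem \ref{isotropic local law} (together with the rigidity in Corollary \ref{cor rigidity}) as the model-dependent input. Since both the fast relaxation of Dyson Brownian motion and the Gaussian-divisible comparison have been abstracted as black-box tools in \cite{MR3687212,MR3729630,landon2016fixed}, the core task is to verify their hypotheses for the present correlated, non-centered ensemble; the role of Assumption \ref{assumption fullness} is precisely to make this verification possible.

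\emph{Step 1 (Gaussian extraction via fullness).} Assumption \ref{assumption fullness} asserts that the covariance operator of $W$, viewed as a positive semidefinite operator on the space of Hermitian/symmetric matrices, dominates $\lambda$ times the identity. Consequently, for any $s>0$ small enough depending on $\lambda$, we may write in distribution $H \stackrel{d}{=} H_0 + \sqrt{s}\,U$, where $U$ is an independent GOE/GUE matrix of variance $1/N$ and $H_0 = A + W_0/\sqrt N$ is a matrix with expectation $A$ and a centered noise $W_0$ whose covariance equals that of $W$ minus $s\lambda$ times the identity on the covariance space. Since subtracting an independent Gaussian leaves all cumulants of order $\geq 3$ unchanged, and reduces the variances by at most $s\lambda$, the matrix $H_0$ satisfies Assumptions \ref{assumption A}, \ref{assumption high moments}, \hyperlink{assumpCD}{(CD)} and \ref{assumption flatness} with only slightly adjusted constants, uniformly in $s$. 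Therefore Theorem \ref{isotropic local law} and Corollary \ref{cor rigidity} apply to $H_0$ with the same self-consistent density $\rho$ (up to an error that vanishes as $s\to 0$).

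\emph{Step 2 (Dyson Brownian motion relaxation).} Run the Ornstein–Uhlenbeck/matrix Brownian flow $u\mapsto H_u = H_0 + \sqrt{u}\,U$ for $u\in[0,s]$ with $s=N^{-1+\omega}$, $\omega>0$ a small constant. The eigenvalues then satisfy the DBM SDE, and the local-relaxation/fixed-energy-universality black box of \cite{MR3687212,landon2016fixed} takes as input precisely the optimal-scale bulk local law and rigidity for $H_0$, which we have just established. Applying this black box at the energy $E$ with $\rho(E)\geq\delta$, we conclude that for $s\geq N^{-1+\omega}$ the local $k$-point correlation functions and rescaled gap statistics of $H_s$ at the energy $E$ agree with those of the reference GOE/GUE (rescaled by the local density $\rho(E)$), up to an error of size $N^{-c}$ for some $c=c(\delta,k,\omega)>0$. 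Because the DBM argument is deterministic once rigidity is given, the presence of correlations and of the expectation matrix $A$ enters only through the local law.

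\emph{Step 3 (trivial comparison).} By construction $H \stackrel{d}{=} H_s$, so both statements in Corollary \ref{cor universality} for $H$ follow immediately from the corresponding statements for $H_s$ proved in Step 2. The equivalence between the $k$-point correlation function formulation and the ordered-gap formulation is the standard one, using Corollary \ref{cor rigidity} to align labels with energies.

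The principal obstacle is verifying the hypotheses of the DBM black box in Step 2 for the correlated model: the abstract results in \cite{MR3687212,landon2016fixed} are formulated under the assumption of an optimal-scale local law that holds uniformly along the flow $H_u$, and one must check that our Theorem \ref{isotropic local law} can be applied to $H_u$ for every $u\in[0,s]$ with constants independent of $u$. This reduces, via Step 1, to the stability of Assumptions \hyperlink{assumpCD}{(CD)} and \ref{assumption flatness} under adding an independent rescaled GOE/GUE, which is straightforward because Gaussian convolution leaves higher cumulants untouched and only shifts the covariance by a positive multiple of the identity; the upper and lower flatness constants therefore change by at most $O(s)$, and the correlation-decay bounds are unaffected.
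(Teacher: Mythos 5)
Your Step~1 contains a genuine gap that the rest of the argument relies on. You assert that because the covariance operator of $W$ dominates $\lambda$ times the identity (Assumption~\ref{assumption fullness}), one may write $H \stackrel{d}{=} H_0 + \sqrt{s}\,U$ with $U$ an independent GOE/GUE matrix and $H_0$ having the complementary covariance. But a lower bound on the covariance matrix does \emph{not} imply that the \emph{distribution} of $W$ is Gaussian divisible. A one-dimensional counterexample: a Rademacher random variable has variance $1 \ge 1/2$, yet it cannot be written as $X_0 + g$ with $g$ an independent centered Gaussian of variance $1/2$, since the Rademacher distribution is compactly supported but the putative convolution is not. The same obstruction applies entrywise to $W$, so the decomposition in Step~1 is false for generic ensembles satisfying our hypotheses. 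Since your Step~3 is only ``trivial'' because of this claimed identity in distribution, the entire argument collapses: you have, in effect, skipped the comparison step of the three-step strategy rather than carrying it out.

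What the paper actually does is run the Ornstein--Uhlenbeck process \emph{forward} from $H(0)=H$, producing $H(t)$ which genuinely acquires a Gaussian component for $t>0$ because the stochastic integral $\int_0^t e^{-(t-u)/2}\Sigma^{1/2}[\diff B(u)]$ is Gaussian. DBM relaxation is applied to $H(t)$ with $t\approx N^{-1+\omega}$ (this part of your outline is fine), and the essential remaining work is a \emph{comparison} showing that the local statistics of $H(0)$ and $H(t)$ are close. The paper does this by bounding $\abs{\E \frac{\diff}{\diff t} f(V(t))}\lesssim\sqrt N$ via a multivariate cumulant expansion (replacing the three-term Taylor expansion from~\cite{1604.08188} by an order-$2/\mu$ expansion to handle the larger $N^{1/2-\mu}$ neighbourhoods), which yields $\abs{\E f(V(t))-\E f(V(0))}\lesssim t\sqrt{N}$. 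With $t=N^{-1+\omega}$ this gives a vanishing error. Note that this comparison is nontrivial and is precisely what replaces your false decomposition; your proposal has no analogue of it. The other points you raise (uniformity of the local law along the flow, invariance of higher cumulants under adding a Gaussian, stability of the flatness constants) are correct observations, but they cannot compensate for the missing comparison argument.
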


\begin{remark}
We chose the standard Euclidean distance on $J$ in the formulation of Assumption \hyperlink{assumpCD}{(CD)} merely for convenience. In the context of \cite{1604.08188} a similar key assumption was formulated in terms of a pseudometric $\delta$ on $J$ which has sub-$P$ dimensional volume, i.e., 
\[\max_{a\in J}\abs{\Set{b\in J | \delta(a,b)\le\tau} }\le \tau^P\]
for all $\tau>1$ and some $P>0$. This pseudometric naturally extends to $I$ as a product metric modulo the symmetry, 
\[\delta_2((a,b),(c,d))\defeq\min\{ \max\{\delta(a,c),\delta(b,d)\},\max\{\delta(a,d),\delta(b,c)\} \}\]
and to any two subsets $A,B$ of $I$ as $\delta_2(A,B)\defeq\min\set{\delta_2(\alpha,\beta)|\alpha\in A,\beta\in B}$. 
All our results hold in this more general setup as well if $d$  is replaced by $\delta_2$
in Assumption \hyperlink{assumpCD}{(CD)} and we  require that $s>12P$. We do not pursue the pseudometric formulation further in the present work since the relaxed decay conditions formulated in Section \ref{sec kappa def} are  more general as they allow for further symmetries in the matrix, for which \hyperlink{assumpCD}{(CD)} is not satisfied irrespective of the pseudometric. 
A typical example for such an additional symmetry is the fourfold model (see \cite{MR3406427}).
\end{remark}

\subsection{Relaxed assumption on correlation decay}\label{sec kappa def}
\setcounter{assumption}{2}
We now state the more general conditions on the correlation structure which are actually used in the proof of Theorem \ref{isotropic local law} and its corollaries, and are implied by Assumption \hyperlink{assumpCD}{(CD)}. For the more general conditions we split the correlation into two regimes. In the short range regime we express the correlation decay as a condition on cumulants, 
while in the long range regime, beyond neighbourhoods of size $\sqrt N$, we impose a mixing condition. 

In the short range regime we assume the boundedness of certain norms on cumulants $\kappa(\alpha_1,\dots,\alpha_k)\defeq\kappa(w_{\alpha_1},\dots,w_{\alpha_k})$ of matrix entries $w_\alpha$, which are modifications of the usual $\ell^1$-summability  condition 
\[ \frac{1}{N^2} \sum_{\alpha_1,\dots,\alpha_k} \abs{\kappa(\alpha_1,\dots,\alpha_k)}<\infty.\]
\subsubsection*{Cumulant norms}\label{subsec cumulant norms}
In order to formulate the conditions on the cumulants concisely, we from now on assume that $W$ is real symmetric. We refer the reader to Appendix \ref{complex W} for the necessary modifications for the complex Hermitian case. In Appendix \ref{appendix cumulants} we will recall the equivalent analytical and combinatorial definitions of $\kappa$ for the reader's convenience (see also \cite{MR725217}). We note that $\kappa$ is invariant under any permutation of its arguments. Here we recall one central property of cumulants (which is also proved in the appendix): If $w_{\alpha_1},\dots,w_{\alpha_j}$ are independent from $w_{\alpha_{j+1}},\dots,w_{\alpha_k}$ for some $1\le j\le k-1$, then $\kappa(\alpha_1,\dots,\alpha_k)$ vanishes. Intuitively, the $k$-th order cumulant $\kappa(\alpha_1,\dots,\alpha_k)$ measures the part of the correlation of $w_{\alpha_1},\dots,w_{\alpha_k}$, which is truly of $k$-body type. For our results, cumulants of order four and higher require simple $\ell^1$-type bounds, while the second and third order cumulants are controlled in specific, somewhat stronger norms. Finiteness of these norms imply a decay of correlation in a certain combinatorial sense even without a distance on the index set $I$. The isotropic and the averaged bound on $D$ require slightly different norms, so we define two sets of norms distinguished by appropriate superscripts and we also define their sums without superscript. 

We first introduce some custom notations which keep the definition of the cumulant norms relatively compact. 
If, in place of an index $a\in J$, we write a dot ($\cdot$) in a scalar quantity then we consider the quantity as 
a vector indexed by the coordinate at the place of the dot.
 For example $\kappa(a_1\cdot,a_2b_2)$ is a $J$-vector, the $i$-th entry of which is $\kappa(a_1i,a_2b_2)$, and $\norm{\kappa(a_1\cdot,a_2b_2)}$ is its (Euclidean) vector norm.  Similarly,  $\norm{A(\ast,\ast)}$ refers to the operator norm  
of the matrix with matrix elements $ A(i,j)$. We also define a combination of these conventions, in particular 
$\norm[1]{\norm{\kappa(\vx \ast,\ast \cdot)}}$ will denote the operator norm $\norm{A}$ of the matrix $A$ with matrix 
elements $A(i,j)=\norm{\kappa(\vx i, j \cdot)} = \norm{\sum_a x_a \kappa(ai, j\cdot)}$.
 Since $\norm{A}=\norm{A^t}$ this does not introduce ambiguities with respect of the order of $i,j$.
Notice that we use dot  $(\cdot)$ for the dummy variable related to the inner norm and star ($\ast$) for the outer norm. 
\nc

For $k$-th order cumulants we set
\begin{subequations}\label{kappadef}
\begin{equation}
\tnorm{\kappa}_k\defeq\tnorm{\kappa}^{\text{av}}_k+ \tnorm{\kappa}^{\text{iso}}_k,\qquad \tnorm{\kappa}^{\text{av}/\text{iso}}=\tnorm{\kappa}^{\text{av}/\text{iso}}_{\le R} \defeq \max_{2\le k\le R} \tnorm{\kappa}_k^{\text{av}/\text{iso}},
\end{equation}
where the averaged norms are given by
\begin{equation}\label{kappa norms}
\begin{split}
\tnorm{\kappa}_2^{\text{av}} &\defeq \norm[1]{\abs{\kappa(\ast,\ast)}},\qquad \tnorm{\kappa}_k^{\text{av}} \defeq  N^{-2} \sum_{\alpha_1,\dots,\alpha_k} \abs{\kappa(\alpha_1,\dots,\alpha_k)},\qquad k\ge 4,\\
\tnorm{\kappa}_3^{\text{av}}&\defeq \norm[2]{\sum_{\alpha_1}\abs{\kappa(\alpha_1,\ast,\ast)}}+  \inf_{\kappa=\kappa_{dd}+\kappa_{dc}+\kappa_{cd}+\kappa_{cc}} \Big(\tnorm[0]{\kappa_{dd}}_{dd}+\tnorm[0]{\kappa_{dc}}_{dc}+\tnorm[0]{\kappa_{cd}}_{cd}+\tnorm[0]{\kappa_{cc}}_{cc}\Big)
\end{split}
\end{equation}
and the infimum is taken over all decompositions of $\kappa$ in four symmetric functions $\kappa_{dd},\kappa_{cd}$, etc. The letters $d$ and $c$ refer to ``direct'' and ``cross'', see Remark~\ref{rem:dc} below. The corresponding norms are given by
\begin{equation}\label{kappa ef norms}
\begin{split}
\tnorm{\kappa}_{cc} =& \tnorm{\kappa}_{dd} \defeq N^{-1}\sqrt{\sum_{b_2,a_3}\bigg(\sum_{a_2,b_3}\sum_{\alpha_1}\abs{\kappa(\alpha_1,a_2b_2,a_3b_3)} \bigg)^2},\\
\tnorm{\kappa}_{cd} \defeq& N^{-1}\sqrt{\sum_{b_3,a_1}\bigg(\sum_{a_3,b_1}\sum_{\alpha_2}\abs{\kappa(a_1b_1,\alpha_2,a_3b_3)} \bigg)^2},\quad \tnorm{\kappa}_{dc} \defeq N^{-1}\sqrt{\sum_{b_1,a_2}\bigg(\sum_{a_1,b_2}\sum_{\alpha_3}\abs{\kappa(a_1b_1,a_2b_2,\alpha_3)} \bigg)^2}\nonumber. 
\end{split}
\end{equation}
For the isotropic bound we define
\begin{equation}\label{kappa iso norms}
\begin{split}
\tnorm{\kappa}_2^{\text{iso}} &\defeq \inf_{\kappa=\kappa_d+\kappa_c} \big(\tnorm[0]{\kappa_d}_d + \tnorm[0]{\kappa_c}_c\big)\qquad \tnorm{\kappa}_d \defeq \sup_{\norm{\vx}\le 1} \norm[1]{\norm{\kappa(\vx \ast,\cdot \ast)}}\qquad \tnorm{\kappa}_c \defeq \sup_{\norm{\vx}\le 1} \norm[1]{\norm{\kappa(\vx \ast,\ast \cdot)}},\\
\tnorm{\kappa}_k^{\text{iso}} &\defeq \norm[2]{\sum_{\alpha_1,\dots,\alpha_{k-2}}\abs{\kappa(\alpha_1,\dots,\alpha_{k-2},\ast,\ast)}},\qquad k\ge 3, 
\end{split}
\end{equation}
\end{subequations}
where the inner norms in \eqref{kappa iso norms} indicate vector norms and the outer norms operator norms, and the infimum is taken over all decomposition of $\kappa$ into the sum of symmetric $\kappa_c$ and $\kappa_d$. 

\begin{remark}\label{rem:dc}
We remark that the particular form of the norms $\tnorm{\kappa}_2^{\text{iso}}$ and $\tnorm{\kappa}_3^{\text{av}}$ on $\kappa$ is chosen to conform with the Hermitian symmetry. For example, in the case of Wigner matrices we have 
\begin{align}\kappa(a_1b_1,a_2b_2)=\delta_{a_1,a_2}\delta_{b_1,b_2}+\delta_{a_1,b_2}\delta_{b_1,a_2} \defqe \kappa_d(a_1b_1,a_2b_2)+\kappa_c(a_1b_1,a_2b_2),
\label{kappa 2 wigner}\end{align} 
i.e., the cumulant naturally splits into a direct and a cross part $\kappa_d$ and $\kappa_c$. In general, the splitting $\kappa=\kappa_c+\kappa_d$ may not be unique
but for the sharpest bound we can consider the most optimal splitting; this is reflected in the infimum in the definition of $\tnorm{\kappa}_2^{\text{iso}}$. Note that in the example \eqref{kappa 2 wigner} $\tnorm[0]{\kappa_d}_d$ and $\tnorm[0]{\kappa_c}_c$ are bounded, but $\tnorm[0]{\kappa_c}_d$ would not be.
 A similar rationale stands behind the definition of $\tnorm{\kappa}_3^{\text{av}}$. 

 We also remark that only the conditions on $\tnorm{\kappa}_2^{\text{iso}}$ and $\tnorm{\kappa}_3^{\text{av}}$ use the product structure $I=J\times J$. All other decay conditions are inherently conditions on index pairs $\alpha \in I$.
\end{remark}

\begin{assumption}[$\kappa$--correlation decay]\label{assumption correlations}There exists a constant $C$ such that for all $R\in\N$ and $\epsilon>0$ 
\[ \tnorm{\kappa}_2^{\text{iso}} \le C,\qquad \tnorm{\kappa}=\tnorm{\kappa}_{\le R}\defeq \max_{2\le k\le R} \tnorm{\kappa}_k \le_{\epsilon,R} N^{\epsilon}\]
where the norms $\tnorm{\cdot}_k$ and $\tnorm{\cdot}_2^{\text{iso}}$ on $k$-th order cumulants were defined in \eqref{kappadef}.
 If the matrix $W$ is complex Hermitian we use Assumption \ref{assumption correlations}', as stated in Appendix~\ref{complex W} 
 instead of  Assumption \ref{assumption correlations}.
\end{assumption}

Furthermore, in the long range regime beyond certain neighbourhoods of size $\ll \sqrt{N}$ we assume a finite polynomial decay of correlations that is reminiscent of the standard $\rho$-mixing condition in statistical physics (see, e.g.~\cite{MR2178042} for an overview of various mixing conditions). We will need this decay in a certain iterated sense that we now formulate precisely. 

\begin{assumption}[Higher order correlation decay]\label{assumption neighbourhood decay}
There exists $\mu>0$ such that the following holds: For every $\alpha\in I$ and $q,R\in\N$ there exists a sequence of nested sets $\NN_k=\NN_k(\alpha)$ such that $\alpha\in \NN_1\subset \NN_2\subset\dots\subset \NN_R=\NN\subset I$, $\abs{\NN}\le N^{1/2-\mu}$ and
\begin{align*}
\kappa\Big(f(W_{I\setminus \bigcup_j \NN_{n_j+1}(\alpha_j)}),g_1(W_{\NN_{n_1}(\alpha_1)\setminus \bigcup_{j\not=1} \NN(\alpha_j)}),\dots,g_q(W_{\NN_{n_q}(\alpha_q)\setminus \bigcup_{j\not=q} \NN(\alpha_j)})\Big) \le_{R,q,\mu} N^{-3 q} \norm{f}_{q+1}\prod_{j=1}^q \norm{g_j}_{q+1}
\end{align*}
for any $n_1,\dots,n_q<R$, $\alpha_1,\dots,\alpha_q\in I$ and functions $f,g_1,\dots,g_q$. We will refer to these sets as ``neighbourhoods'' of $\alpha$, although we do not assume any topological structure on $I$. For any $\NN\subset I$, here $W_\NN$ denotes the set of $w_\alpha$ indexed by $\alpha\in\NN$.
\end{assumption}

\begin{remark}
For the proof of Theorem \ref{isotropic local law} we  need Assumptions \ref{assumption high moments}, \ref{assumption correlations} and \ref{assumption neighbourhood decay} only for finitely many values of $q,R$ up to some threshold, depending only on the parameters $D,\gamma,\epsilon$ in the statement and $\mu$ from Assumption \ref{assumption neighbourhood decay}. This follows from the fact that the high moment bound from Theorem \ref{theorem step} is only needed for a finite value of $p$ which relates to certain threshold on $q,R$. 
\end{remark}

\subsection{Some examples}\label{sec examples}
We end this section by providing examples of correlated matrix models satisfying Assumptions \ref{assumption correlations}--\ref{assumption neighbourhood decay}.
Our main example is the one already advertised in Assumption \hyperlink{assumpCD}{(CD)}. In Example \ref{ex polynomial decay} we check that Assumption \hyperlink{assumpCD}{(CD)} indeed implies \ref{assumption correlations}--\ref{assumption neighbourhood decay}.
\begin{example}[Polynomially decaying model]\label{ex polynomial decay} Recall the metric setting of Assumption \hyperlink{assumpCD}{(CD)}. Simple calculations show that Assumption \ref{assumption correlations} is satisfied even if we only request $s\ge 2$ in \eqref{metric tree decay CD}, independent of the chosen neighbourhood systems. As for Assumption \ref{assumption neighbourhood decay}, we define the neighbourhoods $\NN_k(\alpha)\defeq\set{\beta\in I| d(\alpha,\beta)\le k\,N^{1/4-\mu}}$ so that $d(\NN_k(\alpha),\NN_{k+1}(\alpha)^c)=N^{1/4-\mu}$. To ensure that 
\[\abs{\kappa\big(f_1(W_{\NN_n(\alpha)}),f_2(W_{\NN_{n+1}(\alpha)^c})\big)}\le \frac{\norm{f_1}_2\norm{f_2}_2}{N^3}, \]
we thus have to choose $s\ge 12/(1-4\mu)$. The tree decay structure \eqref{tree decay} then ensures that Assumption \ref{assumption neighbourhood decay} is satisfied for all $q$.
\end{example}
\begin{example}[Block matrix]
For $n,M,N\in \N$ with $nM=N$ we set $J=[N]$ and consider an $n\times n$-block matrix with identical copies of an $M\times M$ Wigner matrix in each block. We introduce an equivalence relation on $I=J \times J$ in such a way that we first identify $a \sim b\in J$ if $a = b\,(\text{mod }M)$, and then $(a,b) \sim (c,d)\in I$ if $(a,b)=(c,d)$ or $(a,b)=(d,c)$ according to the Hermitian symmetry. Then the correlation structure is such that $\kappa(\alpha_1,\dots,\alpha_k)=\landauO{1}$ if $\alpha_1, \ldots, \alpha_k$ 
all belong to the same equivalence class 
 and $\kappa(\alpha_1,\dots,\alpha_k)=0$ otherwise. Since every entry is correlated with at most $\landauO{n^2}$ other entries, 
 Assumptions \ref{assumption correlations}, \ref{assumption neighbourhood decay} are clearly satisfied  as long as $n$ is bounded.
 
 The same correlation structure is obtained if the blocks contain possibly different random matrices with independent entries
 (respecting only the overall Hermitian symmetry, but possibly without symmetry within each block), see e.g. the ensemble discussed in \cite{1706.08343}. Furthermore,  one may combine the block matrix model with a polynomially decaying model from 
 Example \ref{ex polynomial decay} to construct yet another example  for which Theorem \ref{isotropic local law} is applicable.
 In this general model the matrices in each block  should merely exhibit a polynomially decaying correlation instead of strictly 
 independent elements.
\end{example}
  \begin{example}[Correlated Gaussian matrix models]\label{example gauss}
    Since all higher order cumulants for Gaussian random variables vanish, our method allows to prove the local law (and its corollaries) for correlated Gaussian random matrix models under even weaker conditions. In fact, besides Assumptions \ref{assumption A} and \ref{assumption flatness} (or \ref{assumption fullness} for universality) we only have to assume that 
    \[ \tnorm{\kappa}_2^{\text{av}}+\tnorm{\kappa}_2^{\text{iso}} \le_{\epsilon} N^{\epsilon} \]
    for all $\epsilon>0$. In particular, this includes the polynomially decaying model from Example \ref{ex polynomial decay} for $s\ge 2$. These statements can be directly proved by following our general proof, setting all higher order cumulants to zero and using neighbourhoods $\NN(\alpha)=I$ for all $\alpha$. The details are left to the reader. 
\end{example}

\begin{example}[Fourfold symmetry]
A Wigner matrix $W$ with fourfold symmetry is a matrix of independent entries $w_\alpha$ of unit variance up to the symmetries $w_{a,b}=w_{b,a}=w_{-a,-b}=w_{-b,-a}$ for all $a,b\in \Z/N\Z$. From the explicit formula 
\[\kappa(ab,cd)=\kappa_d(ab,cd)+\kappa_c(ab,cd)\defeq(\delta_{a,c}\delta_{b,d}+\delta_{a,-c}\delta_{b,-d})+(\delta_{a,d}\delta_{b,c}+\delta_{a,-d}\delta_{b,-c}),\]
and a similar one for the third order cumulants, Assumption \ref{assumption correlations} is straightforward to verify. By choosing the neighbourhoods $\NN(\alpha)$ to contain the three other companions of $\alpha$ from the symmetry, it is obvious that also Assumption \ref{assumption neighbourhood decay} is fulfilled. Strictly speaking, the flatness condition \ref{assumption flatness} is violated by the fourfold symmetry, but as the resulting $M$ is diagonal, there is an easy replacement for the flatness. For more details on the random matrix model with a fourfold symmetry we refer the reader to \cite{MR3406427}.

A similar argument shows that Assumptions \ref{assumption correlations}--\ref{assumption neighbourhood decay} are also satisfied for other symmetries which naturally split in such a way that $w_{a,b}$ is identified with $w_{f_1(a),f_2(b)}$ and $w_{g_1(b),g_2(a)}$ for a finite collection of functions $f_i,g_i$. The appropriate replacement for the flatness condition \ref{assumption flatness}, however, has to be checked on a case-by-case basis.
\end{example}

\section{General multivariate cumulant expansion}\label{sec cum exp}
The goal of this section is the derivation of a finite-order multivariate cumulant expansion with a precise control on the approximation error. 

\subsection{Precumulants: Definition and relation to cumulants}
We begin by introducing the concept of \emph{pre-cumulants} and establishing some of their important properties. For any collection of random variables $X, Y_1, \dots,Y_m$ we define the quantities 
\begin{align*} 
K(X) \defeq& X\\
K_{t_1,\dots,t_m}(X;\bm Y)=&K_{t_1,\dots,t_m}(X;Y_1,\dots,Y_m) \defeq Y_m (\1_{t_m\le t_{m-1}}-\E ) Y_{m-1}(\1_{t_{m-1}\le t_{m-2}}-\E ) Y_{m-2}\dots Y_1(\1_{t_1\le 1}-\E) X
\end{align*}
for $m\ge 1$, that depend on real parameters $t_1,\dots,t_m\in[0,1]$. We will call them \emph{time ordered pre-cumulants}. We moreover introduce the integrated \emph{symmetrized pre-cumulants}  
\[ K(X;\bm Y) \defeq \sum_{\sigma\in S_{\abs{\bm Y}}} \iiint_{0}^1 K_{t_1,\dots,t_{\abs{\bm Y}}}(X;\sigma(\bm Y)) \diff\bm t,\]
where $S_{\abs{\bm Y}}$ is the group of permutations on a $\abs{\bm Y}$-element set and $\diff \bm t = \diff t_1 \dots \diff t_m$  indicates integration over $[0,1]^{\abs{\bm Y}}$. Note that the first variable $X$ of $K(X;\bm Y)$ plays a special role. Moreover, $K(X;\bm Y)$ is invariant under permutations of the components of the vector $\bm Y$. These pre-cumulants are -- other than the actual cumulants -- random variables, but their expectations turn out to produce the traditional cumulants, justifying their name. While they appear to be very natural objects in the study of cumulants, we are not aware whether the pre-cumulants $K$ have been previously studied, and whether the result of the following lemma is already known. 
\begin{lemma}[Pre-cumulant Lemma]\label{pre cum lemma}
Let $X$ be a random variable and let $\bm Y$, $\bm Z$ be random vectors. Then we have
\begin{subequations}
\begin{align}
\E K(X;\bm Y) &= \kappa(X,\bm Y), \label{E pre cum = cum}\\
K(X;\bm Y) &= \kappa(X,\bm Y) + X (\Pi \bm Y) - \sum_{\bm Y'\subset \bm Y} (\Pi \bm Y')\kappa(X, \bm Y\setminus \bm Y'),\label{pre cum alt def}\\
\intertext{and the \emph{pre-cumulant decoupling identity}}
K(X; \bm Y\sqcup \bm Z) &- \kappa(X,\bm Y\sqcup\bm Z) = (\Pi \bm Z) \big[ K(X; \bm Y) - \kappa(X,\bm Y)\big] - \sum_{\substack{\bm Y' \subset \bm Y\\ \bm Z' \subsetneq \bm Z}} (\Pi \bm Y')(\Pi \bm Z') \kappa(X,(\bm Y\setminus \bm Y')\sqcup (\bm Z\setminus \bm Z')),\label{two groups pre cum}
\end{align}
where $\bm Y'\subset \bm Y$ indicates that $\bm Y'$ is a sub-vector of $\bm Y$ (with $\bm Y'=\emptyset$ and $\bm Y'=\bm Y$ allowed) and $\bm Y\setminus \bm Y'$ is the vector of the remaining entries. By $\bm Z'\subsetneq \bm Z$ we denote all proper sub-vectors of $\bm Z$, i.e., not including $\bm Z$. By $\Pi\bm Z$ we mean the product of all entries of the vector $\bm Z$, while by $\bm Z\cup\bm Y$ we mean the concatenation of the two vectors $\bm Z,\bm Y$. The order of the vector is of no importance as $K(X;\bm Y)$ is symmetric with respect to the vector $\bm Y$ and $\kappa$ is overall symmetric.
\end{subequations}
\end{lemma}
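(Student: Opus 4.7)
The three parts of the lemma are related hierarchically, so I would prove (b) first and then extract (a) and (c) as consequences. Starting with the easier derivation, (c) follows from (b) by purely algebraic manipulation. Substituting (b) into the left-hand side of (c) and using $\Pi(\bm Y \sqcup \bm Z) = (\Pi \bm Y)(\Pi \bm Z)$ together with the bijection between sub-vectors of $\bm Y \sqcup \bm Z$ and pairs $(\bm Y', \bm Z')$ with $\bm Y' \subset \bm Y$, $\bm Z' \subset \bm Z$, one obtains
\[ K(X; \bm Y \sqcup \bm Z) - \kappa(X, \bm Y \sqcup \bm Z) = X(\Pi \bm Y)(\Pi \bm Z) - \sum_{\bm Y' \subset \bm Y}\sum_{\bm Z' \subset \bm Z} (\Pi \bm Y')(\Pi \bm Z')\,\kappa\bigl(X, (\bm Y\setminus\bm Y')\sqcup(\bm Z \setminus \bm Z')\bigr). \]
Splitting the inner sum according to whether $\bm Z' = \bm Z$ or $\bm Z' \subsetneq \bm Z$, the $\bm Z' = \bm Z$ contribution equals $(\Pi \bm Z)\sum_{\bm Y' \subset \bm Y}(\Pi \bm Y')\kappa(X, \bm Y\setminus\bm Y')$, which combined with $X(\Pi \bm Y)(\Pi \bm Z)$ and applying (b) to $\bm Y$ alone assembles into $(\Pi \bm Z)[K(X;\bm Y) - \kappa(X,\bm Y)]$; the remaining $\bm Z' \subsetneq \bm Z$ terms are exactly the sum on the right of (c).

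Taking expectations in (b) yields
\[ \E K(X;\bm Y) = \kappa(X,\bm Y) + \E[X \Pi \bm Y] - \sum_{\bm Y' \subset \bm Y}\E[\Pi \bm Y']\,\kappa(X, \bm Y\setminus \bm Y'), \]
so (a) reduces to the classical moment--cumulant identity $\E[X\Pi\bm Y] = \sum_{\bm Y' \subset \bm Y}\E[\Pi \bm Y']\,\kappa(X, \bm Y\setminus\bm Y')$, which is obtained by expanding $\E[X\Pi \bm Y]$ as a sum of products of joint cumulants over all set partitions of $\{X\} \cup \bm Y$ and grouping the partitions by the block containing $X$.

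For the core identity (b) I would induct on $m = |\bm Y|$. The base cases $m = 0, 1$ are a direct check: $K(X) = X = \kappa(X) + X - \kappa(X)$, and $K(X; Y_1) = Y_1(X - \E X)$ matches the right-hand side of (b). For the inductive step the recursion
\[ K_{t_1,\dots,t_m}(X; Y_1, \dots, Y_m) = Y_m\bigl(\1_{t_m \le t_{m-1}} - \E\bigr) K_{t_1,\dots,t_{m-1}}(X; Y_1, \dots, Y_{m-1}) \]
combined with $\int_0^1 \1_{t_m \le t_{m-1}}\,dt_m = t_{m-1}$ and the symmetrization $\sum_{\sigma \in S_m}$ grouped by $k = \sigma(m)$ produces two parts. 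After integrating and symmetrizing over the remaining variables the $-\E$ part collapses to $-\sum_k Y_k\,\E K(X; \bm Y \setminus Y_k) = -\sum_k Y_k\,\kappa(X, \bm Y \setminus Y_k)$ using the already established (a) at level $m - 1$ (which follows from (b) at level $m-1$ by the argument above, so no circularity occurs). The $+t_{m-1}$ part must then be expanded recursively, applying the inductive hypothesis to the inner pre-cumulant and evaluating the simplex integrals produced by the accumulated factors of $t_{j-1}$.

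The main obstacle is precisely this iterated integration: each step introduces a new weight $t_{j-1}$ that couples into the indicator of the next integration, producing combinatorial coefficients from the volumes of iterated simplices which must be matched against the moment--cumulant coefficients on the right-hand side of (b). A more symmetric alternative that bypasses the iterated integrals is to expand $K_{t_1,\dots,t_m}(X; \sigma(\bm Y))$ into $2^m$ terms indexed by the subset $S \subseteq \{1,\dots,m\}$ of positions where $-\E$ is chosen in place of $\1_{t_j \le t_{j-1}}$; each resulting term factors as a nested product of block expectations of the form $\E[\prod_{j \in B} Y_{\sigma(j)}]$ (with a factor of $X$ appended to the block whose rightmost index is $0$), and after performing the $t$-integrals as simplex volumes and summing over $\sigma$, one matches the resulting sum against the expansion of the right-hand side of (b) obtained by substituting the moment--cumulant formula into each $\kappa(X, \bm Y \setminus \bm Y')$. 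Either route reduces (b) to a single combinatorial identity linking iterated simplex integrals to partitions of $\{X\} \cup \bm Y$.
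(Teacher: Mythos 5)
Your overall architecture is sound: deriving \eqref{two groups pre cum} from \eqref{pre cum alt def} by splitting the sub-vector sum according to whether $\bm Z'=\bm Z$ is exactly what the paper does, and your derivation of \eqref{E pre cum = cum} by taking expectations in \eqref{pre cum alt def} and invoking the moment–cumulant identity grouped by the block containing $X$ is a valid (mild) reordering of the logical dependencies — the paper instead obtains an explicit partition formula for $K(X;\bm Y)$ first and reads off both \eqref{E pre cum = cum} and \eqref{pre cum alt def} from it. Both routes work.

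The genuine gap is in \eqref{pre cum alt def} itself, and you acknowledge it: both of your routes end with the phrase ``reduces (b) to a single combinatorial identity'' without stating or verifying that identity. That identity is precisely the content of the proof and is not automatic. Concretely, what must be shown — and what the paper carries out — is the following. Expand \eqref{pre cum expansion} into $2^m$ terms indexed by the set of positions $\{j_1<\dots<j_b\}$ at which $-\E$ is chosen; this produces a product of nested-block moments multiplied by the iterated indicator $\1_{t_m\le\cdots\le t_{j_b}}\1_{t_{j_b-1}\le\cdots\le t_{j_{b-1}}}\cdots\1_{t_{j_1-1}\le\cdots\le t_1}$, whose $[0,1]^m$-integral is the product of simplex volumes
\[
V=\frac{1}{(m-j_b+1)!}\,\frac{1}{(j_b-j_{b-1})!}\,\cdots\,\frac{1}{(j_1-1)!}.
\]
Now fix an (ordered, labeled) partition $[m]=\pi_1\sqcup\cdots\sqcup\pi_{b+1}$ with the prescribed block sizes. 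The number of permutations $\sigma\in S_m$ that map the intervals $[1,j_1-1],[j_1,j_2-1],\dots,[j_b,m]$ onto $\pi_1,\dots,\pi_{b+1}$ respectively is exactly $(j_1-1)!\,(j_2-j_1)!\cdots(m-j_b+1)!=1/V$, so the simplex volume cancels the permutation count and the double sum over $\sigma$ and over positions $(j_1,\dots,j_b)$ collapses to the partition sum
\[
K(X;\bm Y)=\sum_{b=0}^m(-1)^b\sum_{\substack{\pi_1\sqcup\cdots\sqcup\pi_{b+1}=[m]\\\abs{\pi_j}\ge1\text{ for }j\ge2}}\Pi\bm Y_{\pi_{b+1}}\,(\E\Pi\bm Y_{\pi_b})\cdots(\E\Pi\bm Y_{\pi_2})\,(\E X\Pi\bm Y_{\pi_1}),
\]
from which \eqref{pre cum alt def} follows by separating the $b=0$ term and the outermost block $\pi_{b+1}$. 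Your proof will stand once you insert this calculation; without it you have only reduced the lemma, not proved it. Your inductive route has the same unclosed step, since the accumulated $t_{j-1}$-weights must be evaluated by the same simplex-volume computation.
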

We note that  \eqref{two groups pre cum} is intentionally  not symmetric in $\bm Y, \bm Z$, although 
 an analogous formula holds with $\bm Y$ and $\bm Z$ interchanged. The relation \eqref{two groups pre cum} should be interpreted as a refined version of the fact that centred precumulants factor independent random variables. Indeed, if $\bm Z$ was independent of $X,\bm Y$, then the sum in \eqref{two groups pre cum} would vanish by independence properties of the cumulant and \eqref{two groups pre cum} would simplify to 
\[K(X; \bm Y\sqcup \bm Z) - \kappa(X,\bm Y\sqcup\bm Z) = (\Pi \bm Z) \big[ K(X; \bm Y) - \kappa(X,\bm Y)\big].\]
In our applications $\bm Z$ will depend only very weakly on $X$ and $\bm Y$, hence the sum in \eqref{two groups pre cum} will be a small error term. 
\begin{proof}
By the definition of the pre-cumulants, we have for $\bm Y=(Y_1,\dots, Y_m)$
\begin{align}
K(X;\bm Y)=\sum_{\sigma\in S_{m}} \iiint_0^1 Y_{\sigma(m)} (\1_{t_m\le t_{m-1}}-\E)Y_{\sigma(m-1)}  (\1_{t_{m-1}\le t_{m-2}}-\E)\dots  (\1_{t_{2}\le t_{1}}-\E) Y_{\sigma(1)} (\1_{t_{1}\le 1}-\E) X\diff\bm t.\label{pre cum expansion}
\end{align}
Multiplying out the brackets and pulling  the  characteristic functions involving the $t$-variables out of the expectations, each term is a product of moments of $(X,\bm Y)$-monomials. We rearrange the sum according to the number of moments in the form that $K(X;\bm Y)=\sum_{b=0}^m \phi_b$, where $\phi_b$ contains exactly $b$ moments. These terms are given by 
\begin{align}\nonumber
&\phi_b=(-1)^b\sum_{1\le j_1<\dots < j_b\le m} \sum_{\sigma\in S_m}  \iiint_0^1 \1_{t_m\le \dots \le t_{j_b}}\1_{t_{j_b-1}\le \dots \le t_{j_{b-1}}}\dots \1_{t_{j_2-1}\le \dots\le t_{j_{1}}}\1_{t_{j_1-1}\le \dots\le t_{1}}\diff \bm t\\
&\qquad\times Y_{\sigma(m)}\dots Y_{\sigma(j_b)}(\E Y_{\sigma(j_b-1)}\dots Y_{\sigma(j_{b-1})})\dots(\E Y_{\sigma(j_2-1)}\dots Y_{\sigma(j_{1})}) (\E Y_{\sigma(j_1-1)}\dots Y_{\sigma(1)}X),\qquad b\ge 1 \label{b blocks}
\end{align}
and the integral in \eqref{b blocks} can be computed to give
\[ \iiint_0^1 [\,\cdots]\diff\bm t= \frac{1}{(m-j_b+1)!}\frac{1}{(j_b - j_{b-1})!}\dots \frac{1}{(j_2 - j_{1})!}\frac{1}{(j_1 - 1)!}\defqe V.\]
Here we introduced an additional variable $t_0=1$ for notational convenience and follow the convention that the last factor in \eqref{b blocks} for $j_1=1$ reads $\E X$. For $b=0$ the analogue of \eqref{b blocks} is given by
\[ \phi_0=\bigg(\sum_{\sigma\in S_m} \iiint_0^1 \1_{t_m\le \dots \le t_1}\diff\bm t\bigg) Y_{1}\dots Y_m X  = Y_{1}\dots Y_m X.\] 

Let the summation indices $1\le j_1<\dots<j_b\le m$ be fixed and fix a labelled partition of $[m]=\pi_1\sqcup\dots\sqcup \pi_{b+1}$ into subsets of sizes $\abs{\pi_1}=j_1-1$, $\abs{\pi_2}=j_2-j_1$, \dots, $\abs{\pi_b}=j_{b}-j_{b-1}$ and $\abs{\pi_{b+1}}= m-j_b+1$. Those permutations $\sigma$ in \eqref{b blocks} for which $\sigma([1,j_1-1])=\pi_1, \sigma([j_1,j_2-1])=\pi_2, \dots, \sigma([j_{b-1},j_b-1])=\pi_b$ and $\sigma([j_b,m])=\pi_{m+1}$ all produce the same term $(-1)^b V \Pi\bm Y_{\pi_{b+1}}\dots(\E \Pi\bm Y_{\pi_2})(\E X \Pi\bm Y_{\pi_1})$, where $\bm Y_\pi=(\, Y_k\,\mid\,k\in \pi\,)$. We note that $\pi_1$ plays a special role since it is explicitly allowed to be the empty set, in which the last factor is just $X$. The combinatorial factor $V$ is precisely cancelled by the number of such permutations, i.e., $1/V$. Thus \eqref{b blocks} can be rewritten as 
\begin{subequations}
\begin{align}\label{pre cum exp 2}
\phi_b &=(-1)^b \sum_{\substack{\pi_1\sqcup \dots \sqcup \pi_{b+1}=[m]\\ \abs{\pi_j}\ge 1\text{ for }j\ge 2}} \Pi \bm Y_{\pi_{b+1}} (\E \Pi \bm Y_{\pi_{b}})\dots(\E \Pi\bm Y_{\pi_2})(\E X\Pi\bm Y_{\pi_1}), \intertext{and therefore}
K(X;\bm Y) &= \sum_{b=0}^m \phi_b =\sum_{b=0}^m (-1)^b \sum_{\substack{\pi_1\sqcup \dots \sqcup \pi_{b+1}=[m]\\ \abs{\pi_j}\ge 1\text{ for }j\ge 2}} \Pi \bm Y_{\pi_{b+1}} (\E \Pi \bm Y_{\pi_{b}})\dots(\E \Pi\bm Y_{\pi_2})(\E X\Pi\bm Y_{\pi_1}),\qquad b\ge 1.\label{K formula}
\end{align}
\end{subequations}

We recognize the expectation of \eqref{pre cum exp 2} as the sum over all unlabelled partitions $\mathcal P\vdash (X,\bm Y)$ with $\abs{\mathcal P}=b+1$ blocks, under-counting by a factor of $b!$ as the first $b$ factors on the rhs.~of\eqref{pre cum exp 2} after taking the expectation are interchangeable (the last factor is special due to $X$). We can thus conclude that $\E K(X;\bm Y)$ reads 
\begin{align}\label{moebius}
\E K(X;\bm Y)=\sum_{b=0}^m (-1)^b b! \sum_{\substack{\mathcal P\vdash (X,\bm Y) \\ \abs{\mathcal P}=b+1}} \prod_{A\in\mathcal P}\E\Pi (X,\bm Y)_A =  \kappa(X,\bm Y),
\end{align}
where we used \eqref{cumulant to moment} in the ultimate step, an identity that is equivalent to the analytical definition of the cumulant, see Appendix \ref{appendix cumulants} for more details. This completes the proof of \eqref{E pre cum = cum}. Now \eqref{pre cum alt def} follows from first separating $b=0$ to produce the $X(\Pi \bm Y)$ term and then separating the $\pi_{b+1}$ summation in \eqref{K formula} so that $\bm Y_{\pi_{b+1}}$ plays the role of $\bm Y'$ for $\bm Y'\not=\emptyset$. The sum over the remaining moments is exactly the cumulant $\kappa(X,\bm Y\setminus \bm Y')$, see \eqref{moebius}. Finally, the term $\bm Y'=\emptyset$ in \eqref{pre cum alt def} cancels the first $\kappa(X,\bm Y)$ term, completing the proof of \eqref{pre cum alt def}. The identity \eqref{two groups pre cum} follows from \eqref{pre cum alt def} where $\bm Y$ plays the role of $\bm Y\sqcup \bm Z$. The $\bm Z'=\bm Z$ term is considered separately, and then the identity \eqref{pre cum alt def} is used again, this time for $X$ and $\bm Y$.
\end{proof} 

\subsection{Precumulant expansion formula}
We consider a random vector $\bm w\in \R^\cI$, indexed by an abstract set $\cI$, and a sufficiently often differentiable function $f\colon \R^\cI\to\C$. The goal is to derive an expansion for $\E w_{i_0} f(w)$ in the variables indexed by a fixed subset $\NN\subset \cI$ that contains a distinguished element $i_0\in \NN$. The expansion will be in terms of cumulants $\kappa(w_{i_1},\dots,w_{i_m})$ and expectations $\E \partial_{\bm i} f$ of derivatives $\partial_{\bm i} f\defeq \partial_{i_1}\dots\partial_{i_m} f$, where we identify $\partial_{i}=\partial_{w_i}$ and $\bm i=\{i_1,\dots,i_m\}$. To state the expansion formula compactly we first introduce some notations and definitions. We recall that a multiset is an unordered set with possible multiple appearances of the same element. For a given tuple $\bm i=(i_1,\dots,i_m)\in \NN^m$ we define the multisets 
\begin{align}\underline w_{\bm i}\defeq \{w_{i_1},\dots,w_{i_m}\} \quad\text{and the augmented multiset}\quad\underline w_{i_0\bm i}\defeq \{w_{i_0}\}\sqcup \underline w_{\bm i},\label{augmented multiset}\end{align}
where we consider $\sqcup$ as a disjoint union in the sense that $\underline w_{i_0\bm i}$ has $m+1$ elements (counting repetitions), regardless of whether $i_0=i_k$ for some $k\in [m]$. Similarly, we write $\underline w_\ast\subset \underline w$ to indicate that $\underline w_\ast$ is a sub-multiset of a multiset $\underline w$. As cumulants are invariant under permutations of their entries we will write $\kappa(\underline w)$ for multisets $\underline w$ of random variables. We will also write $\Pi \underline w\defeq \prod_{j=1}^m w_{i_j}$ for the product of elements of a multiset $\underline w=\set{w_{i_j}|j\in[m]}$. 

Equipped with Lemma \ref{pre cum lemma} we can now state and prove the version of the multivariate cumulant expansion with a remainder that is best suitable for our application. Recall from \eqref{E pre cum = cum} that $\E K(w_{i_0}; \underline w_{\bm i})=\kappa(\underline w_{i_0\bm i})$.
\begin{proposition}[Multivariate cumulant expansion]\label{cum exp prop}
Let $f\colon \R^\cI\to \C$ be $R$ times differentiable with bounded derivatives and let $\bm w\in \R^\cI$ be a random vector with finite moments up to order $R$. Fix a subset $\NN\subset\cI$ and an element $i_0\in \NN$, then it holds that
\begin{subequations}
\begin{align}\label{cum expansion statement}
\E w_{i_0} f(w) =& \sum_{m=0}^{R-1} \sum_{\bm i\in\NN^m }\left[\E\frac{\kappa(\underline w_{i_0\bm i})}{m!}\partial_{\bm i} f+\E\frac{K(w_{i_0};\underline w_{\bm i})-\kappa(\underline w_{i_0\bm i})}{m!} \partial_{\bm i} f \big\rvert_{\bm w_\NN=0}\right] +\Omega(f,i_0,\NN),\quad\text{where}\\
 \Omega(f,i_0,\NN)\defeq&\sum_{\bm i\in\NN^R } \E \iiint_0^1  K_{t_1,\dots, t_R}(w_{i_0},\dots,w_{i_{R}})\diff t_1\dots \diff t_{R-1} \int_0^1 (\partial_{\bm i} f)(t_{R} \bm w',\bm w'')\diff t_{R},\label{def Omega}
 \end{align} 
\end{subequations}
where for $m=0$ the derivative should be considered as the $0$-th derivative, i.e.~as the function itself. Here we introduced a decomposition $\bm w=(\bm w',\bm w'')$ of all random variables $\bm w=\bm w_\cI$ such that $\bm w'=\bm w_\NN=(w_i\,|\,i\in\NN )$ and $\bm w''=\bm w_{\NN^c}=(w_{i}\,|\,i\in \cI\setminus\NN )$ and we write $f(\bm w)=f(\bm w',\bm w'')$. Moreover, if $\E \abs{w_i}^{2R}\le \mu_{2R}$ for all $i\in \cI$, then 
\begin{align} \abs{\Omega(f,i_0,\NN)}\le_R \mu_{2R}^{1/2}\sum_{\bm i\in \NN^{R}}\int_0^1 \Big(\E\abs{(\partial_{\bm i} f)(t_{R} \bm w',\bm w'')}^2 \Big)^{1/2}\diff t_{R} .\label{hoelder error bound}\end{align}
\end{proposition}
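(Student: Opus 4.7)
The plan is to establish both \eqref{cum expansion statement} and the bound \eqref{hoelder error bound} by induction on the truncation level $R$. Writing $\Omega_R(f)=\Omega(f,i_0,\NN)$ for the quantity defined in \eqref{def Omega} and denoting by $T_m$ the $m$-th summand of the expansion,
\[T_m\defeq\sum_{\bm i\in\NN^m}\frac{1}{m!}\Big[\kappa(\underline w_{i_0\bm i})\,\E\partial_{\bm i}f+\E\big[K(w_{i_0};\underline w_{\bm i})-\kappa(\underline w_{i_0\bm i})\big](\partial_{\bm i}f)(0,\bm w'')\Big],\]
the goal reduces to the telescoping recursion $\Omega_R(f)=T_R+\Omega_{R+1}(f)$ together with the base identity $\E w_{i_0}f(\bm w)=T_0+\Omega_1(f)$. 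The base case is elementary: the fundamental theorem of calculus gives $f(\bm w',\bm w'')-f(0,\bm w'')=\sum_{i_1\in\NN}\int_0^1 w_{i_1}(\partial_{i_1}f)(t_1\bm w',\bm w'')\,\diff t_1$ which, combined with the splitting $w_{i_0}=\E w_{i_0}+(w_{i_0}-\E w_{i_0})$, yields $\E w_{i_0}f(\bm w)=T_0+\Omega_1(f)$ using $K(w_{i_0})=w_{i_0}$ and $\kappa(\underline w_{i_0})=\E w_{i_0}$.

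For the inductive step I Taylor-expand the inner function,
\[(\partial_{\bm i}f)(t_R\bm w',\bm w'')=(\partial_{\bm i}f)(0,\bm w'')+\sum_{i_{R+1}\in\NN}\int_0^{t_R}w_{i_{R+1}}(\partial_{\bm i i_{R+1}}f)(t_{R+1}\bm w',\bm w'')\,\diff t_{R+1},\]
and split $\Omega_R=A+C$ accordingly. In $C$ I rewrite $\int_0^{t_R}=\int_0^1\1_{t_{R+1}\le t_R}\,\diff t_{R+1}$ and invoke the defining recursion of the time-ordered precumulants $K_{t_1,\dots,t_{R+1}}=w_{i_{R+1}}(\1_{t_{R+1}\le t_R}-\E)K_{t_1,\dots,t_R}$, in the rearranged form
\[\1_{t_{R+1}\le t_R}\,w_{i_{R+1}}\,K_{t_1,\dots,t_R}=K_{t_1,\dots,t_{R+1}}+w_{i_{R+1}}\,\E K_{t_1,\dots,t_R}.\]
The contribution of the first summand is exactly $\Omega_{R+1}(f)$. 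The second factorises because $\E K_{t_1,\dots,t_R}$ is deterministic in $\bm w$; using $\sum_{i_{R+1}}w_{i_{R+1}}(\partial_{\bm i i_{R+1}}f)(t_{R+1}\bm w',\bm w'')=\frac{d}{dt_{R+1}}(\partial_{\bm i}f)(t_{R+1}\bm w',\bm w'')$ and integrating $t_{R+1}\in[0,1]$, it reduces to $\sum_{\bm i}\int_{[0,1]^R}\E K_{t_1,\dots,t_R}\,\diff\bm t\cdot\big[\E\partial_{\bm i}f-\E(\partial_{\bm i}f)(0,\bm w'')\big]$.

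Combining this with $A$ requires the reindexing identity
\[\sum_{\bm i\in\NN^R}\int_{[0,1]^R}K_{t_1,\dots,t_R}(w_{i_0};\underline w_{\bm i})\,\diff\bm t\cdot g(\bm i)=\sum_{\bm i\in\NN^R}\frac{K(w_{i_0};\underline w_{\bm i})}{R!}\,g(\bm i),\]
valid for any $g$ symmetric in its $R$ arguments and proved by averaging the relabelling $\bm i\mapsto\sigma(\bm i)$ over $\sigma\in S_R$. This rewrites $A=\sum_{\bm i}\E[K(w_{i_0};\underline w_{\bm i})(\partial_{\bm i}f)(0,\bm w'')]/R!$, while taking expectations in the reindexing identity and applying \eqref{E pre cum = cum} identifies the factorised correction as $\sum_{\bm i}\kappa(\underline w_{i_0\bm i})[\E\partial_{\bm i}f-\E(\partial_{\bm i}f)(0,\bm w'')]/R!$. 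Splitting $K=(K-\kappa)+\kappa$ in $A$ and adding, the $\E(\partial_{\bm i}f)(0,\bm w'')$ contributions cancel, leaving exactly $T_R$. Finally, \eqref{hoelder error bound} follows from Cauchy--Schwarz inside the outer $\bm w$-expectation in $\Omega_R$, combined with a Hölder-type estimate on $\E|K_{t_1,\dots,t_R}|^2$ derived from the explicit polynomial expansion \eqref{K formula}, bounding it by a constant depending on $R$ times $\mu_{2R}$; the integrations over $t_1,\dots,t_{R-1}$ are trivial since the integrand is uniformly bounded in these variables. The principal obstacle throughout is the careful bookkeeping of the $R!$ symmetrization factors when relating the unsymmetric $K_{t_1,\dots,t_R}$ appearing in $\Omega_R$ to the symmetric $K$ appearing in \eqref{cum expansion statement}; this is uniformly handled by the reindexing identity above.
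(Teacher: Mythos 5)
Your proposal is correct and follows essentially the same route as the paper's proof: the paper establishes the single ``Taylor plus centering'' identity \eqref{cum exp general step} and iterates it $R$ times with $g=K_{t_1,\dots,t_m}$, inserting the permutation sum only at the end, while you repackage exactly this iteration as the telescoping recursion $\Omega_m = T_m + \Omega_{m+1}$, re-deriving the precumulant recursion step, the factorization of the deterministic $\E K_{t_1,\dots,t_m}$, and the symmetrization identity inside each inductive step. The Cauchy--Schwarz/H\"older derivation of \eqref{hoelder error bound} matches the paper's as well.
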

\begin{proof}
For functions $f=f(\bm w)$, $g=g(\bm w)$ a Taylor expansion yields, for any $s\ge 0$,
\[ \E g(\bm w)  f(s\bm w',\bm w'') =  (\E g) (\E f(0,\bm w'')) + \Cov{ g,f(0,\bm w'')} + \sum_{i\in\NN  }\int_0^s \E g(\bm w) w_{i} (\partial_{i} f)(t \bm w',\bm w'')\diff t\]
and after another Taylor expansion to restore $f(\bm w',\bm w'')$ in the first term we find 
\begin{align}
\E g(\bm w) f( s \bm w',\bm w'') =&  (\E g) (\E f) + \Cov{ g,f(0,\bm w'')} + \sum_{i\in\NN  }\int_0^1 \E w_{i} [\1_{t\le s} g-(\E g) ] (\partial_{i} f)(t \bm w',\bm w'') \diff t.\label{cum exp general step} 
\end{align} 
Here we follow the convention that if no argument is written, then $\E g=\E g(\bm w)$. Starting with $g(\bm w)=w_{i_0}$, the last term in \eqref{cum exp general step} requires to compute $\E K_{t}(w_{i_0}; w_{i}) (\partial_{i} f)(t\bm w',\bm w'')$ with $t=t_1$, $i=i_1$. So this has the structure $\E \widetilde g \widetilde f(t\bm w',\bm w'')$ with $\widetilde g=K_{t_1}$ and $\widetilde f=\partial_{i_1} f$ and we can use \eqref{cum exp general step} again. Iterating this procedure with 
\[(g(\bm w),s,i,t)=(K_{t_1,\dots,t_{m-1}}(w_{i_0};w_{i_1}\dots,w_{i_{m-1}}),t_{m-1},i_m,t_m)\] for $m=1,\dots,R,$ we arrive at
\begin{align}\nonumber
\E w_{i_0} f &= \sum_{m=0}^{R-1} \sum_{i_1,\dots,i_m\in \NN} \bigg(\E\iiint_0^1 K_{t_1,\dots,t_m} \diff \bm t\bigg)\left(\E \partial_{\bm i} f\right) + \sum_{m=0}^{R-1} \sum_{i_1,\dots,i_l\in \NN} \E \bigg(\iiint_0^1 K_{t_1,\dots,t_m} \diff \bm t-\E\iiint_0^1 K_{t_1,\dots,t_m} \diff \bm t\bigg) (\partial_{\bm i}f)(0,\bm w'')\\
&\quad + \sum_{i_1,\dots,i_{R}\in\NN} \E \iiint_0^1  K_{t_1,\dots,t_{R}}\diff t_1\dots \diff t_{R-1} \int_0^1 (\partial_{\bm i} f)(t_{R} \bm w',\bm w'')\diff t_{R}\label{cum exp Kk},
\end{align}
where $K_{t_1,\dots,t_m}=K_{t_1,\dots,t_m}(w_{i_0},\dots,w_{i_m})$ and $\diff \bm t=\diff t_1\dots\diff t_m$. We note that \eqref{cum exp Kk} does not include the sum over permutations, but since the summation over all $i_1,\dots,i_m$ is taken we can artificially insert the permutation as in
\[\sum_{i_1,\dots,i_m}\phi (i_1,\dots,i_m)=\frac{1}{m!}\sum_{i_1,\dots,i_m}\sum_{\sigma\in S_m} \phi(i_{\sigma(1)},\dots,i_{\sigma(m)}).\] Now \eqref{cum expansion statement} follows from combining \eqref{cum exp Kk} with \eqref{E pre cum = cum}. Finally, \eqref{hoelder error bound} follows directly from a simple application of the H\"older inequality.
\end{proof}

\subsection{Toy model}\label{sec:toy}
Proposition \ref{cum exp prop} will be the main ingredient for the probabilistic part of the proofs of Theorems \ref{isotropic local law away} and \ref{isotropic local law}. For pedagocial reasons we first demonstrate the multiplicative cancellation effect of \emph{self-energy renormalization} through iterated cumulant expansion in a toy model.

Let $f$ and $\bm w$ be as in Proposition \ref{cum exp prop} and let us suppose that $\cI$ is equipped with a metric $d$. We furthermore assume that $\E \bm w=0$ and that the multivariate cumulants of $\bm w$ follow a tree-like mixing decay structure as in Example \ref{ex polynomial decay}, i.e.,
\begin{align} \kappa(f_1(\bm w),\dots,f_k(\bm w)) \lesssim \prod_{\{i,j\}\in E(T_{\text{min}})} \frac{1}{1+d(\supp f_i,\supp f_j)^s}  \label{kappa tree decay metric}\end{align}
for some $s>0$, where $T_{\text{min}}$ is the tree such that the sum of $d(\supp f_i,\supp f_j)$ along its edges $\{i,j\}\in E(T_{\text{min}})$ is minimal. Fix now a finite positive integer parameter $R$ and a large length scale $l>0$. Around every $i\in\cI$ we use the metric $d$ to define neighbourhoods $\NN(i)\defeq \Set{ j\in \cI | d(i,j)\le l R }$ and $\NN_k(i)\defeq \set{j\in I|d(i,j)\le lk}$, as in Assumption \ref{assumption neighbourhood decay}. For definiteness we furthermore assume that $\cI$ has dimension two in the sense that $\abs{\NN}\sim l^2 R^2$ as for the standard labelling of a matrix where $\cI=[N]^2$. We now assume that $f$ does not depend strongly on any single $w_i$, more specifically, for an multi-index $\bm i$ we assume \begin{align}\abs{\partial_{\bm i}f}\lesssim \abs{\NN}^{-(1+\epsilon)\abs{\bm i}},\qquad \bm i=(i_1,\dots,i_p),\qquad \abs{\bm i}=p.\label{f der decay}\end{align} This bound ensures that the size of the derivative in the Taylor expansion in the neighbourhood $\NN$ compensates for the combinatorics.

\subsubsection{Expansion of a weakly dependent function} \label{exp weakly dep fct}
For this setup we want to study the size of the expression 
\[ \E w_{i_1}\dots w_{i_p} f(\bm w) \]
where $i_1,\dots,i_p$ are in general position in the sense that their $\NN(i_k)$ neighbourhoods do not intersect. If $f$ were constant we could use the following lemma:
\begin{lemma}\label{tree decay lemma mutally separated}
Assume that $\bm w$ has a tree-like correlation decay as in \eqref{kappa tree decay metric} and assume that the random variables $g_0(\bm w),\dots,g_p(\bm w)$ have mutually $l$-separated supports, i.e., that $d(\supp g_i,\supp g_j)\gtrsim l$ for all $i\not=j$. If furthermore $\E g_k=0$ for $k=1,\dots,p$, then it holds that
\[ \abs{\E g_0\dots g_p}\lesssim l^{-s\lceil p/2\rceil}.\]
\end{lemma}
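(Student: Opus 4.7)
The plan is to use the standard moment--cumulant relation to turn the expectation into a sum over set partitions, then exploit the vanishing of singleton cumulants together with the tree-decay bound \eqref{kappa tree decay metric} to estimate each term.

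First I would write, by the moment-to-cumulant inversion formula,
\begin{equation*}
\E\, g_0 g_1 \cdots g_p \;=\; \sum_{\mathcal{P}\,\vdash\,\{0,1,\dots,p\}} \;\prod_{B\in\mathcal{P}} \kappa\bigl((g_i)_{i\in B}\bigr),
\end{equation*}
where the sum runs over set partitions of $\{0,1,\dots,p\}$. The crucial observation is that for any singleton block $B=\{k\}$ one has $\kappa(g_k)=\E g_k$, which vanishes whenever $k\ge 1$ by hypothesis. Hence a partition contributes nothing unless its only possible singleton is $\{0\}$; in every other block each element must be paired with at least one other.

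Next I would apply the tree-decay hypothesis \eqref{kappa tree decay metric} to each surviving block. Since the supports of the $g_i$ are mutually $l$-separated, every edge of any spanning tree on a block $B$ has length $\gtrsim l$, so each factor contributes
\begin{equation*}
\abs[1]{\kappa\bigl((g_i)_{i\in B}\bigr)} \;\lesssim\; \prod_{e\in E(T_{\min}^B)} \frac{1}{1+d(e)^s} \;\lesssim\; l^{-s(\abs{B}-1)}.
\end{equation*}
Multiplying over blocks, a partition $\mathcal{P}$ with block sizes $m_1,\dots,m_b\ge 1$ (where at most one $m_j=1$, corresponding to the singleton $\{0\}$) contributes at most $l^{-s\sum_j (m_j-1)}$.

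Finally I would minimize the exponent $\sum_j(m_j-1)$ subject to $\sum_j m_j = p+1$ and the constraint that at most one block has size $1$. The minimum equals $(p+1)-b_{\max}$ where $b_{\max}$ is the largest admissible number of blocks: pairing off the $p$ indices $1,\dots,p$ and leaving $\{0\}$ as a singleton gives $b_{\max}=1+\lfloor p/2\rfloor$ and exponent $\lceil p/2\rceil$. Since every partition contributes at most $l^{-s}$ times a constant depending on $p$, summing the finitely many partition types yields the claimed bound $\abs{\E g_0\cdots g_p}\lesssim l^{-s\lceil p/2\rceil}$. No step here is hard; the only mild subtlety is the combinatorial optimization verifying that $\{0\}$ as a singleton and all other indices matched into pairs is indeed optimal, which follows from the elementary inequality $\sum(m_j-1)\ge \lceil p/2\rceil$ under the stated constraints.
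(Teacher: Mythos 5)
Your proof is correct and follows essentially the same argument as the paper: expand the moment via the moment-to-cumulant formula, discard singleton blocks $\{k\}$ with $k\ge 1$ using $\E g_k=0$, bound each remaining cumulant block by $l^{-s(|B|-1)}$ via the tree-decay assumption, and observe that the worst case is the pair partition (with $\{0\}$ a singleton), giving exponent $\lceil p/2\rceil$.
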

\begin{proof}
Due to a basic identity on cumulants, see \eqref{moment to cumulant}, we have that
\[\E g_0\dots g_p = \sum_{A_1\sqcup \dots \sqcup A_k = [0,p]} \kappa(g_{A_1})\dots \kappa(g_{A_k}),\]
where the sum goes over all partitions $[0,p]$ and $g_{A}=\set{g_k|k\in A}$. From \eqref{kappa tree decay metric} it follows that \[ \abs{\kappa(g_{A_k})}\lesssim l^{-s(\abs[0]{A_k}-1)} \]
and due to the assumption of zero mean $\E g_k=0$ for $k\in [p]$ we have that $\kappa(g_A)=0$ whenever $A=\{k\}$ for some $k\in [p]$. It follows that the worst case is given by pair partitions with $\abs{A_k}=2$ for all $A_k$ not containing $0$ which completes the proof.
\end{proof}
From this lemma with $g_0=1$ and $g_k=w_{i_k}$ for $k=1,\dots,p$ we conclude that for constant $f$ we have the asymptotic bound $\abs{f \E w_{i_1}\dots w_{i_p}}\lesssim l^{-s\lceil p/2\rceil}$ by the zero mean assumption $\kappa(w_i)=\E w_i=0$. We now want to argue that for weakly dependent $f$ as in \eqref{f der decay} a similar bound still holds true although $f$ depends on all variables. Note that the weak dependence renders the minimal spanning tree distance trivial and a direct application of \eqref{kappa tree decay metric} would not 
 give  any decay. For brevity, we introduce the notations 
 \[ \kappa(i,\bm j) \defeq \kappa(w_i,w_{\bm j}),\qquad K(i;\bm j) \defeq K(w_i;w_{\bm j}),\]
i.e.~we identify cumulants and precumulants as functions of indices rather than random variables. 
 We begin by expanding the first $w_{i_1}$ to obtain from 
 \eqref{cum expansion statement}
\begin{equation} \E w_{i_1}\dots w_{i_p} f = \sum_{\bm j_1}^{\NN(i_1)}\E \left[\frac{\kappa(i_1,\bm j_1)}{\abs{\bm j_1}!}+\frac{K(i_1;\bm j_1)-\E K(i_1;\bm j_1)}{\abs{\bm j_1}!}\bigg\rvert_{\bm w_{\NN(i_1)}=0}^{\rightarrow}\right]w_{i_2}\dots w_{i_p}\partial_{\bm j_1}f + \landauO{l^{-2\epsilon R}}, 
\label{toy cum exp one f}\end{equation}
where we set $\sum_{\bm j}^{\NN}\defeq \sum_{0\le m<R}\sum_{\bm j\in\NN^m}$ and the parameter $R$, the maximal order 
of the expansion, is omitted for brevity. 
The notation $\rvert_{\bm w_\NN=0}^\rightarrow$ means that in all expressions to the right, the argument $\bm w$ is set to zero in the set $\NN$, i.e.~$\bm w_\NN=0$. This effect includes expectation values and cumulants. Note that $\rvert_{\bm w_{\NN_1}=0}^\rightarrow \rvert_{\bm w_{\NN_2}=0}^\rightarrow = \rvert_{\bm w_{\NN_1\cup\NN_2}=0}^\rightarrow$, i.e.~the effects of multiple $\rvert^\rightarrow$ operators accumulate.
For example,
  \begin{equation}\label{fg}
    f(w_1, w_2)\rvert_{w_1=0}^\rightarrow \; g(w_1, w_2)\rvert_{w_2=0}^\rightarrow \;  h(w_1, w_2) = f(w_1, w_2) g(0, w_2) h(0,0).
 \end{equation}
However, the order of $\rvert_{w_1=0}^\rightarrow$ and $\rvert_{w_2=0}^\rightarrow$  matters as long as there is a nontrivial 
function in between, clearly
\[
  g(w_1, w_2)\rvert_{w_2=0}^\rightarrow  \; f(w_1, w_2)\rvert_{w_1=0}^\rightarrow  \; h(w_1, w_2) = g(w_1, w_2) f(0, w_2) h(0,0),
 \] 
which is different from \eqref{fg}.
 Finally, the error term in \eqref{toy cum exp one f} was estimated using \eqref{hoelder error bound}, and by comparing the combinatorics $\abs{\NN}^R$ of the summation to the size of the $R$-th derivative, $\abs{\partial_{i_1}\dots\partial_{i_R} f}\le \abs{\NN}^{-(1+\epsilon)R}$. We will choose $R\approx ps/4\epsilon$ large, so that the error term is negligible.

 Iterating this procedure, we find 
\begin{align} \E w_{i_1}\dots w_{i_p} f = \bigg(\prod_{k\in[p]}\sum_{\bm j_k}^{\NN(i_k)}\bigg)  \E \prod_{k\in[p]}^\rightarrow\left[\frac{\kappa(i_k,\bm j_k)}{\abs{\bm j_k}!}+\frac{K(i_k;\bm j_k)-\E K(i_k;\bm j_k)}{\abs{\bm j_k}!}\bigg\rvert_{\bm w_{\NN(i_k)}=0}^{\rightarrow}\right]\partial_{\bm j_1}\dots \partial_{\bm j_p}f + \landauO{l^{-sp/2}} \label{E wwww f}\end{align}
where $\prod_{k\in[p]}^\rightarrow a_k$ indicates that the order of the factors $a_k$ is taken to be increasing in $k$, i.e., as $a_1\dots a_p$. This is important due to the noncommutativity of the effect of the $\rvert^\rightarrow$ operation on subsequent factors. We now open the bracket in \eqref{E wwww f} and first consider the extreme case, where we take the product all the first terms from each bracket, i.e., the product of $p$ factors with $\kappa$. In this case the summation is of order $1$ as the cumulant assumption \eqref{kappa tree decay metric} implies that $\sum_{\bm j\in\cI^k}\abs{\kappa(i, \bm j)}\lesssim 1$ for any fixed $i_1$ if $s\ge 2$. Therefore the worst case is when the least total number of derivatives is taken, i.e., when $\abs{\bm j_l}=1$ for all $l$, in which case $\abs{\partial_{\bm j_1}\dots \partial_{\bm j_p}f}\lesssim \abs{\NN}^{-(1+\epsilon)p}\lesssim l^{-2p}$. Now we consider the other extreme case where all the $(K-\E K)=(K-\kappa)$ factors are multiplied. There we a priori do not see the smallness as the summation size $\abs{\NN}^{\abs{\bm j_1}+\dots+\abs{\bm j_p}}$ roughly cancels the derivative size $\abs{\NN}^{-(1+\epsilon)(\abs{\bm j_1}+\dots+\abs{\bm j_p})}$. The desired smallness thus has to come from the correlation decay \eqref{kappa tree decay metric}. We can, however not directly apply the tree-like decay structure since there does not have to be a ``security distance'' between the supports of $\bm w_{\bm j_k}$ and $f$. For those $k$ with such a security we can apply the tree-like decay immediately, and for those $k$ where there is no such security distance we instead use \eqref{two groups pre cum} to write $K-\kappa$ approximately as the product of two functions whose supports are separated by a security distance of scale $l$. Indeed, if $\bm j_k$ is not separated from $\supp f$ at least by $l$, then by the pigeon hole principle of placing less than $R$ labels into $R$ nested layers, it splits into two groups $\bm j_k^{(i)}$ and $\bm j_k^{(o)}$ of ``inside'' and ``outside'' indices such that $\dist(\bm j_k^{(i)},\bm j_k^{(o)})\gtrsim l$. Now by \eqref{two groups pre cum} we have that 
\begin{align} K(i_k; \bm j_k) - \kappa(i_k; \bm j_k) = (\Pi \bm j_k^{(o)})\big[K(i_k;\bm j_k^{(i)})-\kappa(i_k,\bm j_k^{(i)})\big] - \sum_{\bm n_k^{(o)}\subsetneq \bm j_k^{(o)}}\sum_{\bm n_k^{(i)}\subset \bm j_k^{(i)}} (\Pi \bm n_{k}^{(i)})(\Pi \bm n_{k}^{(o)})\kappa(i_k,\bm j_{k}^{(i)}\setminus \bm n_{k}^{(i)},\bm j_{k}^{(o)}\setminus \bm n_{k}^{(o)}),\label{K-kappa i j}\end{align}
where $\Pi \bm j\defeq \Pi \bm w_{\bm j}$. When multiplying \eqref{K-kappa i j} for all $k$, in the product of the second terms we (multiplicatively) collect $p$ decay factors $l^{-s}$, resulting in $l^{-sp}$. For the product of the first terms we have to estimate a term of the type $\E g_1\dots g_p \widetilde f$ with $g_k$ being zero mean random variables such that all factors have mutually $l$-separated support. Here we set $g_k\defeq K(i_k;\bm j_k^{(i)})-\kappa(i_k,\bm j_k^{(i)})$ and absorbed the $\Pi \bm j_k^{(o)}$ factors into $\widetilde f$. It follows that 
\begin{align}\label{E ggg f} \abs[0]{\E g_1\dots g_p \widetilde f}\lesssim l^{-s \lceil p/2\rceil}, \end{align}
from Lemma \ref{tree decay lemma mutally separated}. In this argument we only considered the two extreme cases when we opened the bracket in \eqref{E wwww f} and even in the product $\Pi(K-\kappa)$, after using \eqref{K-kappa i j} for each factor we only considered the two extreme cases. There are many mixed terms in both steps but they can be estimated similarly and altogether we have 
\[\abs{ \E w_{i_1}\dots w_{i_p} f}\lesssim l^{-2p}+l^{-sp/2},\]
i.e., a power law decay whose exponent is proportional to the number of factors. 

\subsubsection{Expansion of a product of weakly dependent functions and self-energy renormalization}\label{exp mult weakly dpt fct} 
Now we generalize the expansion from Section \ref{exp weakly dep fct} and consider another simple example: the iterated expansion of multipole weakly 
dependent functions. In particular, we will introduce the concept of \emph{self-energy renormalization}.

Let $f_1,\dots,f_p$ be some functions of $\bm w$ which also depend weakly on each single $w_i$ in such a way that $\abs{\partial_{\bm j} f}\lesssim \abs{\NN}^{-(1+\epsilon)\abs{\bm j}}$, and let $i_1,\dots,i_p$ be in general position as in the previous example. We want to study 
\[\E \prod_{k\in [p]} w_{i_k} f_k,\]
which, by \eqref{E wwww f} with $f$ replaced by $\prod f_k$, can be expanded to 
\[ \E \prod_{k\in [p]}w_{i_k}f_k = \prod_{k\in [p]} \bigg(\sum_{\bm j_k}^{\NN(i_k)} \sum_{(\bm j_k^l)_{l\in [p]}=\bm j_k}\bigg) \E \prod_{k\in[p]}^\rightarrow\left[\frac{\kappa(i_k,\bm j_k)}{\abs{\bm j_k}!}+\frac{K(i_k;\bm j_k)-\E K(i_k;\bm j_k)}{\abs{\bm j_k}!}\bigg\rvert_{\bm w_{\NN(i_k)}=0}^{\rightarrow}\right]\prod_{n\in [p]}(\partial_{\bm j^n} f_n) + \landauO{l^{-sp/2}}. \]
Here the second sum is the sum over all partitions $\bm j_k^1\sqcup \dots \sqcup \bm j_k^p=\bm j_k$ of the multi-index $\bm j_k$, the multi-index $\bm j^n$ is given by the disjoint union $\bm j^n=\bm j^n_1\sqcup \dots \sqcup \bm j^n_p$, and we choose $R\approx ps/4\epsilon$,
 as in the previous example (recall that $R$ is the maximal order of expansion, i.e.~$\abs{\bm j_k}\le R$). Thus $\bm j_k^n$ encodes those derivatives hitting $f_n$ which originate from the expansion according to $w_{i_k}$. By expanding the product we can rewrite this expression as 
\begin{align*} \E \prod_{k\in [p]}w_{i_k}f_k &= \sum_{L_1\sqcup L_2=[p]} \E \prod_{k\in L_1}\bigg[\sum_{\bm j_k}^{\NN(i_k)}\frac{\kappa(i_k,\bm j_k)}{\abs{\bm j_k}!}  \sum_{(\bm j_k^n)_{n\in [p]}=\bm j_k} \bigg]\\
&\qquad\times\prod_{k\in L_2}^\rightarrow\bigg[\sum_{\bm j_k}^{\NN(i_k)}\frac{K(i_k;\bm j_k)-\E K(i_k;\bm j_k)}{\abs{\bm j_k}!}\bigg\rvert_{\bm w_{\NN(i_k)}=0}^{\rightarrow} \sum_{(\bm j_k^n)_{n\in [p]}=\bm j_k} \bigg] \prod_{n\in [p]}(\partial_{\bm j^n} f_n) + \landauO{l^{-sp/2}}.  \end{align*}
It turns out that in many relevant cases, in particular after the summation over $i_1,\dots,i_k,$ the leading contribution comes from those $k\in L_1$ for which $\abs{\bm j_k}=1$ and $\abs{\bm j_k^k}=1$. To counteract these leading terms we subtract this contribution from each factor $w_{i_k}f_k$ and instead compute 
\begin{align}\nonumber&\E \prod_{k\in [p]} \big[w_{i_k} f_k-\sum_{j\in \NN(i_k)} \kappa(i_k,j) \partial_j f_k \big]= \sum_{L_1\sqcup L_2=[p]} \E \prod_{k\in L_1}\bigg[\sum_{\bm j_k}^{\NN(i_k)}\frac{\kappa(i_k,\bm j_k)}{\abs{\bm j_k}!}  \sum_{(\bm j_k^n)_{n\in [p]}=\bm j_k}\1(\abs[1]{\bm j_k^k}=0\text{ if }\abs{\bm j_k}=1) \bigg]\\
&\qquad\qquad\qquad\times\prod_{k\in L_2}^\rightarrow\bigg[\sum_{\bm j_k}^{\NN(i_k)}\frac{K(i_k;\bm j_k)-\E K(i_k;\bm j_k)}{\abs{\bm j_k}!}\bigg\rvert_{\bm w_{\NN(i_k)}=0}^{\rightarrow} \sum_{(\bm j_k^n)_{n\in [p]}=\bm j_k} \bigg] \prod_{n\in [p]}(\partial_{\bm j^n} f_n) + \landauO{l^{-sp/2}}.\label{eq self energy renorm toy}
\end{align}
We note that this substraction or \emph{self-energy renormalization} does not affect the power counting bound of $l^{-2p}+l^{-sp/2}$ because it does not change the order of the terms but only excludes certain allocations of derivatives. However, beyond power counting, this exclusion can still reduce the effective size of the term considerably, see Section \ref{section step} where $f$ is the resolvent of a random matrix.

\section{Bound on the error matrix \texorpdfstring{$D$}{D} through a multivariate cumulant expansion}\label{section step} 
In this section we prove an isotropic and averaged bound on the error matrix $D$ defined in \eqref{eq D def}, in the form of high-moment estimates using the multivariate cumulant expansion. To formalize the bounds, we define the high-moment norms for random variables $X$ and random matrices $A$ by
 \[\norm{X}_p\defeq (\E \abs{X}^p)^{1/p},\quad \norm{A}_{p}\defeq \sup_{\norm{\vx},\norm{\vy}\le 1} \norm{\braket{\vx,A\vy}}_p=\Big[\sup_{\norm{\vx},\norm{\vy}\leq 1}\E\abs{\braket{\vx,A\vy}}^p\Big]^{1/p},\]
 where the supremum is taken over deterministic vectors $\vx,\vy$. 
\begin{theorem}[Bound on the Error]\label{theorem step}
Under Assumptions \ref{assumption A}, \ref{assumption high moments} and \ref{assumption neighbourhood decay}, there exist a constant $C_\ast$ such that for any $p\ge 1,\epsilon>0$, $z$ with $\Im z\ge N^{-1}$, $B\in \C^{N\times N}$ and $\vx,\vy\in \C^N$ it holds that 
\begin{subequations}
\begin{align}\label{bootstrapping step}
\frac{\norm{\braket{\vx,D\vy}}_{p}}{\norm{\vx}\norm{\vy}} &\le_{\epsilon,p} (1+\tnorm{\SS}+\tnorm{\kappa}_{\le R}^{\text{iso}})N^{\epsilon}\sqrt{\frac{\norm{\Im G}_{q}}{N\Im z}} \Big(1+\norm{G}_{q}\Big)^{\frac{C_\ast}{\mu}} \bigg(1+ \frac{\norm{G}_{q}}{N^{\mu}}\bigg)^{\frac{C_\ast p}{\mu}} \\
\frac{\norm{\braket{BD}}_{p}}{\norm{B}} &\le_{\epsilon,p} (1+\tnorm{\SS}+\tnorm{\kappa}_{\le R}^{\text{av}}) N^{\epsilon} 
 \braket{z}  \frac{\norm{\Im G}_{q}}{N\Im z} \Big(1+\norm{G}_{q}\Big)^{\frac{C_\ast}{\mu}} \bigg(1+ \frac{\norm{G}_{q}}{N^{\mu}}\bigg)^{\frac{C_\ast p}{\mu}},
\label{av bound D eq}
\end{align}
\end{subequations}
where $q=C_\ast p^4/(\mu^2\epsilon)$, $R=4p/\mu$, and for convenience we separately defined 
\begin{align}\label{tnorm SS}
  \tnorm{\SS}\defeq \tnorm{\kappa}_2^{\text{iso}}.
 \end{align}  
\end{theorem}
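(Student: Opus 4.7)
The plan is to control the high moments $\E\abs{\braket{\vx,D\vy}}^p$ and $\E\abs{\braket{BD}}^p$ by applying the multivariate cumulant expansion of Proposition \ref{cum exp prop} entrywise to each explicit factor of $W/\sqrt N$. Writing $D=WG/\sqrt N+\SS[G]G$ and splitting $\E\abs{\braket{\vx,D\vy}}^p$ into a product of $p$ factors and their conjugates, I would pick a distinguished $W$-factor, isolate one matrix element $w_{\alpha_0}$, and rewrite $\E w_{\alpha_0}(\cdots)$ through \eqref{cum expansion statement} with the nested neighbourhoods $\NN=\NN_R(\alpha_0)$ of Assumption \ref{assumption neighbourhood decay} and with $R=4p/\mu$. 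Derivatives $\partial_\beta$ act on resolvents via $\partial_\beta G=-G\Delta^{\beta}G/\sqrt N$ (up to the Hermitian symmetrisation), so each derivative cleaves a resolvent line into two entries.

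The key algebraic step comes at $m=1$: the second-order cumulant contribution in which the derivative $\partial_\beta$ lands on the $G$ sitting in the same distinguished $D$-factor reassembles, after summation over $\beta$, exactly the counter-term $-\SS[G]G$. Thus the self-energy renormalization is automatic at each factor, and only the genuinely correlation-inducing pieces of the $m=1$ contribution survive, producing a new random expression with strictly fewer explicit $W$'s and carrying a kernel bounded by $\tnorm{\kappa}_2^{\text{iso}}$ (isotropic) or $\tnorm{\SS}$ (averaged). For $m\ge 2$ the cumulants $\kappa(\alpha_0,\bm i)$ combined with the $m$-th order derivative are summed over $\bm i\in\NN^m$ and estimated by the norms $\tnorm{\kappa}_k^{\text{iso}/\text{av}}$ defined in \eqref{kappadef}, losing at most a factor $N^\epsilon$ by Assumption \ref{assumption correlations}. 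Ward's identity $GG^{\ast}=\Im G/\Im z$ then collapses each pair of conjugate resolvent entries to the scale $\norm{\Im G}_q/(N\Im z)$ that drives the claimed bound.

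The main obstacle is that this expansion must be iterated on every remaining $W$-factor, including those hidden inside the pre-cumulants themselves, and one must prevent the proliferation of terms whose cumulants straddle the boundary of a fixed neighbourhood $\NN_k$. To deal with such straddling, I would invoke the pre-cumulant decoupling identity \eqref{two groups pre cum} together with a pigeon-hole argument on the nested family $\NN_1\subset\cdots\subset\NN_R$, exactly as in the toy model of Section \ref{sec:toy}: whenever an index tuple fails to lie in a single $\NN_k$, there exists a layer separating it into an inside and outside part, and Assumption \ref{assumption neighbourhood decay} then supplies a mixing gain $N^{-3q}$ that more than absorbs the combinatorics. The Taylor remainder $\Omega$ left over when the expansion is truncated at order $R$ is estimated by \eqref{hoelder error bound}; the resulting loss is a finite power of $\norm{G}_q$, which with the choices $R=4p/\mu$ and $q=C_\ast p^4/\mu\epsilon$ gives precisely the factor $(1+\braket{z}\norm{G}_q/N^\mu)^{C_\ast p/\mu}$ in the statement.

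Finally I would organise all surviving contributions diagrammatically as developed in Sections \ref{cancellation ids sec}--\ref{iso bound sec}: cumulants become internal vertices, resolvent matrix elements become oriented edges, and $\vx,\vy$ or $B$ become external attachments; the $\braket{z}$-dependent prefactor $(1+\braket{z}\norm{G}_q)^{C_\ast/\mu}$ records the derivatives that hit the $\SS[G]G$ term. The isotropic bound picks up a factor $\sqrt{\norm{\Im G}_q/(N\Im z)}$ per Ward-eliminated resolvent pair, while the averaged bound gains the full factor $\norm{\Im G}_q/(N\Im z)$ because the extra trace against $B$ contributes an additional $1/\sqrt N$ via Cauchy--Schwarz on the Hilbert--Schmidt norm. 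Taking the $p$-th root assembles \eqref{bootstrapping step} and \eqref{av bound D eq}. The delicate point throughout is that the self-energy cancellation must be exploited at \emph{every} expansion step and not only at leading order, which is precisely what the diagrammatic bookkeeping and the pre-cumulant decoupling identity together achieve.
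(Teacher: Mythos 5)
Your proposal follows essentially the same route as the paper's proof: iterate the multivariate cumulant expansion of Proposition \ref{cum exp prop} factor by factor on the $D$'s, exploit the $m=1$ self-energy cancellation with $\SS[G]G$, treat straddling labels via the pre-cumulant decoupling identity \eqref{two groups pre cum} together with the nested neighbourhoods and a pigeon-hole argument, and bound the surviving terms through the diagrammatic Ward-estimate bookkeeping of Sections \ref{cancellation ids sec}--\ref{iso bound sec}, with the truncation error controlled by \eqref{hoelder error bound} and the choice $R\sim p/\mu$. One small attribution to fix: the extra power of $\norm{\Im G}_q/(N\Im z)$ in the averaged bound does not come from a Cauchy--Schwarz on the Hilbert--Schmidt norm of the trace; rather, the $N^{-1}$ from each normalized trace $\braket{B\cdot}$ already renders the naive graph value of order one (see \eqref{naive size averaged}), so the marking procedure can deliver the full gain $\psi^{2p}$, whereas in the isotropic case the deterministic vectors $\vx,\vy$ must first be contracted into isotropic resolvent entries (Step 1 of Section \ref{iso bound sec}) to reach the $O(1)$ baseline, after which only $\psi^{p}$ remains available.
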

\begin{remark}\label{S decomp remark}
We remark that the size of $\SS$ can be effectively controlled by $\tnorm{\kappa}_{2}^{\text{iso}}$, justifying the definition of $\tnorm{\SS}$. To see this we note that due to 
\begin{align}
\SS[V]=\frac{1}{N} \sum_{\alpha_1,\alpha_2} \kappa(\alpha_1,\alpha_2)\Delta^{\alpha_1}V\Delta^{\alpha_2}\label{SS kappa}
\end{align}
an arbitrary partition of $\kappa=\kappa_c+\kappa_d$ naturally induces a partition $\SS=\SS_c+\SS_d$. Furthermore, it is easy to see that $\norm{\SS_c[V]T}_p\le \tnorm{\kappa_c}_c \norm{V}_{2p}\norm{T}_{2p}$ and $\norm{\SS_d[V]T}_p\le \tnorm{\kappa_d}_d \norm{V}_{2p}\norm{T}_{2p}$, c.f. Lemma \ref{S[R]T bound}, thus 
\[\norm{\SS[V]T}_p \le \tnorm{\kappa}^{\text{iso}}_2 \norm{V}_{2p}\norm{T}_{2p}.\]
\end{remark}
Here we recall that the double-index $\alpha$ stands for a pair $\alpha=(a,b)$ of single indices, and that the matrix $\Delta^{\alpha}$ is a matrix of $0$'s except for a $1$ in the $(a,b)$-entry. 
\begin{remark} \label{Gpower remark} 
We point out an additional feature of the estimates \eqref{bootstrapping step}--\eqref{av bound D eq}:  they not only provide the 
optimal power of  $\norm{ \Im G }_q/(N\Im z)$, but  the power of $\norm{ G}_q$, without an extra smallness factor $N^{-\mu}$, is independent of $p$.
This will be essential  in the second part of the proof of the local law, see \eqref{Dbound} later.
\end{remark}

The main tool for proving Theorem \ref{theorem step} is the multivariate cumulant expansion from Proposition \ref{cum exp prop}. To connect to the toy model considered in Section \ref{sec:toy}, we note that the \emph{self-energy renormalization} of $N^{-1/2}WG$ is $-\SS[G]G$, up to an irrelevant contribution from indices $j\not\in\NN(i_k)$ in \eqref{eq self energy renorm toy}. In this sense the error term $D=N^{-1/2}WG+\SS[G]G$ is the difference of $N^{-1/2}WG$ and its self-energy renormalization. As already noted in the context of the toy model we recall that this substraction does not change the power counting of the resulting terms. It does, however, exclude certain allocations of derivatives which in the case of $N^{-1/2}WG$ means that the main contributions coming from the diagonal elements of the form $G_{aa}$ are absent.
Off-diagonal elements $G_{ab}$ are smaller on average, in fact the main gain comes from the key formula about resolvents of Hermitian matrices
\[\sum_b \abs{G_{ab}}^2 =\frac{\Im G_{bb}}{\eta},\]
where $\eta=\Im z$. This identity follows directly from the spectral theorem. In the physics literature it is often called \emph{Ward identity} and we will refer to it with this name. Notice that a sum of order $N$ is reduced to a $1/\eta$ factor, so the Ward identity effectively gains a factor of $1/(N\eta)$ over the naive power counting. However, this effect is available only if off-diagonal elements of the resolvent are summed up, the same reduction would not  take place in the sum  $\sum_a \abs{G_{aa}}^2$ which remains of order $N$. So the precise index structure is important. The next calculation shows this effect in the simplest case.
\subsubsection*{Exemplary gain through self-energy renormalization} We now give a short calculation to 
demonstrate the role  of self-energy renormalization term   $\SS[G]G$ while computing $\E \braket{D}^2$. 
 Notice that 
\begin{equation}\label{ED=0}\braket{ D } = \frac{1}{N} \sum_{a} \big[  \sum_b \frac{w_{ab}}{\sqrt N}G_{ba} + (\SS[G] G)_{aa}   \big] = \frac{1}{N}\sum_{a,b}\big[   \frac{w_{ab}}{\sqrt N}G_{ba} -\sum_{c,d} \frac{\kappa(ab, cd)}{N} \partial_{cd} G_{ba} \big] \end{equation}
is the sum of terms  of the form $w_{i} f$  plus their self-energy renormalization $ -N^{-1}\sum_{c,d} \kappa(ab, cd) \partial_{cd} G_{ba} $
where  $i =(a,b)$ and $f = G_{ab}$. We note that \eqref{ED=0} is the direct analogue of the self-energy renormalization in the toy-model discussed in Section \ref{sec:toy}, see \eqref{eq self energy renorm toy}. In \eqref{ED=0} we expanded $\SS[V]=\sum_{\alpha,\beta} N^{-1}\kappa(\alpha,\beta)\Delta^{\alpha} V\Delta^\beta$ and used the fact that the resolvent derivative reads $\Delta_{\alpha} G=-G\Delta^{\alpha}G$. Thus one should think of $\SS[G]G$ as being the matrix self-energy renormalization of $N^{-1/2}WG$. To present this example in the simplest form, we assume that $W$ is a Gaussian random matrix which automatically makes all higher order cumulants vanish. We find
\[\E \braket{D}^2 = N^{-1} \sum_{\alpha_1,\beta_1}\kappa(\alpha_1,\beta_1)\E \braket{\Delta^{\alpha_1} G} \braket{\Delta^{\beta_1} G} + N^{-2}\sum_{\alpha_1,\beta_1} \kappa(\alpha_1,\beta_1)\sum_{\alpha_2,\beta_2} \kappa(\alpha_2,\beta_2) \E  \braket{\Delta^{\alpha_1} G\Delta^{\beta_2} G} \braket{\Delta^{\alpha_2} G\Delta^{\beta_1}G},\]
the first term of which can be further bounded by
\[ N^{-1} \sum_{\alpha_1,\beta_1}\abs{\kappa(\alpha_1,\beta_1) \braket{\Delta^{\alpha_1} G} \braket{\Delta^{\beta_1} G}} \le \frac{\tnorm{\kappa}_2^{\text{av}}}{N} \sum_\alpha \abs{\braket{\Delta^\alpha G}}^2 = \frac{\tnorm{\kappa}_2^{\text{av}}}{N^3}\sum_{a,b} \abs{G_{ba}}^2 = \frac{\tnorm{\kappa}_2^{\text{av}}}{N^2}\frac{\braket{\Im G}}{\eta}.\]
For the second term we instead compute 
\begin{align*}
\sum_{\alpha_1,\beta_1} \sum_{\alpha_2,\beta_2}\abs{ \frac{\kappa(\alpha_1,\beta_1)\kappa(\alpha_2,\beta_2)}{N^2} \braket{\Delta^{\alpha_1} G\Delta^{\beta_2} G} \braket{\Delta^{\alpha_2} G\Delta^{\beta_1}G}}&\le \frac{(\tnorm{\kappa}_2^{\text{av}})^2}{N^2}  \sum_{\alpha_1,\alpha_2} \abs[0]{\braket{\Delta^{\alpha_2} G\Delta^{\alpha_1}G}}^2 \\
& = \frac{(\tnorm{\kappa}_2^{\text{av}})^2}{N^4} \sum_{a_1,b_1,a_2,b_2} \abs{G_{b_2a_1}}^2\abs{G_{b_1a_2}}^2=(\tnorm{\kappa}_2^{\text{av}})^2\frac{\braket{\Im G}^2}{(N\eta)^2}
\end{align*}
and we conclude that 
\[ \E\abs{\braket{D}}^2 \le \frac{1}{N^2} \E\left[ \frac{\tnorm{\kappa}_2^{\text{av}}\braket{\Im G}}{\eta}+\left(\frac{\tnorm{\kappa}_2^{\text{av}}\braket{\Im G}}{\eta}\right)^2\right],\]
which is small if $\eta\gg 1/N$. Without self-energy renormalization, however, i.e., for $\E \braket{N^{-1/2} W G}^2$ we, for example, also encounter a term of the type
\[  N^{-2}\sum_{\alpha_1,\beta_1} \kappa(\alpha_1,\beta_1)\sum_{\alpha_2,\beta_2} \kappa(\alpha_2,\beta_2) \E  \braket{\Delta^{\alpha_1} G\Delta^{\beta_1} G} \braket{\Delta^{\alpha_2} G\Delta^{\beta_2}G},\]
which is of order $1$ because it lacks the gain from the Ward identity.

\subsection{Computation of high moments of \texorpdfstring{$D$}{D} through cancellation identities} \label{cancellation ids sec}
Before going into the proof of Theorem \ref{theorem step}, we sketch the strategy. 
For simplicity, we first present the proof for the case of bounded spectral parameter  $\braket{z}\le C$
and we will comment on the trivial modification for the general case at the end of Section~\ref{section step}.
We will also temporarily assume that $\| H\| \le C$ with some large constant  $C$. 
 For arbitrary linear (or conjugate linear in the sense that $\Lambda(\lambda\cdot)=\overline{\lambda}\Lambda(\cdot)$ for $\lambda\in\C$) functionals $\Lambda^{(1)},\dots,\Lambda^{(k)}$ we derive an explicit expansion for
\begin{align} \label{D product}\E \Lambda^{(1)}(D)\dots\Lambda^{(k)}(D) \end{align}
in terms of joint cumulants $\kappa(\alpha_1,\dots,\alpha_k)$ of the entries of $W$ and expectations of products of factors of the form \begin{align*}\Lambda(\Delta^{\alpha_1} G \Delta^{\alpha_2} G\dots G\Delta^{\alpha_k}G).\end{align*} 
In other words, we express \eqref{D product} solely in terms of matrix elements of $G$, which allows for a very systematic estimate. For the main part of the expansion we will then specialize to $\Lambda^{(k)}(D)=\braket{BD}$, $\Lambda^{(k)}(D)=\braket{\vx, D\vy}$ or their complex conjugates, and develop a graphical representation of the expansion. In this framework both the averaged and the isotropic bound on $D$ reduce to a sophisticated power counting argument which -- with the help of Ward estimates -- directly gives the desired size of the averaged and isotropic error.  

Equipped with the cumulant expansion from Proposition \ref{cum exp prop}, we now aim at expressing $\E \Lambda^{(1)}(D)\dots \Lambda^{(p)}(D)$ for linear and conjugate linear functions $\Lambda^{(j)}$, purely in terms of the expectation of products of $G$'s in the form 
 \begin{equation}\label{Lambda_def}\Lambda_{\alpha_1,\dots,\alpha_k}\defeq-(-1)^kN^{-k/2} \begin{cases}
 \Lambda(\Delta^{\alpha_{1}} G\dots \Delta^{\alpha_{k}} G) &\text{if $\Lambda$ is linear}\\
 \Lambda(\Delta^{\alpha_{1}^t} G\dots \Delta^{\alpha_{k}^t} G)&\text{if $\Lambda$ is conjugate linear}
 \end{cases}  \end{equation}
 for double indices $\alpha_1,\dots,\alpha_k\in I=J\times J$, where we recall that for $\alpha=(a,b)$ the transpose $\alpha^t$ denotes $\alpha^t=(b,a)$. The sign choice will make the subsequent expansion sign-free. The reason for the $N^{-k/2}$ pre-factor is that the $\Lambda_{\alpha_1,\dots,\alpha_k}$ terms appear through $k$ derivatives of $G$'s each of which carries a $N^{-1/2}$ from the scaling $H=A+N^{-1/2}W$. Since the derivatives of $G$ naturally come with many permutations from the Leibniz rule, we will also use the notations
 \begin{align}\Lambda_{\{\alpha_1,\dots,\alpha_m\}}\defeq\sum_{\sigma\in S_m}\Lambda_{\alpha_{\sigma(1)},\dots,\alpha_{\sigma(m)}}, \quad \Lambda_{\alpha,\{\alpha_1,\dots,\alpha_m\}}\defeq\sum_{\sigma\in S_m}\Lambda_{\alpha,\alpha_{\sigma(1)},\dots,\alpha_{\sigma(m)}},\quad \Lambda_{\underline\alpha,\underline\beta}\defeq \sum_{\alpha\in\underline\alpha}\Lambda_{\alpha,\underline\alpha\cup\underline\beta\setminus\{\alpha\}} \label{Lambda set def}\end{align}
 for multisets $\{\alpha_1,\dots,\alpha_m\}$, \smash{$\underline\alpha$}, \smash{$\underline\beta$}. We will follow the convention that underlined Greek letters denote multisets of labels from $I$, while non-underlined Greek letters still denote single labels from $I$. By convention we set $\Lambda_{\emptyset}=\Lambda_{\emptyset,\underline\beta}=0$. The last two definitions in \eqref{Lambda set def} reflect the fact that the first index of $\Lambda$ will often play a special role since derivatives of $\Lambda_{\alpha_1,\dots,\alpha_k}$ will all keep $\alpha_1$ as their first index. With these notations, we note that
 \[ \Lambda_{\underline\alpha}=-\1(\abs{\alpha}>0)\Lambda(G^{-1}\partial_{\underline\alpha}G), \qquad \Lambda_{\underline\alpha,\underline\beta}=\partial_{\underline\beta}\Lambda_{\underline\alpha}  \] 
hold for arbitrary multisets $\underline\alpha$, where $\abs{\underline\alpha}$ denotes the number of elements (counting multiplicity) in the multiset. 

\subsubsection*{Expansion of a single factor of $D$}
We now use Proposition~\ref{cum exp prop} to compute $\E \Lambda(D) f$ for any random variable $f$ (later $f$ will be the product of the other $\Lambda$'s). In the remainder of Section \ref{section step} the neighbourhoods $\NN=\NN(\alpha)$ are those from Assumption \ref{assumption neighbourhood decay}. The analogue of the length scale $l$ from Section \ref{sec:toy} is thus $N^{1/4-\mu/2}$, while the parameter $R$ is still a large integer, depending only on $p$ and $\mu$. We expand \[\E\Lambda(D)f = \E \frac{1}{\sqrt N}\sum_{\alpha} w_\alpha \Lambda( \Delta^\alpha G)f+\E\Lambda(\SS[G]G) f = \E \sum_{\alpha} w_\alpha \Lambda_\alpha f+\E\Lambda(\SS[G]G) f\] and from \eqref{cum expansion statement} we obtain
\begin{align}\E\Lambda(D)f = \sum_\alpha \sum_{0\le m<R}\sum_{\bm\beta \in\NN^m} \E\left[\frac{\kappa(\alpha,\underline\beta)}{m!} + \frac{K(\alpha;\underline\beta)-\kappa(\alpha,\underline\beta)}{m!}\bigg\rvert_{W_{\NN}=0}^{\rightarrow} \right]\partial_{\underline\beta} \Lambda_\alpha f + \E\Lambda(\SS[G]G) f + \sum_{\alpha}\Omega(\Lambda_\alpha f,\alpha,\NN)\label{canc id der}.\end{align}
Here we follow the convention that $\bm \beta$ is the tuple with elements $(\beta_1,\dots,\beta_m)$ and \smash{$\underline\beta$} is the multiset obtained from the entries \smash{$\underline\beta=\{\beta_1,\dots,\beta_m\}$}, and we recall that for $\cI=I$ we denote $\kappa(w_{\alpha_1},\dots,w_{\alpha_k})$ and $K(w_{\alpha_1};w_{\alpha_2},\dots,w_{\alpha_k})$ by $\kappa(\alpha_1,\dots,\alpha_k)$ and $K(\alpha_1;\alpha_2,\dots,\alpha_k)$ (in contrast to the general setting of Section \ref{sec cum exp} where $\kappa$ was viewed as a function of the random variables). 
For $m=0$ the first term in the first bracket of \eqref{canc id der} vanishes due to $\kappa(\alpha)=\E w_\alpha=0$; for $m=1$ its contribution is given by
\[ \sum_{\alpha\in I,\beta\in \NN} \kappa(\alpha,\beta) \partial_\beta (\Lambda_\alpha f) = \sum_{\alpha\in I,\beta\in \NN}\kappa(\alpha,\beta)\Lambda_{\alpha,\beta} f +\sum_{\alpha\in I,\beta\in \NN}\kappa(\alpha,\beta)\Lambda_\alpha\partial_\beta f, \]
where we observe that the first term almost cancels the $\E \Lambda(\SS[G]G)f=-\sum_{\alpha,\beta\in I} \kappa(\alpha,\beta)\Lambda_{\alpha,\beta} f$ term except for the small contribution from $\beta\not\in\NN$. We thus rewrite \eqref{canc id der} in the form 
\begin{subequations}\label{eq cancellation identities}
\begin{align}\label{eq Lambda D cancellation correlated}
\E \Lambda(D) f &= \E \sum_{\alpha\in I,\beta\in\NN} \kappa(\alpha,\beta) \Lambda_{\alpha} \partial_{\beta}f + \E\sum_{\alpha\in I}\sum_{m<R}\sum_{\bm\beta\in\NN^m}\bigg[\frac{\kappa(\alpha,\underline\beta)}{l!}\1_{m\ge 2}+\frac{K(\alpha;\underline\beta)- \kappa(\alpha,\underline\beta)}{m!}\bigg\rvert_{W_{\NN}=0}^{\rightarrow}\bigg] \partial_{\underline\beta}\big( \Lambda_{\alpha} f \big)  \nonumber\\
&\qquad + \E\bigg(-\sum_{\alpha,\beta\in I}\kappa(\alpha,\beta)+\sum_{\alpha\in I,\beta\in \NN}\kappa(\alpha,\beta)\bigg)\Lambda_{\alpha,\beta} f + \sum_\alpha \Omega(\Lambda_\alpha f,\alpha,\NN). \\ 
\intertext{In the above derivation of \eqref{eq cancellation identities} we used directly that $\Lambda$ is linear. In the case of conjugate linear we replace $\Lambda(D)$ by $\Lambda(D^\ast)$ which is linear again. This replacement is remedied by the fact that in the definition of $\Lambda_{\alpha_1,\dots,\alpha_k}$ in \eqref{Lambda_def} we consider transposed double indices. More generally, following the same computation, we have}
\label{eq Lambda Dp cancellation correlated}
\E \Lambda(\partial_{\underline\gamma}D) f &= \E \Lambda_{\underline\gamma}f+ \E \sum_{\alpha\in I,\beta\in \NN} \kappa(\alpha,\beta) \Lambda_{\alpha,\underline\gamma} \partial_{\beta}f + \E\sum_{\alpha\in I}\sum_{m<R}\sum_{\bm\beta\in\NN^m}\bigg[\frac{\kappa(\alpha,\underline\beta)}{m!}\1_{m\ge 2}+\frac{K(\alpha;\underline\beta)-\kappa(\alpha,\underline\beta)}{m!}\bigg\rvert_{W_{\NN}=0}^{\rightarrow}\bigg]\partial_{\underline\beta}\big( \Lambda_{\alpha,\underline\gamma} f \big) \nonumber\\
&\qquad + \E\bigg(-\sum_{\alpha,\beta\in I} \kappa(\alpha,\beta)+\sum_{\alpha\in I,\beta\in \NN}\kappa(\alpha,\beta) \bigg) \Lambda_{\alpha,\{\beta\}\sqcup\underline\gamma}f + \sum_\alpha \Omega(\Lambda_{\alpha,\underline\gamma} f,\alpha,\NN). \end{align}
\end{subequations}
We think of the first two terms and the first term of the square bracket in the third term \eqref{eq Lambda Dp cancellation correlated} as the leading order terms. The second summand in the third term will be small due to the structure of the pre-cumulants and the fact that the subsequent function $\partial\Lambda f$ has the $\NN$-randomness removed. The fourth term is small because the two sums in the parenthesis almost cancel; and finally the fifth term will be small by choosing $R$ sufficiently large. We call \eqref{eq cancellation identities} (approximate) \emph{cancellation identities} as they exhibit the cancellation of second order statistics due to the definition of $\SS$ and $D$.

\subsubsection*{Iterated expansion of multiple factors of $D$}
We now use \eqref{eq Lambda Dp cancellation correlated} repeatedly to compute $\E \prod_{k\in [p]}\Lambda^{(k)}(D)$. As a first step we expand the $D$ in the $\Lambda^{(1)}$ factor, for which the special case \eqref{eq Lambda D cancellation correlated} is sufficient and we find
\begin{align}\nonumber
&\E \Lambda^{(1)}(D)\prod_{k\ge 2}\Lambda^{(k)}(D) = \sum_{\alpha_1\in I} \Omega\bigg(\Lambda_{\alpha_1}^{(1)} \prod_{k\ge 2}\Lambda^{(k)}(D),\alpha_1,\NN(\alpha_1)\bigg)\\\nonumber
& +\E \sum_{\substack{\alpha_1\in I\\\beta_1\in\NN(\alpha_1)}} \kappa(\alpha_1,\beta_1) \Lambda_{\alpha_1}^{(1)} \partial_{\beta_1}\bigg(\prod_{k\ge 2}\Lambda^{(k)}(D)\bigg)+ \E\bigg(-\sum_{\alpha_1,\beta_1\in I}\kappa(\alpha_1,\beta_1)+\sum_{\substack{\alpha_1\in I\\\beta_1\in\NN(\alpha_1)}}\kappa(\alpha_1,\beta_1)\bigg) \Lambda_{\alpha_1,\beta_1}^{(1)}\prod_{k\ge 2}\Lambda^{(k)}(D)\\
& +  \E\sum_{\alpha_1\in I}\sum_{m<R}\sum_{\bm\beta_1\in\NN(\alpha_1)^m}\bigg[\frac{\kappa(\alpha_1,\underline\beta_1)}{m!}\1_{m\ge 2}+\frac{K(\alpha_1;\underline\beta_1)- \kappa(\alpha_1,\underline\beta_1)}{m!}\bigg\rvert_{W_{\NN(\alpha_1)}=0}^{\rightarrow}\bigg] \partial_{\underline\beta_1}\bigg( \Lambda_{\alpha_1}^{(1)} \prod_{k\ge 2}\Lambda^{(k)}(D) \bigg).\label{E prod Lambda der}
\end{align} 
We now distribute the \smash{$\underline\beta_1$}-derivatives in the last term among the \smash{$\Lambda_{\alpha_1}^{(1)}$} and $\Lambda^{(k)}(D)$ factors according to the Leibniz rule. We handle the $\partial_{\beta_1}$ derivative in the second term similarly but observe that this is slightly different in the sense that the $\partial_{\beta_1}$ derivative does not hit the \smash{$\Lambda^{(1)}_{\alpha_1}$} factor. In other words, terms involving second order cumulants ($m=1$) come with the restriction that $\partial_{\beta_1}\Lambda_{\alpha_1}^{(1)}$ derivative is absent. This is precisely the effect we already encountered in Section \ref{sec:toy}; the self-energy normalization does not cancel all second order terms, it merely puts a restriction on the index-allocations in such a way that gains through Ward estimates are guaranteed in all remaining terms. In order to write \eqref{E prod Lambda der} more concisely we introduce the notations
\begin{equation}\begin{split}
\sum_{\alpha_l,\bm\beta_l}^{\sim(l)} &\defeq \sum_{\alpha_l\in I} \sum_{1\le m<R} \sum_{\bm\beta_l\in \NN(\alpha_l)^m} \frac{\kappa(\alpha_l,\underline\beta_l)}{m!} \sum_{\underline\beta_l^1\sqcup\dots\sqcup \underline\beta_l^p=\underline\beta_l} \1\left(\abs[0]{\underline\beta_l^l}=0\text{ if }\abs[0]{\underline\beta_l}=1\right),\\
\sum_{\alpha_l,\bm\beta_l}^{\ast} &\defeq \sum_{\alpha_l\in I} \sum_{0\le m<R} \sum_{\bm\beta_l\in \NN(\alpha_l)^m}\sum_{\underline\beta_l^1\sqcup\dots\sqcup \underline\beta_l^p=\underline\beta_l} \frac{K(\alpha_l;\underline\beta_l)- \kappa(\alpha_l,\underline\beta_l)}{m!}, \\
\sum_{\alpha_l,\beta_l^l}^{\#} &\defeq \bigg[-\sum_{\alpha_l,\beta_l^l\in I}\kappa_\SS(\alpha_l,\beta_l^l)+ \sum_{\alpha_l\in I}\sum_{\beta_l^l\in \NN(\alpha_l)} \kappa(\alpha_l,\beta_l^l) \bigg],\label{sum star hash def}\end{split}\end{equation} 
where $\kappa_\SS(\alpha_1,\dots,\alpha_k)\defeq \kappa(\widetilde w_{\alpha_1},\dots,\widetilde w_{\alpha_k})$ and where $\widetilde W=(\widetilde w_{\alpha})_{\alpha\in I}$ is an identical copy of $W$. The reason for introducing this identical copy  will become apparent in the next step.
 We furthermore follow the convention that $\underline\beta_l^k=\emptyset$ if $\underline\beta_{l}^k$ does not appear in the summation (which is the case for all $k\not=l$ in $\sum^{\#}_{\alpha_l,\beta_l^l}$ in \eqref{E prod der2}). Using these notations we can write \eqref{E prod Lambda der} as 
\begin{align}\label{E prod der2}
\E \prod_{k\in [p]} \Lambda^{(k)}(D) = \E\Bigg(\sum_{\alpha_1,\bm\beta_1}^{\sim(1)}+\sum_{\alpha_1,\bm\beta_1}^\ast\bigg\rvert_{W_{\NN(\alpha_1)}=0}^\rightarrow + \sum_{\alpha_1,\beta_1^1}^{\#} \Bigg) \Lambda_{\alpha_1,\underline\beta_1^1} \prod_{k=2}^p \Lambda^{(k)}\big(\partial_{\underline\beta_1^k} D\big)+\Omega,
\end{align}
where the error term $\Omega$ collects all other terms and is defined in \eqref{Omega first error} below.
We point out that the notations introduced in \eqref{sum star hash def} implicitly depend on the parameter $R$ 
determining the order of expansion.
 
\subsubsection*{Estimate of error term $\Omega$}
It remains to estimate the error term $\Omega$ which is bounded by
\begin{align}\label{Omega first error}
\Omega\defeq \sum_{\alpha_1\in I} \Omega\bigg(\Lambda_{\alpha_1}^{(1)} \prod_{k\ge 2}\Lambda^{(k)}(D),\alpha_1,\NN(\alpha_1)\bigg)\le_R \sum_{\alpha_1,\bm\beta_1\in\NN(\alpha_1)^R} \norm[3]{\partial_{\underline\beta_1}\bigg( \Lambda_{\alpha_1}^{(1)} \prod_{k\ge 2}\Lambda^{(k)}(D)\bigg)\bigg\rvert_{\widehat W_t} }_2
\end{align}
for some $t\in[0,1]$, where $\widehat W_t =\widehat W_t^{(\alpha_1)}=t W_{\NN(\alpha_1)}+W_{\NN(\alpha_1)^c}$, where we recall the definition of $\Omega(\Lambda,\alpha,f)$ in \eqref{cum expansion statement} and its bound in \eqref{hoelder error bound}. To further estimate this expression, we first distribute the $\partial_{\underline\beta_1}$ derivative to the $p$ factors involving $\Lambda^{(1)},\dots,\Lambda^{(p)}$ following the Leibniz rule, and then separate those factors by a simple application of H\"older inequality into $p$ factors of $\norm{\cdot}_{2p}$ norms. Each of these factors can be written as a sum of terms of the type $\norm[1]{\Lambda^{(k)}(\partial_{\underline\gamma} G\big\rvert_{\widehat W_t})}_{2p}$ or $\norm[1]{\Lambda^{(k)}(\partial_{\underline\gamma} D\big\rvert_{\widehat W_t})}_{2p}$ for some derivative operator $\partial_{\underline\gamma}$. We can then estimate these norms using 
\begin{align}\label{Lambda norm bound Ghat Dhat bound}\norm{\Lambda(R)}_q\le \norm{\Lambda}\norm{R}_q,\quad\text{and}\quad \norm[1]{\partial_{\underline\gamma} G\big\rvert_{\widehat W_t}}_{q} + \norm[1]{\partial_{\underline\gamma} D\big\rvert_{\widehat W_t}}_{q} \le_{\abs[0]{\underline\gamma}} N^{-\abs[0]{\underline\gamma}/2} (1+\tnorm{S}) (1+\norm{G}_{Cq\abs[0]{\underline\gamma}/\mu})^{(\abs[0]{\underline\gamma}+5)/\mu},\end{align}
where the second inequality follows from Lemma \ref{G D triv bound lemma}, and we note that $Cp\abs[0]{\underline\gamma}\le CRp^2$. We now count the total number of derivatives: There are $R+1$ derivatives from $\abs[0]{\underline\beta_1}$ and $\alpha_1$, each providing a factor of $N^{-1/2}$. It remains to account for the $\alpha_1,\bm\beta_1$-sums which is at most of size $\sum_{\alpha_1}\abs{\NN(\alpha_1)}^R\le N^{2+R/2-\mu R}$. We now choose $R$ large enough so that 
\[N^{2-(R+1)/2+R/2-\mu R}\le N^{-p}, \]
which is satisfied if we choose $R\ge 3p/\mu$. Combining these rough bounds we have shown that, up to irrelevant combinatorial factors,
\begin{align}\label{E2 bound}
\Omega \le_{p,\mu} N^{-p}  \bigg[\prod_{k=1}^p &\norm[0]{\Lambda^{(k)}} \bigg] \Big(1+\tnorm{\SS}\Big)^p \Big(1+\norm{G}_{Cp^3/\mu^2}\Big)^{Cp/\mu^2}.
\end{align}

\subsubsection*{Main expansion formula for multiple factors of $D$}
Formula \eqref{E prod der2} with the bound \eqref{E2 bound} on the error term is the first step where the cumulant expansion was used in the $\Lambda^{(1)}(D)$ factor. Now we iterate this procedure for the $\Lambda^{(2)}(D),\Lambda^{(3)}(D),\dots$ inductively. We arrive at the following proposition modulo the claimed bound on the overall error which we will prove after an extensive explanation. 
\begin{proposition}\label{prop explicit formula Lambda(D) power}
Let $\Lambda^{(1)},\dots,\Lambda^{(p)}$ be linear (or conjugate linear) functionals and let $p\in \N$ be given. Then we have
\begin{align}\label{E prod D full exp}
\E \prod_{k\in[p]} \Lambda^{(k)}(D) = \E \prod_{l\in[p]}^\rightarrow \left(1+\sum_{\alpha_l,\bm\beta_l}^{\sim(l)} + \sum^{\ast}_{\alpha_l,\bm\beta_l}\Bigg\rvert_{W_{\NN(\alpha_l)}=0}^{\rightarrow} +\sum^{\#}_{\alpha_l,\beta_l^l}\right) \prod_{k\in [p]} \begin{cases}
\Lambda^{(k)}_{\alpha_k,\bigsqcup_{l\in [p]}\underline\beta_l^k}&\text{if }\sum_{\alpha_k}\\
\Lambda^{(k)}_{\bigsqcup_{l<k} \underline{\beta}_l^k, \bigsqcup_{l>k}\underline\beta_l^k}&\text{else}
\end{cases} + \Omega,
\end{align}
where ``if $\sum_{\alpha_k}$'' means cases where after multiplying out the first product $\prod_l$ the summation over the index $\alpha_k$ is performed. Under Assumptions \ref{assumption A}, \ref{assumption high moments} and \ref{assumption neighbourhood decay}, 
the error term $\Omega$  is bounded by 
\begin{align} &\abs{\Omega}  \le_{p,\mu} N^{-p}\braket{z}^{-p} \bigg[\prod_{k=1}^p \norm[0]{\Lambda^{(k)}} \bigg] (1+\tnorm{\SS})^p
\Big(1+\norm{G}_{q}\Big)^{\frac{Cp}{\mu^2}}\bigg(1+ \frac{\norm{G}_{q}}{N^\mu}\bigg)^{\frac{Cp^2}{\mu^2}},\label{cE error}\end{align}
if we choose $R=4p/\mu$ to be order of expansion in the summations, see \eqref{sum star hash def}. Furthermore, we set $q\defeq Cp^3/\mu^2$ for some constant $C$, 
and $\norm[0]{\Lambda^{(k)}}$ denotes the operator norm of the linear functional $\Lambda^{(k)}$.  
\end{proposition}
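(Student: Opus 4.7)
The argument proceeds by induction on $l \in [p]$, applying the cancellation identity \eqref{eq Lambda Dp cancellation correlated} sequentially to the $l$-th factor, and using the Leibniz rule to distribute the resulting derivatives $\partial_{\underline\beta_l}$ among the $p$ factors via a partition $\underline\beta_l = \underline\beta_l^1 \sqcup \cdots \sqcup \underline\beta_l^p$. The base step $l=1$ was already carried out in \eqref{E prod Lambda der}--\eqref{E prod der2}, producing the $l=1$ block of summations in \eqref{E prod D full exp} and a Taylor remainder $\Omega_1$ controlled by \eqref{E2 bound}.

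At step $l$, each term present after step $l-1$ contains a distinguished $l$-th factor of the form $\Lambda^{(l)}(\partial_{\underline\gamma_l} D)$, where $\underline\gamma_l = \bigsqcup_{j<l}\underline\beta_j^l$ collects all derivatives pushed onto the $l$-th slot during prior iterations. Applying \eqref{eq Lambda Dp cancellation correlated} with this choice of $\underline\gamma$ splits this factor into three groups of contributions. First, the leading term $\Lambda^{(l)}_{\underline\gamma_l} f$ matches the ``$1$'' in the parenthesis $(1 + \sum^{\sim(l)} + \sum^{\ast}|^\rightarrow + \sum^{\#})$ and contributes the ``else'' case $\Lambda^{(l)}_{\bigsqcup_{j<l}\underline\beta_j^l,\bigsqcup_{j>l}\underline\beta_j^l}$ (no new $\alpha_l$ is introduced). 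Second, the higher-cumulant, pre-cumulant, and boundary-$\kappa$ terms (carrying the new summation index $\alpha_l$) produce $\sum^{\sim(l)}$, $\sum^{\ast}|_{W_{\NN(\alpha_l)}=0}^\rightarrow$, and $\sum^{\#}$ respectively, together with the factor $\Lambda^{(l)}_{\alpha_l,\bigsqcup_{j}\underline\beta_j^l}$ from the ``if $\sum_{\alpha_k}$'' case. Third, one picks up a Taylor remainder $\Omega_l$. The self-energy cancellation against the $\Lambda(\SS[G]G)$ part of $\partial_{\underline\gamma_l} D$ eliminates precisely the $m=1$, $\underline\beta_l^l \ne \emptyset$ contribution to $\sum^{\sim(l)}$ (producing the indicator $\1(|\underline\beta_l^l|=0\text{ if }|\underline\beta_l|=1)$) and leaves the boundary tail $\sum^{\#}$, exactly as in the base step. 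The remaining $\partial_{\underline\beta_l}$ derivatives are distributed via Leibniz over all $p$ factors.

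The ordering $\prod_{l\in[p]}^\rightarrow$ in \eqref{E prod D full exp} is forced by the non-commutativity of the $|^\rightarrow$ operators that accumulate across iterations: the restriction $|_{W_{\NN(\alpha_l)}=0}^\rightarrow$ produced at step $l$ applies to the randomness of all factors produced at steps $l' > l$. Collecting the main terms after $p$ iterations yields the claimed identity, with cumulative remainder $\Omega = \sum_{l=1}^{p} \Omega_l$.

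For the quantitative bound on $\Omega$, each $\Omega_l$ is estimated via \eqref{hoelder error bound} with $R = 4p/\mu$. After distributing the $R$ derivatives via Leibniz across at most $p$ factors and applying H\"older's inequality to decouple the $L^2$-norm into $p$ factors of $L^{2p}$-norms, each factor is bounded by \eqref{Lambda norm bound Ghat Dhat bound}. Each derivative of $G$ provides $N^{-1/2}$, while the combinatorial sum $\sum_{\alpha_l}\sum_{\bm\beta_l\in\NN(\alpha_l)^R}$ has size at most $N^{2}\cdot N^{R(1/2-\mu)}$ by Assumption \ref{assumption neighbourhood decay}. The net per-step power of $N$ is $N^{3/2 - \mu R} = N^{3/2 - 4p}$ for our choice of $R$, well below the required $N^{-p}$. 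The factor $(1+\braket{z}\norm{G}_q)^{Cp/\mu}$ comes from bounding the $R = O(p/\mu)$ derivatives of $G$ at a single step through \eqref{Lambda norm bound Ghat Dhat bound}; the factor $(1+\braket{z}\norm{G}_q/N^\mu)^{Cp^2/\mu}$ arises across the $p$ iterations since the accumulated total number of pushed-through derivatives in the main terms reaches $O(pR) = O(p^2/\mu)$, with the $N^{-\mu}$ gain per derivative coming from the neighbourhood bound $|\NN|\le N^{1/2-\mu}$ rather than $|I| = N^2$. Summing $\Omega_l$ over $l\in[p]$ yields \eqref{cE error}. The principal technical obstacle is the systematic bookkeeping of the nested $|^\rightarrow$ restrictions and the verification that the self-energy renormalization at each step produces precisely the constrained $\sum^{\sim(l)}$ and the tail $\sum^{\#}$; once this structural identification is made, the size estimates reduce to standard power counting.
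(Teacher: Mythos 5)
Your derivation of the expansion formula \eqref{E prod D full exp} is structurally the same as the paper's: iterate the cancellation identity \eqref{eq Lambda Dp cancellation correlated} slot by slot, distribute derivatives via Leibniz (encoded through the partitions $\underline\beta_l = \bigsqcup_k\underline\beta_l^k$), track the $\rvert^\rightarrow$ restrictions, and observe that the self-energy renormalization kills the $m=1$, $\underline\beta_l^l\neq\emptyset$ allocation at each step. That part is fine.

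The error-bound argument, however, has a genuine gap in the power counting. You estimate each $\Omega_j$ by the per-step tradeoff ``sum of size $N^{2}\cdot N^{R(1/2-\mu)}$ against $R+1$ derivatives each worth $N^{-1/2}$,'' arriving at $N^{3/2-\mu R}=N^{3/2-4p}$, which you declare to be ``well below the required $N^{-p}$.'' But $\Omega_j$ for $j>1$ carries \emph{all} the accumulated sums produced by the main terms of the earlier steps $l<j$ — the ones indexed by $L_2, L_3, L_4$ in the paper's notation — and these contribute large positive powers of $N$ that your per-step count omits entirely. In particular, each $l\in L_4$ contributes a sum over $\alpha_l,\beta_l^l\in I$ (size $N^4$) against only two derivatives ($N^{-1}$), a net $N^{3}$ per element of $L_4$, for a worst case of $N^{3(p-1)}$; and each $l\in L_2\sqcup L_3$ contributes $N^{3/2}$ from the $\alpha_l$ sum over $I$ (this has no $N^{-\mu}$ gain, unlike the $\bm\beta_l$ sums). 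You acknowledge the accumulated $\bm\beta$-derivatives (the $N^{-\mu}$ gain and the $(1+\braket{z}\norm{G}_q/N^\mu)^{Cp^2/\mu}$ factor), but not the $\alpha_l$ and $L_4$ contributions. Combining the paper's careful count (its \eqref{power counting sums}--\eqref{power counting derivatives}) with your step-$j$ factor $N^{3/2-4p}$, the total $N$-power is $N^{3/2(\abs{L_2\sqcup L_3}+1)+3\abs{L_4}-R\mu}$, which in the worst case $\abs{L_4}=p-1$ equals $N^{3p-3/2-4p}=N^{-p-3/2}$ — it does close, but only \emph{barely}, and precisely because $R=4p/\mu$ was chosen with these contributions in mind. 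Your claim of ample margin ($N^{3/2-4p}\ll N^{-p}$) is misleading: the omitted accumulated sums eat almost all of it. A correct error bound has to carry those extra sums through the H\"older and \eqref{Lambda norm bound Ghat Dhat bound} estimates, exactly as the paper does.
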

For \eqref{E prod D full exp} we recall the convention that $\underline\beta_l^k=\emptyset$ whenever $\underline\beta_l^k$ is not summed, i.e., for the contribution from the $1$ in the $l$-th factor, or the contribution from $\sum^{\#}$ in the $l$-th factor for $k\not=l$. Moreover, we remind the reader that the custom notation $\rvert^\rightarrow_{W_\NN=0}$ was introduced right after \eqref{E wwww f}. We also note that the terms with a $1$ from the first factor vanish as they contain $\Lambda^{(1)}_{\emptyset,\sqcup_{l>1}\underline\beta_l^1}=0$. Moreover, we can now explain why we introduced the identical copy $\widetilde W$ of $W$ in the definition of $\kappa_\SS$ in 
 \eqref{sum star hash def}.  The cumulants in the representation of the term 
 $\SS[G]G=-\sum_{\alpha,\beta\in I}\kappa_\SS(\alpha,\beta)\Lambda_{\alpha,\beta}$ should not be affected by the restriction
 imposed by the operation $\rvert_{W_\NN=0}^\rightarrow$.
 Changing $W$ to $\widetilde W$ within the definition of $\kappa_S$ protects it 
 from the action of $\rvert_{W_\NN=0}^\rightarrow$ that turns all subsequent $W$ variables zero. 
  This non-restriction of the particular sum is formally achieved by writing $\SS$ in terms of $\kappa_\SS$ instead of $\kappa$. This is only a notational pedantry, in the next step where we multiply \eqref{E prod D full exp} out, it will disappear. We remark that because of the effect of $\rvert^{\rightarrow}_{W_{\NN}=0}$ the order in which the product in (36) is performed matters. It starts with $l=1$ and ends with $l=p$.

We point out that the estimate \eqref{cE error} not only provides the necessary $N^{-p}$ factor,
but it also involves at most $O(p)$ power of $\norm{ G}_q$ without an extra smallness factor $N^{-\mu}$, see
Remark \ref{Gpower remark}.  While from the perspective of an $N$-power
counting, any factor $\norm{ G}_q$ is neutral, of order one, we need to track that its power is not too big.
 Factors of $\norm{ G}_q$ 
that come with a factor $N^{-\mu}$ can be handled much easier and are not subject to the restriction of their power.

\subsubsection*{Reformulation of the main expansion formula} 
We now derive an alternative, less compact formula \eqref{E prod Lambda tuples full} for \eqref{E prod D full exp} which avoids the provisional $\big\rvert^\rightarrow$ notation. By expanding the first product in \eqref{E prod D full exp} we can rearrange \eqref{E prod D full exp} according to partitions $[p]=L_1\sqcup \dots \sqcup L_4$, where $L_i$ contains those indices $l$ for which the $l$-th factor in the product contributes with its $i$-th term. In particular $L\defeq L_2\sqcup L_3\sqcup L_4\subset [p]$ contains those indices $l$, for which $\alpha_l,\bm\beta_l$ are summed. We shall use the nomenclature that labels $\alpha_l$ and the elements of $\underline\beta_l$ are \emph{type-$l$} labels. These labels have been generated in the $l$-th application of the cancellation identities \eqref{eq cancellation identities}. The partition $\underline\beta_l^1\sqcup\dots\sqcup\underline\beta_l^p=\underline\beta_l$ encodes how these labels have been distributed among the $p$ factors via the Leibniz rule. Thus labels $\underline\beta_l^k$ have been generated on $\Lambda^{(k)}$ at the $l$-th application of \eqref{eq cancellation identities}. Thus $L$ encodes the types of labels present in the different parts of the expansion. To specify the number of type--$l$ labels we introduce the notations
\[M_l\defeq \abs[0]{\underline\beta_l}, \quad M_l^k\defeq \abs[0]{\underline\beta_l^k}.\]
Thus the number of labels of \emph{type} $l$ is $M_l+1$ and the number of type $l$-labels in $\Lambda^{(k)}$ is $M_l^k+\delta_{lk}$. We observe that in all non-zero terms of \eqref{E prod D full exp} the labels $\alpha_l$, $\underline\beta_l$ for $l\in L$ are distributed to the $\Lambda^{(1)},\dots,\Lambda^{(p)}$ in such a way that
\begin{enumerate}[(a)]
\item there are $p$ factors $\Lambda^{(1)},\dots,\Lambda^{(p)}$,\label{rule p terms}
\item every $\Lambda^{(k)}$ carries at least one label (that is for all $k$, $\sum_{l\in L} (M_l^k+\delta_{kl})\ge 1$)\label{rule no empty lambda},
\item for every $l\in L$, there exist at least two and at most $R-1$ \emph{type-$l$} labels (that is for all $l\in L$, $M_l\ge 1$)\label{rule at least two labels}, for $l\in L_4$ there exist exactly two \emph{type-$l$} labels in such a way that $M_l=M_l^l=1$,
\item if for some $l\in L_2$ there are exactly two \emph{type-$l$} labels, then these two labels must occur in distinct $\Lambda's$ (that is, if $l\in L_2$ and $M_l=1$, then $M_l^l=0$)\label{rule cancellation}. 
\item for every $l\in L$, the first index of $\Lambda^{(l)}$ is $\alpha_l$.\label{every index appears first once} 
\end{enumerate}
We now reformulate \eqref{E prod D full exp} in such a way that we first sum up over the partitions $L_1\sqcup L_2\sqcup L_3\sqcup L_4=[p]$, the collection of multiplicities $M=\tuple{M_l^k|l\in L,k\in[p]}$ and the permutations of indices, and only then perform the actual summation over the labels from $I$. As the first three sums carry no $N$, they are irrelevant for the $N$-power counting. From \eqref{E prod D full exp} we find
\begin{align}\label{E prod Lambda tuples full}
\E \prod_{k\in [p]} \Lambda^{(k)}(D) =&\E\sum_{\bigsqcup L_i=[p]} \sum_M^{\sim(L)} C_M \sum_{\sigma}^{\sim(M)} \left[\prod_{l\in L_3}\sum_{\alpha_l,\bm\beta_l\not\in\NN_{L_3}^{<l}}^{(M,l)} \frac{K(\alpha_l;\bm\beta_l)-\kappa(\alpha_l,\bm\beta_l)}{\abs{\bm\beta_l}!}\right] \cM' + \landauO[p,\mu]{N^{-p}},\\\nonumber
\cM'\defeq &  \left[\prod_{l\in L_4}\Bigg(-\sum_{\alpha_l,\beta_l^l\in I}+\sum_{\alpha_l\in I\setminus\NN_{L_3}^{<l}}\sum_{\beta_l^l \in \NN(\alpha_l)\setminus\NN_{L_3}^{<l}}\Bigg) \frac{\kappa(\alpha_l,\beta_l^l)}{1!}\right] \cM, \\
\cM\defeq & \left[\prod_{l\in L_2}\sum_{\alpha_l,\bm\beta_l\not\in\NN_{L_3}^{<l}}^{(M,l)} \frac{\kappa(\alpha_l,\bm\beta_l)}{\abs{\bm\beta_l}!}\right]\bigg[\bigg(\prod_{k\in L} \Lambda^{(k)}_{\alpha_k,\sigma_k(\bm\beta^k)}\bigg) \bigg(\prod_{k\not\in L} \Lambda^{(k)}_{\sigma_k(\bm\beta^k)}\bigg)\bigg]\bigg\rvert_{W_{\NN_{L_3}}=0},\nonumber
\end{align}
where $\sum_{M}^{\sim(L)}$ is the sum over all arrays $M$ fulfilling (a)--(e) above and $C_M$ are purely combinatorial constants bounded by a function of $p,R$; $C_M\le C(p,R)$, in which we also absorbed the $(-1)$'s from the $L_4$ terms. Moreover, $\sum_{\sigma}^{\sim(M)}$ is the sum over all permutations $\sigma_1$,\dots, $\sigma_p$ in the permutation groups $S_{M^1}, \dots, S_{M^p}$ (where $M^k\defeq \sum_{l\in L} M^k_l$) such that for $k\not\in L$ the first element of $\sigma_k(\bm\beta^k)$ is from $(\,\bm\beta_l^k\,\mid\,l\in L\cap[k])$. Furthermore, for any $\NN\subset I$ we set 
\[\sum_{\alpha_l,\bm\beta_l\not\in\NN}^{(M,l)}\defeq \sum_{\alpha_l\in I\setminus \NN}\prod_{k\in[p]}\sum_{\bm\beta_l^k\in (\NN(\alpha_l)\setminus \NN)^{M_l^k}}.\] 
Finally, we introduced the notations $\NN_{L_3}^{<l}\defeq \bigcup_{l>k\in L_3}\NN(\alpha_k)$, and $\NN_{L_3}\defeq\bigcup_{k\in L_3}\NN(\alpha_k)$. Here the $\bm\beta_l^k$ are actual (ordered) tuples and not multisets, which is why we denote them by boldfaced Greek letters to avoid possible confusion with the previously used $\underline\beta_l^k$. In \eqref{E prod Lambda tuples full} we furthermore used the short-hand notation $\bm\beta^k=(\bm\beta^k_l)_{l\in L}$ for the tuple (ordered according to the natural order on $L\subset[p]\subset\N$) of $\bm\beta^{k}_{l}$. We note that the artificial $\kappa_\SS$ from \eqref{E prod D full exp} has been removed in \eqref{E prod Lambda tuples full} since we ``pushed'' the $\rvert^{\rightarrow}$-operator all the way to the end. In the following we will establish bounds on \eqref{E prod Lambda tuples full} for fixed $L$ and $M$ and fixed permutations $\sigma_1,\dots,\sigma_p$. Since the number of possible choices for $M$, $L$ and permutations is finite, depending on $R$ and $p$ only, this will be sufficient for bounding $\E \prod \Lambda^{(k)}(D)$. We also stress that the (multi)labels $\bm\beta_l^k$ themselves are not important, but only their type $l$.  

\subsubsection*{Proof of the error bound in Proposition \ref{prop explicit formula Lambda(D) power}}
We now turn to the proof of the claimed error bound \eqref{cE error}. So far this was only done for the error from the first cumulant expansion in \eqref{E2 bound}.
\begin{proof}[Proof of the error bound in Proposition \ref{prop explicit formula Lambda(D) power}] 
The error $\Omega$ in \eqref{E prod D full exp} is a sum over $p$ terms, where the $j$-th term is the error from the expansion of $\Lambda^{(j)}(D)$. 
Recalling the definition of $\Omega(f,i,\NN)$ from \eqref{def Omega}, this $j$-th expansion error is given by
\begin{align*}
\Omega_j\defeq\sum_{\alpha_j}\Omega\left(\prod_{l<j} \bigg(1+\sum_{\alpha_l,\bm\beta_l}^{\sim(l)} + \sum^{\ast}_{\alpha_l,\bm\beta_l}\Bigg\rvert_{W_{\NN(\alpha_l)}=0}^{\rightarrow} +\sum^{\#}_{\alpha_l,\beta_l^l}\bigg) \prod_{k=1}^p \left.\begin{cases}
\Lambda^{(k)}_{\alpha_k,\bigsqcup_{l\in [p]}\underline\beta_l^k}&\text{if }k=j\text{ or }\big(k<j,\sum_{\alpha_k}\big)\\
\Lambda^{(k)}\big( \partial_{\bigsqcup_{l<k} \underline{\beta}_l^k} D \big)&\text{if }k>j\\
\Lambda^{(k)}_{\bigsqcup_{l<k} \underline{\beta}_l^k, \bigsqcup_{l>k}\underline\beta_l^k}&\text{else}
\end{cases}\right\},\alpha_j,\NN(\alpha_j) \right),
\end{align*}
where ``if $(k<j,\sum_{\alpha_k})$'' means ``if $k<j$ and $\alpha_k$ is summed''. This $j$-th error $\Omega_j$ can be estimated through \eqref{hoelder error bound} and Assumption \ref{assumption high moments} by the sum of
\begin{align} \Bigg[\prod_{l\in L_2\sqcup L_3}\sum_{\alpha_l,\bm\beta_l}^{(M,l)}\Bigg]\Bigg[\prod_{l\in L_4}\sum_{\alpha_l,\beta_l^l\in I}\Bigg] \sum_{\alpha_j}\sum_{\bm\beta_j\in\NN(\alpha_j)^R} \norm[3]{\bigg(\prod_{k\in L} \Lambda^{(k)}_{\alpha_k,\sigma_k(\bm\beta^k)}\bigg) \bigg(\prod_{k\in [j]\setminus L} \Lambda^{(k)}_{\sigma_k(\bm\beta^k)}\bigg)\bigg(\prod_{k>j} \Lambda^{(k)}(\partial_{\sigma_k(\bm\beta^k)} D)\bigg\rvert_{\widehat W}\bigg)}_2,\label{general error}\end{align}
over partitions $L=L_2\sqcup L_3 \sqcup L_4 \subset [j-1]$, arrays $M$ fulfilling \eqref{rule p terms}--\eqref{every index appears first once} above and partitions $\sigma_k$. In all terms $\widehat W$ is a modification of $W$ which differs from $W$ in at most $C\sqrt N$ entries. The previously studied error from \eqref{Omega first error} for example corresponds to $j=1$, $L_2=L_3= L_4 =\emptyset$. The combinatorics of all these summations are independent of $N$, hence can be neglected. So we can focus on a single term of the form \eqref{general error}. The norm in \eqref{general error} will first be estimated by H\"older and then by \eqref{Lambda norm bound Ghat Dhat bound} to reduce it to many factor of $\norm{G}_q$. We now have to count the size of the sums, the number of $N^{-1/2}$ factors from the derivatives, and the number of $\norm{G}_q$'s we collect in the bound. We start with the sums which are at most of size
\begin{equation} \label{power counting sums}N^{2\abs{L_2\sqcup L_3}} (N^{1/2-\mu})^{M_{L_2\sqcup L_3}} (N^2\cdot N^2)^{\abs{L_4}} N^2 (N^{1/2-\mu})^{R}= N^{2\abs{L_2\sqcup L_3}+(M_{L_2\sqcup L_3}+R)(1/2-\mu)+4\abs{L_4}+2}.\end{equation}
Here the first factor comes from the $\alpha_l$ summations for $l\in L_2\sqcup L_3$, while the second term comes from the corresponding $\bm\beta_l$ summations. The third factor comes from the $\alpha_l,\beta_l^l$-summations for $l\in L_4$, and finally the fifth and sixth factor correspond to the $\alpha_j$ and $\bm\beta_j$ summations. Next, we count the total number of derivatives. Every index $\alpha_l$ and $\beta_l^k$ accounts for a derivative, and each derivative contributes a factor of $N^{-1/2}$. So we have
\begin{equation} \label{power counting derivatives}(N^{-1/2})^{\abs{L_2\sqcup L_3} + M_{L_2\sqcup L_3}+2\abs{L_4}+(R+1)}=  N^{-\abs{L_2\sqcup L_3}/2-M_{L_2\sqcup L_3}/2-\abs{L_4}-(R+1)/2}, \end{equation}
so that altogether from \eqref{power counting sums} and \eqref{power counting derivatives} we have an $N$-power of
\[N^{3/2 (\abs{L_2\sqcup L_3}+1)+ 3\abs{L_4} - R\mu} N^{-\mu M_{L_2\sqcup L_3}} \le N^{-p} N^{-\mu M_{L_2\sqcup L_3}}.\]
It remains to count the number of $\norm{G}_{CRp^2}=\norm{G}_q$ coming from the application of \eqref{Lambda norm bound Ghat Dhat bound}, which in total provides
\begin{align} \sum_{k\in L} (1+\abs[0]{\bm\beta^k}+5) + \sum_{k\in [j]\setminus L} (\abs[0]{\bm\beta^k}+5) + \sum_{k>j}(\abs[0]{\bm\beta^k}+5) = 5p + \abs{L_2\sqcup L_3} + M_{L_2\sqcup L_3} + 2\abs{L_4} + R+1\le Cp/\mu + M_{L_2\sqcup L_3} 
\label{Gfactors}
\end{align}
factors of $\norm{G}_q$. The claim \eqref{cE error} now follows from the trivial estimate $M_{L_2\sqcup L_3}\le Rp\le C p^2/\mu$.  
\end{proof}

Subsequently we establish a bound on the rhs.~of \eqref{E prod Lambda tuples full}, by first estimating it in terms of $\norm{\cM'}_p$, then estimating $\norm{\cM'}_p$ in terms of $\norm{\cM}_p$ and finally bounding the leading contribution $\cM$. We consider the first two steps in this procedure as errors stemming from the neighbourhood structure of the expansion, while the third step is concerned with the leading order contribution from the expansions. 
In Section \ref{sec neighbourhood errors} we consider the errors stemming from the neighbourhood structure, while in Sections \ref{av bound sec} and \ref{iso bound sec} we derive bounds on $\norm{\cM}_p$ for the averaged and isotropic case, separately. For simplicity we first carry out the technically most involved argument from Sections \ref{av bound sec}--\ref{iso bound sec} in the extreme case $L_3=L_4=\emptyset$ where the neighbourhood errors are absent. Finally, we explain the necessary modifications for the general case in Section \ref{sec mods}. 

\subsection{Bound on neighbourhood errors}\label{sec neighbourhood errors}
We start with the bound on the $L_3$-factors in \eqref{E prod Lambda tuples full}. Neglecting the irrelevant combinatorial factors $\abs[0]{\bm\beta_l}!$ and the summations over $L_i$, $M$ and $\sigma$, we have to estimate 
\begin{align}\label{eq cE def}
\cE\defeq\Bigg[\prod_{l\in L_3} \sum_{\alpha_l,\bm\beta_l\not\in\NN_{L_3}^{<l}}^{(M,l)} \Bigg] \cE(\alpha_{L_3},\bm\beta_{L_3})\defeq\Bigg[\prod_{l\in L_3} \sum_{\alpha_l,\bm\beta_l\not\in\NN_{L_3}^{<l}}^{(M,l)} \Bigg] \E \cM'\prod_{l\in L_3} \big[K(\alpha_l;\bm\beta_l)-\kappa(\alpha_l,\bm\beta_l)\big]. 
\end{align} 
By the pigeon hole-principle we find that for every $l\in L_3$ and any assignment of $\alpha_l,\bm\beta_l$ there exist some $n_l<R$ such that we have a partition $\underline\beta_l=\underline\beta_l^{(i)}\sqcup\underline\beta_l^{(o)}$ into \emph{inside} and \emph{outside} elements with $\underline\beta_l^{(i)}\subset \NN_{n_l}(\alpha_l)$ and $\underline\beta_l^{(o)}\subset \NN_{n_l+1}(\alpha_l)^c$ since $\abs[0]{\underline\beta_l}=M_l<R$ (see rule \eqref{rule at least two labels}). We recall the nested structure of the neighbourhoods as stated in Assumption \ref{assumption neighbourhood decay}, and provide an illustration of the ``security layers'' in Figure \ref{neighbourhood sketch}.
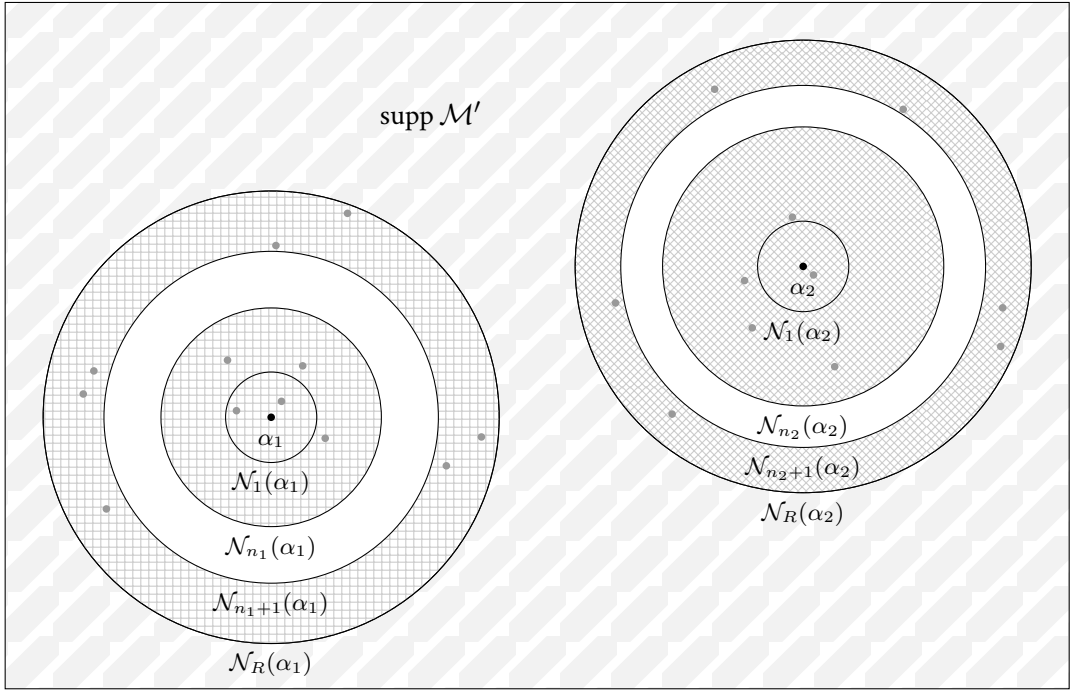
\begin{figure}
\centering
\makeatletter
\tikzset{
        hatch distance/.store in=\hatchdistance,
        hatch distance=5pt,
        hatch thickness/.store in=\hatchthickness,
        hatch thickness=5pt
        }
\pgfdeclarepatternformonly[\hatchdistance,\hatchthickness]{north east hatch}%
    {\pgfqpoint{-1pt}{-1pt}}%
    {\pgfqpoint{\hatchdistance}{\hatchdistance}}%
    {\pgfpoint{\hatchdistance-1pt}{\hatchdistance-1pt}}%
    {
        \pgfsetcolor{\tikz@pattern@color}
        \pgfsetlinewidth{\hatchthickness}
        \pgfpathmoveto{\pgfqpoint{0pt}{0pt}}
        \pgfpathlineto{\pgfqpoint{\hatchdistance}{\hatchdistance}}
        \pgfusepath{stroke}
    }
\makeatother
\usetikzlibrary{patterns}
\begin{tikzpicture}[
  node distance=7mm,
  title/.style={font=\fontsize{6}{6}\color{black!50}\ttfamily},
  typetag/.style={rectangle, draw=black!50, font=\scriptsize\ttfamily, anchor=west},
  NN/.style={circle,draw=black!50,minimum size=3},
  baseline={([xshift=-4cm]current bounding box.center)}
]
\small
  \pgfresetboundingbox
  \fill [use as bounding box,draw=black,pattern=north east hatch,
                    pattern color=black!5,
                    hatch distance=20pt,
                    hatch thickness=10pt] (-3.5,-3.6) rectangle (10.5,5.5);
  \pgfmathsetseed{7899778}
  \node [circle,minimum size=6cm,draw,fill=white] {};
  \node [label=below:$\mathcal{N}_R(\alpha_1)$,circle,minimum size=6cm,draw,fill=white,pattern=grid,pattern color=black!20](aa) {};
  \foreach \p in {1,...,15}
  {
    \pgfmathsetmacro{\rho}{rand};
    \pgfmathsetmacro{\rad}{rand};
    \fill[color=black!40] ({2.9*sqrt(abs(\rad))*sin(360*\rho)},{2.9*sqrt(abs(\rad))*cos(360*\rho)}) circle (0.05) node{};
  }
  \node[label=below:$\mathcal{N}_{n_1+1}(\alpha_1)$,circle,minimum size=4.4cm,draw,fill=white] (0,0) {};

  \node[label=below:$\mathcal{N}_{n_1}(\alpha_1)$,circle,minimum size=2.9cm,draw,pattern=grid,pattern color=black!20] (0,0) {};
    \foreach \p in {1,...,5}
  {
    \pgfmathsetmacro{\rho}{rand};
    \pgfmathsetmacro{\rad}{rand};
    \fill[color=black!40] ({1.4*sqrt(abs(\rad))*sin(360*\rho)},{1.4*sqrt(abs(\rad))*cos(360*\rho)}) circle (0.05) node{};
  }
  \node (aa)[label=below:$\mathcal{N}_1(\alpha_1)$,circle,minimum size=1.2cm,draw] (0,0) {};
  \fill (0,0) circle (0.05) node[label=below:$\alpha_1$]{};

  \node [circle,minimum size=6cm,draw,fill=white]at (7,2) {};
  \node [label=below:$\mathcal{N}_R(\alpha_2)$,circle,minimum size=6cm,draw,pattern=crosshatch,pattern color=black!20]at (7,2) (bb) {};
  \foreach \p in {1,...,15}
  {
    \pgfmathsetmacro{\rho}{rand};
    \pgfmathsetmacro{\rad}{rand};
    \fill[color=black!40] ({7+2.9*sqrt(abs(\rad))*sin(360*\rho)},{2+2.9*sqrt(abs(\rad))*cos(360*\rho)}) circle (0.05) node{};
  }
  \node[label=below:$\mathcal{N}_{n_2+1}(\alpha_2)$,circle,minimum size=4.8cm,draw,fill=white] at (7,2) {};
  \node[label=below:$\mathcal{N}_{n_2}(\alpha_2)$,circle,minimum size=3.7cm,draw,pattern=crosshatch,pattern color=black!20] at (7,2) {};
      \foreach \p in {1,...,5}
  {
    \pgfmathsetmacro{\rho}{rand};
    \pgfmathsetmacro{\rad}{rand};
    \fill[color=black!40] ({7+1.4*sqrt(abs(\rad))*sin(360*\rho)},{2+1.4*sqrt(abs(\rad))*cos(360*\rho)}) circle (0.05) node{};
  }
  \node (aa)[label=below:$\mathcal{N}_1(\alpha_2)$,circle,minimum size=1.2cm,draw] at (7,2) {};
  \fill (7,2) circle (0.05) node[label=below:$\alpha_2$]{};

  \node at (2.1,4) {\large$\supp\mathcal{M}'$};

\end{tikzpicture}
\caption{Illustration for the bound on $\cE$ \eqref{eq cE def}. Gray dots ${\color{gray}\bullet}$ denote the $\underline\beta_1,\underline\beta_2$ labels. Since there are $\lvert\underline\beta_i\rvert<R$ labels and $R$ rings, there is always one empty ring by the pigeon-hole principle.}\label{neighbourhood sketch}
\end{figure}
According to \eqref{two groups pre cum} we can then write ($L_3'$ collects those indices where we took the middle term of \eqref{two groups pre cum} in the $l$ factor)
\begin{align*}
\cE(\alpha_{L_3},\bm\beta_{L_3}) =  \sum_{L_3=L_3'\sqcup L_3''}(-1)^{\abs{L_3''}}  \prod_{l\in L_3''} \Bigg[\sum_{\gamma_l^{(i)}\subset \beta_l^{(i)}}\sum_{\gamma_l^{(o)}\subsetneq \beta_l^{(o)}} \kappa(\alpha_l,\underline\beta_l^{(i)}\setminus \underline\gamma_l^{(i)},\underline\beta_l^{(o)}\setminus \underline\gamma_l^{(o)})\Bigg] \E f\prod_{l\in L_3'}\big[K(\alpha;\underline\beta_l^{(i)})-\kappa(\alpha,\underline\beta_l^{(i)})\big],
\end{align*}
where 
\begin{align*}
f\defeq \cM' \prod_{l\in L_3'} \big(\Pi \underline\beta_l^{(o)}\big) \prod_{l\in L_3''} \Big[ \big(\Pi \underline\gamma_l^{(i)}\big) \big(\Pi \underline\gamma_l^{(o)}\big)\Big]
\end{align*}
is a random variable supported in $\bigcap_{l\in L_3'} \NN_{n_l+1}(\alpha_l)^c$, i.e., well separated from the variables $K(\alpha_l;\underline\beta_l^{(i)})$ for $l\in L_3'$. It remains to estimate a quantity of the type $\E f g_1 \dots g_k$, where $f,g_1,\dots,g_k$ are random variables whose supports are pairwise separated by ``security layers'' and where each $g_i$ is of the form
$K-\kappa$ with $\E g_i=0$. Here $k=\abs{L_3'}$ and from Lemma \ref{tree decay lemma mutally separated} and Assumption \ref{assumption neighbourhood decay} it follows that $\E f g_1\dots g_k\le_k \norm{f}_{k+1} N^{-3 \lceil k/2\rceil}$. According to Lemma \ref{mixing cumulant decay lemma} the $\kappa(\alpha_l,\underline\beta_l^{(i)}\setminus\underline\gamma_l^{(i)},\underline\beta_l^{(o)}\setminus\underline\gamma_l^{(o)})$ factors are also at least $N^{-3}$ small and we can conclude that
 \begin{equation}\label{layers}
 \abs{\cE(\alpha_{L_3},\bm\beta_{L_3})}\le_{p,R} N^{-3\lceil\abs{L_3}/2\rceil}\norm{\cM'}_p.
 \end{equation}

Next, we use the triangle inequality to pull the $L_4$ summation out of $\norm{\cM'}_p$ to achieve a bound in terms of $\norm{\cM}_p$. We have 
\begin{align*}&\abs[3]{\Bigg(-\sum_{\alpha_l,\beta_l^l\in I} + \sum_{\alpha_l\in I\setminus \NN_{L_3}^{<l}}\sum_{\beta_l^l\in \NN(\alpha_l)\setminus \NN_{L_3}^{<l}}\Bigg) \kappa(\alpha_l,\beta_l^l)}\le \Bigg(\sum_{\alpha_l\in I\setminus\NN_{L_3}^{<l}}\sum_{\beta_l^l\in \NN_{L_3}^{<l}}+\sum_{\alpha_l\in I\setminus\NN_{L_3}^{<l}}\sum_{\beta_l^l\in I\setminus\NN(\alpha_l)} + \sum_{\alpha_l\in \NN_{L_3}^{<l}}\sum_{\beta_l^l\in I}\Bigg) \abs[0]{\kappa(\alpha_l,\beta_l^l)} \\
& \quad\le \Bigg( \sum_{\beta_{l}^l\in\NN_{L_3}^{<l}}\sum_{\alpha_l\in\NN(\beta_l^l)}+\sum_{\beta_{l}^l\in\NN_{L_3}^{<l}}\sum_{\alpha_l\in I\setminus\NN(\beta_l^l)}+\sum_{\alpha_l\in I}\sum_{\beta_l^l\in I\setminus\NN(\alpha_l)} +\sum_{\alpha_l\in \NN_{L_3}^{<l}}\sum_{\beta_l^l\in \NN(\alpha_l)}+\sum_{\alpha_l\in \NN_{L_3}^{<l}}\sum_{\beta_l^l\in I\setminus\NN(\alpha_l)} \Bigg)\abs[0]{\kappa(\alpha_l,\beta_l^l)}\le C N,
\end{align*}
where we estimated the first and the fourth term with two small summations purely by size $(CN^{1/2-\mu})^2\le CN$ and the other terms using the fact that $\abs{\kappa(\alpha,\beta)}\lesssim N^{-3}$ for $\beta\in I\setminus\NN(\alpha)$. Summarizing, we thus have that
\begin{align}\label{E prod Lambda cM bound}
\abs{\E \prod \Lambda^{(k)}(D)} &\le_{p,\mu} N^{-p}+\sum_{\bigsqcup L_i=[p]}\sum_{M}^{\sim(L)} \frac{N^{\abs{L_4}}}{N^{3\lceil \abs{L_3}/2\rceil}} \sum_{\sigma}^{\sim(M)}   \Bigg[\prod_{l\in L_3}\sum_{\alpha_l,\bm\beta_l\not\in\NN_{L_3}^{<l}}^{(M,l)}\Bigg]\Bigg[\prod_{l\in L_4}\max_{\alpha_l,\beta_l^l\in I}\Bigg] \norm{\cM}_p,
\end{align}
and it only remains to estimate the leading order term $\cM$, as defined in \eqref{E prod Lambda tuples full}. This has to be done separately for averaged and isotropic bound and should be considered as the main part of the proof. To simplify notations we will first prove the bound on $\cM$ for the case that $L_3=L_4=\emptyset$ and $\NN(\alpha)=I$. In particular $L_3=\emptyset$ implies that $\NN_{L_3}=\emptyset$ and therefore in the next two Sections~\ref{av bound sec} and \ref{iso bound sec} we now aim at deriving a bound on $\norm[1]{\cM((\Lambda^{(k)})_{k\in[p]}; L,M,\sigma)}_p$, where 
\begin{align}\label{E prod Lambda tuples}
\cM((\Lambda^{(k)})_{k\in[p]}; L,M,\sigma)\defeq \left[\prod_{l\in L}\sum_{\alpha_l,\bm\beta_l}^{(M,l)} \frac{\kappa(\alpha_l,\bm\beta_l)}{\abs{\bm\beta_l}!}\right]\left(\prod_{k\in L} \Lambda^{(k)}_{\alpha_k,\sigma_k(\bm\beta^k)}\right) \left(\prod_{k\not\in L} \Lambda^{(k)}_{\sigma_k(\bm\beta^k)}\right),\quad \sum_{\alpha_l,\bm\beta_l}^{(M,l)}\defeq \sum_{\alpha_l\in I}\prod_{k\in[p]}\sum_{\bm\beta_l^k\in I^{M_l^k}}.
\end{align}
The definition of $\cM$ in \eqref{E prod Lambda tuples} agrees with the one in \eqref{E prod Lambda tuples full} in the special case $L_3=L_4=\emptyset$, except for a tiny contribution from $\bm\beta_l\not\subset \NN(\alpha_l)$. The reason for extending the sum here to the whole index set is twofold: First, we do not have to keep track of the summation ranges of individual indices, and, second, we demonstrate that for the main terms separating the contribution outside of the neighbourhoods $\NN$ is not necessary, all estimates on $\cM$
would also hold for the unrestricted sum. In particular, the neighbourhood decay condition is not necessary for the main terms, they are used only for bounding $\cM'$ in terms of $\cM$ in Section \ref{sec neighbourhood errors}. This fact was already advertised in Example \ref{example gauss} where we claimed that in the Gaussian case we can considerably relax our decay conditions. Later, in Section \ref{sec mods} we will explain how to elevate the proof for the special case $L_3=L_4=\emptyset$ with extended index sets to the general case.

\subsection{Averaged bound on \texorpdfstring{$D$}{D}}\label{av bound sec}
To treat \eqref{E prod Lambda tuples} systematically, we introduce a graphical representation for any $M$, $L$ and permutations $\sigma$ in \eqref{E prod Lambda tuples}. For the averaged local law
we need averaged estimates on $D$, so we set 
 \[ 
 \Lambda^{(k)}(D)\defeq \braket{BD}\qquad \mbox{or} \qquad \Lambda^{(k)}(D)\defeq \overline{\braket{BD}},
 \]
  where $B$ is a generic norm-bounded matrix, $\norm{B}\lesssim 1$ and we recall that $\braket{\cdot}=N^{-1}\Tr$ denotes the normalized trace. A factor $\Lambda_{\alpha_1,\dots,\alpha_n}$ can be represented as a directed cyclic graph on the vertex set $\{\alpha_1,\dots,\alpha_n\}$. Up to sign we have 
\begin{align}\label{Lambda BD}\abs{\Lambda_{\alpha_1,\dots,\alpha_n}}= N^{-n/2}\braket{ B\Delta^{\alpha_1}G\Delta^{\alpha_2} G\dots \Delta^{\alpha_n} G }= N^{-1-n/2} G_{b_1a_2}G_{b_2a_3}\dots G_{b_{n-1}a_n}(GB)_{b_na_1},\end{align}
which we represent as a cyclic graph in such a way that the vertices represent labels $\alpha_i=(a_i,b_i)$ and a directed edge from $\alpha_i=(a_i,b_i)$ to $\alpha_j=(a_j,b_j)$ represents $G_{b_ia_j}$. Since we will always draw the graphs in a clockwise orientation we will not indicate the direction of the edges specifically. The specific $GB$ factor will be denoted by a wiggly line instead of a straight line used for the $G$ factors. As an example, we have the correspondences
\[\Lambda_{\alpha_1,\alpha_2,\alpha_3,\alpha_4} \leftrightarrow\plotLambdas{"$\alpha_1$"--"$\alpha_2$"--"$\alpha_3$"--"$\alpha_4$"--[decorate,decoration={snake,amplitude=.3mm,segment length=.6mm}]"$\alpha_1$"}, \quad\Lambda_{\alpha_1,\alpha_2}\leftrightarrow \plotLambdas{"$\alpha_1$" --[bend left] "$\alpha_2$"; "$\alpha_1$" --[bend right,decorate,decoration={snake,amplitude=.3mm,segment length=.6mm}] "$\alpha_2$"  } \quad\text{and}\quad \Lambda_{\alpha_1} \leftrightarrow \plotLambdas{"$\alpha_1$" --[loop below, my loop, decorate,decoration={snake,amplitude=.3mm,segment length=.6mm}] "$\alpha_1$" }.\]

In \eqref{E prod Lambda tuples} the labels of type $l$ are connected through the $\kappa(\alpha_l,\bm\beta_l)$ factor which strongly links those labels due to the decay properties of the cumulants. We represent this fact graphically as a vertex colouring of the graph  in which label types correspond to colours. The set of colours representing the label types $L$ will be denoted by $C$. The $M_l+1$ vertices of a given type $l$ will be denoted by $V_c$, where $c$ is the colour corresponding to $l$. 

We define $\Val(\Gamma)$, the \emph{value} of a graph $\Gamma$, as summation over all labels consistent with the colouring, such that equally coloured labels are linked through a cumulant, of the product of the corresponding $\Lambda$'s, just as in \eqref{E prod Lambda tuples}. For example, we have
\begin{align}\label{Lambda1221 ex av} \sum_{\alpha_1,\beta_1^2(1)} \kappa(\alpha_1,\beta_1^2(1))\sum_{\alpha_2,\beta_2^1(1)} \kappa(\alpha_2,\beta_2^1(1)) \Lambda_{\alpha_1,\beta_2^1(1)} \Lambda_{\alpha_2,\beta_1^2(1)} = \Val\left(\plotLambda{{1,2},{2,1}}\right) \end{align} 
or
\begin{align*} \sum_{\alpha_1,\beta_1^2(1)} \kappa(\alpha_1,\beta_1^2(1))\sum_{\alpha_2,\beta_2^1(1),\beta_2^1(2)} \frac{\kappa(\alpha_2,\beta_2^1(1),\beta_2^1(2))}{2!} \sum_{\alpha_3,\beta_3^2(1)} \kappa(\alpha_3,\beta_3^2(1)) &\Lambda_{\alpha_1,\beta_2^1(2),\beta_2^1(1)} \Lambda_{\alpha_2,\beta_1^2(1)} \Lambda_{\alpha_3,\beta_1^3(1)}  \\
&= \Val \left(\plotLambda{{1,2,2},{2,3},{3,1}}\right),\end{align*}
where we choose the variable names for the labels in accordance with \eqref{E prod Lambda tuples} following the convention that the elements of the tuple $\bm\beta_l^k$ are denoted by $(\beta_l^k(1),\beta_l^k(2),\dots)$. We warn the reader that $\Val(\Gamma)$, the value of a diagram itself is a random variable unlike in customary Feynman diagrammatic expansion theory. In the following we will derive bounds on the value of diagrams. To separate the conceptual from the technical difficulties we first derive those bounds in a vague $\lesssim$ sense which ignores a technical subtlety: The entries $G_{ab}$ of the resolvent are bounded with overwhelming probability, but usually not almost surely. In the first conceptual step we will tacitly assume such an almost sure bound and write $\abs{G_{ab}}\lesssim 1$. Later  in Section \ref{sec detailed bound} we will make the bounds rigorous in a high-moment sense. We note that if $\Lambda(D)=\overline{\braket{BD}}$, then the edges would represent $G^\ast$ and $(GB)^\ast$ instead of $G$ and $GB$ and the order would be reversed (recall that the double indices are transposed in \eqref{Lambda_def}) but the counting argument is not sensitive to these nuances, so we omit these distinctions in our graphs.

We now rephrase the rules on $M$ in this graphical representation. They dictate that we need to consider the set of all vertex coloured graphs $\Gamma$ with cyclic components such that
\begin{enumerate}[(a)]
\item there exist $p$ connected components, all of which are cycles,
\item each connected component contains at least one vertex, 
\item each colour colours at least two vertices,
\item if a colour colours exactly two vertices, then these vertices are in different components.
\item for each colour there exists a component in which the vertex after the wiggled edge (in clockwise orientation) is of that colour.
\end{enumerate}
We note that these rules, compared to \eqref{E prod Lambda tuples}, disregarded the restrictions on the permutations $\sigma_k$ for $k\not \in L$ as these are not relevant for the averaged bound. The set of graphs satisfying (a)--(e) will be denoted by $\cG^{\text{av}}(p,R)$ and for each $L,M,\sigma$ the main term $\cM$ from \eqref{E prod Lambda tuples} is  given by the value of some graph $\Gamma\in\cG^{\mathrm{av}(p,R)}$.
\begin{align}\label{av graphical expansion}
\cM\Big( \big(\braket{B\cdot}^{[p/2]},\overline{\braket{B\cdot}}^{[p/2]}\big) ;L,M,\sigma \Big) = \Val(\Gamma),\qquad \Gamma=\Gamma(L,M,\sigma)\in\cG^{\mathrm{av}(p,R)}
\end{align}
where $\braket{B\cdot}^{[p/2]}$ denotes the tuple of $p/2$ functionals mapping $D\mapsto \braket{BD}$ and similarly for $\overline{\braket{B\cdot}}$. As the number of such graphs is finite for given $p,R$ it follows that it is sufficient to prove the required bound for every single graph.

As for any fixed colour $\sum_{\pB{1}}\le N^2 \tnorm{\kappa}^{\text{av}}$, the naive size of the value $\Val(\Gamma)$ is bounded by
\begin{align}\Val(\Gamma)\lesssim N^{-p} \prod_{c\in C} N^{2 -\abs{V_c}/2}\le 1\label{naive size averaged}\end{align}
since according to \eqref{Lambda BD} every component contributes a factor $N^{-1}$ and every label contributes a factor $N^{-1/2}$, and where the ultimate inequality followed from $\abs{V_c}\ge 2$ and $\abs{C}\le p$. We now demonstrate that using Ward identities of the form \[\sum_{a}\abs{G_{ab}}^2=\frac{(\Im G)_{bb}}{\eta}\] we can improve upon this naive size by a factor of $\psi^{2p}$, where $\psi \approx 1/\sqrt{N\eta}$ and $\eta\defeq \Im z$. We will often use the Ward
identity in the form
\begin{subequations}
\begin{align} \label{ward}
  \sum_b \abs{G_{ab}}\le \sqrt{N}\sqrt{\sum_b \abs{G_{ab}}^2}  = N\sqrt{\frac{(\Im G)_{aa}}{N\eta}} \lesssim N\psi, \qquad \sum_b \abs{(GB)_{ab}} \lesssim \norm{B} N\psi
\end{align}
which explicitly exhibits a gain of a factor $\psi$ over the trivial bound of order N. Together with the previous bound
\begin{align}\label{ward2}
   \sum_b \abs{G_{ab}}^2 \le N\psi^2, \qquad \sum_b \abs{(GB)_{ab}}^2 \lesssim \norm{B}^2 N\psi^2
\end{align}
\end{subequations}
we will call \eqref{ward}--\eqref{ward2} \emph{Ward estimates}. Here we used the trivial bound $\abs{G}\lesssim 1$ and we set $\psi\defeq \sqrt{\Im G/N\eta}$
(where $\Im G$ is meant in an isotropic sense which we will define rigorously later).

We consider the subset of colours $C'\defeq \Set{c\in C| \abs{V_c}\le 3}\subset C$ which colour either two or three vertices and we intend to use Ward identities only when summing up vertices with those colours. However, one may not use Ward estimates for every such summation, e.g.
even if both $a$ and $b$ were indices of eligible labels, one cannot gain from both of them
in the sum $\sum_{a,b} \abs{G_{ab}}$. We thus need a systematic procedure to identify sufficiently many labels so that  each summation over them can be performed  by using  Ward estimates. In the following, we first describe a procedure how to \emph{mark} those edges we can
potentially use for Ward estimates. Secondly, we will show that for sufficiently many marked edges
the Ward estimates can be used in parallel.

\subsubsection*{Procedure for colours appearing twice in $\Gamma$}
If a colour $\pB{2}$ appears twice, then it appears in two different components of $\Gamma$, i.e., in one of the following forms
\[\plotLambda{{2},{2}}\Bigg| \plotLambda{{.,0,2,0},{2}}\Bigg|\plotLambda{{.,0,2,0},{.,0,2,0}}\]
where the white vertices can be of any colour other than $\pB{2}$ (and may even coincide), the dotted edges indicate an arbitrary continuation of the component and some additional edges may be wiggled. The picture only shows those two components with colour $\pB{2}$, the other components of $\Gamma$ are not drawn. Vertical lines separate different cases. When summing up the $\pB{2}$-coloured labels, we can use the Ward estimates on all edges adjacent to $\pB{2}$ using the operator norm $\tnorm{\kappa}_2^{\text{av}}= \norm[1]{\abs{\kappa(\ast,\ast)}}$ on $\kappa$. To see this we note that 
\begin{align}\label{ex est} \sum_{\alpha_1,\alpha_2} \abs{\kappa(\alpha_1,\alpha_2) A_{\alpha_1} B_{\alpha_2}} \le \tnorm{\kappa}_{2}^{\text{av}} \sqrt{\sum_{\alpha_1} \abs{A_{\alpha_1}}^2}\sqrt{\sum_{\alpha_2}\abs{B_{\alpha_2}}^2},\end{align}
after which \eqref{ward2} with $A_{\alpha_1},B_{\alpha_1}\in\{G_{b_1a_1},(GB)_{b_1a_1}, G_{c a_1}G_{b_1 d}, (GB)_{c a_1}G_{b_1 d}, G_{c a_1}(GB)_{b_1 d} \}$ and arbitrary fixed indices $c,d$ is applicable. 
\begin{remark}
In the sequel we will not write up separate estimates for edges representing $GB$ instead of $G$ as the same Ward estimates \eqref{ward}--\eqref{ward2} hold true and the bound is automatic in the sense that there are in total $p$ wiggly edges in $\Gamma$, each of which will contribute a factor of $\norm{B}$ to the final estimate, regardless of whether the corresponding edge has been bounded trivially $\abs{(GB)_\alpha}\lesssim \norm{B}$ or by \eqref{ward}--\eqref{ward2}.
\end{remark}
We find that for every edge connected to $\pB{2}$ we can gain a factor $\psi$ compared to the naive size of the $\pB{2}$-sum, using only the trivial bound $\abs{G}\lesssim 1$. We will indicate visually that
an edge has potential for a gain of $\psi$ through some colour by putting a mark (a small arrow) pointing from the vertex towards the edge. Thus in the case where $\pB{2}$ appears twice we  mark all edges adjacent to $\pB{2}$ to obtain the following marked graphs
\[\plotLambda{{2m},{2m}}\Bigg| \plotLambda{{.,0,m2m,0},{2m}} \Bigg| \plotLambda{{.,0,m2m,0},{.,0,m2m,0}}.\]
We note that these marks indicate that we can use a Ward estimate for every marked edge, when performing the $\pB{2}$-summation, while keeping all other labels fixed. When simultaneously summing over labels from different colours it is not guaranteed any more that we can perform a Ward estimate for every marked edge. We will later resolve this possible issue by introducing the concept of \emph{effective} and \emph{ineffective} marks.  

\subsubsection*{Procedure for colours appearing three times in $\Gamma$} If a colour $\pB{2}$ appears three times, then the following ten setups are possible 
\begin{align}\nonumber
&\plotLambda{{2},{2},{2}}\Bigg|\plotLambda{{.,0,2,0},{2},{2}}\Bigg|\quad\plotLambda{{2,2},{2}}\Bigg|\plotLambda{{.,0,2,2,0},{2}}\Bigg| \quad\plotLambda{{2,2,2}}\quad\Bigg|\quad\plotLambda{{.,0,2,2,2,0}}\quad\Bigg| \plotLambda{{.,0,2,0},{2,2}}\\
&\plotLambda{{.,0,2,0},{.,0,2,0},{2}}\Bigg| \plotLambda{{.,0,2,0},{.,0,2,0},{.,0,2,0}}\quad\Bigg| \plotLambda{{.,0,2,0},{.,0,2,2,0}},\label{graphs 3 av}
\end{align}
where we explicitly allow components with open continuations to be connected (unlike in the previous case, where rule \eqref{rule cancellation} applied). We now mark the edges adjacent to $\pB{2}$ as follows and observe that at most two remain unmarked. Explicitly we choose the markings
\begin{align*}
&\plotLambda{{2m},{2m},{2}}\Bigg|\plotLambda{{.,0,m2m,0},{2m},{2}}\Bigg|\quad\plotLambda{{2,2},{2m}}\Bigg|\plotLambda{{.,0,2,2m,0},{2m}}\Bigg| \quad\plotLambda{{2,2m,2}}\quad\Bigg|\quad\plotLambda{{.,0,m2,2,2m,0}}\quad\Bigg| \plotLambda{{.,0,m2m,0},{2,2}}\\
&\plotLambda{{.,0,m2m,0},{.,0,m2m,0},{2}}\Bigg| \plotLambda{{.,0,m2m,0},{.,0,m2m,0},{.,0,2,0}}\quad\Bigg| \plotLambda{{.,0,m2m,0},{.,0,2,2m,0}}
\end{align*}
and observe that in all but the fifth graph we can gain a factor of $\psi$ for every marked edge using the first term in the norm $\tnorm{\kappa}_3^{\text{av}}$. For example, in the second graph this follows from 
\[\sum_{\alpha_1,\alpha_2,\alpha_3} \abs{\kappa(\alpha_1,\alpha_2,\alpha_3)  G_{c a_3} G_{b_3 d} G_{b_1 a_1}G_{b_2a_2}} \lesssim \tnorm{\kappa}_3^{\text{av}} \sqrt{\sum_{\alpha_2} \abs{G_{b_2a_2}}^2 }\sqrt{\sum_{\alpha_3} \abs{G_{c a_3} G_{b_3 d}}^2 } \lesssim \tnorm{\kappa}_3^{\text{av}} N^2\psi^3\]
and in third graph from 
\[\sum_{\alpha_1,\alpha_2,\alpha_3} \abs{\kappa(\alpha_1,\alpha_2,\alpha_3) G_{b_1 a_2}G_{b_2a_1} G_{b_3a_3}} \lesssim \sum_{\alpha_2,\alpha_3} \abs{G_{b_3a_3}} \sum_{\alpha_1}\abs{\kappa(\alpha_1,\alpha_2,\alpha_3)}  \lesssim \tnorm{\kappa}_3^{\text{av}}N^2\psi,\]
where $c$ and $d$ are the connected indices from the white vertices in the graph. The computations for the other graphs are identical. We note that the markings we chose above are not the only ones possible. For example we could have replaced
\begin{align}
\plotLambda{{.,0,2,2m,0},{2m}} \qquad \text{by}\qquad \plotLambda{{.,0,m2,2m,0},{2}}.\label{changed marking ex}
\end{align}

For the fifth graph in \eqref{graphs 3 av} the second term in the $\tnorm{\kappa}_3^{\text{av}}$ is necessary. The norms in \eqref{kappa ef norms} ensure that we can perform at least one Ward estimate and we have 
\[ \sum_{\alpha_1,\alpha_2,\alpha_3} \abs{\kappa(\alpha_1,\alpha_2,\alpha_3) G_{b_1a_2}G_{b_2a_3}G_{b_3a_1}} \lesssim \tnorm{\kappa}_3^{\text{av}} N^2 \psi.\]
Indeed, for example 
\[\sum_{\alpha_1,\alpha_2,\alpha_3} \abs{\kappa_{cd}(\alpha_1,\alpha_2,\alpha_3) G_{b_1a_2}G_{b_2a_3}G_{b_3a_1}} \lesssim \sum_{\alpha_1,\alpha_2,\alpha_3} \abs{\kappa_{cd}(\alpha_1,\alpha_2,\alpha_3) G_{b_3a_1}} \le \tnorm[0]{\kappa_{cd}}_{cd} N \sqrt{\sum_{b_3,a_1} \abs{G_{b_3a_1}}^2}\lesssim \tnorm[0]{\kappa_{cd}}_{cd} N^2\psi\]
and the other three cases are similar.

\subsubsection*{Procedure for all other colours in $\Gamma$}For colours in $C\setminus C'$, i.e., those which appear four times or more, we do not intend to use any Ward estimates and therefore we do not place any additional markings. Thus we only have to control the size of the summation over any fixed colour, as is guaranteed by the finiteness of $\tnorm{\kappa}_k^{\text{av}}$.

\subsubsection*{Counting of markings} After we have chosen all markings, we select the ``useful'' ones.  We call an edge \emph{ineffectively marked} if it only carries one mark and joins two distinctly $C'$-coloured vertices. All other marked edges we call \emph{effectively marked} because the parallel gain through a Ward estimate is guaranteed for all those edges. In total, there are at least $\sum_{c\in C'}\abs{V_{c}}$ edges adjacent to $C'$ (i.e., adjacent to a $C'$-coloured vertex). After the above marking procedure there are at most $2\sum_{c\in C'}(\abs{V_c}-2)$ unmarked or ineffectively marked edges adjacent to $C'$. To see this we note that edges between two $C'$-colours with only one marking are counted as unmarked from the perspective of exactly one of the two colours. Thus we find that there are at least 
\begin{align} \sum_{c\in C'}\abs{V_c}-2\sum_{c\in C'}(\abs{V_c}-2)=\sum_{c\in C'}(4-\abs{V_c}) \label{marking count average}\end{align}
effectively marked edges adjacent to $C'$ after the marking procedure. We illustrate this counting in an example. In the graph
\[\plotLambda{{1,1,2,2},{1},{2}}\]
we have $V_{\pB{1}}=V_{\pB{2}}=3$ and there are six edges adjacent to $C'=\{\pB{1},\pB{2}\}$. After the marking procedure we could for example obtain the graphs
\[ \plotLambda{{m1,1,m2,2},{1m},{2m}}\quad\text{or}\quad\plotLambda{{m1,1m,m2,2m},{1},{2}},\]
where the second graph would result from the replaced marking in \eqref{changed marking ex}. In both cases there are two effectively marked edges, in accordance with \eqref{marking count average}; in the first example there are also two ineffectively marked edges. 

\subsubsection*{Power counting estimate} 
The strategy now is that we iteratively perform the Ward estimates colour by colour in $C'$ in no particular order. In each step we thus remove all the edges adjacent to some given colour, either through Ward estimates (if the edge was marked in that colour), or through the trivial bound $\abs{G_{\alpha}}\lesssim 1$. If some edge is missing because it already was removed in a previous step, then the corresponding $G$ is replaced by $1$ in that estimate (e.g.~in \eqref{ex est}). This might reduce the number of available Ward estimates in some steps, but the concept of effective markings ensures that whenever an effectively marked edge is removed, then a gain through a Ward estimate is guaranteed.  After the summation over all colours from $C'$ we have thus performed Ward estimates in all the effectively marked edges, which amounts to at least
\[\sum_{c\in C'} (4-\abs{V_c})\]
gains of the factor $\psi$. We note that ineffectively marked edges may not be estimated by a Ward estimates, as it might be necessary to bound the corresponding $G$ trivially while performing the sum over another colour. Using only the gains from the effective marks, we can improve on the naive power counting \eqref{naive size averaged} to conclude that the value of $\Gamma$ is bounded by
\begin{equation}\label{av val bound sim}\Val(\Gamma)\lesssim N^{-p}\prod_{c\in C\setminus C'}N^{2-\abs{V_c}/2}\prod_{c\in C'}(N\psi^2)^{2-\abs[0]{V_c}/2} \le \psi^{2p} N^{2\abs{C\setminus C'}-\abs[0]{V_{C\setminus C'}}/2} ,\end{equation}
where we used that $\abs{C'}\le\abs{C}\le p$, $\abs{V_c}=2,3$ for $c\in C'$ and $\abs{V_c}\ge 4$ otherwise, and that $N\psi^2\ge 1$. 
 Note that this last bound used $\braket{z}\le C$ and $\| H\|\le C$, without these conditions we have
$N\psi^2\ge (\|H\|^2+ \braket{z}^2)^{-1}\ge \braket{\| H\|}^{-2} \braket{z}^{-2}$. 

\subsubsection{Detailed bound}\label{sec detailed bound} The argument above tacitly assumed bounds of the form $\abs{G_{\alpha}}\lesssim 1$ and $\sum_{\alpha}\abs{G_\alpha}^2\lesssim N^2\psi^2$. Apart from unspecified and irrelevant constants, these bounds are not available almost surely, they hold only in the sense of high moments, e.g. $\E\abs{G_\alpha}^q\le_q 1$. Secondly, the definition of $\psi$ intentionally left the role of $\Im G$ in it vague. The precise definition of $\psi$ will involve high $L^q$ norms of $\Im G$. Moreover, different $G$-factors in the monomials $\Lambda$ are not independent. All these difficulties can be handled by the following general H\"older inequality. Suppose, we aim at estimating 
\[\E \sum_A X_A \sum_B Y_{A,B}\]
for random variables $X_A$, $Y_{A,B}$, then we use the H\"older inequality to estimate 
\begin{align} \norm[2]{\sum_A X_A \sum_B Y_{A,B}}_q\le \bigg(\sum_A\bigg)^\epsilon \norm[2]{\sum_A X_A}_{2q} \max_A\norm[2]{\sum_B Y_{A,B}}_{1/\epsilon} \label{eq triv bound}\end{align}
for $0<\epsilon\le 1/2q$. In our procedure \eqref{eq triv bound} enables us to iteratively bound the graphs colour by colour at the expense of an additional factor $N^{2pR\epsilon}$ in every colour step of the bound, as the total sum is at most of size $N^{2pR}$. To estimate a $G$ or an $\Im G$ directly we use the H\"older inequality and note that there are  at most $\abs{V}=\sum_c \abs{V_c}\le pR$ factors of the form $G$ or $GB$, so that we can estimate those terms isotropically by $\norm{G}_{pR/\epsilon}$, $\norm{B}\norm{G}_{pR/\epsilon}$ and $\norm{\Im G}_{pR/\epsilon}$. We use \eqref{eq triv bound} at most with $q\in\{1,2,4,\dots,2^{p-1}\}$ and thus have a restriction of $0<\epsilon\le 2^{-p}$. Thus, combining the power counting above with the iterated application of the H\"older inequality, we have shown that 
\begin{align}\label{av bound proof} 
\norm{\Val(\Gamma)}_p \le_{p,R,\epsilon}  N^{2 p^2 R \epsilon} \Big(1+\tnorm{\kappa}^{\text{av}}\Big)^p\norm{B}^{p}(1+\norm{G}_{pR/\epsilon}^{\abs{V}})\psi_{pR/\epsilon}^{2p}N^{2\abs{C\setminus C'}-\abs[0]{V_{C\setminus C'}}/2}, \quad \psi_{q}\defeq \sqrt{\frac{\norm{\Im G}_q}{N\eta}} \end{align}
for all $\Gamma\in\cG^{\text{av}}(p,R)$ and $0<\epsilon\le 1/2^p$. Therefore, together with \eqref{av graphical expansion} we conclude the bound 
\begin{align}\label{av bound proof final} 
\norm{\cM\Big( \big(\braket{B\cdot}^{[p/2]},\overline{\braket{B\cdot}}^{[p/2]}\big) ;L,M,\sigma \Big)}_p \le_{p,R,\epsilon}  N^{2 p^2 R \epsilon} \Big(1+\tnorm{\kappa}^{\text{av}}\Big)^p\norm{B}^{p}(1+\norm{G}_{pR/\epsilon}^{\abs{V}})\psi_{pR/\epsilon}^{2p}N^{2\abs{C\setminus C'}-\abs[0]{V_{C\setminus C'}}/2}\end{align}
on \eqref{E prod Lambda tuples}.

\subsection{Isotropic bound on \texorpdfstring{$D$}{D}}\label{iso bound sec}
We turn to the isotropic bound on $D$, i.e.~we give bounds on \eqref{E prod Lambda tuples} with functionals $\Lambda$ of the
following type. We consider fixed vectors $\vx,\vy$ and set $\Lambda(D)=D_{\vx\vy}$ or $\Lambda(D)=\overline{D_{\vx\vy}}$. Up to sign we then have 
\begin{align}
\abs{\Lambda_{\alpha_1,\dots,\alpha_n}} = N^{-n/2} (\Delta^{\alpha_1}G\dots\Delta^{\alpha_n}G)_{\vx\vy} = N^{-n/2} \vx_{a_1} G_{b_1a_2}\dots G_{b_{n-1}a_n} G_{b_n\vy}.
\end{align}
The graph component representing $\Lambda_{\alpha_1,\dots,\alpha_n}$ is a chain in contrast to the cycles in the averaged case. We also have additional \emph{edges} representing the first $\vx_{a_1}$ and last $G_{b_n\vy}$ factor which we will picture as $\plotlLambdas{1[xx,as=]--2[Gend,draw=none,as=]}$and $\plotlLambdas{1[Gend,draw=none,as=]--2[Gend,as=]}$, respectively. These are special edges that are adjacent to one vertex only (the dots $\bullet$ and $\circ$ are not considered as vertices). We will call them \emph{initial} and \emph{final} edge. Due to these special edges we should, strictly speaking, talk about a special class of hypergraphs consisting of a union of chains each of them starting and ending with such a special edge, but for simplicity we continue to use the term \emph{graph}. For example we have the correspondence
\begin{align}\Lambda_{\alpha_1,\alpha_2}\leftrightarrow\plotlLambdas{1[xx,as=]--"$\alpha_1$" --"$\alpha_2$" --  2[Gend,as=]}.\end{align}
For $\Lambda(D)=\overline{D_{\vx\vy}}$ the edges represent $\overline{x_{a_1}}$, $G^\ast_{b_k\vy}$ and $G^\ast_{b_ka_{k+1}}$ but we do not indicate complex and Hermitian conjugate visually as they have no consequences on the argument. We follow the same convention regarding the colouring, as we did in the averaged case and for example have the representation
\begin{align*} \sum_{\alpha_1,\beta_1^2(1)} \kappa(\alpha_1,\beta_1^2(1))\sum_{\alpha_2,\beta_2^1(1),\beta_2^1(2)} \frac{\kappa(\alpha_2,\beta_2^1(1),\beta_2^1(2))}{2!} \sum_{\alpha_3,\beta_3^2(1)} \kappa(\alpha_3,\beta_3^2(1)) &\Lambda_{\alpha_1,\beta_2^1(2),\beta_2^1(1)} \Lambda_{\alpha_2,\beta_3^2(1)} \Lambda_{\alpha_3,\beta_1^3(1)}  \\
&= \Val \left(\plotlLambda{{1,2,2},{2,3},{3,1}}\right).\end{align*}

We again rephrase the rules on $M$ as rules on the graph $\Gamma$. We consider all vertex coloured graphs $\Gamma$ such that the connected components are chains with an initial edge of type $\plotlLambdas{1[xx,as=]--2[Gend,as=,draw=none]}$and a final edge of type $\plotlLambdas{1[Gend,draw=none,as=]--2[Gend,as=]}$ such that
\begin{enumerate}[(a)]
\item there exist $p$ connected components, all of which are chains, 
\item every component contains at least one vertex,
\item every colour occurs at least once on a vertex adjacent to $\plotlLambdas{1[xx,as=]--"$,$"[draw=none]}$
\item every colour occurs at least twice,
\item if a colour occurs exactly twice, then it occurs in two different chains.
\end{enumerate}
The set of graphs satisfying (a)--(e) will be denoted by $\cG^{\text{iso}}(p,R)$ and for each $L,M,\sigma$ in \eqref{E prod Lambda tuples} we can write the main term $\cM$ as
\begin{align}\label{iso graphical expansion}
\cM\Big( \big(\braket{\vx,\cdot\vy}^{[p/2]},\overline{\braket{\vx,\cdot\vy}}^{[p/2]}\big) ;L,M,\sigma) = \Val(\Gamma),\qquad \Gamma=\Gamma(L,M,\sigma)\in\cG^{\mathrm{iso}(p,R)}
\end{align}
where $\braket{\vx,\cdot\vy}^{[p/2]}$ denotes the tuple of $p/2$ functionals mapping $D\mapsto \braket{\vx,D\vy}$ and similarly for $\overline{\braket{\vx,\cdot\vy}}$. As the number of such graphs is finite for given $p,R$ it follows that it sufficient to prove the required bound for every single graph.

In contrast to the averaged case, where each $\Lambda$ carried a factor $1/N$ from the definition of $\Lambda(D)=N^{-1}\Tr BD$, now the naive size of the sum over $\Gamma$ is not of order $1$, but of order 
\begin{align}\Val(\Gamma)\lesssim\prod_{c\in C}N^{2-\abs{V_c}/2} = N^{2\abs{C}-\abs{V}/2},\label{eq very naive isotropic}\end{align}
which can be large. Consequently we have to be more careful in our bound and first make use of a cancellation. 
\subsubsection*{Step 1: Improved naive size} We first observe that we can reduce the naive size \eqref{eq very naive isotropic} to order $1$, without using any Ward estimates, yet. 
The improvement  comes from the fact that sums of the type
\[
  \sum_a v_a G_{ab} = G_{\vv b}
\]
can be directly bounded via the right hand side  by $\abs{G_{\vv b} } \lesssim \norm{\vv}$ using the isotropic bound.
Note that the naive estimate on the left hand side would be
\[
   \abs{\sum_a v_a G_{ab} }\lesssim \sum_a \abs{v_a} \le \sqrt{N} \norm{\vv}
\]
and even with a Ward estimate it can only be improved to
\[
  \abs{ \sum_a v_a G_{ab}} \le \norm{ \vv} \sqrt{\sum_a \abs{G_{ab}}^2}  \le  \sqrt{N} \psi \norm{\vv}
\]
So the procedure ``summing up a vector $\vv$ into the argument of $G$'' is much
more efficient than a Ward estimate. The limitation of this idea is that only deterministic
vectors $\vv$ can be summed up, since isotropic bounds on $G_{\vu\vv}$ hold only for fixed
vectors $\vu, \vv$.
\subsubsection*{Improvement for colours occurring twice in $\Gamma$}
For colours which occur exactly twice we can sum up the $\vx$ into a $G$ factor without paying the price of an $N$ factor from this summation. To do so, we consider an arbitrary partition of $\kappa=\kappa_c+\kappa_d$, where one should think of that $\kappa_d(\alpha_1,\alpha_2)$ forces $\alpha_1=(a_1,b_1)$ to be close to $\alpha_2=(a_2,b_2)$, whereas $\kappa_d(\alpha_1,\alpha_2)$ forces $(a_1,b_1)$ to be close to $(b_2,a_2)$. In both cases we can, according to rule (b), perform two single index summations as follows. First, we sum up the index $a_1$ of $\vx$ as \[\sum_{a_1}\kappa(a_1b_1,a_2b_2)x_{a_1}=\kappa(\vx b_1,a_2b_2).\]
Then we sum up its companion $b_2$ or $a_2$, depending on whether we consider the cross or direct term:
\[\sum_{b_2}\kappa_c(\vx b_1,a_2b_2) G_{b_2\vv} =  G_{\kappa_c(\vx b_1,a_2\cdot)\vv}\quad\text{or}\quad \sum_{a_2}\kappa_d(\vx b_1,a_2b_2) G_{\vv a_2} =  G_{\vv\kappa_d(\vx b_1,\cdot b_2)}, \]
where $\vv$ can be any vector or index. Thus we effectively performed a single label (two index) summation into a single $G$ factor that will be estimated by a constant in the isotropic norm. We indicate this summation graphically by introducing half-vertices $\plotlLambdas{2[Gf,as=\nodepart{lower}$a$,circle split part fill={},rotate=180]}$and$\plotlLambdas{1[Gf,as=\nodepart{lower}$b$,circle split part fill={}]}$ representing the single leftover indices $a$ and $b$ corresponding to a label $\alpha=(a,b)$ and  new (half)edges $\plotlLambdas{1[xx,as=,fill=white]--2[draw=none,as=]}$and $\plotlLambdas{1[xx,draw=none,as=,fill=white]--2[xx,as=,fill=white]}$ representing the $G_{\kappa_c(\vx b_1,a_2\cdot)\vv}$ and $G_{\vv\kappa_d(\vx b_1,\cdot b_2)}$ factors. To indicate that the half-edges representing $\vx$ have been summed, we grey them out. This partial summation can thus be graphically represented as
\[\Val\left(\plotlLambda{{1,0,.},{.0,1,0,.}}\right)= \Val\left(\plotlLambda{{1x,0,.},{.0,1c,0,.}}\right) + \Val\left(\plotlLambda{{1x,0,.},{.0,1d,0,.}}\right),\]
since
\begin{align*}
&\sum_{a_1,b_1,a_2,b_2} \kappa(a_1b_1,a_2b_2) \left(\vx_{a_1} G_{b_1\vu}\right) \left(G_{\vv a_2}G_{b_2\vw}\right) = \sum_{b_1,a_2}  \left(G_{b_1\vu}\right) \left(G_{\vv a_2}G_{\kappa_c(\vx b_1,a_2\cdot)\vw}\right) + \sum_{b_1,b_2}  \left(G_{b_1\vu}\right) \left(G_{\vv \kappa_d(\vx b_1,\cdot b_2)}G_{b_2\vw}\right) 
\end{align*}
where $\vu,\vv,\vw$ are the connecting indices from the white vertices.

\subsubsection*{Improvement for colours occurring three times in $\Gamma$}
For colours which appear exactly three times we cannot perform the summation of $\vx$ directly. We can, however use a Cauchy-Schwarz in the vertex adjacent to the $\vx$--edge to improve the naive size of the ${\pB{1}}$--sum to $N^{3/2}$ from $N^2$. Explicitly, for any index or vector $\vv$ we use that 
\[\sum_{a_i,b_i} \abs{\vx_{a_i} G_{b_i\vv}}\lesssim \sum_{a_i,b_i} \abs{\vx_{a_i}}\le N^{3/2}\left(\sum_{a_i}\abs{\vx_{a_i}}^2\right)^{1/2}= N^{3/2} \norm{\vx}. \]
 To indicate the intend to use the Cauchy-Schwarz improvement on a specific $\vx$ edge, we mark the corresponding edge with a marking originating in the adjacent vertex, very much similar to the marking procedure in the averaged case. To differentiate this marking from those indicating the potential for a Ward estimate we use a grey marking $\plotlLambdas{ 10[xx,as=,grow right,draw=none] --[-{<[sep=-3pt,length=8pt,fill=lightgray,color=black]}] 11[as=,fill=white] ;}$. As an example we would indicate 
\[\plotlLambda{{m1,1,2},{2,1}}.\]

After these two improvements over \eqref{eq very naive isotropic} the naive size (naive in the sense without any Ward estimates, yet) of the summed graph is 
\begin{align}\Val(\Gamma)\lesssim\left(\prod_{c\in C, \abs{V_c}=2} N^{1-\abs{V_c}/2}\right)\left(\prod_{c\in C, \abs{V_c}=3} N^{3/2-\abs{V_c}/2}\right)\left(\prod_{c\in C, \abs{V_c}\ge4} N^{2-\abs{V_c}/2}\right)\le 1.\label{eq naive size iso}\end{align}
Notice that the first two factors give $1$, so the improved power counting for colours with two or three occurrences is neutral. We thus restored the order $1$ bound and can now focus on the counting of Ward estimates, with which we can further improve the bound. 

\subsubsection*{Step 2: Further improvements through Ward estimates} The counting procedure is very similar to what we used in the averaged law in the sense that we mark potential edges for Ward estimates colour by colour. To be consistent with the improved naive bound we count the grey initial edges (those from the summation of colours occurring twice) and the initial edges with a grey arrow (those from the summation of colours appearing three times) as unmarked, since they will not be available for Ward estimates. 

\subsubsection*{Marking procedure for colours occurring twice}
Colours occurring twice can, after Step 1, only occur in the reduced forms
\[\plotlLambda{{1x,0,.},{.0,1c,0,.}} \quad\text{and}\quad \plotlLambda{{1x,0,.},{.0,1d,0,.}}, \]
where we allow $\plotlLambdas{1[xx,draw=none,as=,fill=white] --[dotted,thick] 2[G,as=,fill=white] -- 3[xx,draw=none,as=,fill=white]}$ and $\plotlLambdas{1[xx,draw=none,as=,fill=white] -- 2[G,as=,fill=white] --[dotted,thick] 3[xx,draw=none,as=,fill=white]}$ to stand for an arbitrary continuation of the graph, as well as the initial $\plotlLambdas{1[xx,as=]--2[Gend,draw=none,as=]}$ and final edge $\plotlLambdas{1[Gend,draw=none,as=]--2[Gend,as=]}$. In both cases we mark the edges adjacent to the remaining two half-vertices to obtain:
\[\plotlLambda{{1xm,0,.},{.0,m1c,0,.}} \quad\text{and}\quad \plotlLambda{{1xm,0,.},{.0,1dm,0,.}} \]
Thus for colours appearing twice we always leave two edges unmarked (which includes the greyed out initial edge). Using the $\tnorm{\kappa}_2^{\text{iso}}$ norm we indeed find that the solid edges in the two graphs above can be bounded by
\begin{align}\sum_{b_1,a_2} \abs{G_{b_1\vu} G_{\vv a_2}G_{\kappa_c(\vx b_1,a_2\cdot)\vw}}\lesssim \norm{\vw}\sum_{b_1,a_2}\abs{G_{b_1\vu} G_{\vv a_2} \norm{\kappa_c(\vx b_1,a_2\cdot)}} \lesssim N^2\psi^2 \tnorm{\kappa}_2^{\text{iso}}\norm{\vx} \norm{\vu}\norm{\vv}\norm{\vw} \label{2 marking iso example}\end{align} 
and
\[\sum_{b_1,b_2} \abs{G_{b_1\vu}G_{\vv \kappa_d(\vx b_1,\cdot b_2)}G_{b_2\vw} }\lesssim \norm{\vv}\sum_{b_1,b_2}\abs{G_{b_1\vu} G_{b_2\vw} \norm{\kappa_d(\vx b_1,\cdot b_2)}} \lesssim N^2\psi^2 \tnorm{\kappa}_2^{\text{iso}}\norm{\vx} \norm{\vu}\norm{\vv}\norm{\vw} \]
where the vectors (which are allowed to be indices, as well) $\vu,\vv,\vw$ are the endpoints of the edges in the three white vertices. 
\begin{remark}
In the case that $\plotlLambdas{1[xx,draw=none,as=,fill=white] --[dotted,thick] 2[G,as=,fill=white] -- 3[xx,draw=none,as=,fill=white]}$ stands for the initial edge $\plotlLambdas{1[xx,as=]--2[Gend,draw=none,as=]}$, we cannot use the Ward estimate, but use instead that $\vx$ is a vector of finite norm, providing a gain of $N^{-1/2}\ll \psi$. For example, we could bound the graph
\[\plotlLambdas{ 10[xx,as=,grow right,fill=lightgray,draw=none] --[color=lightgray] 11[Gf,as=,circle split part fill={\csname pat1\endcsname}] --[{>[sep=-3pt,length=8pt]}-] 12[G,as=] --[dotted,thick] 13[Gend,draw=none,as=]; 20[xx,as=,grow right,] --[-{<[sep=-3pt,length=8pt]}] 21[Gc=\csname pat1\endcsname,rotate=180,circle split part fill={\csname pat1\endcsname},as=] -- 22[as=] --[dotted,thick] 23[Gend,draw=none,as=]; } \quad\text{by}\quad \sum_{b_1,a_2} \abs{ G_{b_1\vu} \vx_{a_2} G_{\kappa_c(\vx b_1,a_2\cdot)\vw}}\lesssim N^2\frac{\psi}{\sqrt{N}} \tnorm{\kappa}_2^{\text{iso}}\norm{\vx}^2 \norm{\vu}\norm{\vw},\]
which is better than \eqref{2 marking iso example} as $\sqrt N \psi\ge 1$. In the sequel we will not specifically distinguish this case when instead of a Ward estimate we have to use the finite norm of $\vx$, as the procedure is identical and the resulting bound is always smaller in the latter case. 

We also will not separately consider the case when $\plotlLambdas{1[xx,draw=none,as=,fill=white] -- 2[G,as=,fill=white] --[dotted,thick] 3[xx,draw=none,as=,fill=white]}$ stands for the final edge $\plotlLambdas{1[Gend,draw=none,as=]--2[Gend,as=]}$, as we can use the same Ward estimate as before, with the difference that $\vu$ and/or $\vw$ are replaced by $\vy$.

We will not manually keep track of the number of $\norm{\vx},\norm{\vy}$ in the  bound as it is automatic in the sense that there are $p$ initial and $p$ final edges in $\Gamma$, each contributing a factor of $\norm{\vx},\norm{\vy}$ to the final estimate.
\end{remark}

\subsubsection*{Marking procedure for colours occurring three times}
Colours appearing three times occur in one of the following four forms
\begin{align*}&\plotlLambda{{m1,1,1,0,.}}\quad\bigg|\quad\plotlLambda{{m1,1,0,.},{.0,1,0,.}}\\
&\rule{10cm}{0.4pt}\\
&\plotlLambda{{m1,0,.},{.0,1,1,0,.}}\quad\bigg|\quad\plotlLambda{{m1,0,.},{.0,1,0,.},{.0,1,0,.}} \end{align*}
and in all cases we mark the edges adjacent to $\pB{1}$ in such a way that at most three edges (including the initial edge with the grey mark) remain unmarked. Indeed, we mark the edges as follows.
\begin{align*}&\plotlLambda{{m1,1,1m,0,.}}\quad\bigg|\quad\plotlLambda{{m1,1,0,.},{.0,m1m,0,.}}\\
&\rule{10cm}{0.4pt}\\
&\plotlLambda{{m1m,0,.},{.0,m1,1,0,.}}\quad\bigg|\quad\plotlLambda{{m1m,0,.},{.0,m1m,0,.},{.0,1,0,.}}.\end{align*}
Very similar to the bound using $\tnorm{\kappa}_3^{\text{av}}$, we find that using the norm $\tnorm{\kappa}_3^{\text{iso}}$ we can perform Ward estimates on all marked edges.

\subsubsection*{Marking procedure for colours occurring more than three times} For any colour $c$ occurring more than three times we claim that we can always mark edges in such a way that at most $2\abs{V_c}-4$ edges adjacent to $V_c$ remain unmarked. Indeed, if we call an edge connected to two $c$--coloured vertices $c$--\emph{internal} and denote their set $E_{c}^{\text{int}}$, then there are $2\abs{V_c}-\abs[0]{E_{c}^{\text{int}}}$ edges adjacent to $c$. Out of this set of all $c$-adjacent edges, we mark any two and thus the claim is trivially fulfilled if $\abs[0]{E_c^{\text{int}}}\ge 2$. If $\abs[0]{E_c^{\text{int}}}=0$, then the graph contains two single vertices of colour $c$, for which we mark all four adjacent edges, i.e. \[\plotlLambda{{.0,m1m,0,.},{.0,m1m,0,.}},\] also confirming the claim in this case. Finally, if $\abs[0]{E_c^{\text{int}}}=1$, then the graph has to contain \[\plotlLambda{{.0,m1,1,0,.},{.0,m1m,0,.}},\] for which we mark the three indicated edges, confirming the claim also in this final case. We note (again similarly to $\tnorm{\kappa}_3^{\text{av}}$) that the norm $\tnorm{\kappa}_k^{\text{iso}}$ allows to perform Ward estimates on all marked edges. 

\subsubsection*{Counting of markings} In contrast to the averaged case, we now call an edge \emph{ineffectively marked} if it only carries one mark and connects any two distinctly coloured vertices (in the averaged case the analogous definition was restricted to $C'$-coloured vertices). All other marked edges we call \emph{effectively marked}. In particular the initial and final edge are always effectively marked, once they are marked. By construction, all effectively marked edges can be summed up by Ward estimates. In total, there are exactly $p+\sum_{c\in C}\abs{V_c}$ edges in $\Gamma$. After the marking procedure there are at most 
\[\sum_{c\in C,\abs{V_c}=2} 2 + \sum_{c\in C,\abs{V_c}=3} 3+\sum_{c\in C,\abs{V_c}\ge 4}(2\abs{V_c}-4) \] 
unmarked or ineffectively marked edges in $\Gamma$. Thus there are at least
\begin{align} \bigg(p+\sum_{c\in C}\abs{V_c}\bigg) -\bigg(\sum_{c\in C,\abs{V_c}=2} 2 + \sum_{c\in C,\abs{V_c}=3} 3+\sum_{c\in C,\abs{V_c}\ge 4}(2\abs{V_c}-4) \bigg)=p+\sum_{c\in C,\abs{V_c}\ge 4}(4-\abs{V_c})  \label{markings iso}\end{align}
effectively marked edges in $\Gamma$,  which can be negative, but it turns out that in this case the (improved) naive size already is sufficiently small. 

\subsubsection*{Power counting estimate}
The strategy for performing the Ward estimates is identical to that in the averaged case; we perform them colour by colour in an arbitrary order. According to the improved naive bound from Step 1, and recalling that the power counting for $\abs{V_c}=2$ and $\abs{V_c}=3$ gives $1$, i.e. is neutral, and the counting of additional effective markings we find that the summed value of $\Gamma$ is bounded by
\[\Val(\Gamma)\lesssim N^{2\abs{C\setminus C'}-\abs[0]{V_{C\setminus C'}}/2} \psi^{(p+4\abs{C\setminus C'}-\abs[0]{V_{C\setminus C'}})_+},\]
where $C'$ are those colours $c$ with $\abs{V_c}=2,3$.

\subsubsection*{Detailed estimate}
Finally, this power counting is performed with the procedure of iterated H\"older inequalities, exactly as in the averaged case to obtain 
\begin{align} \label{isotropic bound proof}
\norm{\Val(\Gamma)}_p \le_{\epsilon,R,p}  N^{2p^2 R\epsilon}\Big(1+\tnorm{\kappa}^{\text{iso}}\Big)^p \norm{\vx}^p\norm{\vy}^{p} (1+\norm{G}_{pR/\epsilon}^{\abs{V}}) \psi^{(p+4\abs{C\setminus C'}-\abs[0]{V_{C\setminus C'}})_+}_{pR/\epsilon} N^{2\abs{C\setminus C'}-\abs[0]{V_{C\setminus C'}}/2}\end{align}
for all $\Gamma\in\cG^{\text{iso}}(p,R)$ and $0<\epsilon\le 1/2^p$. Therefore we conclude together with \eqref{iso graphical expansion} that
\begin{equation} \label{isotropic bound proof final}
\begin{split}
\norm{\cM\Big( \big(\braket{\vx,\cdot\vy}^{[p/2]},\overline{\braket{\vx,\cdot\vy}}^{[p/2]}\big) ;L,M,\sigma \Big)}_p \le_{\epsilon,R,p}&  N^{2p^2 R\epsilon}\Big(1+\tnorm{\kappa}^{\text{iso}}\Big)^p \norm{\vx}^p\norm{\vy}^{p} \\ &\times (1+\norm{G}_{pR/\epsilon}^{\abs{V}}) \psi^{(p+4\abs{C\setminus C'}-\abs[0]{V_{C\setminus C'}})_+}_{pR/\epsilon} N^{2\abs{C\setminus C'}-\abs[0]{V_{C\setminus C'}}/2}.
\end{split}
\end{equation}

\subsection{Modifications for general case}\label{sec mods}
In the previous Sections \ref{av bound sec} and \ref{iso bound sec} we estimated $\cM$ defined in \eqref{E prod Lambda tuples} under the simplifying assumptions $L_3=L_4=\emptyset$ and $\NN(\alpha_l)=I$. For the final bound in \eqref{E prod Lambda cM bound} we need to treat all other cases. In this section we 
 now demonstrate that these simplifying assumptions \nc
  are not substantial and that the results from \eqref{marking count average} and \eqref{markings iso} on the number of available Ward estimates remain valid in the more general setting. By definition, $\cM$ depends on the labels of types $L_3$ and $L_4$, which are considered fixed in the subsequent discussion. The graphs we introduced to systematically bound $\cM$ do not change in their form for the general case, but only have additional \emph{fixed} vertices $\alpha_l$, $\bm\beta_l$ for $l\in L_3\cup L_4$, which we consider as uncoloured. Thus we enlarge the set graphs $\cG^{\text{av}}$ and $\cG^{\text{iso}}$ to $\widetilde\cG^{\text{av}}$ and $\widetilde\cG^{\text{iso}}$, which are defined by the previously stated rules (a)-(e) with the addition of
\begin{enumerate}[(i)]
\item[(f)] certain vertices may be uncoloured.
\end{enumerate}
These uncoloured vertices represent exactly those labels of types $L_3$ and $L_4$, which are parameters of $\cM$, as defined in 
\eqref{E prod Lambda tuples full}. For example, the previously studied graphs
\[\plotLambda{{1,2},{2,1}}\qquad\text{and}\qquad \plotlLambda{{1,2},{2,1}}
\]
can be extended to
\[\plotLambda{{1,2,0},{2,1,0,0}}\qquad\text{and}\qquad \plotlLambda{{1,2,0},{2,1,0,0}}.
\]
The definition of the value naturally extends to these larger classes of graphs, but without a summation over the uncoloured vertices. In the above example \eqref{Lambda1221 ex av} is then replaced by 
\[\sum_{\alpha_1,\beta_1^2(1)} \kappa(\alpha_1,\beta_1^2(1))\sum_{\alpha_2,\beta_2^1(1)} \kappa(\alpha_2,\beta_2^1(1)) \Lambda_{\alpha_1,\beta_2^1(1),\gamma(1)} \Lambda_{\alpha_2,\beta_1^2(1),\gamma(2),\gamma(3)} = \Val\left(\plotLambda{{1,2,0},{2,1,0,0}}\right),\]
where $\gamma(1),\gamma(2),\gamma(3)$ are the fixed labels and the value of the graph depends on them. The isotropic case is analogous.

The argument in Sections \ref{av bound sec} and \ref{iso bound sec}, however, only concern those labels which are actually summed over, i.e., those of type $l$ for $l\in L_2$. In other words, we only aim at improving the $L_2$-summation by Ward estimates. The presence of additional fixed labels do neither change the naive bounds, the improvement through Ward estimates, nor the counting of those Ward estimates. 

Next, the restricted summations due to the neighbourhood sets $\NN(\alpha)\subset I$ do also not change the argument. In fact, Ward estimates stay true for restricted summations since
\[\sum_{a\in \mathcal J}\abs{G_{a\vx}}^2\le \sum_{a\in J}\abs{G_{a\vx}}^2 = \frac{\Im G_{\vx\vx}}{\eta}\] for arbitrary $\mathcal J\subset J$. Also the procedure for improving the naive size in Section \ref{iso bound sec} holds true if only summed over subsets, i.e.,
\[\sum_{a_1\in \mathcal J}\kappa(a_1b_1,a_2b_2)x_{a_1} = \kappa(\vx_{\mathcal J} b_1,a_2b_2),\]
where the sub-vector $\vx_{\mathcal J}$ has bounded norm $\norm{\vx_{\mathcal J}}\le \norm{\vx}$. 

Finally, the modification of $\cM$ by setting $W_{\NN_{L_3}}=0$ also does not change the substance of the argument as the bound verbatim also covers this modified $W$, and the final bounds can be rephrased in terms of $\norm{G}$ as $\norm[0]{\widehat G}_q\le_q 1+\norm{G}_{Cq/\mu}^{C/\mu}$, as demonstrated in Lemma \ref{G D triv bound lemma}.

\subsection{Proof of Theorem \ref{theorem step}}
We now have all the ingredients to complete the proof of Theorem \ref{theorem step} starting from \eqref{E prod Lambda cM bound}, where we recall that $\cM$ was defined in \eqref{E prod Lambda tuples full}. 

\subsubsection*{Proof of the averaged bound}
We recall from \eqref{naive size averaged} that for the averaged bound the naive size of $\cM$ is given by 
\[ \cM\lesssim N^{-p} N^{-\abs{L}/2-M_{L}/2}N^{2\abs{L_2}},\]
where the first factor comes from the normalized trace, the second from the derivatives and the third from the $L_2$ summations. We demonstrated in Section~\ref{av bound sec} (see \eqref{av val bound sim} and the counting estimate \eqref{marking count average}) that through Ward estimates we can improve the naive size $N^{2\abs{L_2}}$ of the $L_2$ summation to 
\begin{align*} \cM &\lesssim N^{-p} N^{-\abs{L_3\sqcup L_4}/2-M_{L_3\sqcup L_4}/2} \prod_{\substack{l\in L_2\\ M_l\ge 3}} N^{3/2-M_l/2} \prod_{\substack{l\in L_2\\ M_l\le 2}} (N\psi^2)^{3/2-M_l/2}\\
&\le N^{-\abs{L_1}} N^{-3\abs{L_3}/2-M_{L_3}/2} N^{-2\abs{L_4}} \psi^{2\abs{L_2}} \prod_{\substack{l\in L_2\\ M_l\ge 3}} N^{(3-M_l)/2},\end{align*}
where we used that $N\psi^2\ge 1$ and that $M_{L_4}=\abs{L_4}$ and we recall that $p=\abs{L_1}+\abs{L_2}+\abs{L_3}+\abs{L_4}$. Consequently we have from \eqref{E prod Lambda cM bound} that 
\begin{align}\label{EBDp Li bound} \E\abs{\braket{BD}}^p\lesssim_{p,\mu} & N^{-p} + \sum_{\bigsqcup L_i=[p]} N^{-\abs{L_1}} \psi^{2\abs{L_2}}\Bigg[\prod_{\substack{l\in L_2\\ M_l\ge 3}} N^{(3-M_l)/2}\Bigg] N^{-\abs{L_3}-\mu M_{L_3}} N^{-\abs{L_4}} , 
\\\nonumber
 \lesssim_{p,\mu} &  N^{-p} + \psi^{2p} \sum_{\bigsqcup L_i=[p]} 
 \Bigg[\prod_{\substack{l\in L_2\\ M_l\ge 3}} N^{(3-M_l)/2}\Bigg] N^{-\mu M_{L_3}} \lesssim_{p,\mu}  \psi^{2p} \sum_{\bigsqcup L_i=[p]}  N^{-\frac{1}{2}(M_{L_2}-3p)_+-\mu M_{L_3}}, 
 \end{align}
where we bounded the $L_3$-summation in \eqref{E prod Lambda cM bound} by $N^{2\abs{L_3}}(N^{1/2-\mu})^{M_{L_3}}=N^{2\abs{L_3}+M_{L_3}/2} N^{-\mu M_{L_3}}$ in the first line, and used $N^{-1}\le\psi^2$ in the second. To conclude the moment bound \eqref{av bound D eq} from \eqref{EBDp Li bound} we have to count the number of $\norm{G}_q$'s just as in the proof of \eqref{av bound proof final}. The key point 
is to collect enough $N^{-\mu}$ factors so that all but maybe $O(p)$ factors $\norm{G}_q$ could  be compensated by an $N^{-\mu}$. 
Since all $\abs{L_i}$ and  $M_{L_4} = \abs{L_4}$ are of order $p$,
the only way of collecting  more
than $Cp$  factors of $\norm{G}_q$ is having $M_{L_2}$ or $M_{L_3}$ bigger than a constant times $p$. But in this case we collect the same order of factors of the type $N^{-1/2}$ or $N^{-\mu}$ from \eqref{EBDp Li bound} and the claim follows since $N^{-1/2}\le N^{-\mu}$.

\subsubsection*{Proof of the isotropic bound}
We recall from \eqref{eq naive size iso} that for the isotropic law the improved naive size of $\cM$ is given by
\[ \cM\lesssim N^{-\abs{L_3\sqcup L_4}/2 - M_{L_3\sqcup L_4}/2} \prod_{\substack{l\in L_2\\ M_l\ge 3}} N^{3/2-M_l/2} \] and from \eqref{markings iso} that we can always perform at least $\big(p+\sum_{l\in L_2,\,M_l\ge 3} (3-M_l)\big)_+$ Ward estimates. 
Consequently, with Proposition \ref{prop explicit formula Lambda(D) power} and \eqref{E prod Lambda cM bound} we obtain
\begin{align}\label{EDxyp Li bound} \E\abs{D_{\vx\vy}}^p \lesssim_{p,\mu} N^{-p}+ \sum_{\bigsqcup L_i=[p]} N^{-\mu M_{L_3}} \psi^{\big(p+\sum_{l\in L_2,\,M_l\ge 3} (3-M_l)\big)_+} \prod_{\substack{l\in L_2\\ M_l\ge 3}} N^{3/2-M_l/2},\end{align}
where we again bounded the $L_3$ summation in \eqref{E prod Lambda cM bound} by $N^{2\abs{L_3}+M_{L_3}/2} N^{-\mu M_{L_3}}$. The rhs.~of \eqref{EDxyp Li bound} is bounded by $\psi^p$ since every missing $\psi$ power is compensated by an $N^{-1/2}\ll\psi$. To conclude the moment bound \eqref{bootstrapping step} from \eqref{EDxyp Li bound} we again have to count the number of $\norm{G}_q$-factors as in the proof of \eqref{isotropic bound proof final}.  This is 
 very similar to the averaged case above and completes the proof of Theorem \ref{theorem step}.

\subsubsection*{Modifications for large \(|z|\)}
Our proof so far assumed $\braket{z}\lesssim 1$  and $\| H \|\lesssim 1$. 
 The general case follows exactly along the same lines but carrying 
additional $\braket{z}$-factors.  The condition  $\| H \|\lesssim 1$ is relaxed to $\| H \|\lesssim N^\epsilon$ for any fixed $\epsilon>0$
with very high probability. This upper bound directly follows by applying  Chebysev's inequality to the  moment bound $\E \braket{H^k} \le C_k$,
for any $k\in \N$, 
which is obtained by a cumulant expansion using the assumed decays of the cumulants.
We therefore redefine $\psi : =N^\epsilon \braket{z}\sqrt{\Im G/N \eta}$  and with this definition $N\psi^2\gtrsim 1$, used in~\eqref{av bound proof} and~\eqref{isotropic bound proof} still holds. The other two key bounds we used, $|G_\alpha| \le 1$ 
and $\sum_\alpha |G_\alpha|^2 \lesssim N^2\psi^2$,
naturally change to  $|G_\alpha|\lesssim N^\epsilon/\braket{z}$ and $\sum_\alpha |G_\alpha|^2 \lesssim N^2\psi^2/\braket{z}^2$.  Ignoring the irrelevant $N^\epsilon$ factors, these obvious bounds
express the correct scaling of $G$ in the large $z$ regime, thus every $G$ factor naturally comes with a $1/\braket{z}$-factor
on top of any previous bounds we used so far. Since each $D$ carries one $G$-factor, and for the main term of the cumulant expansion of $\E |\Lambda(D)|^p$ the number of $G$-factors does not decrease, we obtain at least an additional $\braket{z}^{-p}$-factor. For the error term of the cumulant expansion the \(\braket{z}^{-p}\) decay in~\eqref{cE error} follows directly from the corresponding decays in Lemma~\ref{G D triv bound lemma}. Combining this with the redefined $\psi$ and adjusting the $\epsilon$-exponent, we obtain Theorem~\ref{theorem step} in the general case.

\section{Proof of the stability of the MDE and proof of the local law}\label{section stability}
Before going into the proof of Theorem \ref{isotropic local law}, we collect some facts from \cite{MR2376207,1604.08188,1706.08343} about the deterministic MDE \eqref{matrix dyson eq} and its solution.
\begin{proposition}[Stability of MDE and properties of the solution]\label{prop stability reference} The following hold true under Assumption \ref{assumption A}.
\begin{enumerate}[(i)]
\item The MDE \eqref{matrix dyson eq} has a unique solution $M=M(z)$ for all $z\in\HC$ and moreover the map $z\mapsto M(z)$ is holomorphic.
\item \label{mu def} The holomorphic function $\braket{M}\colon \HC\to\HC$ is the Stieltjes transform of a probability measure $\mu$ on $\R$.
\item \label{norm M bound without flatness}There exists a constant $c>0$ such that we have the bounds
\begin{align*}
\frac{c}{\braket{z}+\tnorm{\SS}\dist(z,\supp\mu)^{-1}}\le \norm{M(z)}\le \frac{1}{\dist(z,\supp\mu)} \quad\text{and}\quad \norm{\Im M}\le \frac{\eta}{\dist(z,\supp\mu)^2},
\end{align*}
where we recall the definition of $\tnorm{\SS}$ in \eqref{tnorm SS}.
\item \label{1-CMS bound without flatness}There exist constants $c,C>0$ such that 
\[ \norm{(1-\mathcal C_{M(z)} \SS)^{-1}}_{\text{hs}\to\text{hs}}\le c \Big[\frac{\braket{z}}{\dist(z,\supp \mu)}+\frac{\tnorm{\SS}}{\dist(z,\supp \mu)^{2}}\Big]^C, \]
where $\mathcal C$ is the \emph{sandwiching} operator $\mathcal C_R[T]\defeq RTR$. The norm on the lhs.~is the  operator norm
where  $1-\mathcal C_M \SS$ is viewed as a linear map on the space of matrices equipped with the Hilbert-Schmidt norm.
\end{enumerate}
If, in addition, Assumption \ref{assumption flatness} is also satisfied, then the following statements hold true, as well.
\begin{enumerate}[(i)] \setcounter{enumi}{4}
\item The measure $\mu$  from \eqref{mu def} is absolutely continuous with respect to the Lebesgue measure and has a continuous density $\rho\colon\R\to[0,\infty)$, called the self-consistent density of states, which is also real analytic on the open set $\set{\rho>0}$.
\item\label{norm M bound with flatness} There exist constants $c,C>0$ such that we have the bounds
\[ \frac{c}{\braket{z}} \le \norm{M(z)} \le  \frac{C}{\rho(z)+\dist(z,\supp\rho)} \quad\text{and}\quad c\,\rho(z)\le \Im M(z) \le C \braket{z}^2\norm{M(z)}^2 \rho(z)\]
in terms of the harmonic extension $\rho(z)\defeq \pi^{-1}\Im\braket{M(z)}$ of the self-consistent density of states to the upper half plane $\HC$.
\item\label{1-CMS bound with flatness} There exist constants $c,C>0$ such that 
\[ \norm{(1-\mathcal C_{M(z)} \SS)^{-1}}_{\text{hs}\to\text{hs}}\le c\left(1+\big[\rho(z)+\dist(z,\supp \rho)\big]^{-C}\right).\]
\end{enumerate}
\end{proposition}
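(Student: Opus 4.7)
The plan is to deduce each of the seven items from the corresponding results in the cited references \cite{MR2376207,1604.08188,1706.08343}, checking that the hypotheses match and making minor translations into our notation. Items (i)--(iv) do not invoke flatness, so they rest solely on Assumption \ref{assumption A}; items (v)--(vii) additionally invoke Assumption \ref{assumption flatness}. Since the proof is essentially a collation of known statements, most of the work is bookkeeping; the only substantive observation to make is that the quantity $\tnorm{\SS}$ appearing in (iii)--(iv) controls the norm of $\SS$ as a map on the space of matrices equipped with the operator norm, by Remark \ref{S decomp remark}, so our bounds are consistent with those in \cite{1604.08188} (where $\|\SS\|$ was denoted differently but plays the same role).

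First I would handle (i) and (ii): existence and uniqueness of $M(z)$ with $\Im M > 0$ is a classical fixed-point result of Helton--Far--Speicher, and holomorphy of $z\mapsto M(z)$ follows from the implicit function theorem applied to the MDE, noting that the linearization $1-\mathcal{C}_M \SS$ is invertible on $\HC$. For item (ii), the map $z\mapsto \braket{M(z)}$ sends $\HC\to \HC$ and satisfies $z\braket{M(z)} \to -1$ as $z\to\infty$ (read off from $\braket{M(z)} \sim -1/z$, which follows by expanding \eqref{matrix dyson eq} for large $z$); by the Herglotz--Nevanlinna representation this identifies $\braket{M}$ as the Stieltjes transform of a probability measure $\mu$ on $\R$.

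Next I would turn to (iii)--(iv). The upper bound $\|M(z)\|\le \dist(z,\supp\mu)^{-1}$ follows from the fact that the resolvent-like function $M(z)$ has norm controlled by the distance to the support of its spectral measure (which is $\mu$), a bound proven in \cite{1604.08188}. The lower bound on $\|M\|$ and the bound on $\|\Im M\|$ are more quantitative and follow from Propositions 2.1--2.2 of \cite{1604.08188}, applied with $\tnorm{\SS}$ as the single scalar controlling $\SS$. Item (iv) is precisely the Hilbert--Schmidt stability bound of \cite[Proposition 3.1]{1604.08188}, rewritten with $\tnorm{\SS}$.

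Finally I would treat (v)--(vii), which require Assumption \ref{assumption flatness}. Under flatness, the results of \cite{1706.08343} (building on \cite{1604.08188}) show that $\mu$ is absolutely continuous with a H\"older continuous density $\rho$ which is real analytic on $\{\rho > 0\}$, establishing (v). Item (vi) contains the refined bounds on $\|M\|$ and $\Im M$ in terms of $\rho$, both of which are standard consequences of flatness combined with the MDE (see \cite[Theorem 2.5]{1604.08188}). Item (vii) is the flat version of the stability bound on $(1-\mathcal{C}_M\SS)^{-1}$, which is sharper than (iv) because flatness forces the spectrum of $\mathcal{C}_M\SS$ to stay away from $1$ except near the edges of $\supp\mu$, where the bound degenerates only polynomially in $\rho+\dist(z,\supp\rho)$; this is \cite[Theorem 2.6]{1604.08188}. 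The main obstacle is not mathematical but notational: verifying that the constants in the cited references indeed depend only on the constants in Assumptions \ref{assumption A} and \ref{assumption flatness} and not on any discarded structural hypothesis.
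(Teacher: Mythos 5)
Your proposal takes the same collation-from-references approach as the paper's one-line proof. The paper assigns (i)--(ii) to \cite{MR2376207}, (iii)--(iv) to \cite{1706.08343} together with the elementary observation $\norm{M}\ge\norm{M^{-1}}^{-1}$ (which you omit, but which is how the lower bound on $\norm{M}$ is actually derived: $M^{-1}=-(z-A+\SS[M])$ gives $\norm{M^{-1}}\lesssim\braket{z}+\tnorm{\SS}\dist(z,\supp\mu)^{-1}$), and (v)--(vii) to \cite{1604.08188}; your reference attributions differ in detail but the substance of the argument is the same.
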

\begin{proof}
Parts (i)--(ii) follow from \cite[Thm.~2.1]{MR2376207}. Parts (iii)--(iv) follow from \cite[Section 3]{1706.08343} and $\norm{M}\ge \norm{M^{-1}}^{-1}$. Finally, parts (v)--(vii) follow from \cite[Prop.~2.2, 4.2, 4.4]{1604.08188}.
\end{proof}

Due to Assumption \ref{assumption correlations}, \eqref{tnorm SS} and \eqref{S norm to norm bound} below we have $\tnorm{\SS}\le C$. Therefore parts \eqref{norm M bound without flatness},\eqref{1-CMS bound without flatness},\eqref{norm M bound with flatness} and \eqref{1-CMS bound with flatness} show that we have 
\begin{align}\label{eq reference key info}\braket{z} \norm {M(z)}\le_\epsilon N^\epsilon  \quad\text{and}\quad  \norm {(1-\cC_M\SS)^{-1} }_{\text{hs} \to \text{hs}} \le_\epsilon N^\epsilon\qquad\text{in}\quad\Dout^\delta\quad\text{and also in}\quad\DD_0^\delta\quad\text{under Assump.~\ref{assumption flatness}}\end{align}
for some $\delta=\delta(\epsilon)>0$. Similarly to \eqref{eq reference key info}, we will  often state estimates that hold
both in the spectral domain $\Dout^\delta$ without Assumption \ref{assumption flatness} as well as in the spectral domain $\DD_\gamma^\delta$
but under Assumption \ref{assumption flatness}. We recall that according to our convention about $\le_\epsilon$, \eqref{eq reference key info} implies the existence of a constant $C(\epsilon)$ such that the inequalities hold true with that constant for all $z$ in the given $\epsilon$-dependent domains.

\subsection{Definition of an isotropic norm suitable for the stability analysis}
For a fixed $z\in \HC$  define  the map
\[\cJ_z[G,D]\defeq 1+(z-A+\SS[G])G - D
\] 
on arbitrary matrices $G$ and $D$.
From the definition of $D=D(z)$ \eqref{eq D def} and the solution $M=M(z)$ of the MDE \eqref{matrix dyson eq} it follows that
 $\cJ_z[M(z), 0]=0$ and $\cJ_z[G(z),D(z)]=0$. 
Throughout this discussion we will fix $z$ and we omit it from the notation, i.e. $\cJ=\cJ_z$.
We will consider $G$ as a function  $G=G(D)$ of an arbitrary error matrix  $D$ satisfying $\cJ[G(D),D]=0$.
Via the implicit function theorem, this relation defines a unique function $G(D)$ for sufficiently small $D$
and $G(D)$ will be analytic
as long as $\cJ$ is stable.   The stability will be formulated in a specific norm 
that takes into account that the smallness of $D$ can only be established in isotropic sense, i.e. 
in the sense of high moment bound on $D_{\vx\vy}$ for any fixed  deterministic vectors $\vx, \vy$. 
To define this special norm, we fix vectors $\vx,\vy$ and define sets of vectors containing the standard basis vectors $e_a, a\in J$, recursively by
\[I_0\defeq\set{\vx,\vy}\cup\Set{e_a|a\in J},\qquad I_{k+1}\defeq I_k\cup \Set{M\vu|\vu\in I_k}\cup \Set{\kappa_c((M\vu)a,b\cdot),\kappa_d((M\vu)a,\cdot b)|\vu\in I_k,\,a,b\in J},\]
which give rise to the norm
\[\norm{G}_\ast = \norm{G}_\ast^{K,\vx,\vy} \defeq \sum_{0\le k< K}N^{-k/2K} \norm{G}_{I_k} + N^{-1/2} \max_{\vu\in I_K}\frac{\norm{G_{\cdot\vu}}}{\norm{\vu}},\qquad \norm{G}_I \defeq \max_{\vu,\vv\in I} \frac{\abs{G_{\vu\vv}}}{\norm{\vu}\norm{\vv}}, \]
where we will choose $K$ later.

\begin{theorem}
\label{thr:Stability}
Let $K \in \N$, $\vx,\vy \in \C^N$, and denote the open ball of radius $\delta$ around $M$ in $(\C^{N\times N},\norm{\cdot}_\ast^{K,\vx,\vy})$ by $B_\delta(M)$. Then for 
\begin{align}
\label{definition of epsilons}
\epsilon_1&\defeq \frac{\Big[ 1 + \tnorm{\SS}\norm{M}^2 + \tnorm{\SS}^2\norm{M}^4 \norm{(1-\mathcal{C}_M\SS)^{-1}}_{\text{hs}\to\text{hs}} \Big]^{-2}}{10 N^{1/K}\norm{M}^2\tnorm{\SS}},\qquad \epsilon_2 \defeq \sqrt{\frac{\epsilon_1}{10\tnorm{\SS}}}
\end{align}
there exists a unique function $G\colon B_{\epsilon_1}(0)\to B_{\epsilon_2}(M)$ with $G(0)=M$ that satisfies $\cJ[G(D),D]=0$. Moreover, the function $G$ is analytic and satisfies
\begin{align}
\label{G Lipschitz in D}
\norm{G(D_1)-G(D_2)}_\ast \le 10\,N^{1/2K}\norm{(1-\mathcal{C}_M\SS)^{-1}}_{\ast\to\ast}\norm{M}\norm{D_1-D_2}_\ast.
\end{align}
for any $D_1,D_2\in B_{\epsilon_1}(0)$.
\end{theorem}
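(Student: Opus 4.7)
My plan is to recast the equation $\cJ[G,D]=0$ as a fixed point problem for $V\defeq G-M$ near $V=0$ and apply the Banach fixed point theorem in the ball $B_{\epsilon_2}(0)$ of $(\C^{N\times N},\norm{\cdot}_\ast^{K,\vx,\vy})$. Expanding $1+(z-A+\SS[M+V])(M+V)-D=0$ and using the MDE identity $z-A+\SS[M]=-M^{-1}$ gives
\begin{align*}
-M^{-1}V+\SS[V]M+\SS[V]V-D=0,
\end{align*}
which, after multiplying by $M$ on the left and rearranging, is equivalent to
\begin{align*}
V=\Phi_D(V)\defeq (1-\cC_M\SS)^{-1}\bigl(M\SS[V]V - MD\bigr).
\end{align*}
Because the linearization at $V=0$ of the map $V\mapsto \cJ[M+V,D]+D$ is exactly $-M^{-1}(1-\cC_M\SS)V$, the operator $(1-\cC_M\SS)^{-1}$ inherits from Proposition~\ref{prop stability reference} the quantitative control that enters the definition of $\epsilon_1,\epsilon_2$.

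The crux of the proof is to transfer the bounds on $M$, $\SS$ and $(1-\cC_M\SS)^{-1}$ from the Hilbert--Schmidt setting into the isotropic norm $\norm{\cdot}_\ast^{K,\vx,\vy}$. This is the very reason for the layered definition of the sets $I_0\subset I_1\subset\dots\subset I_K$: the set $I_{k+1}$ is engineered to contain exactly those test vectors that appear when one lets $M$ or the factor $\SS[\cdot]$ act on a quantity controlled at level $k$. Concretely, for $\vu\in I_k$ one has $\abs{(MV)_{\vu\vv}}=\abs{V_{M^\ast \vu,\vv}}$ with $M^\ast\vu$ again of controlled norm and accessible at level $k+1$, while
\begin{align*}
(\SS[V])_{\vu\vv}=\frac{1}{N}\sum_{\alpha_1,\alpha_2}\kappa(\alpha_1,\alpha_2)(\vu^\ast\Delta^{\alpha_1}V\Delta^{\alpha_2}\vv)
\end{align*}
splits according to $\kappa=\kappa_c+\kappa_d$ into contractions that are precisely the vectors $\kappa_c((M\vu)a,b\,\cdot\,)$ and $\kappa_d((M\vu)a,\,\cdot\,b)$ appearing in $I_{k+1}$. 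In this way each application of $M$ or $\SS$ on an indexed vector is converted into a test against a vector of the next layer, at the price of the factor $N^{-1/2K}$ built into the weights of $\norm{\cdot}_\ast$. After $K$ such steps the remaining entries of $V$ are estimated by the crude Hilbert--Schmidt (isotropic) bound absorbed into the last term of the norm, and the factor $N^{1/2}$ there cancels against the accumulated powers of $N^{-1/2K}$. This calculation yields
\begin{align*}
\norm{(1-\cC_M\SS)^{-1}[M\SS[V]V]}_\ast \le C\,N^{1/2K}\tnorm{\SS}\norm{M}^2\bigl[1+\tnorm{\SS}\norm{M}^2+\tnorm{\SS}^2\norm{M}^4\norm{(1-\cC_M\SS)^{-1}}_{\mathrm{hs}\to\mathrm{hs}}\bigr]\,\norm{V}_\ast^{\,2}
\end{align*}
and an analogous linear-in-$D$ bound for the inhomogeneous piece. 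With the choice of $\epsilon_1,\epsilon_2$ in \eqref{definition of epsilons}, these two bounds show that $\Phi_D$ maps $B_{\epsilon_2}(0)$ into itself and is a contraction of rate $\le 1/2$ there, uniformly for $D\in B_{\epsilon_1}(0)$.

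With the fixed point set up, the remaining items are routine. Banach yields the unique solution $V(D)$, hence $G(D)=M+V(D)$. Analyticity of $D\mapsto G(D)$ follows either from the analytic implicit function theorem applied to $\cJ$, whose $V$-derivative at $(M,0)$ is the invertible operator $-M^{-1}(1-\cC_M\SS)$, or directly from the exponentially convergent Picard iterates (polynomials in $D$). Finally, the Lipschitz estimate \eqref{G Lipschitz in D} is obtained by writing $V(D_1)-V(D_2)=\Phi_{D_1}(V(D_1))-\Phi_{D_2}(V(D_2))$, splitting into a contraction term and a linear-in-$(D_1-D_2)$ term, and absorbing the contraction factor; the constant $10$ accommodates the geometric series. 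The anticipated main obstacle is the norm-transfer step: verifying that one application of $M$ or $\SS$ really does land in the next layer with only the controlled loss $N^{1/2K}$ requires a careful case analysis of the $\kappa_c$/$\kappa_d$ decomposition and of the role of the standard basis vectors in $I_0$ (needed whenever an inner summation produces an uncontracted single index). Once this transfer is established the rest is bookkeeping.
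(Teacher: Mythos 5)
Your proposal is correct and follows essentially the same route as the paper: rewrite $\cJ[G,D]=0$ as the quadratic fixed-point equation $(1-\cC_M\SS)V=M\SS[V]V-MD$ for $V=G-M$, transfer bounds on $M$, $\SS$ and $(1-\cC_M\SS)^{-1}$ to the $\norm{\cdot}_\ast$-norm using the layered sets $I_k$ and the three-term geometric expansion $(1-A)^{-1}=1+A+A(1-A)^{-1}A$, then conclude via a Banach fixed-point/quadratic implicit function theorem (the paper packages the last step as Lemma~\ref{impl fct lemma} with $A=1-\cC_M\SS$, $B=M$). Your description of the $I_k$-layering and the $\kappa_c/\kappa_d$ splitting matches the paper's proof of \eqref{bound on MSRR}--\eqref{stability}, and the small constant discrepancies in your combined bound (e.g.\ $\norm{M}^2$ versus $\norm{M}$) are harmless bookkeeping.
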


\begin{proof} First, we rewrite the equation $\cJ[G,D]=0$ in the form $\widetilde\cJ[V,D]=0$, where \[\widetilde\cJ[V,D]\defeq (1-\cC_M\SS)V-M\SS[V]V+MD,\qquad V\defeq G-M\]
and for arbitrary $V$ and $D$ we claim the bounds 
\begin{subequations}
 \begin{align}
 \label{bound on MSRR}\norm{M\mathcal{S}[V]V}_\ast &\le N^{1/2K}\tnorm{\SS} \norm{M} \norm{V}_{\ast}^2,\\
 \label{bound on MD}\norm{MD}_\ast &\le N^{1/2K}\norm{M} \norm{D}_\ast,\\
 \label{stability} \norm{(1-\cC_M\SS)^{-1}}_{\ast \to\ast}&\le 1 +\tnorm{\SS} \norm{M}^2 + \tnorm{\SS}^2\norm{M}^4 \norm{(1-\cC_M\SS)^{-1}}_{\text{hs}\to\text{hs}}.
 \end{align}
 \end{subequations} 
We start with the proof of \eqref{bound on MSRR}. Let $\kappa=\kappa_c+\kappa_d$ be an arbitrary partition which induces a partition of $\SS=\SS_c+\SS_d$ (as in Remark \ref{S decomp remark}). Then for $\vu,\vv \in I_k$ we compute
\begin{subequations}
\begin{align}
\label{starting bound kappac}
\frac{\abs{(M\SS_c[V]V)_{\vu\vv}}}{\norm{\vu}\norm{\vv}}&\le\frac{1}{N}\sum_{a,b} \frac{\abs{V_{ab} V_{\kappa_c((M\vu)a,b\cdot) \vv}}}{\norm{\vu}\norm{\vv}} \le \tnorm{\kappa_c}_c\norm{V}_{\text{max}} \norm{M} \min\Set{\norm{V}_{I_{k+1}},\frac{\norm{V_{\cdot\vv}}}{\norm{\vv}}},\\
\frac{\abs{(M\SS_d[V]V)_{\vu\vv}}}{\norm{\vu}\norm{\vv}}&\le\frac{1}{N}\sum_{a,b} \frac{\abs{V_{a\kappa_d((M\vu)a,\cdot b)} V_{b \vv}}}{\norm{\vu}\norm{\vv}} \le \tnorm{\kappa_d}_d\norm{M} \min\Set{\norm{V}_{I_{k+1}} \frac{\norm{V_{\cdot\vv}}}{\sqrt N \norm{\vv}},\norm{V}_{\text{max}}\frac{\norm{V_{\cdot\vv}}}{\norm{\vv}}},\label{eq MSdRR bound}
\end{align}
where we used $\abs{V_{a\vw}}\le \sqrt N \norm{V}_{\text{max}}\norm{\vw}$ in the second bound of \eqref{eq MSdRR bound}, so that 
\begin{align*}
\norm{M\SS_e[V]V}_\ast = \sum_{0\le k< K} \frac{\norm{M\SS_e[V]V}_{I_k}}{N^{k/2K}} + \max_{\vu\in I_K}\frac{\norm{(M\SS_e[V]V)_{\cdot\vu}}}{\sqrt N\norm{\vu}} \le N^{1/2K}\tnorm{\kappa_e}_e \norm{M} \norm{V}_\ast^2
\end{align*}
for $e\in\{c,d\}$ and \eqref{bound on MSRR} follows immediately, recalling \eqref{tnorm SS}.
\end{subequations} We continue with the proof of \eqref{bound on MD}, which follows from the fact that for $\vu,\vv\in I_k$ we have 
\[\frac{\abs{(MD)_{\vu\vv}}}{\norm{\vu}\norm{\vv}}\le \norm{M} \min\Set{\norm{D}_{I_{k+1}},\frac{\norm{D_{\cdot\vv}}}{\norm{\vv}}}.\] Finally, we show \eqref{stability}. We use a three term geometric expansion to obtain
\begin{align}
\label{geometric expansion of 1-CMS}
\norm{(1-\mathcal{C}_M\SS)^{-1}}_{\ast\to\ast} &\le  1 + \norm{\mathcal{C}_M\SS}_{\ast\to\ast}+ 
\norm{\mathcal{C}_M\SS}_{\ast \to \text{hs}}\norm{(1-\mathcal{C}_M\SS)^{-1}}_{\text{hs} \to \text{hs} }
\norm{\mathcal{C}_M\SS}_{\text{hs} \to \ast}
\\
&\le 1 + \norm{M}^2\norm{\SS}_{\text{max}\to \norm{\cdot}}+\norm{M}^4\norm{\SS}_{\text{max} \to \norm{\cdot}}\norm{(1-\mathcal{C}_M\SS)^{-1}}_{\text{hs} \to \text{hs} }
\norm{\SS}_{\text{hs} \to \norm{\cdot}}\nonumber
\end{align}
and it only remains to derive bounds on $\norm{\SS}_{\text{max}\to \norm{\cdot}}$ and $\norm{\SS}_{\text{hs}\to \norm{\cdot}}$. We begin to compute for the cross part $\kappa_c$ and arbitrary normalized vectors $\vv,\vu\in\C^N$ that
\begin{align*} 
\abs{\SS_c[V]_{\vv\vu}} = \abs[2]{\frac{1}{N}\sum_{b_1,a_2} \braket{\kappa_c(\vv b_1,a_2 \cdot),\vu} V_{b_1a_2}}\le \frac{\norm{V}_{\max}}{N}\sum_{b_1,a_2} \norm{\kappa_c(\vv b_1,a_2 \cdot)}\le \tnorm{\kappa_c}_c\norm{V}_{\max}, 
\end{align*}
and
\begin{align*}
\abs{\SS_c[V]_{\vv\vu}} &= \abs[2]{\frac{1}{N}\sum_{a_1,a_2,b_2} v_{a_1}\braket{\kappa_c(a_1 \cdot,a_2 b_2),V_{\cdot a_2}} u_{b_2}}\le \frac{1}{N}\sum_{a_1,a_2,b_2} \abs{v_{a_1}}\norm{\kappa_c(a_1 \cdot,a_2 b_2)}\abs{u_{b_2}}\norm{V_{\cdot a_2}}\le \frac{\tnorm{\kappa_c}_c}{N} \sum_{a_2} \norm{V_{\cdot a_2}} \\
&\le \tnorm{\kappa_c}_c\sqrt{\frac{1}{N}\sum_{b_1,a_2} \abs{V_{b_1 a_2}}^2} = \tnorm{\kappa_c}_c \norm{V}_{\text{hs}}.
\end{align*}
Next, we estimate for the direct part $\kappa_d$ that
\begin{align*}
\abs{\SS_d[V]_{\vv\vu}} &= \abs[2]{\frac{1}{N}\sum_{b_1,b_2} \braket{\kappa_d(\vv b_1,\cdot b_2),V_{b_1\cdot}} u_{b_2} } \le \frac{1}{N} \sum_{b_1,b_2} \norm{V_{b_1\cdot}}\norm{\kappa_d(\vv b_1,\cdot b_2)}\abs{u_{b_2}} \le \frac{\tnorm{\kappa_d}_d}{N} \sqrt{\sum_{b_1}\norm{V_{b_1\cdot}}^2} \\
& \le \frac{\tnorm{\kappa_d}_d}{N} \sqrt{\sum_{b_1,a_2}\abs{V_{b_1a_2}}^2}\le\tnorm{\kappa_d}_d \min\left\{ \frac{\norm{V}_{\text{hs}}}{\sqrt N}, \norm{V}_{\max} \right\},
\end{align*}
so that it follows that,
using \eqref{tnorm SS},\begin{align}\label{S norm to norm bound}
\norm{\SS[V]}=\sup_{\norm{\vv},\norm{\vu}\le 1} \abs{\SS[V]_{\vv\vu}} \le \tnorm{\SS} \min\left\{ \norm{V}_{\text{hs}},\norm{V}_{\max}\right\},\quad \max\left\{\norm{S}_{\text{max}\to\norm{\cdot}},\norm{S}_{\text{hs}\to\norm{\cdot}}\right\}\le \tnorm{\SS}
\end{align}
and therefore \eqref{stability} follows from \eqref{geometric expansion of 1-CMS} with \eqref{S norm to norm bound}. Now the statement \eqref{G Lipschitz in D} follows from the implicit function theorem as formulated in Lemma \ref{impl fct lemma} applied to the equation $\widetilde \cJ[G-M, D] =0$ written in the form
\[(1-\cC_M \SS)V - M\SS[V]V = -MD\]
with $A = 1-\cC_M \SS, B=M$ and $d=D$ in the notation of Lemma \ref{impl fct lemma}.
\end{proof}
This general stability result will be used in the following form
\begin{align}\norm{G- M}_\ast \le_\epsilon N^{\epsilon+1/2K} \frac {\norm{D}_\ast}{\braket{z}}\qquad\text{in}\quad\Dout^\delta\quad\text{and in}\quad\DD_0^\delta\quad\text{under Assump.~\ref{assumption flatness}}   \label{G-M ast bound}\end{align}
for some $\delta=\delta(\epsilon)>0$, as long as $\norm{D}_*\le N^{-1/2K}\braket{z}^2$ by applying it to $D_1=0, D_2= D(z)$ and using  \eqref{eq reference key info} and \eqref{stability}.

\subsection{Stochastic domination and relation to high moment bounds}
In order to keep the notation compact, we now introduce a commonly used (see, e.g., \cite{MR3068390}) notion of high-probability bound.
\begin{definition}[Stochastic Domination]\label{def:stochDom}
If \[X=\left( X^{(N)}(u) \,\lvert\, N\in\N, u\in U^{(N)} \right)\quad\text{and}\quad Y=\left( Y^{(N)}(u) \,\lvert\, N\in\N, u\in U^{(N)} \right)\] are families of random variables indexed by $N$, and possibly some parameter $u$, then we say that $X$ is stochastically dominated by $Y$, if for all $\epsilon, D>0$ we have \[\sup_{u\in U^{(N)}} \P\left[X^{(N)}(u)>N^\epsilon  Y^{(N)}(u)\right]\leq N^{-D}\] for large enough $N\geq N_0(\epsilon,D)$. In this case we use the notation $X\prec Y$. 
\end{definition}
It can be checked (see \cite[Lemma 4.4]{MR3068390}) that $\prec$ satisfies the usual arithmetic properties, e.g. 
if $X_1\prec Y_1$ and $X_2\prec Y_2$, then also  $X_1+X_2\prec Y_1 +Y_2$ and  $X_1X_2\prec Y_1 Y_2$.
We will say that a (sequence of) events  $A=A^{(N)}$  holds with \emph{overwhelming probability} if $\P (A^{(N)}) \ge 1- N^{-D}$ for any $D>0$ and $N\ge N_0(D)$.  In particular, under Assumption \ref{assumption high moments}, we have $w_{ij}\prec 1$.

In the following lemma we establish that a control of the $\norm{\cdot}_\ast^{K,\vx,\vy}$-norm for all $\vx,\vy$ in a high probability sense is essentially equivalent to a control of the $\norm{\cdot}_p$-norm for all $p$.
\begin{lemma}\label{prec p conversion}
Let $R$ be a random matrix and $\Phi$ a deterministic control parameter. Then the following implications hold:
\begin{enumerate}[(i)]
\item If $\Phi \ge N^{-C}$, $\norm{R} \le N^C$ and $\abs[0]{R_{\vx\vy}}\prec \Phi$ for all normalized $\vx,\vy$ and some $C$, then also $\norm{R}_p\le_{p,\epsilon} N^\epsilon \Phi$ for all $\epsilon>0,p\ge1$. \label{prec to p norm}
\item Conversely, if $\norm{R}_p \le_{p,\epsilon} N^\epsilon \Phi $ for all $\epsilon>0,p\ge 1$, then $\norm{R}_\ast^{K,\vx,\vy} \prec \Phi$ for any fixed $K\in\N$, $\vx,\vy\in\C^N$. \label{p to star norm}
\end{enumerate}
\end{lemma}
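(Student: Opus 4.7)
The two implications are essentially book-keeping exercises combining Markov's inequality with union bounds, and I will sketch them in turn.

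For part \eqref{prec to p norm}, the strategy is a standard truncation argument. Fix normalized deterministic vectors $\vx,\vy$ and $p\ge 1$, $\epsilon>0$. Split the expectation at the threshold $N^\epsilon \Phi$:
\[
\E \abs{R_{\vx\vy}}^p \le (N^\epsilon \Phi)^p + \int_{N^\epsilon\Phi}^{\infty} p t^{p-1} \P(\abs{R_{\vx\vy}}>t)\diff t.
\]
The deterministic bound $\norm{R}\le N^C$ forces $\abs{R_{\vx\vy}}\le N^C$, so the integrand vanishes for $t>N^C$; on the remaining range stochastic domination (applied with some $D=D(C,p,\epsilon)$ large enough) controls the tail by $N^{-D}$. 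Using $\Phi\ge N^{-C}$ to compare $N^{-D}N^{Cp}$ with $(N^\epsilon\Phi)^p$, one picks $D=2Cp+1$ to obtain $\E\abs{R_{\vx\vy}}^p\le 2(N^\epsilon\Phi)^p$. Since stochastic domination is uniform in the parameters $\vx,\vy$ by Definition~\ref{def:stochDom}, the resulting bound is uniform, giving $\norm{R}_p\le_{p,\epsilon} N^\epsilon\Phi$.

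For part \eqref{p to star norm}, the key point is that the index sets $I_k$ defining $\norm{\cdot}_\ast^{K,\vx,\vy}$ are polynomially large. Indeed, $\abs{I_0}\le N+2$, and since $I_{k+1}$ is obtained from $I_k$ by adjoining at most $\abs{I_k}$ vectors of the form $M\vu$ and at most $2 N^2\abs{I_k}$ vectors of the form $\kappa_c((M\vu)a,b\,\cdot\,)$ or $\kappa_d((M\vu)a,\cdot\, b)$, we get $\abs{I_k}\le_K N^{2K+1}$ for all $0\le k\le K$. For fixed $\vu,\vv\in I_k$, Markov's inequality applied to $\norm{R}_p\le_{p,\epsilon} N^{\epsilon/2}\Phi$ yields
\[
\P\bigl(\abs{R_{\vu\vv}}> N^\epsilon\Phi\,\norm{\vu}\norm{\vv}\bigr)\le N^{-\epsilon p/2},
\]
which for $p$ arbitrarily large gives $\abs{R_{\vu\vv}}/(\norm{\vu}\norm{\vv})\prec \Phi$ entry-wise. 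A union bound over the (polynomially many) pairs in $I_k\times I_k$ then upgrades this to $\norm{R}_{I_k}\prec \Phi$ simultaneously, and summing the geometric weights $N^{-k/2K}$ controls the bulk of $\norm{R}_\ast^{K,\vx,\vy}$.

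It remains to handle the last term $N^{-1/2}\max_{\vu\in I_K}\norm{R_{\cdot\vu}}/\norm{\vu}$. Here I would use the crude bound
\[
\norm{R_{\cdot\vu}}=\Bigl(\sum_{a\in J}\abs{R_{e_a\vu}}^2\Bigr)^{1/2}\le \sqrt{N}\,\max_{a\in J}\abs{R_{e_a\vu}},
\]
so that the $N^{-1/2}$ prefactor cancels the $\sqrt N$ loss and the problem reduces to stochastic domination of $\max_{a\in J,\,\vu\in I_K}\abs{R_{e_a\vu}}/\norm{\vu}$. Since $\abs{J}\cdot\abs{I_K}$ is polynomial, another union bound finishes the proof. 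The only mild obstacle is to keep track that the constants coming from the union bounds are swallowed by the flexibility in choosing $\epsilon$ and $D$ in the definition of $\prec$; but this is routine once the cardinality of $I_k$ is estimated as above.
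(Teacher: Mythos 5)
Your proof is correct and follows essentially the same route as the paper's: truncation at $N^{\epsilon}\Phi$ combined with the deterministic bound $\norm{R}\le N^{C}$ and $\Phi\ge N^{-C}$ for part \eqref{prec to p norm}, and Markov plus the polynomial cardinality bound $\abs{I_K}\le_K N^{2K+1}$ for part \eqref{p to star norm}. The only cosmetic differences are that you phrase (i) via the layer-cake formula where the paper truncates directly inside the expectation, and in (ii) you apply Markov entrywise followed by a union bound over $I_K\times I_K$ whereas the paper applies Markov to $\norm{R}_{I_K}$ and then passes from $\E\norm{R}_{I_K}^p$ to $\abs{I_K}^{2/r}\norm{R}_{pr}^p$ by a Hölder step — two equivalent ways of paying the $\abs{I_K}^{2}$ entropy price.
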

\begin{proof}
We begin with the proof of \eqref{p to star norm} and infer from Markov's inequality and H\"older's inequality (as in \eqref{eq triv bound}) that
\begin{equation}
\label{Markov inequality}
\P\left(\norm{R}_\ast>N^{\sigma}\Phi\right) \le \P\left( 2\norm{R}_{I_K}>N^{\sigma}\Phi\right) \le_p \frac{\E \norm{R}_{I_K}^p}{N^{\sigma p}\Phi^p} \le_p \abs{I_K}^{2/r} \frac{\E \norm{R}_{pr}^p}{N^{\sigma p}\Phi^p} \le_{p,r,\epsilon} \abs{I_K}^{2/r} N^{\epsilon p-\sigma p},
\end{equation} 
and since $\abs{I_K}\le 4^K N^{K+2}$ we conclude that $\norm{R}_\ast \prec \Phi$ by choosing $\epsilon$ sufficiently small and $p,r$ sufficiently large. On the other hand, \eqref{prec to p norm} directly follows from 
\[
\norm{R}_p \le  N^{\epsilon}\Phi + \sup_{\norm{\vx},\norm{\vy}\le 1}\big(\abs{R_{\vx\vy}}\P[\abs{R_{\vx\vy}}\ge N^\epsilon \Phi]^{1/p}\big).\qedhere
\]
\end{proof}

\subsection{Bootstrapping step} The proof of the local law follows a \emph{bootstrapping procedure}: First, we prove the local law for $\eta\ge N$, and afterwards we iteratively show that if the local law holds for $\eta\ge N^{\gamma_0}$, then it also holds for $\eta\ge N^{\gamma_1}$ for some $\gamma_1<\gamma_0$. We now formulate the iteration step. 

\begin{proposition}\label{proposition step}
The following holds true under the assumptions of Theorem \ref{isotropic local law}: Let $\delta,\gamma>0$ and $\gamma_0>\gamma_1\ge \gamma$ with $4(2C_\ast/\mu+1)(\gamma_0-\gamma_1)<\gamma<1/2$ and suppose that
\begin{align}\label{G gamma 0 bound}
\norm{G-M}_p \le_{\gamma,p} \frac{N^{-\gamma/6}}{\braket{z}} \inD{\gamma_0},
\end{align}
holds for all $p\ge 1$, where $C_\ast$ is the constant from Theorem \ref{theorem step}. Then the same inequality \eqref{G gamma 0 bound} (with a possibly different implicit constant depending on $\gamma,\delta,p$) holds also true in $\DD_{\gamma_1}^{\delta'}$ for some $\delta'=\delta'(\gamma,\delta)>0$. Furthermore, the same statement holds true under the assumptions of Theorem \ref{isotropic local law away} if we replace $\DD_{\gamma_{0}}^\delta$ and $\DD_{\gamma_{1}}^\delta$ by $\DD_{\gamma_{0}}^\delta\cap\Dout^\delta$ and $\DD_{\gamma_{1}}^\delta\cap\Dout^\delta$, respectively, in the above
sentence.
\end{proposition}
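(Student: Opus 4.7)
The plan is to combine the high-moment bound on the error matrix $D$ from Theorem~\ref{theorem step} with the stability estimate \eqref{G-M ast bound}, closing the loop via a continuity argument in $\eta = \Im z$ to propagate the bound from $\DD_{\gamma_0}^\delta$ down to $\DD_{\gamma_1}^{\delta'}$. Starting from the a priori hypothesis $\norm{G-M}_p \le N^{-\gamma/6}/\braket{z}$ together with $\braket{z}\norm{M} \lesssim 1$ and $\norm{\Im M} \lesssim 1$ from Proposition~\ref{prop stability reference}, we deduce $\braket{z}\norm{G}_q \lesssim 1$ and $\norm{\Im G}_q \lesssim 1$ in the bulk (the outside-spectrum regime gives $\norm{\Im G}_q \lesssim \eta N^{C\delta}$, which only improves the bound). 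Inserting into Theorem~\ref{theorem step} with $q = C_\ast p^4/\mu\epsilon$ yields, for any fixed $\vx,\vy,B$,
\[\frac{\norm{\braket{\vx,D\vy}}_p}{\norm{\vx}\norm{\vy}}+\frac{\norm{\braket{BD}}_p}{\norm{B}} \le_{\epsilon,p} \frac{N^{C\epsilon}}{\sqrt{N\eta}}.\]

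By Lemma~\ref{prec p conversion}, this converts to $\norm{D}_\ast^{K,\vx,\vy} \prec 1/\sqrt{N\eta}$ for any fixed $\vx,\vy$ and $K$, and for $\eta \ge N^{-1+\gamma_1}$ and $K$ large this falls within the smallness regime of Theorem~\ref{thr:Stability}. Then \eqref{G-M ast bound} produces the improved bound $\norm{G-M}_\ast \prec N^{C\epsilon+1/2K-\gamma_1/2}/\braket{z}$, which upon conversion back via Lemma~\ref{prec p conversion} yields $\norm{G-M}_p \le N^{-\gamma_1/2+C\epsilon+1/2K}/\braket{z}$. Choosing $\epsilon$ small and $K$ large, the numerical hypothesis $4(2C_\ast/\mu+1)(\gamma_0-\gamma_1) < \gamma$ together with $\gamma_1 \ge \gamma$ ensures that this derived bound is smaller than the assumed one by a polynomial factor, say $N^{-\gamma/12}$ — a quantitative ``gap''.

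The above self-improvement is circular, since the derivation at $z \in \DD_{\gamma_1}^{\delta'}$ used the a priori hypothesis at that same $z$. We close the loop by a continuity argument. For each fixed $E = \Re z$, traverse the vertical line $E + i\eta$ with $\eta$ decreasing from $N^{-1+\gamma_0}$ (where the assumption holds) down to $N^{-1+\gamma_1}$. Using $\abs{\partial_\eta G}+\abs{\partial_\eta M} \lesssim 1/\eta^2 \le N^{2}$, both $G$ and $M$ are polynomially Lipschitz in $\eta$, so $\norm{G-M}_p$ changes by at most $N^{-10}$ (say) between $\eta$-values at distance $N^{-20}$. Discretize the interval into $N^{20}$ grid points. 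At the topmost point the hypothesis holds by assumption, hence the gap lemma applies and yields the stronger bound $N^{-\gamma/4}/\braket{z}$ there; by Lipschitz continuity, this stronger bound transfers to the next grid point (still comfortably below the threshold $N^{-\gamma/6}/\braket{z}$), so the gap lemma applies again, and so on inductively. Iterating down to $\eta = N^{-1+\gamma_1}$, the assumed bound persists throughout; a union bound over a polynomial grid in $E$, combined with Lipschitz continuity in $z$, extends the pointwise bound to the whole of $\DD_{\gamma_1}^{\delta'}$.

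The main obstacle is quantitative control of the gap. The exponents $(C_\ast/\mu, C_\ast p/\mu)$ in Theorem~\ref{theorem step}, together with the losses $N^\epsilon$ from Lemma~\ref{prec p conversion} and $N^{1/2K}$ from \eqref{G-M ast bound}, must not overwhelm the $N^{\gamma_1/2-\gamma/6}$ gain from the Ward-identity exploitation in the cumulant expansion. The constraint $4(2C_\ast/\mu+1)(\gamma_0-\gamma_1) < \gamma$ encodes exactly the balance required, and thereby limits how much $\eta$ can be shrunk in a single bootstrap step; iterating this proposition $O(\log N)$ times produces the full local law.
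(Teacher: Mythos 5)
Your overall plan — combine Theorem~\ref{theorem step} with stability \eqref{G-M ast bound} via the conversion Lemma~\ref{prec p conversion}, then close the circularity by a continuity/bootstrap argument in $\eta$ on a fine grid — has the right shape and is the same skeleton the paper uses. However, you are missing the key deterministic ingredient that makes the paper's argument run in two clean, decoupled stages: the \emph{monotonicity of $\eta \mapsto \eta\norm{G(E+\ii\eta)}_p$}. The paper first shows $\liminf_{\epsilon\to 0}\epsilon^{-1}(f(\eta+\epsilon)-f(\eta)) \ge \norm{G}_p - \eta\norm{G^2}_p \ge 0$ using $\eta\abs{\braket{\vx,G^2\vy}}\le \tfrac12(\braket{\vx,\Im G\,\vx}+\braket{\vy,\Im G\,\vy})$, and hence propagates the moment bound $\braket{z}\norm{G}_p \lesssim 1$ at $\eta_0 = N^{-1+\gamma_0}$ \emph{unconditionally} down to $\braket{z}\norm{G}_p \le N^{2\gamma_s}$ at all $\eta \ge N^{-1+\gamma_1}$, where $\gamma_s = \gamma_0-\gamma_1$. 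This gives an \emph{unconditional} bound $\norm{D}_p \le N^{\epsilon+2(C_\ast/\mu+1/2)\gamma_s-\gamma/2}\le N^{\epsilon-\gamma/4}$ in all of $\DD_{\gamma_1}^{\delta'}$ before any continuity is invoked; the continuity argument is then used only for the gap statement $\norm{G-M}_\ast\chi(\norm{G-M}_\ast\le N^{-\gamma/9})\prec N^{-\gamma/5}/\braket{z}$. Your proposal instead applies Theorem~\ref{theorem step} at $\DD_{\gamma_1}$ using an a~priori $\norm{G}_q$ control that is only hypothesized at $\DD_{\gamma_0}$ (the phrase ``in the bulk'' in your first paragraph silently conflates the two domains), and you propose to repair this by re-deriving the $D$-bound freshly at each of the $N^{20}$ grid points inside the continuity loop. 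That interleaving can in principle be made to work, but it forces you to convert back and forth between $\norm{\cdot}_p$ and $\norm{\cdot}_\ast$ at every single grid step and to track the stability ball condition of Theorem~\ref{thr:Stability} along the way, none of which you spell out; the paper's monotonicity lemma is precisely what lets one avoid this delicate interleaving.

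This also explains why your interpretation of the numerical constraint $4(2C_\ast/\mu+1)(\gamma_0-\gamma_1)<\gamma$ is off: it does not encode a balance between the $N^\epsilon$, $N^{1/2K}$ losses and the Ward gain (those can always be made negligible by taking $\epsilon$ small and $K$ large), but rather ensures that the \emph{monotonicity loss} $\braket{z}\norm{G}_p \le N^{2\gamma_s}$, after being raised to the power $C_\ast/\mu$ in \eqref{bootstrapping step}, is still dominated by the factor $\sqrt{\norm{\Im G}_q/(N\eta)}\lesssim N^{\gamma_s-\gamma/2}$; this is exactly what produces the exponent $2(C_\ast/\mu+1/2)\gamma_s - \gamma/2 < -\gamma/4$. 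In a correctly executed version of your interleaved scheme, $\braket{z}\norm{G}_q$ stays of order $N^\epsilon$ at each grid point and the constraint would play no role at all — a sign that you are not actually using the hypothesis the way the statement anticipates. Two further small corrections: the iteration from $\gamma_0=2$ down to $\gamma$ takes $O((2-\gamma)/\gamma_s) = O_\gamma(1)$ applications of the proposition, not $O(\log N)$, and with a Lipschitz constant $\lesssim N^2$ and spacing $N^{-20}$ the change per step is $\lesssim N^{-18}$, not $N^{-10}$ (harmless, but worth getting right). So: correct global structure, but you should incorporate the monotonicity lemma for $\eta\norm{G}_p$ — it is the step that makes the argument non-circular without delicate interleaving, and it is the source of the constraint on $\gamma_0-\gamma_1$.
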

\begin{proof}
We first prove the assertion under the assumptions of Theorem \ref{isotropic local law}. In the proof we will abbreviate the step size from $\gamma_0$ to $\gamma_1$ by $\gamma_s\defeq \gamma_0-\gamma_1$. We will suppress the dependence of the constants on $\delta,\gamma$ in our notation. In particular, \eqref{G gamma 0 bound} and \eqref{eq reference key info} imply $\norm{G}_{p} \le_{p,\gamma} N^{\gamma_s}\braket{z}^{-1}$ in $\DD_{\gamma_0}^{\delta'}$ with $\delta'=\delta'(\gamma)$. For fixed $E$ the function $\eta\mapsto f(\eta)\defeq \eta \norm{G(E+\ii\eta)}_p$ satisfies 
\begin{align}
\liminf_{\epsilon\to 0} \frac{f(\eta+\epsilon)-f(\eta)}{\epsilon} &\ge \norm{G(E+\ii\eta)}_p-\eta\norm{\lim_{\epsilon\to 0} \frac{G(E+\ii(\eta+\epsilon))-G(E+\ii\eta)}{\epsilon}}_p\\
&=\norm{G(E+\ii\eta)}_p-\eta\norm{G(E+\ii\eta)^2}_p\ge 0,\nonumber
\end{align}
where we used \[\eta\abs{\braket{\vx,G^2 \vy}}\le \frac{\eta}{2} \left(\braket{\vx,\abs{G}^2\vx}+ \braket{\vy,\abs{G}^2\vy}\right)\le \frac{1}{2}\left(\braket{\vx, \Im G\vx}+\braket{\vy,\Im G\vy}\right) \]
in the last step. We thus know that $\eta\mapsto\eta\norm{G(E+\ii\eta)}_p$ 
is monotone and we can conclude that $\braket{z}\norm{G}_{p}\le_{p,\gamma} N^{2\gamma_s}$ in $\DD_{\gamma_1}^{\delta'}$. From \eqref{bootstrapping step} and $\gamma_s<\mu$ it thus follows that 
\begin{equation}\label{Dbound}
\norm{D}_p \le_{p,\gamma,\epsilon} N^{\epsilon + 2(C_\ast/\mu+1/2)\gamma_s-\gamma/2}\le N^{\epsilon-\gamma/4} \quad\text{in}\quad\DD_{\gamma_1}^{\delta'}.
\end{equation}
Note that the exponent in the right hand side is independent of $p$; this was possible
because the power of $\norm{G}_q$  in \eqref{bootstrapping step} was linear in $p$. 
 
We now relate these high moment bounds to high probability bounds in the $\norm{\cdot}_\ast$ norm, as defined before Theorem~\ref{thr:Stability} and find for any fixed $\vx,\vy$ and $K$ that $\norm{D}_\ast\prec N^{-\gamma/4}$ from Lemma \ref{prec p conversion}\eqref{p to star norm} (we recall that the $\norm{\cdot}_\ast$ implicitly depends on $\vx,\vy$ and $K$). Next, we apply \eqref{G-M ast bound} to obtain
\begin{align}\label{G-M char bound}
\norm{G-M}_\ast\chi(\norm{G-M}_\ast \le N^{-\gamma/9})\prec \frac{N^{-\gamma/5}}{\braket{z}}  \quad\text{in}\quad\DD_{\gamma_1}^{\delta'},
\end{align}
provided $K\ge 10/\gamma$. The bound \eqref{G-M char bound} shows that there is a gap in the set of possible values for $\norm{G-M}_\ast$. The extension of \eqref{G gamma 0 bound} to $\DD_{\gamma_1}^{\delta'}$ then follows from a standard continuity argument using a fine grid of intermediate values of $\eta$: Suppose that \eqref{G-M char bound} were true as a deterministic inequality. Since $\eta\mapsto \norm[0]{(G-M)(E+\ii\eta)}_\ast$ is continuous, and for $\eta=N^{-1+\gamma_0}$ we know that $\norm{(G-M)(E+\ii\eta)}_\ast\le N^{-\gamma/6}$ by \eqref{G gamma 0 bound} and Lemma \ref{prec p conversion}\eqref{p to star norm}, we would conclude the same bound for $\eta=N^{-1+\gamma_1}$. Going back to the $\norm{\cdot}_p$-norm by Lemma \ref{prec p conversion}\eqref{prec to p norm} we could conclude \eqref{G gamma 0 bound} in $\DD_{\gamma_1}^\delta$. Since \eqref{G-M char bound} may not control $\norm{G-M}_\ast$ on a set of very small probability, and we cannot exclude a ``bad'' set for every $\eta\in [N^{-1+\gamma_1},N^{-1+\gamma_0}]$, we use a fine $N^{-3}$-grid. The relation \eqref{G-M char bound} is only used for a discrete set of $\eta$'s and intermediate values are controlled by the $\eta^{-1}$-Lipschitz continuity of $\norm{G-M}_\ast$ in the continuity argument above. This completes the proof of Proposition~\ref{proposition step} in the setup of Theorem~\ref{isotropic local law}. The proof in the setup of Theorem~\ref{isotropic local law away} is identical except for the fact that the inequalities \eqref{eq reference key info} and \eqref{G-M ast bound} only hold true in the restricted set $\Dout^\delta$ without Assumption \ref{assumption flatness}.
\end{proof}

\subsection{Proof of the local law and the absence of eigenvalues outside of the support}\label{sec local law proof}
We now have all the ingredients to complete the proof of Theorems \ref{isotropic local law away} and \ref{isotropic local law}.
\begin{proof}[Proof of Theorems \ref{isotropic local law away}, \ref{isotropic local law} and Corollary \ref{cor no eigenvalues outside}] 
We will first prove Theorem \ref{isotropic local law} and then remark in the end how to adapt it to prove Theorem \ref{isotropic local law away}. The proof involves five steps. In the first step we derive a weak initial isotropic bound, which we improve in the second step to obtain the isotropic local law. In the third step we use the isotropic local law to obtain the averaged local law in the bulk, which we use in the fourth step to establish that with very high probability there are no eigenvalues outside the support of $\rho$, also proving Corollary \ref{cor no eigenvalues outside}. Finally, in the fifth step we use the fact that there are no eigenvalues outside the support of $\rho$ to improve the isotropic and averaged law outside the support.

\subsubsection*{Step 1: Initial isotropic bound.} We claim the initial bound
\begin{align}
\label{bound for large eta}
\norm{G-M}_p \le_{p,\gamma} \frac{N^{-\gamma/6}}{\braket{z}}\inD{\gamma}
\end{align}
for some $\delta=\delta(\gamma)$. First, we aim at proving \eqref{bound for large eta} for large $\eta\ge N$, i.e., in $\DD_{\gamma=2}^\delta=\DD_{2}^\delta$ for arbitrary $\delta$. We use that \[\norm{H}=\max_k \abs{\lambda_k}\le \sqrt{\Tr \abs{H}^2} \le \sqrt{\Tr \abs{A}^2}+\sqrt{N^{-1}\Tr \abs{W}^2}\prec \sqrt N,\] as follows from Assumptions \ref{assumption A} and \ref{assumption high moments}.
 In fact, by computing traces of higher moments  $W^k$ and using the summable decay of the cumulants, this bound can easily
be improved to $\| H\|\prec 1$. 
 Since $\abs{z}\ge N$ and $\norm{H}\prec\sqrt N$, we have $\norm{G}_p\le_p \braket{z}^{-1}$ and $\norm{\Im G}_p\le_p \braket{z}^{-2}\eta$ and thus from Theorem \ref{theorem step} it follows that that 
\[
\norm{D}_p \le_{p, \epsilon} \frac{N^{\epsilon}}{\braket{z}\sqrt{N}}\inD{2}.
\]
We now fix normalized vectors $\vx,\vy$ and any $K\ge 10/\gamma$ in the norm $\norm{\cdot}_\ast=\norm{\cdot}_\ast^{K,\vx,\vy}$ and translate these $p$ norm bounds into high-probability bounds using Lemma \ref{prec p conversion} to infer $\norm{D}_\ast \prec \braket{z}^{-1} /\sqrt N$ and $\norm{G}_\ast\prec \braket{z}^{-1}$. Using the stability in the form of \eqref{G-M ast bound} and absorbing $N^\epsilon$ factors into $\prec$ we conclude 
\[
\norm{G-M}_\ast \prec \frac{N^{1/2K}}{\braket{z}^2\sqrt{N}}\inD{2}.
\]
Now \eqref{bound for large eta} in $\mathbb D_{2}^\delta$ follows from \ref{prec p conversion}\eqref{prec to p norm} since $\vx,\vy$ and $K$ were arbitrary. By applying Proposition \ref{proposition step} iteratively starting from $\gamma_0=2$ and (possibly) reducing $\delta$ in every step we can then conclude that \eqref{bound for large eta} holds in all of $\mathbb D_{\gamma}^\delta$ for some $\delta=\delta(\gamma)>0$. 

\subsubsection*{Step 2: Iterative improvement of the isotropic bound.} We now iteratively improve the initial bound \eqref{bound for large eta} until we reach the intermediate bound 
\begin{align}
\label{First bound on G-M}
\norm{G-M}_p \le_{p,\epsilon} \frac{N^{\epsilon}}{\braket{z}}\bigg(\sqrt{\frac{\norm{\Im M}}{N \eta}}+\frac{1}{\braket{z}}\frac{1}{N \eta}\bigg)\inD{\gamma}
\end{align}
for $\delta=\delta(\epsilon)>0$. From \eqref{bound for large eta} and the bound on $\braket{z}\norm{M}$ from \eqref{eq reference key info} we conclude that $\braket{z}\norm{G}_p$ is $N^\epsilon$-bounded in $\DD_\gamma^\delta$ for some $\delta=\delta(\epsilon)>0$. Then from Theorem \ref{theorem step} and \eqref{bound for large eta}, again, it follows that 
\begin{align}
\label{initial control on G and D}
\norm{D}_p\le_{p,\epsilon} N^{\epsilon}\sqrt{\frac{\norm{\Im G}_q}{N \eta}} \quad\text{and}\quad \norm{G-M}_\ast+\norm{D}_\ast \prec N^{-\gamma/6} \inD{\gamma}.  
\end{align}
From now on all claimed bounds hold true uniformly in all of $\DD_{\gamma}^\delta$; we will therefore suppress this qualifier in the following steps. In order to prove \eqref{First bound on G-M}, we show inductively
\begin{align}
\label{inductive improvement}
\norm{G-M}_p \le_{p,\epsilon} N^\epsilon\Psi_l,
\end{align}
where we define successively improving control parameters $(\Psi_l)_{l=0}^L$ through $\Psi_0\defeq  1$ and $\Psi_{l+1}\defeq N^{-\sigma}\Psi_l=N^{-(l+1)\sigma}$, where $\sigma \in (0,1)$ is arbitrary.
The final iteration step $L$ is chosen to be the largest integer such that  
\begin{align}
\label{lower bound on Psis}
\Psi_L \ge \frac{N^\sigma}{\braket{z}}\bigg(\sqrt{\frac{\norm{\Im M}}{N \eta}}+ \frac{1}{\braket{z}}\frac{N^{\sigma}}{N \eta}\bigg).
\end{align}
 For the induction step from $l$ to $l+1$, we write  $\Im G = \Im M + \Im (G-M)$ and we continue from \eqref{initial control on G and D} and \eqref{inductive improvement} and estimates that
\[
\norm{D}_p \le_{p,\epsilon} N^\epsilon \left(\sqrt{\frac{\norm{\Im M}}{N\eta}}+\sqrt{\frac{\Psi_l}{N\eta}}\right)\le_{p,\epsilon} N^{\epsilon}\bigg(\sqrt{\frac{\norm{\Im M}}{N \eta}}+\frac{1}{\braket{z}}\frac{N^{\sigma}}{N \eta}+ \braket{z}N^{-\sigma}\Psi_l\bigg).
\]
Thus we also have, for any normalized $\vx,\vy$,
\[
\norm{D}_\ast=\norm{D}_\ast^{K,\vx,\vy} \prec \sqrt{\frac{\norm{\Im M}}{N \eta}}+\frac{1}{\braket{z}}\frac{N^{\sigma}}{N \eta}+ \braket{z}N^{-\sigma}\Psi_l
\]
and from \eqref{G-M ast bound} we conclude 
\[
\norm{G-M}_\ast\,\prec\, \frac{N^{1/2K}}{\braket{z}}\bigg(\sqrt{\frac{\norm{\Im M}}{N \eta}}+ \frac{1}{\braket{z}}\frac{N^{\sigma}}{N \eta}\bigg)+N^{1/2K-\sigma}\Psi_l
\]
provided $K \ge 7/\gamma$ (c.f.~the bound on $\norm{D}_\ast$ from \eqref{initial control on G and D} and the definition of $\epsilon$-neighbourhoods in \eqref{definition of epsilons}). In particular, since $K$ can be chosen arbitrarily large, we find, for any normalized $\vx,\vy$ that
\[
\abs{(G-M)_{\vx\vy}} \prec \frac{1}{\braket{z}}\bigg(\sqrt{\frac{\norm{\Im M}}{N \eta}}+ \frac{1}{\braket{z}}\frac{N^{\sigma}}{N \eta}\bigg)+N^{-\sigma}\Psi_l\le 2N^{-\sigma}\Psi_l,
\]
where we used $l<L$ and \eqref{lower bound on Psis} in the last step. By the definition of $\Psi_{l+1}$ we infer
\[
\norm{G-M}_p\le_{p,\epsilon}N^{\epsilon}\Psi_{l+1},
\]
completing the induction step, and thereby the proof of \eqref{First bound on G-M}. 

Finally, in order to obtain \eqref{iso bulk} from \eqref{First bound on G-M}, we recall \begin{align}
 \norm{\Im M}\le \norm{M}\le_\epsilon N^\epsilon  \label{norm Im M bound}
 \end{align}  from Proposition \ref{prop stability reference}\eqref{norm M bound with flatness} and \eqref{iso bulk} follows.

\subsubsection*{Step 3: Averaged bound.} First, it follows from \eqref{matrix dyson eq} and \eqref{eq D def} or equivalently from $\widetilde \cJ[G-M,D]=0$ that $G-M$ satisfies the following quadratic relation  
\[ G -M = (1-\cC_M \SS)^{-1} \big[-MD + M \SS[G-M](G-M)\big] \]
and therefore 
\[ \norm{\braket{B(G-M)}}_p \le \norm{\braket{B (1-\cC_M\SS)^{-1} [MD]}}_p + \norm{\braket{B(1-\cC_M\SS)^{-1}\big[ M\SS[G-M](G-M)\big]}}_p.\]
By geometric expansion, as in \eqref{geometric expansion of 1-CMS}, it follows that 
\begin{align*}
\norm{(1-\cC_{M}\SS)^{-1}}_{\norm{\cdot}\to\norm{\cdot}} \le 1 + \norm{M}^2 \tnorm{\SS}+ \norm{M}^4 \tnorm{\SS}^2\norm{(1-\cC_M\SS)^{-1}}_{\text{hs}\to\text{hs}}
\end{align*}
and thus that $\norm{\big((1-\cC_M S)^{-1}\big)^\ast[B^\ast]} \le_\epsilon N^\epsilon \norm{B}$ by \eqref{eq reference key info}. Using \eqref{av bound D eq}, where $\big((1-\cC_M S)^{-1}\big)^\ast[B^\ast]$ plays the role of $B$, and writing $\norm{\Im G}_q \le \norm{\Im M}+\norm{G-M}_q$ and using \eqref{First bound on G-M} we can conclude that
\begin{align}\label{First av bound G-M}
\norm{\braket{B(G-M)}}_p \le_{p,\epsilon,\gamma}\frac{\norm{B}N^\epsilon}{\braket{z}}\left[ \braket{z}\frac{\norm{\Im M}}{N\eta }+ \sqrt{\frac{\norm{\Im M}}{N\eta}}\frac{1}{N\eta}+\frac{1}{(N\eta)^2}\right] 
\end{align}
from Lemma \ref{S[R]T bound}. Now \eqref{av bulk} follows directly from \eqref{First av bound G-M} and \eqref{norm Im M bound}.

The proof of Theorem \ref{isotropic local law} is now complete. For the proof of Theorem \ref{isotropic local law away} the first three steps are identical except that we only work in the resticted domains $\DD_\gamma^\delta\cap\DD_{\text{out}}^\delta$. Due to \eqref{eq reference key info} and \eqref{G-M ast bound}, it then follows that in $\Dout^\delta$ the only place where the above proof used Assumption \ref{assumption flatness} is \eqref{norm Im M bound}. In the absence of Assumption \ref{assumption flatness} we replace \eqref{norm Im M bound} by the bound $\norm{\Im M}\le \eta \dist(z,\supp\mu)^{-2}$ from Proposition \ref{prop stability reference} in \eqref{First bound on G-M} and \eqref{First av bound G-M}, which only adds another negligible $N^\epsilon$ factor. This proves 
\begin{align}\label{outside first bound}
\norm{G-M}_p \le_{p,\epsilon} \frac{N^{\epsilon}}{\braket{z}}\bigg(\sqrt{\frac{1}{N}}+\frac{1}{\braket{z}}\frac{1}{N \eta}\bigg), \quad\norm{\braket{B(G-M)}}_p \le_{p,\epsilon,\gamma}\frac{\norm{B}N^\epsilon}{\braket{z}}\left[ \frac{1}{N }+ \sqrt{\frac{1}{N}}\frac{1}{N\eta}+\frac{1}{(N\eta)^2}\right]
\end{align} in the restricted domain $\DD_\gamma^\delta\cap\Dout^\delta$. We now need two additional steps to prove Theorem \ref{isotropic local law away} in all of $\Dout^\delta$.

\subsubsection*{Step 4: Absence of eigenvalues outside of the support.} For $B=1$ it follows from \eqref{outside first bound} and a spectral decomposition of $H$ that with very high probability in the sense of Corollary \ref{cor no eigenvalues outside} there are no eigenvalues outside the support of $\mu$. Indeed, if there is an eigenvalue $\lambda$ with $\dist(\lambda,\supp\mu)\ge N^{-\delta}$, then $\abs{\braket{G(\lambda+i\eta)}}\ge \abs{\braket{\Im G(\lambda+i\eta)}}\ge 1/N\eta$. From \eqref{outside first bound} with $\epsilon=1/4$ and $\gamma=1/2$ we have 
\begin{align*}
\P\left( \exists \lambda\text{ with }\dist(\lambda,\supp\mu)\ge N^{-\delta} \right) \le \P\left(\abs{\braket{G-M}}\ge c/N\eta \text{ in }\Dout^\delta\cap\DD_{1/2}^\delta \right) \lesssim \inf_{\eta\ge N^{-1/2}}\left(N^\epsilon\left[\eta+\frac{1}{\sqrt N}+\frac{1}{N\eta}\right]\right)^p \lesssim N^{-p/4}.
\end{align*}
Now Corollary \ref{cor no eigenvalues outside} follows from the remark about the dependence of $\delta$ on $\epsilon$ in Theorem \ref{isotropic local law away}.

\subsubsection*{Step 5: Improved bounds outside of the support.} Now we fix $z$ such that $\dist(z, \supp \rho)\ge N^{-\delta}$ and $\eta \ge N^{-1+\gamma}$. Then we have $\norm{\Im G}\prec \eta\braket{z}^{-2}$ and  $\norm{G}\prec \braket{z}^{-1}$
and also $\norm{\Im G}_p \le_{p,\epsilon} N^{\epsilon} \eta \braket{z}^{-2}$ and $\norm{G}_p\le_{p,\epsilon} N^{\epsilon}\braket{z}^{-1}$ and we infer from Theorem \ref{theorem step} that
\[
\norm{D}_p\le_{p,\epsilon}\frac{N^{\epsilon}}{\braket{z}\sqrt{N}} \quad\text{and therefore}\quad \norm{D}_\ast\prec\frac{1}{\braket{z}\sqrt{N}}.
\]
Again using stability in the form of \eqref{G-M ast bound} we find
\[
\norm{G-M}_\ast \prec \frac{N^{1/2K}}{\braket{z}^2\sqrt{N}}
\]
and since $K$ was arbitrary we also have 
\[
\norm{G-M}_p \le_{p,\epsilon} \frac{N^{\epsilon}}{\braket{z}^2\sqrt{N}}.
\]
By Lipschitz-continuity of $G$ and $M$ with Lipschitz constant of order one we can extend the regime of validity of this bound from $\eta \ge N^{-1+\gamma}$ to $\eta \ge 0$ to conclude \eqref{iso outside}. The improvement on the averaged law outside of the support of the $\rho$ then follows immediately from the improved isotropic law and the fact that with very high probability there are no eigenvalues outside of the support of $\rho$. 
\end{proof}

\section{Delocalization, rigidity and universality} \label{section deloc rig univ}
In this section we infer eigenvector delocalization, eigenvalue rigidity and universality in the bulk from the local law in Theorem \ref{isotropic local law}. These proofs are largely independent of the correlation structure of
the random matrix, so arguments that have been developed
for Wigner matrices over the last few years can be applied with
minimal modifications. Especially the \emph{three step strategy}
for proving bulk universality  (see \cite{MR2917064} for a short summary) has been streamlined recently \cite{MR3729630,landon2016fixed,MR3687212} so that the only model-dependent input is the local law.
The small modifications required for the correlated setup
have been presented in detail in \cite{1604.08188} and we will not repeat them.
Here we only explain why the proofs in \cite{1604.08188} work under
the more general conditions imposed in the current paper.
In fact, the proof of the eigenvector delocalization and eigenvector rigidity from \cite{1604.08188}
holds \emph{verbatim} in the current setup as well. The proof of the
bulk universality in \cite{1604.08188} used that the correlation length
was $N^{\epsilon}$ at a technical step that can be easily modified
for our weaker assumptions. In the following we will highlight which arguments of \cite{1604.08188} have to be modified in the current, more general, setup.
\begin{proof}[Proof of Corollary \ref{cor delocalization} on bulk eigenvector delocalization]
As usual, delocalization of eigenvectors corresponding to eigenvalues in the bulk is an immediate corollary of the local law since for the eigenvectors $\bm u_k=\big(u_k(i)\big)_{i\in J}$ and eigenvalues $\lambda_k$ of $H$ and $i\in J$ we find from the spectral decomposition
\[ C\gtrsim \Im G_{ii}= \eta\sum_k \frac{\abs{u_k(i)}^2}{(E-\lambda_k)^2+\eta^2}\ge \frac{\abs{u_k(i)}^2}{\eta}\quad\text{for}\quad z=E+\ii\eta,\]
where the first inequality is meant in a high-probability sense and follows from the boundedness of $M$ and Theorem \ref{isotropic local law}, and the last inequality followed assuming that $E$ is $\eta$-close to $\lambda_k$.
\end{proof}
\begin{proof}[Proof of Corollary \ref{cor rigidity} on bulk eigenvalue rigidity]
Rigidity of bulk eigenvalues follows, verbatim as in \cite[Corollary 2.9]{1604.08188}, from the improved local law away from the spectrum and \cite[Lemma 5.1]{MR3719056}.
\end{proof}
\begin{proof}[Proof of Corollary \ref{cor universality} on bulk universality]
Bulk universality follows from the \emph{three step strategy}, out of which only the third step requires a minor modification, compared to \cite{1604.08188}. Since in \cite{1604.08188} arbitrarily high polynomial decay outside of $N^\epsilon$ neighbourhoods was assumed, we have to replace to three term Taylor expansion in \cite[Lemma 7.5]{1604.08188} by an $2/\mu$-term cumulant expansion to accommodate for neighbourhoods of sizes $N^{1/2-\mu}$. 

The key input for the universality proof through Dyson Brownian motion is the Ornstein Uhlenbeck (OU) process, which creates a family $H(t)$ of interpolating matrices between the original matrix $H=H(0)$ and a matrix with sizeable Gaussian component, for which universality is known from the second step of the three step strategy. The OU process is defined via 
\begin{align}
\diff H(t) = -\frac{1}{2}(H(t)-A)\diff t + \Sigma^{1/2}[\diff B(t)],\qquad\text{where}\quad \Sigma[R]\defeq \E\braket{W^\ast R}W,  \label{OU}
\end{align}
where $B(t)$ is a matrix of independent (real, or complex according to the symmetry class of $H$) Brownian motions. It is designed in a way which preserves mean and covariances along the flow, i.e., $H(t)=A+N^{-1/2}W(t)$ and it is easy to check that $\E W(t)=0$ and $\Cov{w_\alpha(t),w_\beta(t)}=\Cov{w_\alpha(0),w_\beta(0)}$, where $W(t)=(w_\alpha(t))_{\alpha\in I}$. Furthermore, Assumptions \ref{assumption correlations}, \ref{assumption neighbourhood decay} hold also, uniformly in $t$, for $W(t)$. Indeed, adding an independent Gaussian vector $\bm g=(g_{\alpha_1},\dots,g_{\alpha_k})$ to $(w_{\alpha_1},\dots,w_{\alpha_k})$ leaves the cumulant invariant by additivity
\[ \kappa(w_{\alpha_1}+g_{\alpha_1},\dots,w_{\alpha_k}+g_{\alpha_k}) = \kappa(w_{\alpha_1},\dots,w_{\alpha_k})+ \kappa(g_{\alpha_1},\dots,g_{\alpha_k})\] 
and the fact that cumulants of Gaussian vectors vanish for $k\ge 3$ (for $k\ge 2$ we already noticed that, by design, the expectation and the covariance is invariant under $t$). We now estimate 
\[ \E f(N^{-1/2} W(t))-\E f(N^{-1/2} W(0)) \]
for smooth functions $f$. For notational purposes we set $v_\alpha(t)=N^{-1/2}w_{\alpha}(t)$ and $V(t)=N^{-1/2} W(t)$ and will often suppress the $t$-dependence. It follows from Ito's formula that 
\begin{align*}
2\frac{\diff}{\diff t}\E f(V) = - \E\sum_\alpha v_\alpha (\partial_\alpha f)(V) + \sum_{\alpha,\beta} \Cov{v_\alpha,v_\beta} \E (\partial_{\alpha}\partial_{\beta} f)(V).
\end{align*}
We now apply Proposition \ref{cum exp prop} to the first term and obtain
\begin{align*}
2\frac{\diff}{\diff t} \E f &= - \sum_{2\le m<R}\sum_{\alpha} \sum_{\bm \beta \in \NN^m} \frac{\kappa(v_\alpha,v_{\bm\beta})}{m!} (\E \partial_\alpha\partial_{\bm \beta} f)-\sum_{ m<R}\sum_{\alpha} \sum_{\bm \beta \in \NN^m} \E\frac{K(v_\alpha;v_{\bm\beta})-\kappa(v_\alpha,v_{\bm\beta})}{m!} \partial_\alpha\partial_{\bm \beta} f\big\rvert_{W_\NN=0} \\
&\qquad- \sum_{\alpha} \Omega(\partial_\alpha f,\alpha,\NN) +\sum_{\alpha}\sum_{\beta\in I\setminus\NN} \kappa(v_\alpha,v_\beta) \E \,\partial_{\alpha}\partial_{\beta} f,
\end{align*}
where we used a cancellation for the $m=1$ term in $\beta\in\NN$ and the fact that $\kappa(v_\alpha)=\E v_\alpha=0$ for the $m=0$ term. We now estimate the four terms separately. The sum in the last term is of size $N^4$, the derivative contributes an $N^{-1}$ and the covariance is assumed to be $N^{-3}$ small, i.e., the last term is of order $1$. The first term for fixed $m$ is of size $\tnorm{\kappa}^{\text{av}}N^{2-(m+1)/2}$ and therefore altogether of size $\tnorm{\kappa}^{\text{av}}\sqrt N$. Estimating the sums by their size, and the derivative by its prefactor $N^{-(R+1)/2}$, we find from \eqref{hoelder error bound} that the third term is of size 
\[ N^2 \abs{\NN}^R N^{-(R+1)/2}\le N^{3/2-\mu R},\]
which can be made smaller than $\sqrt N$ by choosing $R=2/\mu$. Finally, the second term is naively of size $N^{3/2}$, but using \eqref{two groups pre cum}, the  security layers and the pigeon-hole 
principle as in \eqref{K-kappa i j} or  in \eqref{layers}, this can be improved to $N^{-3/2}$. We can conclude that 
\begin{align}\abs{\E\frac{\diff}{\diff t}f(V(t))}\lesssim \sqrt N \quad\text{and therefore}\quad \abs{\E f(V(t))-\E f(V(0))}\lesssim t \sqrt N. \end{align}
The remaining argument of \cite[Section 7.2]{1604.08188} can be, assuming fullness as in Assumption \ref{assumption fullness}, followed verbatim to conclude bulk universality.
\end{proof}

\appendix
\section{Cumulants}\label{appendix cumulants}
In this section we provide some results on cumulants which we refer to in the main part of the proof. The section largely follows the approach of \cite{MR775392,MR725217}, but our application requires a more quantitative version of the independence property exhibited by cumulants, which we work out here.

Cumulants $\kappa_{\bm m}$ of a random vector $\bm w =(w_1,\dots, w_l)$ are traditionally defined as the coefficients of log-characteristic function
\[ \log \E e^{i \bm t \cdot \bm w } = \sum_{\bm m} \kappa_{\bm m} \frac{(i\bm t)^{\bm m}}{\bm m!},\]
while the (mixed) moments of $\bm w$ are the coefficients of the characteristic function
\[ \E e^{i \bm t\cdot \bm w} = \sum_{\bm m} (\E \bm w^{\bm m}) \frac{(i\bm t)^{\bm m}}{\bm m!}, \]
where $\sum_{\bm m}$ is the sum over all multi-indices $\bm m=(m_1,\dots,m_l)$. Thus
\begin{align}\label{cum moment gen fcts}
\exp\left( \sum_{\bm m} \kappa_{\bm m} \frac{(i\bm t)^{\bm m}}{\bm m!} \right) = \sum_{\bm m} (\E \bm w^{\bm m}) \frac{(i\bm t)^{\bm m}}{\bm m!}.
\end{align}
It is easy to check that for a set $A\subset [l]$ the coefficient of $\prod_{a\in A}t_a$ in \eqref{cum moment gen fcts} is given by 
\begin{align*}
\E \Pi \underline w_A=\Big(\prod_{a\in A}\partial_{t_a}\Big) \exp\left( \sum_{\bm m} \kappa_{\bm m} \frac{\bm t^{\bm m}}{\bm m!} \right) \bigg\rvert_{\bm t=0} = \sum_{\mathcal P\vdash A} \kappa^{\mathcal P},
\end{align*}
where $\cP\vdash A$ indicates the summation over all partitions of the (multi)set $A$, and where for partitions $\mathcal P=\{\mathcal P_1,\dots,\mathcal P_b\}$ of $A$ we defined 
$\kappa^{\mathcal P}=\prod_{k=1}^b \kappa_{\chi(\mathcal P_k)}$ with $\chi(\mathcal P_k)$ being the characteristic multi-index of the set $\mathcal P_k$. Thus for a partition $\mathcal Q$ of $[l]$ it follows that 
\begin{align} \label{moment to cumulant}
M^\cQ\defeq \prod_{\mathcal Q_i\in \mathcal Q} \E \Pi \underline w_{\mathcal Q_i} = \prod_{\mathcal Q_i\in \mathcal Q} \sum_{\mathcal P\vdash \mathcal Q_i} \kappa^{\mathcal P} = \sum_{\mathcal P\le \mathcal Q} \kappa^\cP,
\end{align}
where $\mathcal P\le \mathcal Q$ indicates that $\cP$ is a finer partition than $\cQ$.

Now we establish the inverse of the relation \eqref{moment to cumulant}, i.e., express cumulants in terms of products of moments. To do so, we notice that the set of partitions $\cP$ on $[l]$ (or, in fact, any finite set) is a partially ordered set with respect to the relation $\le$. It is, in fact, also a lattice, as any two partitions $\cP,\cQ$ have both a unique greatest lower bound $\cP \land \cQ$ and a unique least upper bound $\cP \lor \cQ$. One then defines the \emph{incidence algebra} as the algebra of scalar functions $f$ mapping intervals $[\cP,\cQ]=\Set{\cR|\cP\le\cR\le\cQ}$ to scalars $f(\cP,\cQ)$ equipped with point-wise addition and scalar multiplication and the product $\ast$
\[(f\ast g)(\cP,\cQ)=\sum_{\cP\le \cR\le \cQ} f(\cP,\cR)g(\cR,\cQ).\]
There are three special elements in the incidence algebra; the $\delta$ function mapping $[\cP,\cQ]$ to $\delta(\cP,\cQ)=1$ if $\cP=\cQ$ and $\delta(\cP,\cQ)=0$ otherwise, the $\zeta$ function mapping all intervals $[\cP,\cQ]$ to $\zeta(\cP,\cQ)=1$, and finally the M\"obius function defined inductively via
\[\mu(\cP,\cQ) = \begin{cases}
1, &\text{if }\cP=\cQ,\\
-\sum_{\cP\le \cR<\cQ} \mu(\cP,\cR),&\text{if }\cP < \cQ.
\end{cases} \]
The $\delta$ function is the unit element of the incidence algebra. It is well known (and easy to check) that the multiplicative inverse of the zeta function is the M\"obius function, and vice versa, i.e., that $\mu\ast\zeta=\zeta\ast\mu=\delta$. Thus it follows that for any functions $F$ and $G$ on the partitions, we have 
\[ F(\cP)= \sum_{\cQ\le \cP} G(\cQ) \qquad\text{if and only if}\qquad G(\cQ)=\sum_{\cP\le \cQ} \mu(\cP,\cQ)F(\cP).  \]
Applying this equivalence to \eqref{moment to cumulant} yields
\begin{align}\label{moments to cumulants mu} \kappa^{\cP} = \sum_{\cQ\le \cP} \mu(\cQ,\cP) M^\cQ\end{align}
and thus it only remains to identify $\mu$. One can check that for $\cP\le\cQ$, $\mu(\cP,\cQ)$ is given by
\[\mu(\cP,\cQ)=(-1)^{n-r}0!^{r_1}1!^{r_2}\dots (n-1)!^{r_n},\]
where $n$ is the number of blocks of $\cP$, $r$ is the number of blocks of $\cQ$ and $r_i$ is the number of blocks of $\cQ$ which contain exactly $i$ blocks of $\cP$. For the particular choice of the trivial partition $\{[l]\}$ of $[l]$ it follows that 
\begin{align}\label{cumulant to moment}
\kappa(w_1,\dots,w_l)\defeq\kappa_{(1,\dots,1)}= \kappa^{\{[l]\}} = \sum_{\cP} (-1)^{\abs{\cP}-1}(\abs{\cP}-1)! M^\cP =  \sum_{\cP} (-1)^{\abs{\cP}-1}(\abs{\cP}-1)! \prod_{\cP_i\in \mathcal \cP} \E \Pi \underline w_{\cP_i},
\end{align}
providing an alternative (purely combinatorial) definition of cumulants.

\begin{lemma}\label{mixing cumulant decay lemma}
If for a partition of the index set $[n]=A\sqcup B$ with $\abs{A},\abs{B}>0$ the random variables $\underline w_A$ and $\underline w_B$ are independent, then $\kappa(\underline w_{[n]})=\kappa(\underline w_A,\underline w_B)=0$. If, instead of independence, we merely assume that \begin{align} \Cov{ f(w_i\,\mid\,i\in A) , g(w_j\,\mid\,j\in B) }\le \epsilon \norm{ f}_2 \norm{g}_2 \label{cov f g bound}\end{align}
for all $f,g$, and that the random variables $w_i$ have finite $2n$-th moments $\max_i \E \abs{w_i}^{2n}\le \mu_{2n}$, then we still have
\begin{align}\label{eq kappa bound cov}
\abs[1]{\kappa(\underline w_{[n]})}\le \epsilon\, C(n,\mu_{2n}).
\end{align}
\end{lemma}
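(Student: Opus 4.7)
The plan is to treat the two assertions separately: the first is the classical independence property of cumulants, and the second is a quantitative perturbation of it. For part one, I would use the generating function identity \eqref{cum moment gen fcts} together with the fact that independence makes the joint characteristic function factor. Writing $\phi(\mathbf t) = \E e^{i\mathbf t\cdot \underline w_{[n]}} = \phi_A(\mathbf t_A)\phi_B(\mathbf t_B)$, we get $\log\phi(\mathbf t) = \log\phi_A(\mathbf t_A) + \log\phi_B(\mathbf t_B)$, a sum in which no summand involves both $\mathbf t_A$ and $\mathbf t_B$ variables. Since $A$ and $B$ are both nonempty, the coefficient of the fully mixed monomial $\prod_{i\in[n]} t_i$ vanishes, giving $\kappa(\underline w_{[n]})=0$. (The same conclusion can be read off \eqref{cumulant to moment} by grouping partitions of $[n]$ according to the partitions they induce on $A$ and $B$ and using the factorization $\E\Pi \underline w_S = \E\Pi\underline w_{S\cap A}\,\E\Pi\underline w_{S\cap B}$.)

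For part two, I would introduce an auxiliary random vector $\underline w' \defeq (\underline w_A, \underline w_B')$ in which $\underline w_B'$ is an independent copy of $\underline w_B$, independent of $\underline w_A$, so that by part one $\kappa(\underline w'_{[n]}) = 0$. It therefore suffices to bound the difference $\kappa(\underline w_{[n]}) - \kappa(\underline w'_{[n]})$, which using the combinatorial formula \eqref{cumulant to moment} becomes
\begin{equation*}
\kappa(\underline w_{[n]}) = \sum_{\cP\vdash[n]} (-1)^{\abs{\cP}-1}(\abs{\cP}-1)! \Bigg[\prod_{\cP_i\in\cP} \E\Pi\underline w_{\cP_i} - \prod_{\cP_i\in\cP} \E\Pi\underline w'_{\cP_i}\Bigg].
\end{equation*}
For any block $\cP_i$ contained entirely in $A$ or entirely in $B$ the two factors coincide, so the only contribution comes from \emph{mixed} blocks, i.e. blocks $\cP_i$ meeting both $A$ and $B$. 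For such a block
\begin{equation*}
\E\Pi\underline w_{\cP_i} - \E\Pi\underline w'_{\cP_i} = \E\Pi\underline w_{\cP_i} - \E\Pi\underline w_{\cP_i\cap A}\, \E\Pi\underline w_{\cP_i\cap B} = \Cov{\Pi\underline w_{\cP_i\cap A},\, \Pi\underline w_{\cP_i\cap B}},
\end{equation*}
which by \eqref{cov f g bound} is at most $\epsilon \norm{\Pi\underline w_{\cP_i\cap A}}_2\norm{\Pi\underline w_{\cP_i\cap B}}_2$; each factor is bounded in terms of $\mu_{2n}$ by a repeated application of H\"older's inequality since each block has at most $n$ elements.

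Finally I would convert the difference of products into a single-factor replacement via the standard telescoping identity $\prod a_i - \prod b_i = \sum_j (\prod_{i<j}a_i)(a_j-b_j)(\prod_{i>j}b_i)$, applied over the mixed blocks. Each of the finitely many (depending only on $n$) terms carries a factor $\epsilon$ from one covariance and the remaining moment factors are all uniformly bounded by a constant depending only on $n$ and $\mu_{2n}$. Summing over the (finitely many) partitions $\cP\vdash [n]$ and absorbing all combinatorial constants into $C(n,\mu_{2n})$ yields the claimed bound \eqref{eq kappa bound cov}. The only mildly delicate point is the bookkeeping of the telescoping over several mixed blocks and ensuring that all $L^2$-norms of monomials $\Pi\underline w_S$ are controlled uniformly by $\mu_{2n}$; this is a direct consequence of H\"older's inequality applied to products of at most $n$ factors, so no new ideas are needed.
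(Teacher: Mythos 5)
Your proof is correct and follows essentially the same route as the paper: expand the cumulant via the moment--cumulant formula \eqref{cumulant to moment}, note that each mixed-block moment differs from its independent factorization by a covariance which \eqref{cov f g bound} bounds by $\epsilon$ times $L^2$-norms controlled by $\mu_{2n}$, and sum over the finitely many partitions. Your explicit introduction of the auxiliary independent copy $\underline w'$ together with the telescoping identity is a helpful spelling-out of the step the paper compresses after \eqref{eq moment product cov} (``thus the claim follows from \eqref{cov f g bound}''), but the underlying decomposition and estimate are identical.
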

\begin{proof}
We first recall the well known proof, based on the relations \eqref{moment to cumulant}--\eqref{moments to cumulants mu}, that the cumulant of independent $\underline w_A$, $\underline w_B$ vanishes. Let $\cP$ be a partition on $[n]$, $\cQ$ a partition on $A$ and $\cR$ a partition on $B$. $\cP$ naturally induces partitions $\cP\cap A$ and $\cP \cap B$ on $A$ and $B$; conversely $\cQ$ and $\cR$ naturally induce a partition $\cQ\cup \cR$ on $[n]$. We observe that $\cQ \le \cP\cap A$ and $\cR \le \cP\cap B$ if and only if $\cQ\cup\cR \le \cP$. We then compute
\begin{align}\nonumber
\kappa(\underline w_{[n]})&=\sum_{\cP} \mu(\cP,\{[n]\}) M^\cP = \sum_{\cP} \mu(\cP,\{[n]\}) M^{\cP\cap A} M^{\cP\cap B} = \sum_{\cP} \mu(\cP,\{[n]\}) \bigg(\sum_{\cQ\le \cP\cap A} \kappa^\cQ \bigg)\bigg(\sum_{\cR\le \cP\cap B} \kappa^\cR \bigg)\\
&= \sum_{\cQ\vdash A}\sum_{\cR\vdash B}\sum_{\cP\vdash[n]} \zeta(\cQ\cup\cR,\cP)\mu(\cP,\{[n]\})\kappa^\cQ\kappa^\cR = \sum_{\cQ\vdash A}\sum_{\cR\vdash B} \delta(\cQ\cup\cR,\{[n]\})\kappa^\cQ \kappa^\cR=0,\label{cumulant decay indep}
\end{align}
where the first equality followed from \eqref{moments to cumulants mu}, the second equality from independence, the third equality from \eqref{moment to cumulant}, the fourth equality from the previous observation, the fifth equality from $\delta=\zeta\ast \mu$ and the ultimate equality from the fact that the trivial partition cannot be decomposed into two partitions on smaller sets, using that $\abs{A},\abs{B}>0$.

If $\underline w_A$ and $\underline w_B$ are not independent but merely \eqref{cov f g bound} holds, then there is an additional covariance term in the second step in the above equation. We write
\begin{align} M^\cP = \prod_{\cP_i\in \cP } \E \Pi \underline w_{\cP_i} = \prod_{\cP_i\in \cP } \Big[(\E \Pi \underline w_{\cP_i\cap A})(\E \Pi \underline w_{\cP_i\cap B})+\Cov{\Pi \underline w_{\cP_i\cap A},\Pi \underline w_{\cP_i\cap B}}\Big],\label{eq moment product cov}\end{align}
and thus the claim follows from \eqref{cov f g bound}.
\end{proof}

\section{Precumulants and Wick polynomials}\label{wick poly appendix}
The precumulants defined in Section \ref{sec cum exp} are structurally similar to the well known \emph{Wick polynomials} (which are also known as \emph{Appell polynomials}). We first recall some basic definitions and facts about Wick polynomials from \cite{MR899982chap}. For a random vector $\bm X$ of length $\abs{\bm X}$ we can define the Wick polynomial $\wick{\bm X}$ as the derivative \[\wick{\bm X}\defeq\partial_{t_1}\dots\partial_{t_{\abs{\bm X}}} \frac{e^{\bm t\cdot \bm X}}{\E e^{\bm t\cdot \bm X}}\Big\rvert_{\bm t=0}.\]
Alternatively, we can define $\wick{\bm X}$ combinatorially as 
\begin{subequations}
\begin{align}
\wick{\bm X} = \sum_{\bm X'\subset \bm X} (\Pi \bm X') \sum_{\cP\vdash \bm X\setminus\bm X'} (-1)^{\abs{\cP}} \prod_{\cP_i\in\cP} \kappa(\cP_i).
\end{align}
or indirectly via
\begin{align}\label{eq indir wick def}
\Pi \bm X = \sum_{\bm X'\subset \bm X} \wick{\bm X'} \big(\E \Pi (\bm X\setminus \bm X')\big).
\end{align}
\end{subequations}
One useful property of Wick polynomials is that for any random variable $Y$ we have
\begin{align}\label{wick decoupling}
\E Y \wick{\bm X_1\sqcup \bm X_2} = 0\qquad \text{whenever}\quad \bm X_1\quad\text{is independent of}\quad \{\bm X_2,Y\} 
\end{align}
and $\bm X_1$ is not empty. Eq.~\eqref{wick decoupling} follows, for example, immediately from the analytical definition since
\begin{align*}
\E Y \wick{\bm X_1\sqcup \bm X_2} = \partial_{\bm t} \frac{\E Y e^{\bm t_1\cdot \bm X_1+\bm t_2\cdot \bm X_2}}{\E e^{\bm t_1\cdot \bm X_1+\bm t_2\cdot \bm X_2}}\Big\rvert_{\bm t=0}= \partial_{\bm t} \frac{\E Y e^{\bm t_2\cdot \bm X_2}}{\E e^{\bm t_2\cdot \bm X_2}}\Big\rvert_{\bm t=0} 
\end{align*} 
by independence and the remaining derivative vanishes as the function is constant with respect to $\bm t_1$.

Our pre-cumulants $K(X;\bm Y)$ and their centered versions $K(X;\bm Y)-\kappa(X,\bm Y)$ are inherently non-symmetric functions due to the special role of $X$. After symmetrization, however, we can express them through Wick polynomials as
\begin{align}\label{pre cum Wick rel}
\sum_{X\in \bm X} \big[K(X; \bm X\setminus \{X\})-\kappa(\bm X)\big]= \abs{\bm X} \Pi \bm X - \sum_{\bm X'\subset \bm X} \abs{\bm X'}\big(\E \Pi \bm X'\big) \wick{\bm X\setminus \bm X'}.
\end{align} 
In order to prove \eqref{pre cum Wick rel} we start from \eqref{pre cum alt def} and compute 
\begin{align*}
\sum_{X\in \bm X} \big[K(X; \bm X\setminus \{X\})-\kappa(\bm X)\big] &= \abs{\bm X} \Pi \bm X - \sum_{\bm X'\subset \bm X}\abs{\bm X\setminus \bm X'} (\Pi \bm X') \kappa(\bm X\setminus\bm X')\\
& = \abs{\bm X} \Pi \bm X - \sum_{\bm X''\subset \bm X'\subset \bm X}\abs{\bm X\setminus \bm X'}\wick{\bm X''} \big(\E \Pi(\bm X'\setminus \bm X'')\big)  \kappa(\bm X\setminus\bm X'),
\end{align*}
where the second inequality followed from \eqref{eq indir wick def}. We now relabel the summation indices to obtain 
\begin{align*}
\sum_{X\in \bm X} \big[K(X; \bm X\setminus \{X\})-\kappa(\bm X)\big] &= \abs{\bm X} \Pi \bm X - \sum_{\bm X''\subset \bm X'\subset \bm X}\abs{\bm X''}\wick{\bm X\setminus\bm X'} \big(\E \Pi(\bm X'\setminus \bm X'')\big)  \kappa(\bm X''),
\end{align*}
from which \eqref{pre cum Wick rel} follows using the well known cumulant identity 
\begin{align}\label{cum id to prove}
\abs{\bm X'}\E \Pi \bm X'=\sum_{\bm X''\subset \bm X'}\abs{\bm X''} \big(\E \Pi(\bm X'\setminus \bm X'')\big)  \kappa(\bm X'').
\end{align}
In order to prove \eqref{cum id to prove}, we use \eqref{moment to cumulant} on the rhs.~to obtain
\begin{align*}
\sum_{\bm X''\subset \bm X'}\abs{\bm X''} \big(\E \Pi(\bm X'\setminus \bm X'')\big)  \kappa(\bm X'') = \sum_{\bm X''\subset \bm X'}\abs{\bm X''} \kappa(\bm X'')\sum_{\cP\vdash \bm X'\setminus \bm X''} \kappa^{\cP} = \sum_{\cP \vdash \bm X'} \kappa^{\cP} \sum_{\substack{\bm X''\subset \bm X'\\\bm X''\in\cP}} \abs{\bm X''}=\abs{\bm X'}\sum_{\cP \vdash \bm X'} \kappa^{\cP},
\end{align*}
from which \eqref{cum id to prove} follows by another application of \eqref{moment to cumulant}.

Finally we remark that a quantitative variant of \eqref{wick decoupling} for the pre-cumulants was centrally used in our proof in Section \ref{sec neighbourhood errors}. Qualitatively the analogue of \eqref{wick decoupling} for pre-cumulants reads
\begin{align}
\E Y \big[K(X;\bm X_1, \bm X_2)-\kappa(X,\bm X_1,\bm X_2)\big] =0 \qquad \text{whenever}\quad \{X,\bm X_1\}\quad\text{is independent of}\quad\{\bm X_2,Y\}
\end{align}
and $\bm X_2$ is non-empty. Indeed, from the pre-cumulant decoupling identity \eqref{two groups pre cum} we have that 
\begin{align*}
\E Y \big[K(X;\bm X_1, \bm X_2)-\kappa(X,\bm X_1,\bm X_2)\big] = \E Y (\Pi \bm X_2) \big[K(X;\bm X_1)-\kappa(X,\bm X_1)\big] - \sum_{\substack{\bm X_1'\subset\bm X_1\\\bm X_2'\subsetneq\bm X_2}} \E Y(\Pi \bm X_1')(\Pi \bm X_2') \kappa(X,\bm X_1\setminus\bm X_1',\bm X_2\setminus\bm X_2')
\end{align*}
and the first term vanishes due to independence and \eqref{two groups pre cum}, and the second term vanishes due to Lemma \ref{mixing cumulant decay lemma} because the argument of $\kappa$ splits into two independent groups.

\section{Modifications for complex Hermitian \texorpdfstring{$W$}{W}}\label{complex W}
Our main arguments were carried out for the real symmetric case. We now explain how to modify our proofs 
if $W$ is complex Hermitian. A quick inspection of the proofs shows that the only modification concerns
Proposition \ref{cum exp prop} where we have to replace the cumulant expansion by its complex variant. We reduce the problem to the real case by considering real and imaginary parts of each variable separately. Another option would have been to consider $w$ and $\overline w$ independent variables, but our choice seems to require the least modifications. In order to compute $\E w_{i_0} f(\bm w)$ for a random vector $\bm w\in\C^\cI$, $w_{i_0}\in\C$ and a function $f\colon\C^\cI\to\C$, we can define $\widetilde f\colon\R^{\cI\sqcup\cI}\to\C$ by mapping $(\bm w^{\Re},\bm w^{\Im})\mapsto f(\bm w^{\Re}+i\bm w^{\Im})$, where the new index set $\cI\sqcup\cI$ should be understood as two copies of $\cI$ in the sense that $\cI\sqcup\cI=\set{(i,\Re),(i,\Im)|i\in\cI}$. If we want to expand $w_{i_0} f(\bm w)$ in the variables of some fixed index set $\NN\subset\cI$, we separately apply Proposition \ref{cum exp prop} to $\E \widetilde w_{(i_0,\Re)} \widetilde f(\widetilde{\bm w})$ and $\E \widetilde w_{(i_0,\Im)} \widetilde f(\widetilde{\bm w})$ in $\NN\sqcup\NN$, where $\widetilde{\bm w}=(\Re \bm w,\Im \bm w)$ and $\widetilde w_{(i,\Re)}=\Re w_i$, $\widetilde w_{(i,\Im)} = \Im w_i$. It follows that
\begin{align}\label{complex cum exp 1st step}
\E w_{i_0} \widetilde f(\widetilde{\bm w}) = \sum_{l>0}\sum_{\widetilde{\bm i}\in (\NN\sqcup\NN)^l} \frac{\kappa( \widetilde w_{(i_0,\Re)}, \widetilde{\bm w_{\widetilde i}} )+ \kappa(\ii \widetilde w_{(i_0,\Im)}, \widetilde{\bm w_{\widetilde i}} )}{l!} \partial_{\widetilde{\bm i}} (\E \widetilde f) + \widetilde\Omega^1 + \widetilde\Omega^2, 
\end{align}
where the error terms are those from two applications of \eqref{cum expansion statement}. We note that we can make sense of $\kappa$ with complex arguments directly through Definition \eqref{cumulant to moment}. We now want to go back to a summation over our initial index set $\NN$ and therefore regroup the terms in \eqref{complex cum exp 1st step} according to the first indices of $\widetilde{\bm i}$. To formulate the result compactly we introduce the tensors
\begin{align*}
\widetilde\kappa(w_{i_0},\dots,w_{i_l}) \defeq \kappa\Big[ \begin{pmatrix}\Re w_{i_0}\\\ii\Im w_{i_0}\end{pmatrix}\otimes \dots\otimes \begin{pmatrix}\Re w_{i_l}\\\ii\Im w_{i_l}\end{pmatrix} \Big]\in (\R\times \ii\R)^{\otimes(l+1)}\quad\text{and}\quad \widetilde\partial_{\bm i}  \defeq  \begin{pmatrix}\partial_{\Re w_{i_1}}\\\partial_{\Im w_{i_1}}\end{pmatrix}\otimes \dots\otimes \begin{pmatrix}\partial_{\Re w_{i_l}}\\\partial_{\Im w_{i_l}}\end{pmatrix},
\end{align*}
where the application of $\kappa$ is understood in an entrywise sense and the derivative tensor has dimension $(\C^2)^{\otimes l}$. By saying that $\kappa$ is understood in an entrywise sense, we mean, by slight abuse of notation that, for example,
\[ \kappa \left(  \begin{pmatrix}v_1\\v_2\end{pmatrix} \otimes \begin{pmatrix}w_1\\w_2\end{pmatrix}  \right) = \kappa \bigg( \sum_{i,j=1}^2  v_i w_j\, e_i\otimes e_j \bigg) \defeq \sum_{i,j=1}^2 \kappa(v_i, w_j)\, e_i \otimes e_j,\]
where $e_1,e_2$ is the standard basis of $\R\times \ii\R$. Due to the special nature of the index $i_0$ we see from \eqref{complex cum exp 1st step} that $\Re w_{i_0}$ and $\ii\Im w_{i_0}$ always occur in a sum of two and the rhs.~of \eqref{complex cum exp 1st step} can be expressed in terms of the partial trace $\Tr_1 \widetilde \kappa (w_{i_0},\dots,w_{i_l})\in (\R\times i\R)^{\otimes l}$ along the first dimension, which corresponds to $i_0$. Thus we can compactly write \eqref{complex cum exp 1st step} as
\begin{align}\label{eq complex cum exp 2nd step}
\E w_{i_0} f(\bm w) = \sum_{0\le l<R} \sum_{\bm i\in \NN^l} \frac{\braket{\Tr_1\widetilde\kappa(w_{i_0},\bm w_{\bm i}), \E(\widetilde\partial_{\bm i} f)}}{l!} + \widetilde\Omega^1 + \widetilde\Omega^2,
\end{align}
where the scalar product is taken between two tensors of size $2^l$. For example, the $l=1$ term from \eqref{eq complex cum exp 2nd step} reads 
\[ \sum_{i_1\in\NN} \left(\frac{\kappa(\Re w_{i_0},\Re w_{i_1})+\kappa(\ii\Im w_{i_0},\Re w_{i_1})}{1!}(\E \partial_{\Re w_{i_1}} f) + \frac{\kappa(\Re w_{i_0},\ii\Im w_{i_1})+\kappa(\ii\Im w_{i_0},\ii\Im w_{i_1})}{1!}(\E \partial_{\Im w_{i_1}} f) \right).\]
The rest of the argument in Section \ref{section step} can be carried out verbatim for any specific choice of distribution of $\Re$, $\Im$ to the entries of $\kappa$. We only have to replace the norms $\tnorm{\kappa}^{\text{av}}$ and $\tnorm{\kappa}^{\text{iso}}$ in Assumption \ref{assumption correlations} by applying them entrywise to $\widetilde \kappa$, i.e.,
\begin{subequations}\label{widetildekappa norm}
\begin{align}
\tnorm{\widetilde\kappa (w_{\alpha_1},\dots,w_{\alpha_k})}^{\text{av}} &\defeq \sum_{\mathfrak{X}_1,\dots,\mathfrak{X}_k\in\{\Re,\Im\}} \tnorm{\kappa(\mathfrak X_1 w_{\alpha_1},\dots,\mathfrak X_k w_{\alpha_k})}^{\text{av}},\\ 
\tnorm{\widetilde\kappa (w_{\alpha_1},\dots,w_{\alpha_k})}^{\text{iso}} &\defeq \sum_{\mathfrak{X}_1,\dots,\mathfrak{X}_k\in\{\Re,\Im\}} \tnorm{\kappa(\mathfrak X_1 w_{\alpha_1},\dots,\mathfrak X_k w_{\alpha_k})}^{\text{iso}}. 
\end{align}
\end{subequations}
\begin{genericthm*}[Hermitian $\kappa$-correlation decay]
We assume that for all $R\in\N$ and $\epsilon>0$
\[\tnorm{\widetilde\kappa}^{\text{av}}\le_{\epsilon,R} N^{\epsilon } \quad\text{and}\quad \tnorm{\widetilde\kappa}^{\text{iso}} \le_{\epsilon,R} N^{\epsilon }.\]
\end{genericthm*}
Since there are at most $2^R$ such choices this change has no impact on any of the claimed bounds which always implicitly allow for an $R$--dependent constant. 

\section{Proofs of auxiliary results} 
\begin{lemma}[Quadratic Implicit Function Theorem]\label{impl fct lemma}
Let $\norm{\cdot}$ be a norm on $\C^d$, $A,B\in \C^d$  and $Q\colon\C^d\times \C^d\to\C^d$ a bounded $\C^d$-valued quadratic form, i.e.,
\[
\norm{Q} = \sup_{x,y}\frac{\norm{Q(x,y)}}{\norm{x}\norm{y}}< \infty.
\]
Suppose that $A$ is invertible. Then for $\epsilon_2\defeq \big[2\norm{A^{-1}}\norm{Q}\big]^{-1}$ and $\epsilon_1\defeq \epsilon_2\big[2\norm{A^{-1}}\norm{B}\big]^{-1}$ there is a unique function $X\colon B_{\epsilon_1} \to B_{\epsilon_2}$ such that
\[
AX(d)+Q(X(d),X(d))=Bd,
\]
where $B_{\epsilon}$ denotes the open $\epsilon$-ball around $0$. Moreover, the function $X$ is analytic and satisfies
\[
\norm{X(d_1)-X(d_2)} \le 2 \norm{A^{-1}}\norm{B}\norm{d_1-d_2}\quad\text{for all}\quad d_1, d_2 \in B_{\epsilon_1/2}.
\]
\end{lemma}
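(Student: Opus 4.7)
The strategy is a Banach fixed point argument. Since $A$ is invertible, the equation $AX+Q(X,X)=Bd$ is equivalent to the fixed point equation $X=T_d(X)$, where
\[
T_d(X)\defeq A^{-1}Bd-A^{-1}Q(X,X).
\]
For any $r\ge 0$ we have $\|T_d(X)\|\le \|A^{-1}\|\|B\|\|d\|+\|A^{-1}\|\|Q\|r^2$ on $\bar B_r$, and
\[
\|T_d(X_1)-T_d(X_2)\|\le 2\|A^{-1}\|\|Q\|r\,\|X_1-X_2\|\quad\text{for }X_1,X_2\in\bar B_r,
\]
by the bilinearity of $Q$. Thus if we can choose $r<\epsilon_2$ with $\|A^{-1}\|\|B\|\|d\|+\|A^{-1}\|\|Q\|r^2\le r$, then $T_d$ is a strict contraction on $\bar B_r$ with factor $r/\epsilon_2<1$.

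For $d\in B_{\epsilon_1}$ the quadratic $\|A^{-1}\|\|Q\|r^2-r+\|A^{-1}\|\|B\|\|d\|=0$ has two roots $r_\pm(d)=\epsilon_2(1\pm\sqrt{1-\|d\|/\epsilon_1})$, and I will take $r=r_-(d)<\epsilon_2$. The Banach fixed point theorem then produces a unique $X(d)\in\bar B_{r_-(d)}$ with $T_d(X(d))=X(d)$. To get uniqueness in the \emph{whole} ball $B_{\epsilon_2}$, suppose $\widetilde X\in B_{\epsilon_2}$ is another solution; then the fixed point identity yields
\[
\|A^{-1}\|\|Q\|\|\widetilde X\|^{2}-\|\widetilde X\|+\|A^{-1}\|\|B\|\|d\|\ge 0,
\]
which forces $\|\widetilde X\|\le r_-(d)$ or $\|\widetilde X\|\ge r_+(d)\ge\epsilon_2$; only the first option is compatible with $\widetilde X\in B_{\epsilon_2}$, so $\widetilde X=X(d)$.

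For the Lipschitz estimate, subtracting the fixed-point equations for $d_1,d_2\in B_{\epsilon_1/2}$ and using bilinearity of $Q$ gives
\[
\|X(d_1)-X(d_2)\|\le\|A^{-1}\|\|B\|\|d_1-d_2\|+\|A^{-1}\|\|Q\|\bigl(\|X(d_1)\|+\|X(d_2)\|\bigr)\|X(d_1)-X(d_2)\|.
\]
Since $\|X(d_i)\|\le r_-(\epsilon_1/2)=\epsilon_2\bigl(1-\tfrac{1}{\sqrt 2}\bigr)$, the prefactor in the second term is at most $1-\tfrac{1}{\sqrt 2}<\tfrac12$, and rearranging yields $\|X(d_1)-X(d_2)\|\le 2\|A^{-1}\|\|B\|\|d_1-d_2\|$ as claimed.

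Analyticity is immediate from the construction: the Picard iterates $X_0=0$, $X_{n+1}=T_d(X_n)$ are polynomials in $d$, and the uniform contraction on $\bar B_{r_-(d)}$ with $d$ ranging over any compactly contained subset of $B_{\epsilon_1}$ gives uniform convergence $X_n\to X$, so $X$ is holomorphic. (Alternatively, invoke the holomorphic implicit function theorem, noting that the partial differential $A+2Q(X(d),\cdot)$ is invertible near $d=0$ as a small perturbation of $A$.) No step is conceptually delicate; the only bookkeeping care is keeping the constants aligned so that the domain $B_{\epsilon_1}$ and the codomain $B_{\epsilon_2}$ match the stated values, which is exactly why the radii are chosen to satisfy $2\|A^{-1}\|\|Q\|\epsilon_2=1$ and $2\|A^{-1}\|\|B\|\epsilon_1=\epsilon_2$.
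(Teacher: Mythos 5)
Your proof is a careful elaboration of exactly the approach the paper invokes (the paper's entire proof is the single sentence ``A simple application of the Banach fixed point theorem''). The fixed-point map, the choice of radius $r_-(d)$, the uniqueness argument via the two roots, the Lipschitz estimate, and the analyticity via holomorphic Picard iterates are all correct and consistent with what the stated constants $\epsilon_1,\epsilon_2$ are designed to achieve.
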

\begin{proof}
A simple application of the Banach fixed point theorem. 
\end{proof}

\begin{lemma}\label{S[R]T bound}
For random matrices $R,T$ and $p\ge 1$ it holds that $\norm{\SS[V]T}_p\le \tnorm{\SS} \norm{V}_{2p} \norm{T}_{2p}$.
\end{lemma}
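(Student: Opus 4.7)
The plan is to split the cumulant into its cross and direct parts $\kappa = \kappa_c + \kappa_d$, which induces a splitting $\SS = \SS_c + \SS_d$ as in Remark \ref{S decomp remark}, prove the bounds $\norm{\SS_c[V]T}_p \le \tnorm{\kappa_c}_c\norm{V}_{2p}\norm{T}_{2p}$ and $\norm{\SS_d[V]T}_p \le \tnorm{\kappa_d}_d\norm{V}_{2p}\norm{T}_{2p}$ separately, and then conclude by the triangle inequality and by taking the infimum over admissible decompositions; this recovers exactly $\tnorm{\kappa}_2^{\text{iso}} = \tnorm{\SS}$ on the right-hand side.

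For the cross part, I fix unit vectors $\vx, \vy$ and expand, using \eqref{SS kappa} together with the identity $\Delta^{(a_1 b_1)}V\Delta^{(a_2 b_2)} = V_{b_1 a_2}\, e_{a_1} e_{b_2}^{t}$, to obtain
\[
\braket{\vx, \SS_c[V]T\vy} = \frac{1}{N}\sum_{a_1,b_1,a_2,b_2}\overline{x_{a_1}}\,\kappa_c(a_1 b_1, a_2 b_2)\, V_{b_1 a_2}\,(T\vy)_{b_2}.
\]
Performing the sums over $a_1$ and $b_2$ first yields the key rewriting
\[
\braket{\vx, \SS_c[V]T\vy} \;=\; \frac{1}{N}\sum_{b_1, a_2} V_{b_1 a_2}\, T_{\overline{\mathbf{k}^{\vx}_{b_1 a_2}}\,\vy},
\]
where $\mathbf{k}^{\vx}_{b_1 a_2}\defeq \kappa_c(\vx b_1, a_2\,\cdot)$ is a deterministic $J$-vector of Euclidean norm $M^{(\vx)}_{b_1 a_2}\defeq \norm{\kappa_c(\vx b_1, a_2\,\cdot)}$. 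The crucial observation is that both factors $V_{b_1 a_2} = V_{e_{b_1}\,e_{a_2}}$ and $T_{\overline{\mathbf{k}^{\vx}_{b_1 a_2}}\,\vy}$ are now quadratic forms of $V$ and $T$ in \emph{deterministic} vectors, so they can each be controlled isotropically in $p$-norm.

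Applying Minkowski's inequality to move $\norm{\cdot}_p$ inside the finite $(b_1, a_2)$-sum and then H\"older's inequality in the probability space with exponents $(2p, 2p)$ gives, after using the homogeneity bound $\norm{T_{\overline{\mathbf{k}^{\vx}_{b_1 a_2}}\,\vy}}_{2p}\le M^{(\vx)}_{b_1 a_2}\norm{T}_{2p}$,
\[
\norm{\braket{\vx, \SS_c[V]T\vy}}_p \;\le\; \frac{\norm{V}_{2p}\norm{T}_{2p}}{N}\sum_{b_1, a_2} M^{(\vx)}_{b_1 a_2}.
\]
The remaining $\ell^1$-sum equals $\braket{\mathbf{1}, M^{(\vx)}\mathbf{1}}$, where $\mathbf{1}\in\C^J$ is the all-ones vector of Euclidean norm $\sqrt N$; hence it is bounded by $N\norm{M^{(\vx)}}\le N\tnorm{\kappa_c}_c$ directly from the definition of $\tnorm{\kappa_c}_c$ in \eqref{kappa iso norms}. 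Taking the supremum over unit $\vx, \vy$ yields the claimed estimate for $\SS_c$.

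The direct part $\SS_d$ is handled by exactly the same scheme, with the only modification that one contracts over $a_2$ rather than $b_2$ in order to package the cumulant into the $V$-factor; the relevant deterministic vector then becomes $\kappa_d(\vx b_1,\,\cdot\, b_2)$, whose norm is controlled by $\tnorm{\kappa_d}_d$. This is precisely the asymmetry already visible in the definitions of $\tnorm{\kappa_c}_c$ and $\tnorm{\kappa_d}_d$ in \eqref{kappa iso norms}. There is no genuine obstacle here: the statement amounts to a dictionary translation between the combinatorial splitting of $\kappa$ and the matrix action of $\SS$, and the only quantitative ingredient is the elementary identity $\norm{\mathbf{1}}^2 = N$, which is what converts the operator-norm bound on the pointwise-norm matrix $M^{(\vx)}$ into the $\ell^1$-bound needed to cancel the prefactor $1/N$.
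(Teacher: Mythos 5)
Your proof is correct and follows essentially the same route as the paper's: split $\kappa = \kappa_c + \kappa_d$, contract the resulting cumulant vector into one of the two matrix factors so that both become isotropic quadratic forms in deterministic vectors, apply the triangle and H\"older inequalities in probability to produce $\norm{V}_{2p}\norm{T}_{2p}$, and bound the leftover $\ell^1$-sum of $\norm{\kappa_e(\vx b_1, \cdot)}$ by $N$ times the operator norm, then optimize over the splitting. The only cosmetic difference is that you spell out the $\braket{\mathbf 1, M^{(\vx)}\mathbf 1}\le N\norm{M^{(\vx)}}$ step and track the complex conjugate, which the paper elides.
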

\begin{proof}
Let $\kappa=\kappa_c+\kappa_d$ be an arbitrary partition, which induces a partition of $\SS$ since \[\SS[V]=\frac{1}{N}\sum_{\alpha_1,\alpha_2} \kappa(\alpha_1,\alpha_2) \Delta^{\alpha_1}V\Delta^{\alpha_2}.\] For vectors $\vx,\vy$ with $\norm{\vx},\norm{\vy}\le 1$ we compute 
\begin{align*} \norm{(\SS[V]T){\vx\vy}}_p &= \norm[2]{\frac{1}{N}\sum_{b_1,a_2,b_2} \kappa(\vx b_1,a_2b_2) V_{b_1a_2}T_{b_2\vy}}_p\le \norm[2]{\frac{1}{N}\sum_{b_1,b_2} V_{b_1\kappa_c(\vx b_1,\cdot b_2)} T_{b_2\vy}}_p + \norm[2]{\frac{1}{N}\sum_{b_1,a_2} R_{b_1a_2} T_{\kappa_d(\vx b_1,a_2\cdot)\vy}}_p\\
&\le \frac{\norm{V}_{2p}\norm{T}_{2p}}{N}\Big[ \sum_{b_1,b_2} \norm{\kappa_d(\vx b_1,\cdot b_2)} + \sum_{b_1,a_2} \norm{\kappa_c(\vx b_1,a_2\cdot)}\Big] \le \Big[\tnorm{\kappa_d}_d+\tnorm{\kappa_c}_c\Big] \norm{V}_{2p}\norm{T}_{2p}
\end{align*}
and the result follows from optimizing over the decompositions of $\kappa$ and recalling the definition \eqref{tnorm SS}.
\end{proof}
\begin{lemma}\label{G D triv bound lemma}
For any $t\in [0,1]$, $q\ge 1$,  $\epsilon>0$ 
 and multi-set $\underline\beta\subset I$ we have under Assumption \ref{assumption A} that
\begin{subequations}
\begin{align}\label{G triv bound}
\norm[1]{\partial_{\underline\beta} G \big\rvert_{\widehat W} }_q &\le_{\abs[0]{\underline\beta}} N^{-\abs[0]{\underline\beta}/2} 
 N^\epsilon \braket{z}^{-1}
\Big(1+\norm{G}_{2q(\abs[0]{\underline\beta}+1)}\Big)^{ (\abs[0]{\underline\beta}+1)/\mu} \\
\label{D triv bound}
\norm[1]{\partial_{\underline\beta} D \big\rvert_{\widehat W}}_q&\le_{\abs[0]{\underline\beta}} N^{-\abs[0]{\underline\beta}/2} (1+\tnorm{\SS}) N^\epsilon  \braket{z}^{-1} \Big(1+\norm{G}_{6q(\abs[0]{\underline\beta}+2)}\Big)^{ (\abs[0]{\underline\beta}+2)/\mu},
\end{align}
\end{subequations}
where $\widehat W_\alpha= t w_\alpha$ for $\alpha\in\NN$ and $\widehat W_\alpha=w_\alpha$ otherwise for a set $\NN\subset I$ of size $\abs{\NN}\le  N^{1/2-\mu}$.
\end{lemma}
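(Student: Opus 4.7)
The plan is to expand both $\partial_{\underline\beta}G|_{\widehat W}$ and $\partial_{\underline\beta}D|_{\widehat W}$ algebraically via the Leibniz rule, reducing each to a sum of monomials in the matrix entries of $\widehat G$, and then to compare $\norm{\widehat G}_r$ with $\norm{G}_r$ through a short resolvent expansion that exploits $|\NN|\le N^{1/2}$ together with Assumption \ref{assumption high moments}.

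For \eqref{G triv bound} I first apply the elementary identity $\partial_\alpha G = -N^{-1/2}\,G\Delta^\alpha G$ and the Leibniz rule to obtain
\[
\partial_{\underline\beta}G\Big|_{\widehat W}=(-N^{-1/2})^{|\underline\beta|}\sum_{\sigma\in S_{|\underline\beta|}}\widehat G\Delta^{\beta_{\sigma(1)}}\widehat G\cdots\widehat G\Delta^{\beta_{\sigma(|\underline\beta|)}}\widehat G,
\]
whose $(\vx,\vy)$-entry is a sum of $|\underline\beta|!$ monomials, each a product of exactly $|\underline\beta|+1$ isotropic entries of $\widehat G$. H\"older's inequality then gives $\norm{(\partial_{\underline\beta}G|_{\widehat W})_{\vx\vy}}_q\le_{|\underline\beta|}N^{-|\underline\beta|/2}\norm{\widehat G}_{q(|\underline\beta|+1)}^{|\underline\beta|+1}$, reducing the claim to an isotropic $L^r$-bound on $\widehat G$ in terms of $\norm{G}_r$. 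With $V\defeq \widehat H-H=(t-1)N^{-1/2}W_\NN$, iterating the second resolvent identity $\widehat G = G - GV\widehat G$ to order $K$ yields
\[
\widehat G=\sum_{j=0}^{K-1}(-GV)^{j}G+(-GV)^K\widehat G.
\]
Each main term $((-GV)^j G)_{\vx\vy}$ is a $|\NN|^j$-fold sum of products of $j$ entries of $W$ and $j+1$ entries of $G$; H\"older together with Assumption \ref{assumption high moments} and the neutrality $|\NN|^j N^{-j/2}\le 1$ bounds it in $L^q$ by $C_j\norm{G}_{(2j+1)q}^{j+1}$. The remainder term is bounded analogously, except that the single $\widehat G$-factor is controlled by the deterministic operator bound $\norm{\widehat G}\le\eta^{-1}$; choosing $K=|\underline\beta|+1$ and exploiting the worst-case comparability $\eta^{-1}\sim\norm{G}_r$ in the isotropic $L^r$-norm absorbs this factor into one additional power of $\norm{G}_r$, yielding the claimed bound with the exponent $6q(|\underline\beta|+1)$ arising from the combined H\"older factors (three to split $w$ and the two surrounding $G$'s, two to accommodate the interpolation between main terms and remainder).

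For \eqref{D triv bound} I use the identity $D = 1+(z-A+\SS[G])G$ and differentiate. The Leibniz rule produces the same $\widehat G$-monomials handled in \eqref{G triv bound}, together with terms of the form $\SS[\widehat G\Delta^{\bm\gamma_1}\widehat G\cdots]\widehat G\cdots\widehat G$ coming from derivatives landing on the $\SS[G]$ argument; these are controlled via Lemma \ref{S[R]T bound} and produce the prefactor $(1+\tnorm{\SS})$. The non-differentiated part contributes a factor $\norm{(z-A+\SS[G])G}_r\le(\braket{z}+\tnorm{\SS}\norm{G}_{2r})\norm{G}_{2r}$, and the combination with the extra Leibniz factors of $\widehat G$ provides the shape $(1+\braket{z}\norm{G}^3)(1+\norm{G}^{|\underline\beta|+2})$ claimed in the statement.

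The main obstacle is the accounting in the third step: the finite resolvent expansion always leaves a remainder involving $\widehat G$ at a higher $L^r$-level, and we cannot appeal to any local-law-type control on $\widehat G$ at this stage. The only deterministic input we possess is the trivial operator bound $\norm{\widehat G}\le\eta^{-1}$, and the task is to insert it exactly once so that the inequality closes with the polynomial dependence on $\norm{G}_r$ advertised in the statement: in the regime where $\norm{G}_r\sim\eta^{-1}$ this is exactly absorbed into the extra power of $\norm{G}_r$, while in the favourable regime where $\norm{G}_r\ll\eta^{-1}$ the resolvent series converges rapidly and the trivial remainder bound is dominated by the leading terms, so that a careful bookkeeping (rather than any probabilistic argument) suffices to close the proof.
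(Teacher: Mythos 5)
Your proposal follows essentially the same two-stage route as the paper: first apply the Leibniz rule and $\partial_\alpha G = -N^{-1/2}G\Delta^\alpha G$ to express $\partial_{\underline\beta}G|_{\widehat W}$ as a sum over $S_{|\underline\beta|}$ of products of $|\underline\beta|+1$ factors of $\widehat G$; then reduce, via H\"older, to an isotropic $L^r$-bound on $\widehat G$ in terms of $\norm{G}_r$, obtained from a truncated resolvent identity whose main terms are neutral because $|\NN|\, N^{-1/2}\le 1$; and for $D$ differentiate the defining relation and invoke Lemma \ref{S[R]T bound} to produce the $(1+\tnorm{\SS})$ prefactor. The one structural deviation is that you tie the resolvent expansion depth $K$ to $|\underline\beta|+1$, whereas the paper truncates at a fixed depth of three, establishes $\norm{\widehat G}_q\le_q 1+\norm{G}_{6q}^3$ once, and only then inserts this into the $|\underline\beta|+1$ Leibniz monomials. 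Tying $K$ to $|\underline\beta|$ gains nothing: since $|\NN|\,N^{-1/2}\le 1$ exactly, each additional iteration of $\widehat G=G+N^{-1/2}\widehat G(W-\widehat W)G$ only increases the power of $\norm{G}$ in the remainder without improving the $\eta^{-1}$ prefactor, and the resulting exponent does not visibly reduce to the stated $|\underline\beta|+1$.

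The genuine gap is the one you flag: absorbing the deterministic factor $\|\widehat G\|\le\eta^{-1}$ from the remainder. Your proposed resolution, the ``worst-case comparability $\eta^{-1}\sim\norm{G}_r$'', is false. One always has $\norm{G}_r\le\eta^{-1}$, but the reverse fails badly: for $|z|\gg1$ and $\eta\sim N^{-1}$ one has $\norm{G}_r\sim|z|^{-1}$ while $\eta^{-1}\sim N$, so $\eta^{-1}/\norm{G}_r$ is a positive power of $N$. The companion claim that ``in the favourable regime $\norm{G}_r\ll\eta^{-1}$ the resolvent series converges rapidly'' is also wrong: the $j$-th main term is of order $\norm{G}^{j+1}$ after the neutral cancellation $|\NN|^jN^{-j/2}\le 1$, so the series does not decay once $\norm{G}_r\gtrsim 1$. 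To actually close the remainder bound one must either exploit a strict inequality $|\NN|\le N^{1/2-\mu}$ (as in Assumption \ref{assumption neighbourhood decay}), which gives each layer a genuine factor $N^{-\mu}$ that, with $O(1/\mu)$ iterations and $\eta\ge N^{-1}$, extinguishes $\eta^{-1}$ at the cost of a $\mu$-dependent power of $\norm{G}$; or else carry the $\eta^{-1}$ (equivalently, extra powers of $\braket{z}\norm{G}$) explicitly and absorb them downstream, which is what the slack in \eqref{Lambda norm bound Ghat Dhat bound} is for. Either repair works; the two-regime heuristic you wrote does not.
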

\begin{proof}
We write $\underline\beta=\{\beta_1,\dots,\beta_n\}$ and its easy to see inductively that 
\begin{align}
\partial_{\underline\beta} G\big\rvert_{\widehat W} = \frac{(-1)^n}{N^{n/2}} \sum_{\sigma\in S_n} \widehat G\Delta^{\beta_{\sigma(1)}}\widehat G\Delta^{\beta_{\sigma(2)}} \widehat G\dots \widehat G \Delta^{\beta_{\sigma(n)}} \widehat G,
\end{align}
where $\widehat G=G(\widehat W)$. From the resolvent identity it follows that 
\[\widehat G-G =   \sum_{k=1}^{R-1} G\Big( \frac{W-\widehat W}{\sqrt{N}}G\Big)^k+
\widehat G\Big( \frac{W-\widehat W}{\sqrt{N}}G\Big)^R 
\]
and therefore by the trivial bound $\norm[0]{\widehat G}\le 1/\eta$ 
and Assumption \ref{assumption high moments} it follows that 
 for \(R:=\lceil 1/\mu\rceil\) and \(\eta\ge N^{-1}\)   we have
\begin{equation}\label{GG} \norm[1]{\widehat G-G}_q \le \sum_{k=1}^{R-1} \frac{\abs{\NN}^k \norm{G}^{k+1}_{(2k+1)q}\max_\alpha \norm{w_\alpha}_{(2k+1)q}^k}{N^{k/2}} + \frac{\abs{\NN}^R \norm{G}^{R}_{2Rq}\max_\alpha \norm{w_\alpha}_{(2R+1)q}^R}{N^{R/2}\eta} \le_q \norm{G}_{2Rq} (1+\norm{G}_{2Rq})^{R} .\end{equation}
Since $\norm{H}\le N^{\epsilon/2}$ with very high probability for sufficiently large $N$ it follows that $\norm{G}_{r}\le \norm{G}\lesssim N^{\epsilon/2}/\braket{z}$ for $\abs{z}\gg N^{\epsilon/2}$ which immediately implies \eqref{G triv bound}.

Similarly, \eqref{D triv bound} follows from the easily verifiable identities
\begin{align}
\partial_{\underline\beta} D\big\rvert_{\widehat W} = \frac{(-1)^n}{N^{n/2}} \sum_{\sigma\in S_n} \Big[ \widehat D \Delta^{\beta_{\sigma(1)}}\widehat G\dots\Delta^{\beta_{\sigma(n)}} \widehat G + \sum_{k=1}^n \SS[\widehat G\Delta^{\beta_{\sigma(1)}}\widehat G\dots \Delta^{\beta_{\sigma(k)}}\widehat G ]\widehat G\Delta^{\beta_{\sigma(k+1)}}\widehat G\dots \Delta^{\beta_{\sigma(n)}} \widehat G \Big]
\end{align} 
 and
\begin{equation}\label{D bound}
 \widehat D=D-\SS[G]G + (D-\SS[G]G)\frac{W-\widehat W}{\sqrt N} \widehat G + \frac{\widehat W-W}{\sqrt N} \widehat G + \SS[\widehat G] \widehat G
\end{equation} 
together with Lemma~\ref{S[R]T bound}, ~\eqref{G triv bound}, \eqref{GG} and 
\begin{align}\label{D bound2}\norm[0]{ D}_{r}\le_r N^\epsilon \braket{z}^{-1} (1+ \norm[0]{G}_{r}) +
 \norm[0]{\SS[G]G}_{r}. \end{align}
To see why \eqref{D bound2} holds we write 
 $D= HG-AG+\SS[G]G$ and use $\norm{AG}_r\lesssim \norm{G}_r $ while
 $\norm[0]{H G}_r = \norm{1+ zG}_r\lesssim 1+ N^{\epsilon/2} \norm{G}_r$
for $|z|\lesssim N^{\epsilon/2}$. For large $|z|\gg N^{\epsilon/2}$ we estimate 
 that $\norm[0]{H G}_r  \le \norm{ H  G} \le  N^{\epsilon} \braket{z}^{-1}$  since $\norm{ H}\le  N^{\epsilon/2} $.
\end{proof}

\printbibliography

\end{document}